\documentclass[11pt,twoside]{amsart}

%

\setlength{\textheight}{9.3in}
\setlength{\textwidth}{6.7in}
\setlength{\oddsidemargin}{0pt}
\setlength{\evensidemargin}{0pt}
\setlength{\headsep}{16pt}
\setlength{\headheight}{0cm}
\setlength{\footskip}{12pt}
\setlength{\parindent}{0pt}
\setlength{\parskip}{4pt}
\setlength{\topmargin}{2pt}

\usepackage{amsmath}
\usepackage{xspace}
\usepackage{amssymb}
\usepackage{bbm}
\usepackage{graphicx}
\usepackage{url}
\usepackage{pifont}
\usepackage{enumerate}
\usepackage{tikz}
\usepackage{lipsum}
\usepackage[matrix,arrow]{xy}

\usepackage{tabularx}
\usepackage{hyperref}


\numberwithin{equation}{section}

\newcommand{\mi}{\bbi\xspace}

\DeclareMathSymbol{\varnothing}{\mathord}{AMSb}{"3F}
\renewcommand{\emptyset}{\varnothing}


\DeclareMathOperator{\RE}{Re}
\DeclareMathOperator{\IM}{Im}

\DeclareMathOperator{\graph}{graph}


\newcommand{\bbC}{\mathbb{C}}

\newcommand{\bbN}{\mathbb{N}}

\newcommand{\bbP}{\mathbb{P}}
\newcommand{\bbQ}{\mathbb{Q}}
\newcommand{\bbR}{\mathbb{R}}
\newcommand{\bbS}{\mathbb{S}}

\newcommand{\bbZ}{\mathbb{Z}}

\newcommand{\N}{\mathbb{N}}
\newcommand{\R}{\mathbb{R}}
\newcommand{\C}{\mathbb{C}}

\newcommand{\calA}{\mathcal{A}}
\newcommand{\calB}{\mathcal{B}}

\newcommand{\calE}{\mathcal{E}}
\newcommand{\calF}{\mathcal{F}}

\newcommand{\calH}{\mathcal{H}}

\newcommand{\calS}{\mathcal{S}}

\newcommand{\calW}{\mathcal{W}}

\newcommand{\CPone}{\bbP^1}


\newcommand{\ax}{a^\circ}

\newcommand{\aax}{\alpha^\circ}

\newcommand{\nx}{\nu^\circ}

\newcommand{\ind}{l}
\newcommand{\indd}{k}
\theoremstyle{plain}
\newtheorem{theorem}{Theorem}[section]
\newtheorem*{theorem*}{Theorem}
\newtheorem{corollary}[theorem]{Corollary}
\newtheorem*{corollary*}{Corollary}
\newtheorem{proposition}[theorem]{Proposition}
\newtheorem*{proposition*}{Proposition}
\newtheorem{lemma}[theorem]{Lemma}
\newtheorem*{lemma*}{Lemma}

\newtheorem*{example*}{Example}
\newtheorem{definition}[theorem]{Definition}
\newtheorem*{definition*}{Definition}

\newtheorem*{notation*}{Notation}
\newtheorem{remark}[theorem]{Remark}
\newtheorem*{remark*}{Remark}

\newcommand{\bbi}{\mathbbm{i}}


\title[The space of genus two spectral curves of cmc tori]{The space of genus two spectral curves \\ of constant mean curvature tori in $\bbR^3$}

\author[E.\ Carberry]{Emma Carberry}
\address{E.\ Carberry, School of Mathematics and Statistics,\,University of Sydney,\,Australia.}
\email{emma.carberry@sydney.edu.au}

\author[M.\ Kilian]{Martin Kilian}
\address{M. Kilian, School of Mathematical Sciences,
University College Cork, Ireland.}
\email{m.kilian@ucc.ie}

\author[S.\ Klein]{Sebastian Klein}
\address{S.\ Klein, Mathematics Chair III\\
Universit\"at Mannheim\\
D-68131 Mannheim, Germany.}
\email{s.klein@math.uni-mannheim.de}

\author[M.\ U.\ Schmidt]{Martin Ulrich Schmidt}
\address{M.\ Schmidt, Mathematics Chair III,\,
Universit\"at Mannheim,\,D-68131 Mannheim,\,Germany.}
\email{schmidt@math.uni-mannheim.de}

\thanks{{\it Mathematics Subject Classification.} 53A10, 37K10. \today \\  \,\,\, S. Klein was funded by DFG Grant 414903103.}


\begin{document}
\begin{abstract} 
We use  Whitham deformations to give a complete account of spectral data of real solutions of the sinh--Gordon equation of spectral genus $2$. We parameterise the closure of spectral data of constant mean curvature tori in $\bbR^3$ by an isosceles right triangle and analyse its boundary. We prove that the Wente family, which is described by spectral data with real coefficients, is parameterised by the bisector of the right angle. Our methods combine blowups of Whitham deformations and spectral data in an innovative way that changes the underlying integrable system.
\end{abstract}
\dedicatory{Dedicated to Nicholas Schmitt}

\maketitle
%
%

%
%
\section{Introduction and Definitions}
\subsection{Introduction}
Wente's discovery of constant mean curvature (cmc) tori \cite{Wen} in 1986 led to a series of articles on the subject in the 1980s and 1990s. In an attempt to produce graphics of the new examples, Abresch \cite{Ab} noticed that Wente's examples are foliated by two families of curvature lines: one planar family, and one spherical family. This prompted Abresch to single out such cmc tori, resulting in a reduction of the sinh-Gordon equation to a nonlinear ode and solutions in terms of elliptic functions. These preliminary results culminated in the work of Pinkall-Sterling \cite{PinS}, Hitchin \cite{hit:tor} and Bobenko \cite{Bob:cmc} with a systematic description of all cmc tori in terms of algebro-geometric data, namely spectral curves with holomorphic line bundles. The spectral curves are hyperelliptic Riemann surfaces, and for cmc tori in $\bbR^3$ the lowest possible genus of such spectral curves is $g=2$. The countably infinite examples studied by Wente and Abresch are contained in this simplest case of $g=2$, but there are many more cmc tori in this simplest class that are not discussed in the work of Abresch. This paper gives a full account of the space of all cmc tori with $g=2$.

The space of real solutions of the sinh--Gordon equation, or equivalently of cmc planes in $\bbR^3$, is itself quite simple when expressed in terms of spectral curves. However the spectral curves of cmc tori are characterised by transcendental conditions. They form a totally disconnected set and so we first consider the closure of spectral curves of cmc tori within the space of spectral curves of cmc planes. For $g=2$ this closure is the union of an $\bbS^1$-family of diffeomorphic spaces $\calS^2_{\lambda_0},\,\lambda_0\in\bbS^1$. In our main theorem, Theorem~\ref{Th:main}, we construct a diffeomorphism from $\calS^2_1$ to an open triangle in the real plane. The spectral curves of tori with $g=2$ and $\lambda_0=1$ correspond to the points in the intersection of the interior of the isosceles right triangle in Figure~\ref{fig:triangle} with $(\pi\bbQ)^2$. 
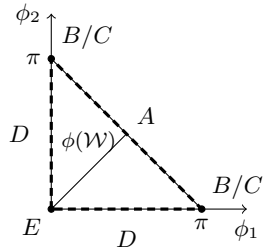
\begin{figure}[h]
\centering
\begin{tikzpicture}[scale = 2.0]
    \draw[->] (0,0) -- (1.3,0) node[at end, below] {\footnotesize$\phi_1$};
    \draw[->] (0,0) -- (0,1.3) node[at end, left] {\footnotesize$\phi_2$};

    \draw[line width = 1.2pt, densely dashed] (0,0) -- (0,1) -- (1,0) -- (0,0); 

    \draw (1,0) -- (0,1) node[midway, above right] {\footnotesize$A$};

    \draw (0,0) -- (0.5,0.5) node[midway, above, inner sep = 7pt] {\scriptsize$\phi(\calW)$};
    
    \node[left] at (0,1) {\footnotesize$\pi$};
    \node[below] at (1,0) {\footnotesize$\pi$};

    \draw[fill] (0,0) circle (0.6pt) node[below left] {\footnotesize$E$};
    \draw[fill] (1,0) circle (0.6pt) node[above right] {\footnotesize$B/C$};
    \draw[fill] (0,1) circle (0.6pt) node[above right] {\footnotesize$B/C$};
    
    \node[left, inner sep = 8pt] at (0,0.5) {\footnotesize$D$};
    \node[below, inner sep = 8pt] at (0.5,0) {\footnotesize$D$};

\end{tikzpicture}
\caption{The closure of the space of spectral curves of cmc tori of spectral genus two. Labelling and details are explained in Theorem 1.3 and Section 7. } \label{fig:triangle}
\end{figure}

The Whitham flow (isoperiodic flow \cite{GriS1}) has been an effective tool in studying moduli spaces of integrable equations.  Within integrable surface geometry (see also \cite{GerPS,H2021}), it has for example been used to classify properly embedded minimal annuli in $\bbS^2\times\bbR$ \cite{HKS2}, equivariant cmc tori in $\bbS^3$ \cite{KSS}, Alexandrov-embedded cmc tori in $\bbS^3$ \cite{HKS3} and harmonic tori in $\bbS^3$  \cite{CO:19, CO:20}, as well as to determine the closure of cmc tori in $\bbS^3$ and $\bbR^3$ within the spaces of cmc planes of finite type  \cite {CS1, CS}. Here we use Whitham flows to parameterise all cmc tori in $\bbR^3$ whose spectral curves have genus two. We study the boundary of the space of these cmc tori, which corresponds to the boundary of the isosceles right triangle in Figure~\ref{fig:triangle}. We will see in Section~\ref{Se:limits a to e} that the limit to this boundary corresponds to a transition from the sinh-Gordon equation to another limiting integrable system, and this limiting system depends on the corresponding point on the boundary of the triangle. See the table in Section~\ref{Se:limits a to e} for the five different cases that occur, along with their limit integrable systems.

In some cases we apply a blowup to spaces of polynomials whose elements describe spectral data, that is spectral curves and differentials on them. This blowup is done in such a way that the  spectral curves are simultaneously blown up at particular points. By blowing up the polynomials, we investigate either subvarieties of the space of spectral data and their singularities or Whitham vector fields along such subvarieties. The blowup of the corresponding spectral curves allows us to view the underlying isospectral flows and the blown up vector fields again as isospectral flows and Whitham vector fields of another integrable system. In this sense these blowups give maps that switch from one integrable system to another one. Section~\ref{Se:blow up sym point} introduces this method before it is applied in Sections~\ref{Se:wente} and~\ref{Se:limits a to e} in many different situations, see also \cite{CKS}.
\subsection{The definition of $\calH^g$} The spectral curves corresponding to finite type real solutions of the sinh--Gordon equation with spectral genus $g$ are hyperelliptic complex curves $\Sigma = \Sigma_a$ described by the equation
$\nu^2 = \lambda\,a(\lambda)$, where $a$ is a polynomial of degree $2g$. By compactification, we regard $\Sigma$ as a compact, hyperelliptic
surface with a holomorphic map $\lambda $ to the Riemann sphere $\CPone$, and denote the part of $\Sigma$ that is above $\C^\times \ni \lambda$ by $\Sigma^\circ = \{(\lambda,\nu) \in \C^\times \times \C \mid
\nu^2 = \lambda\,a(\lambda)\}$. By rescaling $\lambda$ and $\nu$, we may assume that the polynomial $a$ satisfies the following conditions:
\begin{enumerate}
\item
  \emph{Reality condition:} $a\in P^{2g}_\bbR$, where for $d\in \mathbb{N}$ we define $P^d_\bbR$ as the space of polynomials $p$ of degree at most $d$ which satisfy the reality condition
  $$ \lambda^{d}\, \overline{p(\bar{\lambda}^{-1})} = p(\lambda)  $$
\item
  \emph{Positivity condition:} $\lambda^{-g}\, a(\lambda) > 0$ for $\lambda\in \bbS^1$
\item
  \emph{Non--degeneracy:} The roots of $a$ are all pairwise distinct
\item
  \emph{Normalisation:} The highest coefficient of $a$ has absolute value $1$.
\end{enumerate}
The condition that the roots of $a$ are pairwise distinct ensures that the corresponding spectral curve is smooth. 
We denote the space of polynomials $a$ which satisfy these conditions by $\mathcal{H}^g$. We regard $\mathcal{H}^g$ and its subsets as topological subspaces of the space $\C^{2g}[\lambda]$ of complex polynomials in $\lambda$ of degree at most $2g$.
In the following, all topological closures of such sets are taken in $\C^{2g}[\lambda]$.

Let $a\in \mathcal{H}^g$ and $\Sigma$ be the corresponding spectral curve, that is the hyperelliptic complex curve defined by the equation $\nu^2 = \lambda\,a(\lambda)$. It comes equipped
with the holomorphic involution
\begin{equation}
  \label{eq:sigma}
  \sigma: \Sigma \to \Sigma,\; (\lambda,\nu) \mapsto (\lambda,-\nu) \,.
\end{equation}
Moreover, due to the reality condition for $a$, it also comes with the anti--holomorphic involution
\begin{equation}
  \label{eq:rho}
  \rho: \Sigma\to\Sigma,\; (\lambda,\nu) \mapsto (\bar{\lambda}^{-1}, \bar{\lambda}^{-(g+1)} \,\bar{\nu}) \; .
\end{equation}
Note that all points $(\lambda,\nu) \in \Sigma$ with $\lambda\in \bbS^1$ are fixed points of $\rho$. 
\subsection{Differentials and $\calB_a$}\label{subsection differentials}
The differential forms \,$\Theta$\, on \,$\Sigma$\, we are interested in are abelian differentials of the second kind with second order poles at \,$\lambda=0$\, and \,$\lambda=\infty$\, and no others. We further require the symmetry condition 
$$ \sigma^* \Theta = -\Theta $$
and the reality condition 
\begin {equation} 
\label {eq:Thetareal}
\rho^* \Theta = -\overline{\Theta} \; . 
\end {equation}
These are exactly the differentials of the form (here we write $\Theta(b)$ instead of $\Theta_b$ as in \cite{CS1,CS})
$$ 
\Theta(b) = \frac{b(\lambda)}{\nu}\,\frac{\mathrm{d}\lambda}{\lambda} 
\quad\text{with \,$b\in P_\bbR^{g+1}$\,.} 
$$
We let $\calB_a$ be the $\R$--linear space of polynomials $b\in P^{g+1}_{\R}$, such that the corresponding differential form $\Theta(b)$ has purely imaginary periods. This space $\calB_a$ has real dimension $2$. Moreover, any $b\in \calB_a$ is uniquely determined by the value of $b(0)$, in other words, the $\R$--linear map $\calB_a \to \C,\; b \mapsto b(0)$ is an isomorphism of $\R$--linear spaces. It follows that there exists a unique basis $(b_1,b_2)$ of $\calB_a$ with $b_1(0)=1$ and $b_2(0)=\mi$. We call this basis the \emph{normalised basis} of $\calB_a$.  

The importance of the differentials $\Theta(b)$ where $b\in\calB_a$ comes from the fact that they encode the double periodicity of translational flow on the isospectral set of the corresponding spectral curve $\Sigma$. Thus any solution of the sinh--Gordon equation of a constant mean curvature torus in $\R^3$ gives rise to a spectral curve $\Sigma_a$ which satisfies periodicity conditions. These conditions state the existence of a \emph{Sym point} $\lambda_0\in \bbS^1$, a basis $(b_1,b_2)$ of $\calB_a$ and holomorphic functions $\mu_1,\mu_2$ on $\Sigma^\circ$ such that 
\begin{enumerate}
\item For $\ind=1,2$, the function $\mu_\ind$ is the logarithmic primitive of $\Theta(b_\ind)$, so $\mathrm{d} \ln\mu_\ind=\Theta(b_\ind)$, with an anti--symmetric branch with respect to $\sigma$ on a neighbourhood of $\lambda=0$.
\item We have $b_1(\lambda_0)=b_2(\lambda_0)=0$ and $\mu_1(\lambda_0) = \mu_2(\lambda_0) = \pm 1$.
\end{enumerate}
The real planes $ \calB_a $ define a subbundle of the trivial bundle $\calH^2\times P_{\mathbb R}^3$.
\subsection{The space $\calS^2_1$} We now define for $\lambda_0\in \bbS^1$
\begin{align*}
\calS^g_{\lambda_0}&:= \{ a \in \mathcal{H}^g \mid b(\lambda_0)=0 \text{ for all $b\in \calB_a$ } \}&\text{and}&&\calS^g&:=\bigcup_{\lambda_0\in \bbS^1} \calS^g_{\lambda_0} \;.
\end{align*}
By the above characterisation, it is clear that the set $\mathcal{P}^g$ of polynomials $a \in \mathcal{H}^g$ which correspond to constant mean curvature tori in $\bbR^3$ is contained in $\calS^g$. Indeed, due to the fact that the unit circle is closed in $\bbC$, the closure of
$\mathcal{P}^g$ in $\mathcal{H}^g$ is also contained in $\calS^g$. Rotation by $\lambda_0 $ induces a diffeomorphism from $\calS^g_1 $ onto $\calS^g_{\lambda_0} $ and so it will suffice to consider $\calS^g_1 $. In this paper we investigate the case $g=2$ and define on $\calS^2_1$ the frame bundle
\begin{align}\label{def:frame}
\calF:=\{(a,b_1,b_2)\in\calS^2_1\times P_\bbR^3\times P_\bbR^3\mid b_1\text{ and }b_2\text{ form a basis of }\calB_a\}
\end{align}
with the projection $\pi:\calF\to\calS^2_1,\;(a,b_1,b_2)\mapsto a$. In \cite{CS} it was shown that that the closure of $\mathcal{P}^2$ in $\mathcal{H}^2$ is equal to $\calS^2$. Indeed this follows from \cite[Thm~5.8]{CS} together with the following statement:
\begin{lemma} \label{L:S2-smooth}
$\calS^2$ and $\calS^2_{\lambda_0} $ are smooth submanifolds of $\mathcal{H}^2$ of dimension $3$ and $2$ respectively. 
\end{lemma}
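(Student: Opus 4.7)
The plan is to realise $\calS^2_{\lambda_0}$ and $\calS^2$ as regular zero sets of smooth $\R^2$-valued maps and apply the submersion theorem.

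I would first record that $\mathcal H^2$ is a smooth real $4$-manifold: the reality condition cuts a real $5$-dimensional linear subspace out of $\C^4[\lambda]$, the normalisation $|a_4|=1$ removes one more real dimension, and positivity and non-degeneracy are open. The statement at the end of Section~\ref{subsection differentials} ensures that $\calB$ is a smooth rank-$2$ real subbundle of $\mathcal H^2\times P_\R^3$, so the normalised basis $(b_1,b_2)$ depends smoothly on $a$.

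Using the reality condition, $\lambda_0^{-3/2}b(\lambda_0)\in\R$ for every $b\in P_\R^3$ and $\lambda_0\in\bbS^1$ (for any fixed branch of $\lambda_0^{3/2}$), so I would define
\[
  \Phi_{\lambda_0}:\mathcal H^2\to\R^2,\qquad a\mapsto\lambda_0^{-3/2}\bigl(b_1(\lambda_0),b_2(\lambda_0)\bigr),
\]
giving $\calS^2_{\lambda_0}=\Phi_{\lambda_0}^{-1}(0)$, together with the extension $\Phi:\mathcal H^2\times\bbS^1\to\R^2$ whose zero locus $\widetilde{\calS^2}$ projects onto $\calS^2$. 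Smoothness of $\calS^2_{\lambda_0}$ of dimension $4-2=2$ would then follow from the submersion theorem once one verifies that $d\Phi_{\lambda_0}|_a$ has rank $2$ at every $a\in\calS^2_{\lambda_0}$, and smoothness of $\calS^2$ of dimension $3$ from the analogous surjectivity of $d\Phi$ together with a local analysis showing that the projection $\widetilde{\calS^2}\to\calS^2\subset\mathcal H^2$ is an immersion (for which the $\bbS^1$-rotation action intertwining the $\calS^2_{\lambda_0}$'s is the main tool).

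\textbf{The main obstacle} is this regularity statement. I would differentiate the defining system of the normalised basis---reality, $b_j(0)\in\{1,\mi\}$, and $\RE\oint_{A_k}\Theta(b_j)=0$---with respect to $a$, using the Rauch variation $\dot\nu=\lambda\,\dot a/(2\nu)$ to obtain an inhomogeneous linear system on $\dot b_j\in P_\R^3$ whose inhomogeneity is an $\R$-linear function of $\dot a\in T_a\mathcal H^2$. Evaluating at $\lambda_0$ then expresses $d\Phi_{\lambda_0}|_a(\dot a)\in\R^2$ as an explicit pair of period integrals on $\Sigma_a$ linear in $\dot a$. The $2\times 2$ matrix obtained by selecting two specific root-moving deformations of $a$ (respecting the reality pairing) must then be shown non-singular: I would argue this via residue computations, relating the determinant to a non-vanishing Wronskian-type pairing on $\calB_a$ that cannot degenerate because $(b_1,b_2)$ is a basis of the $2$-dimensional space $\calB_a$. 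For $\Phi$, the additional tangent direction $\partial_{\lambda_0}$ produces the extra contribution $\bigl(\lambda_0^{-3/2}b_1(\lambda_0),\lambda_0^{-3/2}b_2(\lambda_0)\bigr)'$, which only simplifies the rank check.
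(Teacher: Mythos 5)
Your framework --- realising $\calS^2_{\lambda_0}$ as the zero set of $\Phi_{\lambda_0}(a)=\lambda_0^{-3/2}\bigl(b_1(\lambda_0),b_2(\lambda_0)\bigr)$ and $\calS^2$ as the image of an incidence variety in $\mathcal{H}^2\times\bbS^1$ --- is sound, and the dimension counts are right. But note that the paper does none of this work itself: its proof is a two-line reduction to \cite[Thm~3.2]{CS} (which gives $\deg\gcd(\calB_a)\le 1$ for all $a\in\mathcal{H}^2$, with equality exactly on $\calS^2$) and to \cite[Thm~5.5(i),(ii)]{CS}, which already contains the submersion statement. You are in effect reproving the cited theorem, which is legitimate, but the burden then falls entirely on the regularity step you yourself flag as the main obstacle --- and that is where the gap is.

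The justification you offer for non-degeneracy of the $2\times 2$ matrix --- that a Wronskian-type pairing on $\calB_a$ ``cannot degenerate because $(b_1,b_2)$ is a basis of the $2$-dimensional space $\calB_a$'' --- cannot be right as stated, because $(b_1,b_2)$ is a basis of $\calB_a$ for \emph{every} $a\in\mathcal{H}^2$, so this property carries no information about where transversality might fail. The quantity your Wronskian computes at a point of $\calS^2_{\lambda_0}$ is (up to a nonzero factor) $\mathrm{d}f(\lambda_0)$ with $f=b_2/b_1$, and Lemma~\ref{L:boundary-S21-C4lambda-pre} shows that $\mathrm{d}f(\lambda_0)=0$ does occur for polynomials whose $\calB_a$ vanishes at $\lambda_0$ --- precisely on the boundary of $\calS^2_{\lambda_0}$. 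Ruling this degeneration out on $\mathcal{H}^2$ itself requires the structural input $\deg\gcd(\calB_a)\le 1$ (equivalently $\deg f=2$), which is \cite[Thm~3.2]{CS}; you neither prove nor invoke it. The same fact is needed a second time for $\calS^2$: without knowing that no $a\in\mathcal{H}^2$ has $\gcd(\calB_a)$ vanishing at two distinct points of $\bbS^1$, or to second order at one point, the projection $\widetilde{\calS^2}\to\mathcal{H}^2$ could fail to be injective and $\bigcup_{\lambda_0}\calS^2_{\lambda_0}$ could fail to be a manifold; the $\bbS^1$-rotation action does not address this. So the proposal is a reasonable skeleton, but the step carrying all the mathematical content is missing.
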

\begin{proof}
Due to \cite[Thm~3.2]{CS}, $\deg(\gcd(\calB_a))\leq 1$ for all $a\in \mathcal{H}^2$ with equality for $a \in \calS^2$. Thus by \cite[Thm~5.5(i),(ii)]{CS}, $\calS^2_{\lambda_0} $ and $\calS^2$ are smooth submanifolds of $\mathcal{H}^2$ of dimension $2$ and $3$ respectively.
\end{proof}
\subsection{The Main Result}
Our main result is a global parameterisation of $\calS^2_1$ using Whitham flows. To say that a tangent vector in $T_{(a, b_1, b_2)}\calF$ or a vector field on \,$\calF$\, is Whitham means that it infinitesimally preserves the periods of the differentials $\Theta(b_1),\Theta(b_2) $. A Whitham flow on \,$\calF$\, is the flow of a Whitham vector field. An important step of the proof shows that the composition of the Whitham flow with a suitable map $\phi:\mathcal F\rightarrow\mathbb R^2$ is a diffeomorphism onto an open triangle. To define $\phi $, we note first that the reality condition  \eqref {eq:Thetareal} satisfied by the differentials $\Theta(b) $ ensures that they are purely imaginary on the fixed point set $\Sigma_\bbR$ of the anti-holomorphic involution $\rho$. In fact their restriction to $\Sigma_\bbR$ is exact and we may take the unique primitive $q$ which obeys $\sigma^\ast q=-q$.
\begin{lemma}\label{L:theta}
For any $a\in\mathcal{H}^2$ and $b\in\calB_a$ there exists a unique function $q:\Sigma_\bbR\to \mi\bbR$ on the fixed point set $\Sigma_\bbR$ of $\rho$ with $dq=\Theta(b)|_{\Sigma_\bbR}$ and $\sigma^*q=-q$.
\end{lemma}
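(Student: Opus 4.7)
The plan is to establish, in order: (i) that the restriction $\Theta(b)|_{\Sigma_\bbR}$ is a smooth, purely imaginary $1$-form on $\Sigma_\bbR$; (ii) that it is exact; (iii) existence of a primitive with the required symmetry and imaginary values; and (iv) uniqueness.

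For (i), recall that on $\Sigma_\bbR$ the anti-holomorphic involution $\rho$ acts as the identity. Applying the reality condition \eqref{eq:Thetareal} on $\Sigma_\bbR$ yields $\Theta(b)|_{\Sigma_\bbR}=-\overline{\Theta(b)|_{\Sigma_\bbR}}$, so this restriction is purely imaginary-valued. Smoothness holds because the positivity condition forces $a$ to have no zeros on $\bbS^1$, so $\nu\neq 0$ on $\Sigma_\bbR$, and since $\bbS^1$ avoids $\lambda=0,\infty$ the form has no poles there.

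For (ii), I would use the involution $\sigma$ together with the specific topology of $\Sigma_\bbR$ for $g=2$. Writing $\lambda=e^{i\psi}$ on $\bbS^1$, the positivity condition gives $\lambda\,a(\lambda)=e^{3i\psi}r(\psi)$ with $r>0$, and since $e^{3i\psi/2}$ changes sign as $\psi$ increases by $2\pi$, the two sheets above $\bbS^1$ are joined into a single connected circle $\Sigma_\bbR$, naturally parameterised by $\psi\in\bbR/4\pi\bbZ$. In this parameterisation $\sigma$ acts as the rotation $\psi\mapsto\psi+2\pi$, in particular preserving orientation. Combined with $\sigma^*\Theta=-\Theta$, this gives
$$\int_{\Sigma_\bbR}\Theta(b)=\int_{\Sigma_\bbR}\sigma^*\Theta(b)=-\int_{\Sigma_\bbR}\Theta(b),$$
so the period around $\Sigma_\bbR$ vanishes. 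Hence $\Theta(b)|_{\Sigma_\bbR}$ is exact on the one-dimensional manifold $\Sigma_\bbR$.

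For (iii) and (iv), fix a base point $P_0\in\Sigma_\bbR$ and let $q_0$ be the primitive with $q_0(P_0)=0$; because $\Theta(b)|_{\Sigma_\bbR}$ is purely imaginary, $q_0$ takes values in $\mi\bbR$. The function $\sigma^*q_0+q_0$ has zero differential and $\Sigma_\bbR$ is connected, so this function equals a constant $c\in\mi\bbR$. Then $q:=q_0-c/2$ is purely imaginary and satisfies $\sigma^*q=-q$. Uniqueness is immediate: if $q'$ is another such primitive then $d(q'-q)=0$ makes $q'-q$ a constant on the connected $\Sigma_\bbR$, while $\sigma^*(q'-q)=-(q'-q)$ forces that constant to be $0$.

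The main obstacle is the period-vanishing step (ii): the argument that $\int_{\Sigma_\bbR}\Theta(b)=0$ is not automatic from $b\in\calB_a$ (which only guarantees \emph{imaginary} periods) and genuinely relies on the $g=2$ topology of $\Sigma_\bbR$ and the fact that $\sigma|_{\Sigma_\bbR}$ is orientation-preserving. Once this is extracted from the positivity condition, the remainder of the proof is essentially a bookkeeping of constants.
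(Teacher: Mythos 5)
Your proof is correct and follows essentially the same route as the paper: the period of $\Theta(b)$ around the connected circle $\Sigma_\bbR$ vanishes because $\sigma$ preserves $\Sigma_\bbR$ together with its orientation while $\sigma^*\Theta=-\Theta$, and the primitive is then pinned down by the $\sigma$-antisymmetry (the paper writes this directly as $q(y)=\tfrac12\int_{\sigma(y)}^{y}\Theta(b)$ rather than subtracting a constant from a base-point primitive). Your explicit parameterisation $\psi\in\bbR/4\pi\bbZ$ simply spells out the facts the paper states without computation.
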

\begin{proof}
The fixed point set of $\rho$ is the preimage of $\bbS^1$ with respect to the map $\Sigma\to\mathbb{P}^1, (\lambda,\nu)\mapsto\lambda$. This fixed point set is  invariant under $\sigma$ and for even genus homeomorphic to $\bbS^1$. Due to the anti--symmetry of $\Theta(b)$ the integral of $\Theta(b)$ along $\Sigma_\bbR$ vanishes for all $b\in\calB_a$. For any $y\in\Sigma_\bbR$ this fixed point set decomposes into two disjoint paths from $\sigma(y)$ to $y$ and the integral of $\Theta(b)$ along both paths coincide. The condition $\sigma^*q=-q$ implies $q(y)=\tfrac{1}{2}(q(y)-q(\sigma(y)))=\frac{1}{2}\int_{\sigma(y)}^y\Theta(b)$. This defines the unique smooth function $q:\Sigma_\bbR\to \mi\bbR$ with the desired properties.
\end{proof}
The Sym point \,$\lambda=1$\, has two pre-images in  \,$\Sigma_a$\,; we write \,$y(a)$\, for the one with \,$\nu > 0$\,. By slight abuse of terminology we call also \,$y(a)$\, the Sym point of \,$\Sigma_a$\,. Then we may define the map $\phi$:
\begin{align}\label{def:phi}
\phi:\calF&\to\bbR^2,& 
(a,b_1,b_2) &\mapsto -\mi(q_1(y(a)),q_2(y(a))).
\end{align}
We shall normalise the Whitham flows in such a way that the Jacobi matrix of the composition of the flow with this $\phi$ is equal $\big(\begin{smallmatrix}1&0\\0&1\end{smallmatrix}\big)$. Hence this composition is just a translation by the value of $\phi$ at the initial point and the Whitham flow is commutative. Now we state our main result.
\begin{theorem}\label{Th:main}
There is a global smooth section $b:\calS^2_1\rightarrow\calF$ such that the composition $\phi\circ b = (\phi_1,\,\phi_2)$ is a diffeomorphism onto the open triangle $\triangle=\{\phi\in(0,\pi)^2\mid\phi_1+\phi_2<\pi\}$. Furthermore, $\Theta(b_\ind(a))$ is for $\ind=1,2$ the logarithmic derivative of a unique function $\mu_\ind$ on $\Sigma_a$ whose logarithm $\ln\mu_\ind$ has in a neighbourhood of $\lambda=0$ an anti--symmetric branch with respect to $\sigma$~\eqref{eq:sigma}. 
\end{theorem}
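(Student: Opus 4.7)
The plan is to construct the section $b$ and the diffeomorphism $\phi \circ b$ simultaneously, by integrating a commuting pair of Whitham vector fields on $\calF$ and using $\phi$ itself as flow-box coordinate. First I would define smooth Whitham vector fields $X_1, X_2$ on $\calF$ uniquely characterised by the normalisation $d\phi_j(X_\ind) = \delta_{j\ind}$. Existence and uniqueness rest on showing that the Whitham tangent space at each $(a, b_1, b_2) \in \calF$ projects isomorphically onto $\bbR^2$ via $d\phi$; equivalently, that the pairing between $\calB_a$ and the tangent directions of $\calS^2_1$ is non-degenerate. This follows from the two-dimensionality of $\calB_a$ established in Section~\ref{subsection differentials}, combined with the tangent-space description of $\calS^2_1$ in \cite[Thm~5.5]{CS} already invoked in the proof of Lemma~\ref{L:S2-smooth}.

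The second step is to verify $[X_1, X_2] = 0$: each $X_\ind$ is pinned down uniquely by the periods it preserves together with the normalisation $d\phi_j(X_\ind) = \delta_{j\ind}$, and both conditions are preserved under either flow. Commutativity promotes $(X_1, X_2)$ to a local $\bbR^2$-action $F_t$ on $\calF$ with $\phi \circ F_t = \phi + t$. I would fix a distinguished initial datum $(a^*, b_1^*, b_2^*) \in \calF$---a natural choice is a point on the Wente locus, where $a^*$ has real coefficients and $\phi^* := \phi(a^*, b_1^*, b_2^*)$ is explicitly known by the symmetry---and consider the orbit map $t \mapsto F_t(a^*, b_1^*, b_2^*)$. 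Projecting via $\pi : \calF \to \calS^2_1$ produces a smooth map $\Phi : U \to \calS^2_1$ defined on an open time domain $U \subset \bbR^2$, and the candidate section $b$ is obtained by inverting $\Phi$ along this orbit; local bijectivity of $\Phi$ is automatic from $d\phi(X_\ind) = e_\ind$ and the dimension match $\dim \calS^2_1 = 2$ of Lemma~\ref{L:S2-smooth}.

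The main obstacle is the global analysis: showing that $\Phi$ is globally injective, surjects onto $\calS^2_1$, and that $U = \triangle - \phi^*$. My plan here is an end-by-end study of $\partial\triangle$, matching the five boundary strata labelled $A$, $B/C$, $B/C$, $D$ and $E$ in Figure~\ref{fig:triangle} to the five degeneration types tabulated in Section~\ref{Se:limits a to e}. Along each stratum the spectral curve $\Sigma_a$ degenerates in a controlled way and the Whitham flow diverges; the blowup construction of Section~\ref{Se:blow up sym point} identifies the limit as a solution of another integrable system. From this I expect to extract: (i) the flow cannot be continued past $\partial\triangle$, so $U \subseteq \triangle - \phi^*$; (ii) every ideal boundary point of $\calS^2_1$ is realised by some such degeneration, hence $\Phi$ surjects onto $\calS^2_1$; and (iii) injectivity, following from the simple connectivity of $\triangle$ combined with the proper local-diffeomorphism property of $\Phi$ established through the boundary analysis.

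Finally, the claim about $\mu_\ind$ is a purely local analytic construction near the branch point over $\lambda = 0$. In the local uniformiser $z$ with $z^2 = \lambda$ the differential takes the form
$$ \Theta(b_\ind) = \frac{2\, b_\ind(z^2)}{z^2 \sqrt{a(z^2)}}\, dz, $$
so $\Theta(b_\ind)$ has a pole of order two and the involution $\sigma$ acts as $z \mapsto -z$. A primitive of $\Theta(b_\ind)$ on a punctured neighbourhood of this branch point exists and is unique up to an additive constant; imposing the anti-symmetry $\sigma^* \ln\mu_\ind = -\ln\mu_\ind$ fixes the constant uniquely, and exponentiating yields the desired function $\mu_\ind$ with the stated normalisation.
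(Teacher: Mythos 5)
Your overall architecture --- commuting Whitham fields normalised by $d\phi_j(X_\ind)=\delta_{j\ind}$, flow from a base point, boundary analysis via the five degeneration types, then injectivity/surjectivity --- matches the paper's. But the closing global arguments, which are the real content, have genuine gaps.

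First, your injectivity argument is backwards. A proper local diffeomorphism $\Phi:\Omega\to C$ onto a connected component $C$ of $\calS^2_1$ is a covering map; since $\Omega\cong\triangle$ is simply connected, $\Phi$ is then the \emph{universal} cover of $C$, which is injective only if $\pi_1(C)$ is trivial --- and the topology of $\calS^2_1$ (even its connectedness) is precisely what is not known a priori. Simple connectivity of the \emph{source} buys you nothing here. The paper instead exploits the discrete ambiguity of the construction: if $a_t=a_{\tilde t}$ then the labelled roots $(\alpha_1,\alpha_2)$ are equal or interchanged, so by the $D_4$--equivariance of Remark~\ref{choice of labelling and signs} the values $\phi(a_t,b_{1,t},b_{2,t})$ and $\phi(a_{\tilde t},b_{1,\tilde t},b_{2,\tilde t})$ are equal or have interchanged components; the interchanged case is killed on a neighbourhood of the diagonal because the diagonal maps \emph{diffeomorphically} onto the Wente family $\calW$ (Theorem~\ref{T:wente:wente} and Lemma~\ref{wente existence}), and then propagates. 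Your surjectivity claim has the same hole: ``every ideal boundary point is realised by a degeneration'' does not imply the orbit covers $\calS^2_1$ if $\calS^2_1$ were disconnected. The paper gets surjectivity (and, as a by-product, connectedness of $\calS^2_1$) from the fact that the diagonal of the flow started at \emph{any} $a_0$ sweeps out all of the connected set $\calW$, so every orbit meets $\calW$ and hence all orbits coincide. You mention the Wente locus only as a convenient base point; in the actual proof it is the load-bearing element, and Sections~\ref{Se:wente} and the explicit classification \eqref{eq:wente:wente:W} exist precisely to make these two steps work.

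Second, your treatment of $\mu_\ind$ as ``a purely local analytic construction near $\lambda=0$'' misses that $\mu_\ind$ must be single-valued on $\Sigma_a^\circ$, which forces all periods of $\Theta(b_\ind)$ to lie in $2\pi\mi\bbZ$. This is a global normalisation of the basis, encoded in \eqref{period map} ($\int_{A_k}\Theta(b_\ind)=0$, $\int_{B_k}\Theta(b_\ind)=2\pi\mi\delta_{k\ind}$); your normalisation $d\phi_j(X_\ind)=\delta_{j\ind}$ fixes the vector fields but not the integrality of the periods of the initial frame, and nothing in your construction guarantees it. Relatedly, defining the frame from a homology basis requires that the labelling of the roots and the orientations of the $A$--cycles extend consistently --- this is where Lemma~\ref{global cycles} (no colinear roots, so the $A$--cycles never collide) and Theorem~\ref{Th:global basis}~(i) (positivity of $\phi$ fixes the orientations) enter, and your proposal does not address this monodromy issue.
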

\begin{remark}
In \cite{MaOs}, simply periodic real solutions of the KdV equation are considered. In this situation one has a single differential which is the logarithmic derivative of a multiplier $\mu$. They prove that the sequence of values $h_k=|\ln\mu(c_k)|$ at the sequence of roots $(c_k)_{k\in\bbZ}$ of the differential are global parameters on the moduli space. We instead have a pair \,$(\Theta(b_1), \Theta(b_2))$\, of such differentials. Motivated by~\cite{MaOs} we shall parameterise $\calS^2_1$ by the values of the primitives $(q_1,q_2)$ at the Sym point $\lambda_0=1$. This parametrisation immediately identifies the cmc tori: the subspace of all $a\in\calS^2_1$ representing a spectral curve of a cmc torus in $\bbR^3$ with spectral genus 2 and Sym point at $\lambda=1$ is equal to $(\phi\circ b)^{-1}\big[\triangle\cap\big(\pi\bbQ\big)^2\big]$. Rotation by $\lambda_0\in\bbS^1$ maps $\calS_1^2$ onto $\calS_{\lambda_0}^2$, so an analogous statement holds for $\calS_{\lambda_0}^2$.
\end{remark}
\subsection{Outline of proof}
The proof of Theorem~\ref{Th:main} is contained in Section~\ref{Se:proof} and combines the results of all foregoing sections. We briefly outline the structure of the proof in logical order, before we describe the content of each section. Clearly the statement of the theorem can only be true for a special choice of the section $b$. For any $a\in\calS^2_1$ a basis $(b_1,b_2)$ of $\calB_a$ is uniquely characterised by the period map
\begin{align*}
H_1(\Sigma_a,\bbZ)&\to\bbR^2,&C&\mapsto\left(\int_C\Theta(b_1),\int_C\Theta(b_2)\right).
\end{align*}
Indeed, if any two $\Theta(b) $ have the same periods then an appropriate linear combination of them is a holomorphic differential with vanishing periods and hence the zero differential. On simply connected subsets of $\calS^2_1$ the bundle with the discrete fibres $H_1(\Sigma_a,\bbZ)$ has a natural trivialisation. It follows that on any simply-connected open neighbourhood of \,$a_0 \in \calS^2_1$\,, any basis $(b_{1,0},b_{2,0})$ of $\calB_{a_0}$ extends to a unique section of $\calF$ whose period map is constant with respect to this trivialisation. The global section $b$ whose existence is stated in the theorem is of this type. In the spectral genus two case all solutions of $\sinh$--Gordon are doubly--periodic. Consequently we have the additional structure of an apriori given $\mathrm{GL}(2,\bbZ)$--subbundle of triples $(a,b_1,b_2)\in\calF$, such that $(b_1,b_2)$ generates the lattice of all $b\in\calB_a$ whose differentials $\Theta(b)$ are logarithmic derivatives of holomorphic functions $\mu$ on $\Sigma^\circ$. This subbundle is preserved by the corresponding Whitham flow. Furthermore, the discreteness of the fibres of the subbundle leads to local smooth sections $b$ of $\calF$ which integrate the Whitham vector fields and shows that the Whitham flow defines a local action of the commutative Lie--group $\bbR^2$ on $\calF$. Here we use a special basis $(b_1,b_2)$ of the two--dimensional vector space $\calB_a$. Then pulling back the coordinate vector fields of $\phi$ we obtain two commuting Whitham vector fields.

We extend the combined flow $(t_1,t_2)=t\mapsto (a_t,b_{1,t},b_{2,t})$ of these two commuting Whitham vector fields to a maximal domain $\Omega\subset\bbR^2$. The boundary points of \,$\Omega$\, do not correspond to elements of \,$\calS^2_1$\,. We investigate the behaviour of the map $\phi$ at these boundary points. For this purpose we characterise the aforementioned period map in two steps. First it is specified up to finitely many choices which are permuted by the dihedral group $D_4$. In a second step we show that there always exists a choice such that both components of the following map are positive:
\begin{align}\label{def:image phi}
\Omega&\to\bbR^2,&t\mapsto\phi(a_t,b_{1,t},b_{2,t})\,.
\end{align}
Furthermore, at the boundary of $\Omega$ the flow has to leave $\calS^2_1$. However, since the spectral curves are determined by branchpoints of two--sheeted coverings over the compact space $\bbP^1$, any sequence of spectral curves has a convergent subsequence. By this compactness we define limits of the spectral curves, and perform the aforementioned combination of blowups of $\calF$ and of the corresponding spectral curves. Along these lines we extend the map $\phi$ to these limits. More precisely, we show that the image of $\partial\Omega$~\eqref{def:image phi} is equal to $\partial\triangle$ and $\Omega$ is equal to $\triangle-\phi(a_0,b_{1,0},b_{2,0})$.

In the final step we show that $t\mapsto a_t$ is bijective from $\Omega$ onto $\calS^2_1$. As a consequence the composition of the inverse of this map with $t\mapsto(a_t,b_{1,t},b_{2,t})$ defines a global section $b$ of $\calF$ with the properties specified in Theorem~\ref{Th:main}. The proof will be an outcome of the investigation of $\calW=\calS^2_1\cap\bbR^4[\lambda]$ which we call the Wente family. More precisely, by extending the Whitham flow along the diagonal
\begin{align}\label{def:diagonal}
\big\{(\varphi,\varphi)-\phi(a_0,b_{1,0},b_{2,0})\mid\varphi\in(0,\tfrac{\pi}{2})\big\}\subset\Omega
\end{align}
to $t_\infty=(\frac{\pi}{2},\frac{\pi}{2})-\phi(a_0,b_{1,0},b_{2,0})\in\partial\Omega$ with $a_{t_\infty}=(\lambda-1)^4$, the restriction of $t\mapsto a_t$ to~\eqref{def:diagonal} is shown to be a diffeomorphism onto $\calW$. Since this is true for any $a_0\in\calS^2_1$, the image of $t\mapsto a_t$ is $\calS^2_1$. Morever, by definition of $\phi$, any $t\ne t'\in\Omega$ with the same image in $\calS^2_1$ are mapped by~\eqref{def:image phi} onto elements of $\triangle$ with interchanged components. Now because $t\mapsto a_t$ is locally bijective, it follows that it is injective at first on a tubular neighbourhood of~\eqref{def:diagonal} and then on all of $\Omega$.
\subsection{Summary of sections} 
In Section~\ref{Se:local whitham} we construct the Whitham vector fields which are mapped by~\eqref{def:phi} onto the vector fields $\frac{\partial}{\partial\phi_1}$ and $\frac{\partial}{\partial\phi_2}$. Namely we show that the Whitham tangent vectors \,$(\dot{a}, \dot{b}_1, \dot{b}_2)$\, at a point \,$(a,b_1,b_2) \in \calF$\, are parameterised by the values of \,$\dot{q}_1$\, and \,$\dot{q}_2$\, at the Sym point. 

In Section~\ref{Se:closure} we determine in Proposition~\ref{P:boundary-S21-C4lambda} for any $\lambda_0\in\bbS^1$ the boundary of $\calS_{\lambda_0}^2$ as a subset of $P_\bbR^4$. It will turn out that this boundary decomposes into two one--dimensional smooth families of elliptic curves, each with an ordinary double point at $\lambda_0$, which are separated by the rational curve with a higher order double point at $\lambda_0$. The rotation by $\lambda_0$ induces a diffeomorphism from $\calS_1^2$ onto $\calS_{\lambda_0}^2$, so we concentrate on $\calS_1^2$. At the central element $a(\lambda)=(\lambda-1)^4$ of $\partial\calS^2_1$ all four roots of $a$ coalesce at the common root of $(b_1,b_2)$. Hence a blowup of $\calS^2_1$ at this point might separate all of them. This is carried out in Section~\ref {Se:blow up sym point} by rescaling the new parameter $\varkappa=\frac{\lambda-1}{\mi(\lambda+1)}$. The elements of these blowups describe blowups of the spectral curve at $\varkappa=0$. The rescaled marked points at $\varkappa=\pm\mi$ coalesce at two unbranched points over $\varkappa=\infty$. So the blowup transforms the integrable system of the $\sinh$--Gordon equation into the nonlinear Schr\"odinger (NLS) equation. Consequently in Theorem~\ref{th:blow-up} the exceptional fibre is identified with a natural analogue of $\calS^2_1$ inside the elliptic spectral curves of NLS.

In Section~\ref{Se:wente} we consider the Wente family $\mathcal{W}:=\calS_1^2\cap\bbR[\lambda]$. The two--dimensional space $\calB_a$ has for such spectral curves $a\in\mathcal{W}$ a another natural basis $b_1$ and $b_2$. In Theorem~\ref{T:wente:wente} we give a rather explicit description of this family. This Theorem is used in Section~\ref{Se:proof} to prove that the restriction of the map $t\mapsto a_t$ to~\eqref{def:diagonal} is a diffeomorphism onto $\calW$.

The explicit description of the Wente family has the useful consequence that the roots of any $a\in\calS_1^2$ are never colinear. Consequently we can define globally on the universal covering of $\calS_1^2$ the two cycles of the spectral curve, which project in the $\lambda$--plane to straight lines connecting the two roots $\alpha_1,\,\alpha_2 \in B(0,1)\setminus\{0\}$ with their reflected roots $\Bar{\alpha}_1^{-1}$ and $\Bar{\alpha}_2^{-1}$, respectively. Moreover, in Theorem~\eqref{Th:global basis}~(i) we show that the orientation of these cycles is uniquely determined by the condition that both components of $\phi$~\eqref{def:phi} are positive. This allows us to define in Section~\ref{Se:global Whitham} on every simply connected subset of $\calS^2_1$ a section $b$ of $\calF$ such that each $\Theta(b_\ind(a))$ is the logarithmic derivative of a global function. The Whitham flow preserves this section. It is extended to a maximal domain $\Omega$ which is defined as the union of the intervals of the one--dimensional maximal flows along all directions of the two--dimensional commutative Whitham flow. For any sequence $(t_n)_{n\in\mathbb{N}}$ in $\Omega$ we describe the corresponding sequence $(a_n)$ in $\calS^2_1$ by its roots  $(\alpha_{1,n},\alpha_{2,n})_{n\in\mathbb{N}}$ in $B(0,1)\setminus\{0\}$. These sequences of roots are bounded and have convergent subsequences whose limits are denoted by $(\alpha_1,\alpha_2)$. We observe that if any accumulation point of $(t_n)_{n\in\mathbb{N}}$ is an endpoint of one of the maximal intervals in $\Omega$, then at least one limit $\alpha_1$ and $\alpha_2$ does not belong to $B(0,1)\setminus\{0\}$. In this case Theorem~\ref{Th:global basis} gives a list of 5 possible sets, to which the limits $(\alpha_1,\alpha_2)$ might belong:
\begin{enumerate}
\item[(A)] $\{(1,1)\}$.
\item[(B)] $\bigr(\big(B(0,1)\setminus\{0\}\big)\times\{1\}\bigr) \;\cup\;\bigr(\{1\}\times\big(B(0,1)\setminus\{0\}\big)\bigr)$.
\item[(C)] $\bigr(\{0\}\times\partial B(0,1)\bigr) \;\cup\;\bigr(\partial B(0,1)\times\{0\}\bigr)$.
\item[(D)] $\bigr(\{0\}\times\big(B(0,1)\setminus\{0\}\big)\bigr)\;\cup\;\bigr(\big(B(0,1)\setminus\{0\}\big)\times\{0\}\bigr)$.
\item[(E)] $\{(0,0)\}$.
\end{enumerate}
In the subsequent Section~\ref{Se:limits a to e} we further restrict the sets in the cases~(B)--(D) and prove in all cases that the limits of $\phi$ take values in different parts of the boundary of the triangle $\triangle$.

Now the proof of the Theorem~\ref{Th:main} in Section~\ref{Se:proof} is a rather direct consequence: for any sequence in $\Omega$ the corresponding sequences $(\alpha_{1,n},\alpha_{2,n})_{n\in\mathbb{N}}$ have convergent subsequences. If both limits belong to $B(0,1)\setminus\{0\}$, then the corresponding sequence $a_n$ converges to the unique $a\in P_\bbR^4$ with the corresponding roots and the other two sequences converge to the values $b(a)$ of the section of $\calF$ constructed in the theorem. So in this case the limit belongs to $\calS_1^2$ and the sequence in $\Omega$ converges to the corresponding value of the map $\phi\circ b-\phi(a_0,b_{1,0},b_{2,0})$. In all other cases we prove in Section~\ref{Se:limits a to e} that the corresponding sequence in $\Omega$ has a convergent subsequence. Hence $\Omega$ is relatively compact. Furthermore, the boundary of $\Omega$ is a translated copy of the boundary $\partial\triangle$. This implies that $\Omega$ itself is a translated copy of the triangle $\triangle$, and the restriction of the map $\phi$ to any connected component of $\calS_1^2$ is a diffeomorphism onto $\triangle$. Every connected component contains a boundary point corresponding to the unique point in Theorem~\ref{th:blow-up} which belongs to the closure of the Wente family. Hence the uniqueness and connectedness of the Wente family imply that $\calS_1^2$ is connected.
\section{Preliminary results}
For the description of the boundary of $\calS^2$, the meromorphic function $f = b_2/b_1$, where $(b_1,b_2)$ is any basis of $\calB_a$, will turn out to be important. This function depends on the choice
of the basis $(b_1,b_2)$ only by a real M\"obius transformation in the range. Now suppose that $(b_1,b_2)$ is the normalised basis of $\calB_a$.
Then the corresponding function $f=\tfrac{b_2}{b_1}$ maps the unit circle onto the real line; the composition 
\begin{equation}
\label{eq:ftildedef}
\tilde{f} = \frac{1+\mi f}{1-\mi f} = \frac{\mi-f}{\mi+f} = \frac{b_1+\mi b_2}{b_1-\mi b_2}
\end{equation}
of $f$ with the Cayley transform maps the unit circle onto itself. We define the winding number $n(a)$ to be the winding number of $\tilde{f}|_{\bbS^1}$, see \cite[Section~3]{CS}; $n(a)$ depends only on $a$. We observe that the meromorphic differentials $\Theta(b_1) + \mi\Theta(b_2)$ and $\Theta(b_1) -  \mi\Theta(b_2)$ each have only one pole, namely at $\lambda=0$ respectively~at $\lambda=\infty$. 

As in \cite{CS}, we define for any $g \geq 0$ and $j\in \mathbb{Z}$
$$ V_j = \left\{ a\in \mathcal{H}^g \setminus \calS^g \mid n(a)=j \right\} \; . $$
As special cases of \cite[Thm~3.5]{CS} we then have
\begin{equation}\label{eq:Thm35CS}
\mathcal{H}^0 = V_1\;,\quad \mathcal{H}^1 = V_0 \quad\text{and}\quad 
\mathcal{H}^2 \setminus \calS^2 = V_1 \cup V_{-1} \; . 
\end{equation}
The sets $V_j$ mentioned in \eqref{eq:Thm35CS} are non--empty, open and in the case of the last equation, disjoint.

\begin{lemma}\label{L:intro:S2Vpm1}
We have $\calS^2 = \overline{V_1} \cap \overline{V_{-1}} \cap \mathcal{H}^2$.
\end{lemma}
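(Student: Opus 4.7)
The inclusion $\overline{V_1}\cap\overline{V_{-1}}\cap\mathcal{H}^2\subseteq\calS^2$ is immediate: if $a\in\mathcal{H}^2\setminus\calS^2=V_1\sqcup V_{-1}$, then by the openness and disjointness of $V_{\pm 1}$ noted after \eqref{eq:Thm35CS}, $a$ lies in exactly one of $V_{\pm1}$ together with a full neighbourhood, and hence cannot be in the closure of the other.

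For the reverse inclusion $\calS^2\subseteq\overline{V_1}\cap\overline{V_{-1}}$, my plan is to translate the winding number condition into a root-count and then to analyse how this count changes across $\calS^2$. Using the normalised basis $(b_1,b_2)$ of $\calB_a$, so $b_1(0)=1$ and $b_2(0)=\mi$, the reality condition forces the top coefficients of $b_1$ and $b_2$ to be $1$ and $-\mi$; hence $b_1+\mi b_2=2\lambda\,p_+(\lambda)$ and $b_1-\mi b_2=p_-(\lambda)$ for polynomials $p_\pm$ of degree two, and a direct computation yields the reflection identity
\begin{equation*}
    p_-(\lambda)\;=\;\lambda^2\,\overline{p_+(\bar\lambda^{-1})},
\end{equation*}
so the roots of $p_-$ are the images $\bar\mu^{-1}$ of the roots $\mu$ of $p_+$. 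Applying the argument principle to $\tilde f = \lambda p_+/p_-$ on $\bbS^1$ then gives
\begin{equation*}
    n(a)\;=\;1+r_+(a)-r_-(a)\;=\;2r_+(a)-1,
\end{equation*}
where $r_+(a)$ counts the roots of $p_+$ strictly inside $\bbS^1$. Since $n(a)\in\{1,-1\}$ off $\calS^2$ by \eqref{eq:Thm35CS}, the value $r_+=2$ is excluded and consequently $V_1=\{r_+=1\}$, $V_{-1}=\{r_+=0\}$.

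Next I would fix $a_0\in\calS^2$ and track the root of $p_+$ near the common zero. By \cite[Thm~3.2]{CS}, $\deg\gcd(\calB_{a_0})\leq 1$, so $\calB_{a_0}$ has a unique common zero $\lambda_0\in\bbS^1$; from this one checks that $\lambda_0$ is a simple root of $p_+(a_0)$ and that its second root $\mu^\ast$ lies strictly off $\bbS^1$ (another common zero, or a double root at $\lambda_0$, would force $\deg\gcd\geq 2$). The implicit function theorem then produces a smooth root-tracking map $\mu\colon U\to\bbC$ on a neighbourhood $U$ of $a_0$ with $\mu(a_0)=\lambda_0$ and $U\cap\calS^2=\mu^{-1}(\bbS^1)$. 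Assuming that $\mu$ is transverse to $\bbS^1$ at $a_0$, the two local components of $U\setminus\calS^2$ correspond to $\mu(a)$ lying inside versus outside $\bbS^1$, hence to $r_+(a)=1$ versus $r_+(a)=0$. Both $V_1$ and $V_{-1}$ therefore meet every neighbourhood of $a_0$, so $a_0\in\overline{V_1}\cap\overline{V_{-1}}$.

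The hard part will be verifying this transversality at every $a_0\in\calS^2$. If $d\mu(a_0)$ were tangent to $\bbS^1$, then the differential of $|\mu|^2-1$ would vanish at $a_0$, and the resulting behaviour of $\mu^{-1}(\bbS^1)$ near $a_0$ would either drop to codimension strictly greater than one, become singular, or develop a one-sided fold on which $r_+$ is constant; the first two possibilities contradict the smooth codimension-one statement of Lemma~\ref{L:S2-smooth}, and the third I would rule out using the explicit quadratic dependence of $p_+$ on the two-real-parameter family $\calB_a$ combined with the fact that $V_{-1}$ is globally non-empty. This closes the argument uniformly on $\calS^2$.
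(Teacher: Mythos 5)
The easy inclusion $\overline{V_1}\cap\overline{V_{-1}}\cap\mathcal{H}^2\subseteq\calS^2$ is correct as you state it, and your reformulation of the winding number as a root count ($n(a)=2r_+(a)-1$ off $\calS^2$, via the factorisation $b_1+\mi b_2=2\lambda p_+$, $b_1-\mi b_2=p_-$ with $p_-$ the reflection of $p_+$) is a sound and rather clean way to set up the reverse inclusion. But the reverse inclusion is not proved: everything is reduced to the claim that the tracked root $\mu(a)$ meets both sides of $\bbS^1$ in every neighbourhood of $a_0\in\calS^2$, and that claim --- which \emph{is} the content of the lemma --- is only asserted. Concretely: (1) the ``one-sided fold'' is not excluded by Lemma~\ref{L:S2-smooth}, because $\{|\mu|^2=1\}$ can be a smooth codimension-one submanifold even when $|\mu|\geq 1$ holds throughout a neighbourhood (the function $|\mu|^2-1$ may simply vanish to second order along it, as $x^2$ does along $\{x=0\}$); so smoothness of $\calS^2$ gives you nothing here. (2) The global non-emptiness of $V_{-1}$ carries no local information near a given $a_0$ and cannot rule out that some particular $a_0\in\calS^2$ has a neighbourhood contained in $\overline{V_1}$. (3) Your identification of the two local components with $r_+=1$ versus $r_+=0$ tacitly assumes the second root $\mu^\ast$ of $p_+$ lies \emph{outside} $\bbS^1$; if it lay inside, transversality would produce $r_+=2$, i.e.\ $n=3$, on one side, which is forbidden by \eqref{eq:Thm35CS} --- so either $\mu^\ast$ is always outside (not shown) or transversality fails at such points, and your argument does not disentangle these.

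For comparison, the paper does not attempt this from scratch: its proof is a one-line citation of Lemma~\ref{L:S2-smooth} together with \cite[Thm~5.5(iii), (C) $\Rightarrow$ (B)]{CS}, and the two-sidedness you are missing is precisely what that cited implication supplies (the same mechanism appears in the proof of Lemma~\ref{L:boundary-S21-C4lambda-pre} via the jump formula \eqref{eq:CS-Theorem35} for the winding number in terms of $\mathrm{sign}(\mathrm{d}\tilde f(\lambda_0))$). So your proposal is in effect an attempt to re-derive the cited result, and it stops exactly at its hardest step. To close the gap along your lines you would need an explicit deformation of $a$ inside $\mathcal{H}^2$ (e.g.\ a Whitham-type or direct perturbation of the branch points) that demonstrably pushes the common root of $\calB_a$ off $\bbS^1$ to either side, i.e.\ a computation showing $\mathrm{d}(|\mu|^2)(a_0)\neq 0$.
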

\begin{proof}
This statement follows from Lemma~\ref{L:S2-smooth} and \cite[Thm~5.5(iii), (C) $\Rightarrow$ (B)]{CS}.
\end{proof}
We make repeated use of the following two results below (cf.\ \cite[Lemma~3.4]{KPS}).
\begin{lemma}\label{L:adouble}
Let $(a_n)_{n\in \N},\,(b_n)_{n\in\N}$ be sequences of holomorphic functions on an open, connected and simply-connected set $\Omega \subset \C$, which converge uniformly on compact subsets of $\Omega$ to $a\not\equiv 0$ respectively $b$. If all $a_n$ have only simple roots and all periods of $b_n(x)/y\,dx$ on the smooth curves $\Sigma_n=\{ (x,\,y) \in \Omega \times \C \mid y^2 = a_n (x) \}$ are purely imaginary, then the pull-back of $b(x)/y\,dx$ to the normalisation of  $\Sigma=\{ (x,\,y) \in \Omega \times \C \mid y^2 = a (x) \}$ is holomorphic.
\end{lemma}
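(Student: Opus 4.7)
The plan is to prove that the pulled-back differential $\omega := b(x)/y\,dx$ on the normalisation $\tilde\Sigma$ extends holomorphically across every potential pole. Potential poles occur only above multiple roots of $a$: away from such points, $b_n(x)/y_n\,dx$ converges locally uniformly to $\omega$ on small trivialisations of the double cover (since $y_n\to y$ is bounded away from $0$ there), so $\omega$ is automatically holomorphic away from the preimages of the discrete set $\{a=0,\ a'=0\}$. Fix therefore a multiple root $\alpha\in\Omega$ of $a$; I first treat the generic case of a double root. Write $a(x)=(x-\alpha)^2 h(x)$ with $h(\alpha)\neq 0$ and choose a disc $D\subset\Omega$ around $\alpha$ containing no other root of $a$. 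By Hurwitz's theorem, for all large $n$ the function $a_n$ has exactly two simple roots $\alpha_n,\beta_n\in D$, and $\alpha_n,\beta_n\to\alpha$. The two preimages of $\alpha$ in $\tilde\Sigma$ carry residues $\pm b(\alpha)/\sqrt{h(\alpha)}$ for $\omega$, so holomorphy at these points reduces to the scalar condition $b(\alpha)=0$, which I extract by combining two different cycle integrals on $\Sigma_n$.

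The first cycle is the vanishing cycle $\gamma_n$ around the branch cut $[\alpha_n,\beta_n]$. Substituting $x=\tfrac12(\alpha_n+\beta_n)+\tfrac12(\alpha_n-\beta_n)\cos\theta$ and invoking dominated convergence yields
\[
\int_{\gamma_n}\frac{b_n(x)}{y}\,dx \;\longrightarrow\; -2\pi\mi\,\frac{b(\alpha)}{\sqrt{h(\alpha)}}.
\]
Since the set of purely imaginary numbers is closed in $\C$, the limit lies in $\mi\R$ and hence $b(\alpha)/\sqrt{h(\alpha)}\in\R$.

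The second cycle is a partner cycle $\delta_n$ joining $[\alpha_n,\beta_n]$ to some other branch cut of $a_n$; such a cycle exists in the settings in which the lemma is applied, where $a_n$ is a polynomial of fixed positive degree with additional simple roots outside $D$. Isolating the leading singular contribution near $\alpha$ and using the elementary estimate $\int dx/\sqrt{(x-\alpha_n)(x-\beta_n)}=\log\bigl(1/|\alpha_n-\beta_n|\bigr)+O(1)$ on the portion of the path near $D$, one obtains
\[
\int_{\delta_n}\frac{b_n(x)}{y}\,dx \;=\; \pm\frac{b(\alpha)}{\sqrt{h(\alpha)}}\,\log\frac{1}{|\alpha_n-\beta_n|} + O(1).
\]
Each of these periods being purely imaginary while $\log(1/|\alpha_n-\beta_n|)\to+\infty$ forces $\RE\bigl(b(\alpha)/\sqrt{h(\alpha)}\bigr)=0$, i.e.\ $b(\alpha)/\sqrt{h(\alpha)}\in\mi\R$. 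Combining with the first step gives $b(\alpha)/\sqrt{h(\alpha)}\in\R\cap\mi\R=\{0\}$ and therefore $b(\alpha)=0$, so $\omega$ is holomorphic at both preimages of $\alpha$ in $\tilde\Sigma$.

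The principal obstacle I anticipate is the careful control of the $O(1)$ remainder in the partner-cycle analysis, specifically ensuring that the bounded terms cannot conspire to cancel the logarithmic divergence as $n$ varies. A secondary obstacle is the treatment of roots of $a$ of multiplicity $k\geq 3$, where one needs $b$ and several of its derivatives to vanish at $\alpha$; these additional conditions should follow inductively by running the same vanishing/partner cycle pair on each subset of the $k$ colliding simple roots of $a_n$, each subset contributing one more scalar vanishing condition on $b$ at $\alpha$.
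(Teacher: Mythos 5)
Your reduction to the vanishing of residues and your analysis of the collapsing cycle are sound, and your route is genuinely different from the paper's: the paper integrates $\RE(b_n(x)/y\,dx)$ to harmonic functions $h_n$ normalised to vanish at the roots of $a_n$, and then combines the maximum principle with the removable-singularity theorem for bounded harmonic functions, with no explicit period asymptotics. However, the partner-cycle step is a genuine gap, not a technicality. First, such a cycle need not exist under the stated hypotheses: if $a_n$ has exactly the two roots $\alpha_n,\beta_n$ in $\Omega$, then $\Sigma_n$ is an annulus and the vanishing cycle generates all of $H_1(\Sigma_n)$, so your first step only yields $b(\alpha)/\sqrt{h(\alpha)}\in\R$ — and the statement actually fails there: for $\Omega=B(0,2)$, $a_n(x)=x^2-n^{-2}$, $b_n\equiv 1$, every period of $dx/y$ is an integer multiple of $2\pi\mi$, yet the limit form $\pm\,dx/x$ has a pole on the normalisation of $y^2=x^2$. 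Secondly, even when a partner cycle exists, its far end may itself be a collapsing cut (as happens for $(\varkappa^2+1)^2$ in Lemma~\ref{lemma 1} and for $(\lambda-1)^4$); then that end contributes its own logarithm, the coefficient of $\log(1/|\alpha_n-\beta_n|)$ in your displayed asymptotic becomes a \emph{sum} of two residues, and you obtain only a linear relation between them rather than the vanishing of either. The argument therefore needs an anchor — a cut of $a_n$ whose limit is a regular point of the limit form — and ``purely imaginary periods'' alone does not supply one.

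In fairness, the paper's proof hides the same requirement: the constants of integration of $h_n$ are pinned at the roots of $a_n$, and the asserted convergence $h_n\to h$ (equivalently, the uniform boundedness of $h_n$ on $\Sigma_n\cap(\partial B(x_0,\varepsilon)\times\C)$) holds only if some root of $a_n$ stays away from the higher-order roots of $a$; in the example above one computes $h_n=\log(n\,|x+y|)\to\infty$. In the paper's actual applications the anchor is always present — the branch point $\lambda=0$ of $\nu^2=\lambda a(\lambda)$ is simple for every $a\in\overline{\calH^2}$ since $|a(0)|=1$, and in Lemma~\ref{lemma 1} the relevant $A$-periods actually vanish rather than merely lying in $\mi\R$, which kills the residue outright. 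Finally, your sketch for multiplicity $k\ge 3$ is too optimistic: holomorphy on the normalisation requires $b$ to vanish to order $\lfloor k/2\rfloor$ at the root, and extracting the vanishing of derivatives of $b$ from period integrals requires controlling sub-leading terms of the asymptotic expansions, not merely rerunning the leading-order vanishing/partner argument on subsets of the colliding roots.
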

\begin{proof}
Since the limit functions $a,\,b$ are holomorphic, then $b(x)/y\,dx$ is holomorphic at all $(x,\,y)$ where $a$ does not vanish. Further, the same holds at points where $a$ has a simple root, since the curve $\Sigma$ is smooth there. For any higher order root $x_0$ of $a$ choose a compact ball $\overline{B(x_0,\,\varepsilon)} \subset \Omega$. For sufficiently small $\varepsilon >0$ the function $a$ has no other roots in  $\overline{B(x_0,\,\varepsilon)}$.

For any $n \in \N$ the integral of $b_n(x)/y\,dx$ along a path between two roots of $a_n$ on the curve $\Sigma_n$ is imaginary as twice this integral is a period. Hence there exists a harmonic function $h_n$ on the curve $\Sigma_n$ with $dh_n = \RE\,(  b_n(x)/y\,dx )$. If in addition we assume that $h_n$ vanishes at all the roots of $a_n$, then this fixes the constants of integration and uniquely determines $h_n$.

On $\Sigma_n\cap\big(B(x_0,\,\varepsilon) \setminus \{x_0\} \times \C\big)$ the sequence $(h_n)_{n\in\N}$ of harmonic functions converges to a harmonic function $h$ on the smooth curve $\Sigma\cap\big(B(x_0,\,\varepsilon) \setminus \{x_0\} \times \C\big)$. By the maximum modulus principle each $h_n$ and $h$ are bounded by their values on $\Sigma_n\cap \big(\partial B(x_0,\,\varepsilon) \times \C\big)$ respectively $\Sigma\cap\big(\partial B(x_0,\,\varepsilon)\times\C\big)$. Hence $h$ extends to a harmonic function on $\Sigma\cap\big(B(x_0,\,\varepsilon) \times \C\big)$ by the theorem on removable singularities for bounded harmonic functions (cf.~ Theorem 2.3 in \cite{ABR}) This implies that that the imaginary part of the pull-back of $b(x)/y\,dx$ to the normalisation of $\Sigma$ is bounded. By complex linearity the same is true for the whole 1-form $b(x)/y\,dx$. This finishes the proof.
\end{proof}
\begin{corollary}\label{C:adouble}
The map $\C \times \calH^2 \to P^3_{\R}$ which maps $(z,\,a)$ to the unique $b \in \calB_a$ with $b(0) = z$ has a continuous extension to $\C \times \overline{\calH^2}$, where the closure of $\calH^2$ is taken in $P^4_\R$.
\end{corollary}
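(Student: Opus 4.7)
\emph{Plan.} The extension, if it exists, must send $(z,a) \in \C \times \overline{\calH^2}$ to the unique $b \in P^3_\R$ with $b(0) = z$ for which the pullback of $\Theta(b) = b(\lambda)/\nu \cdot d\lambda/\lambda$ to the normalisation $\widehat\Sigma_a$ of $\{\nu^2 = \lambda a(\lambda)\}$ is holomorphic away from the branch points above $\lambda \in \{0,\infty\}$ and has purely imaginary periods. On $\calH^2$ this agrees with the given map via $\calB_a$, so it suffices to verify that the candidate is defined at every $a \in \overline{\calH^2}$ and is sequentially continuous.

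The decisive algebraic fact is that the $\R$-linear evaluation $b \mapsto b(0)$ is injective on $\widetilde{\calB}_a := \{b \in P^3_\R \mid \Theta(b) \text{ is of the above type}\}$ for every $a \in \overline{\calH^2}$. Indeed, if $b \in \widetilde{\calB}_a$ and $b(0) = 0$, the reality condition on $P^3_\R$ (which relates $b_k$ to $\overline{b_{3-k}}$) forces $b(\lambda) = \beta\lambda + \overline{\beta}\lambda^2$ for some $\beta \in \C$, and then $\Theta(b) = (\beta + \overline{\beta}\lambda)\,d\lambda/\nu$ is automatically holomorphic at the branch points above $\{0,\infty\}$. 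Hence $\Theta(b)$ is a global holomorphic $\sigma$-antisymmetric $\rho$-real differential on $\widehat\Sigma_a$ with purely imaginary periods. Going through the degeneration patterns of the four roots of $a$ (cases A--E of the overview, together with the degenerations in which the leading coefficient of $a$ tends to zero) one finds in every case that $\widehat\Sigma_a$ has genus at most one; the absence-of-pole conditions at the singular points of $\Sigma_a$, combined with the purely-imaginary-period condition (using that the ratio of two periods of a holomorphic form on an elliptic curve lies in the upper half plane), form a non-degenerate real linear system on $(\RE\beta,\IM\beta)$, forcing $\beta = 0$.

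Sequential continuity is then obtained from Lemma~\ref{L:adouble}. Let $(z_n, a_n) \to (z, a)$ with $a_n \in \calH^2$, and set $b_n \in \calB_{a_n}$ with $b_n(0) = z_n$. If $(b_n)$ were unbounded in the finite-dimensional space $P^3_\R$, the rescalings $\tilde b_n = b_n/\|b_n\|$ would subconverge to some $\tilde b$ of unit norm with $\tilde b(0) = 0$; applying Lemma~\ref{L:adouble} on a simply-connected neighbourhood of each multiple root $\lambda_0 \neq 0$ of $\lambda a(\lambda)$ (with $\tilde b_n(\lambda)/\lambda$, holomorphic there since $0$ is excluded, in the role of the lemma's numerator and with $\lambda a_n(\lambda)$ in place of its $a_n$), together with $\rho$-reality at the reflected point $\bar{\lambda}_0^{-1}$, would show that $\Theta(\tilde b)$ pulls back holomorphically to $\widehat\Sigma_a$ away from $\{0,\infty\}$ and that its periods, being limits of the purely imaginary periods of $\Theta(\tilde b_n)$, are themselves purely imaginary. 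This would place $\tilde b$ in $\widetilde{\calB}_a$, contradicting the injectivity above. With boundedness in hand, every subsequential limit of $(b_n)$ lies in $\widetilde{\calB}_a$ by the same application of the lemma and has value $z$ at $0$, so it equals the unique candidate $b$; hence the full sequence converges to it. The main obstacle is the injectivity claim at the boundary: one must couple the $\rho$-reality of $\Theta(b)$ with the non-degeneracy of the period matrix of $\widehat\Sigma_a$ (or, when $\widehat\Sigma_a$ is rational or reducible, with the residue conditions at its singular points) to exclude non-trivial solutions in every degeneration pattern allowed by the closure in $P^4_\R$.
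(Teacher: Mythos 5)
Your proposal is correct and follows essentially the same route as the paper: characterise the extension by holomorphy of the pulled-back differential on the normalisation together with purely imaginary periods, prove boundedness of $(b_n)$ by rescaling and Lemma~\ref{L:adouble}, and conclude by uniqueness of the accumulation point. The one place you make extra work for yourself is the injectivity of $b \mapsto b(0)$ at boundary points: rather than checking a linear system on $(\RE\beta,\IM\beta)$ case by case over the degeneration patterns, the paper notes that any $b$ in the kernel yields a holomorphic differential on the normalisation with purely imaginary periods, which Riemann's bilinear relations exclude uniformly in every genus (and the degeneration of the leading coefficient you worry about cannot occur, since $|a(0)|=1$ persists in the closure).
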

\begin{proof}
For any $z \in \C$ there exists $b \in P^3_\R$ with $b(0) = z$, which is unique up to addition with the product of $\lambda$ with an element of $P^1_\R$. Hence for $a \in \calH^2$ the 1-form $b(\lambda)/(\lambda \nu)\,d\lambda$ is unique up to addition with a holomorphic 1-form. The reality condition $\varrho^* \Theta(b) = - \Theta(b)$ implies that 
\[
\int_\gamma \Theta(b) \in \mi \R
\]
for all cycles $\gamma$ which are homologous to $\varrho \circ \gamma$. The $A$-cycles in figure \ref{fig:symmetric cycles}
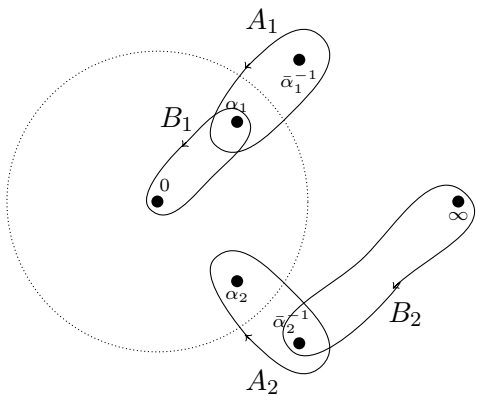
\begin{figure}
\centering
	\begin{tikzpicture} 
	\begin{scope}[scale=2.0]
	\draw[densely dotted] (0,0) circle[radius=1.0];
	\fill (0,0) circle[radius=0.04];
	\fill (0.530, 0.530) circle[radius=0.04];   
	\fill (0.530, -0.530) circle[radius=0.04];
	\fill (0.943, 0.943) circle[radius=0.04];   
	\fill (0.943, -0.943) circle[radius=0.04];
	\fill (2,0) circle[radius=0.04];
	\draw [->] (0.587, 0.887) to [out=45,in=135] (1.093,1.093) to [out=315,in=45] (0.887,0.587) to [out=225,in=315] (0.380,0.380) to [out=135,in=45] (0.582,0.887);
	\draw [<-] (0.587, -0.887) to [out=315,in=225] (1.093,-1.093) to [out=45,in=315] (0.887,-0.587) to [out=135,in=45] (0.380,-0.380) to [out=225,in=315] (0.582,-0.887);
	\draw[->] (0.165,0.365) to [out=45,in=135] (0.580,0.580) to [out=315,in=45] (0.365,0.165) to [out=225,in=315] (-0.05,-0.05) to [out=135,in=45] (0.165,0.365);
	\draw[->] (1.566,-0.578) to [out=45,in=315] (2.053,0.047) to [out=135,in=45] (1.378,-0.366) to [out=225,in=135] (0.890,-0.990) to [out=315,in=45] (1.566,-0.578);
	\draw (0.05,0) node[anchor=south] {\tiny $0$};
	\draw (0.530, 0.530) node[anchor=south] {\tiny $\alpha_1$};
	\draw (0.530, -0.530) node[anchor=north] {\tiny $\alpha_2$};
	\draw (0.943, 0.943) node[anchor=north] {\tiny $\bar{\alpha}_1^{-1}$};
	\draw (0.893, -0.943) node[anchor=south] {\tiny $\bar{\alpha}_2^{-1}$};
	\draw (2,0) node[anchor=north] {\tiny $\infty$};
	\draw (0.7,1.2) node {$A_1$};
	\draw (0.7,-1.2) node {$A_2$};
	\draw (0.13,0.55) node {$B_1$};
	\draw (1.65,-0.75) node {$B_2$};
	\end{scope} 
	\end{tikzpicture}
\caption{Symmetric and anti-symmetric cycles} \label{fig:symmetric cycles}
\end{figure}
are anti-symmetric with respect to $\varrho$, and the $B$-cycles are symmetric up to additive $A$-cycles. Thus $b \in P^3_\R$ belongs to $\calB_a$ if and only if $\int_{A_1} \Theta(b) = \int_{A_2} \Theta(b) = 0$. By Riemann's Bilinear Relations the map $P^1_\R \to \R^2$ defined by 
\[
	b \mapsto \left( \int_{A_1} \Theta(\lambda b) ,\, \int_{A_2} \Theta(\lambda b) \right)
\]
is an isomorphism. Hence for any $(z,\,a) \in \C \times \calH^2$ there exists a unique $b \in \calB_a$ with $b(0) = z$. Moreover, the map $(z,\,a) \mapsto b$ with $b(0)=z$ is continuous, since the following map is continuous: 
\[
	\calH^2 \times P^3_\R \to \R^2 \quad
	(a,\,b) \mapsto \left(  \int_{A_1} \Theta(b), \, \int_{A_2} \Theta(b) \right).
\]
Now let $(z_n,\,a_n)_{n \in \N} \subset \C \times \calH^2$ be a sequence which converges to $(z,\,a) \in \C \times P^4_\R$, and let $(b_n)_{n\in\N}$ be the sequence of images under the map $(z_n,\,a_n) \mapsto b_n$. By Lemma~\ref{L:adouble}, any accumulation point  $\tilde{b}$ of the sequence $(b_n/\| b_n \|)_{n\in \N}$ defines a 1-form $\tilde{b}/(\lambda \, \nu)\,d\lambda$ on the normalisation of $\{ (\lambda,\,\nu) \in \C^* \times \C \mid \nu^2 = \lambda \, a(\lambda) \}$, which is holomorphic at the higher order roots of $a$. We will show that $(\| b_n \|)_{n\in \N}$ is bounded. If we assume to the contrary that $(\| b_n \|)_{n\in \N}$ is not bounded, then there exists a divergent subsequence $(\| b_{n_k} \|)$ such that $z_n / \| b_n \|$ tends to $0$. Then $\tilde{b} (0) = 0$ and $\tilde{b}/(\lambda \, \nu)\,d\lambda$ is holomorphic on the normalisation with purely imaginary periods. By Riemann's Bilinear Relations it follows that $\tilde{b} \equiv 0$, contradicting that $\tilde{b}$ is an accumulation point of a sequence of polynomials of norm equal to $1$. Hence $(\| b_n \|)_{n\in \N}$ is bounded. This argument also shows that there exists a unique accumulation point $b$ of $(b_n)_{n\in\N}$ such that $b(\lambda)/(\lambda \, \nu)\,d\lambda$ is holomorphic on the normalisation at all higher order roots of $a$ with $b(0) = z$. The sequence $(b_n)_{n\in\N}$ converges to $b$ since it is bounded and $b$ is the unique accumulation point. This proves that the map $(z,\,a) \mapsto b$ has a continuous extension.
\end{proof}
\section{The local Whitham flow}\label{Se:local whitham}
We begin with a general construction of the Whitham flows on $\calS^g_1$. More precisely, we construct for any $(a,b_1,b_2)\in\calF$ and any pair of values of $(\dot{q}_1,\dot{q}_2)\in(\mi\bbR)^2$ a tangent vector $(\dot{a},\dot{b}_{1},\dot{b}_2)\in T_{(a,b_1,b_2)}\calF$.
\begin{lemma}\label{L:para:whitham}
Let $g\geq 1$, $a\in \calS^g_{1}$ with $\deg \gcd(\calB_a)=1$ and $(b_1,\,b_2)$ a basis of $\calB_a$. Then the Whitham equations have for given values $(c_1(1),\,c_2(1)) \in \bbR^2$
\begin{align}\label{eq:para:whitham:1}
2\,a\,\dot{b}_\ind - \dot{a}\,b_\ind & = 2\mi\,\lambda\,a\,c_\ind' - \mi \,c_\ind\,(a+\lambda a') \quad\text{for $\ind=1,2$}\\		
\label{eq:para:whitham:2}
b_2\,c_1 - b_1\,c_2 & = Q\,a
\end{align}
a unique solution $(c_1,c_2,Q,\dot{a},\dot{b}_1,\dot{b}_2) \in P^{g+1}_\bbR \times P^{g+1}_\bbR \times P^2_\bbR \times T_a \calS_1^g \times P^{g+1}_\bbR \times P^{g+1}_\bbR $.

\noindent For $\ind=1,2$ the polynomials $c_\ind\in P^{g+1}_\bbR$ describe the deformations of the anti--derivative of $\Theta(b_\ind)$:
\begin{align}\label{eq:ck}
dq_\ind&=\Theta(b_\ind),&\dot{q}_\ind&=\mi c_\ind/\nu\,.
\end{align}
\end{lemma}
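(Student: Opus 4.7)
The plan is to reduce the statement to a bijectivity claim for a linear map on the pair $(c_1,c_2)\in (P^{g+1}_\bbR)^2$ alone, after first showing that $(Q,\dot a,\dot b_1,\dot b_2)$ are uniquely reconstructed from $(c_1,c_2)$. I would first verify that \eqref{eq:para:whitham:1} is the polynomial form of the identity $d(\mi c_\ind/\nu)=\dot\Theta(b_\ind)$ on $\Sigma_a$, namely the content of \eqref{eq:ck}, by a direct computation using $\nu^2=\lambda a$ and $\dot\nu=\lambda\dot a/(2\nu)$. Given $(c_1,c_2)\in V:=\{(c_1,c_2)\in (P^{g+1}_\bbR)^2 : a\mid b_2 c_1-b_1 c_2\}$, the quotient $Q:=(b_2 c_1-b_1 c_2)/a$ automatically lies in $P^2_\bbR$. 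Evaluating \eqref{eq:para:whitham:1} at each root $\alpha_j$ of $a$ gives $\dot a(\alpha_j)\,b_\ind(\alpha_j)=\mi\alpha_j a'(\alpha_j)\,c_\ind(\alpha_j)$; since positivity forces $a(1)\neq 0$ so $\alpha_j\neq 1$, and $\gcd(b_1,b_2)=\lambda-1$ guarantees some $b_\ind(\alpha_j)\neq 0$, the value $\dot a(\alpha_j)$ is determined, with the consistency between $\ind=1,2$ being precisely \eqref{eq:para:whitham:2}. The evaluation map $P^{2g}_\bbR\to\bbC^{2g}$ at the $\alpha_j$ has kernel $\bbR\cdot a$, so $\dot a$ is determined modulo $\bbR\cdot a$; the normalization condition $\RE(\dot a_{2g}\bar a_{2g})=0$ defining $T_a\calH^g$ then fixes the remaining parameter, and $\dot b_\ind=(\dot a b_\ind+2\mi\lambda a c_\ind'-\mi c_\ind(a+\lambda a'))/(2a)\in P^{g+1}_\bbR$ follows.

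Next, I would encode the condition $\dot a\in T_a\calS^g_1$ (rather than just $T_a\calH^g$) as two further linear constraints on $(c_1,c_2)$. Since $\calS^g_1\subset\calH^g$ is cut out by $b_\ind(1)=0$, the tangential condition is $\dot b_\ind(1)=0$. Evaluating \eqref{eq:para:whitham:1} at $\lambda=1$ using $b_\ind(1)=0$ yields
\[
\dot b_\ind(1)=\mi\,c_\ind'(1)-\mi\,\frac{a(1)+a'(1)}{2\,a(1)}\,c_\ind(1).
\]
Differentiating the reality condition at $\lambda=1$ gives $\RE a'(1)=g\,a(1)$ for $a\in P^{2g}_\bbR$, and similarly $\RE c_\ind'(1)=\tfrac{g+1}{2}c_\ind(1)$ for $c_\ind\in P^{g+1}_\bbR$, so the real part of $\dot b_\ind(1)=0$ is automatic and only $\IM c_\ind'(1)=\IM a'(1)/(2a(1))\cdot c_\ind(1)$ is a genuine real constraint. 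Together with the two prescriptions $c_\ind(1)=\zeta_\ind$ for $\ind=1,2$, this produces a linear map $\Phi:V\to\bbR^4$, and the lemma reduces to showing $\Phi$ is a bijection.

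Finally, I would establish $\dim V\geq 4$ by counting: inside $(P^{g+1}_\bbR)^2$ of real dimension $2g+4$, the requirement that $b_2 c_1-b_1 c_2$ vanish at the $2g$ roots of $a$ gives (after reality pairing) at most $2g$ real constraints. The injectivity of $\Phi$ then forces $\dim V=4$ and $\Phi$ bijective. This injectivity is the main obstacle and is where the hypothesis $\deg\gcd(\calB_a)=1$ enters decisively: if $c_\ind(1)=0$ and $\dot b_\ind(1)=0$, then the automatic $\RE c_\ind'(1)=0$ combined with $\IM c_\ind'(1)=0$ give $c_\ind'(1)=0$, so $c_\ind=(\lambda-1)^2 d_\ind$ with $d_\ind\in P^{g-1}_\bbR$. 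Writing $b_\ind=(\lambda-1)\tilde b_\ind$ with $\gcd(\tilde b_1,\tilde b_2)=1$, the equation $(\lambda-1)^2(b_2 d_1-b_1 d_2)=Q\,a$ combined with $a(1)\neq 0$ forces $Q=c(\lambda-1)^2$ for some $c\in\bbR$; then $(\lambda-1)(\tilde b_2 d_1-\tilde b_1 d_2)=c\,a$ evaluated at $\lambda=1$ gives $c=0$, so $\tilde b_2 d_1=\tilde b_1 d_2$. Coprimality of the $\tilde b_\ind$ together with the degree bounds $\deg d_\ind\leq g-1<g\leq\max(\deg\tilde b_1,\deg\tilde b_2)$ (the latter because reality of $b_\ind\in P^{g+1}_\bbR$ would otherwise force $\lambda\mid b_1,b_2$, contradicting $\gcd(b_1,b_2)=\lambda-1$) then forces $d_\ind=0$, hence $c_\ind=0$. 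The delicate point is this injectivity argument, particularly tracking the reality conditions across the factorizations $b_\ind=(\lambda-1)\tilde b_\ind$ and $c_\ind=(\lambda-1)^2 d_\ind$.
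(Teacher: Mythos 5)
Your argument is correct, but it takes a genuinely different route from the paper's proof. The paper is constructive: it first shows that the prescribed pair $(c_1(1),c_2(1))$ completely determines $Q\in\bbC^2[\lambda]$, via $Q(1)=0$ together with formulas for $a(1)Q'(1)$ and $a(1)Q''(1)$ obtained by differentiating \eqref{eq:para:whitham:2} at $\lambda=1$; it then reconstructs $(c_1,c_2)$ from $Q$ by interpolation at the roots of $b_1/(\lambda-1)$ and $b_2/(\lambda-1)$ and a polynomial long division, leaving a two--parameter ambiguity $p\cdot(\tfrac{b_1}{\lambda-1},\tfrac{b_2}{\lambda-1})$ with $p\in\bbC^1[\lambda]$ that is fixed by the value and derivative conditions at $\lambda=1$; reality of $(c_1,c_2,Q)$ is deduced only a posteriori, from an invariance of the defining equations. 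You instead work inside the real forms from the start, set $V=\{(c_1,c_2)\in(P^{g+1}_\bbR)^2 \mid a\text{ divides } b_2c_1-b_1c_2\}$, observe $\dim_\bbR V\ge(2g+4)-2g=4$, and prove injectivity of the linear map recording $(c_1(1),c_2(1))$ together with the two genuine (imaginary--part) constraints coming from $\dot b_\ind(1)=0$; your coprimality--and--degree argument with $c_\ind=(\lambda-1)^2d_\ind$, $b_\ind=(\lambda-1)\tilde b_\ind$, $\tilde b_2 d_1=\tilde b_1 d_2$ is where the hypothesis $\deg\gcd(\calB_a)=1$ enters, playing the role that $b_k'(1)\neq 0$ plays in the paper. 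Your route buys a cleaner uniqueness argument and makes reality of $(c_1,c_2,Q)$ automatic; the paper's route buys an explicit construction of the solution and does not need your (correct but slightly delicate) observation that $\RE c_\ind'(1)=\tfrac{g+1}{2}\,c_\ind(1)$ holds identically on $P^{g+1}_\bbR$, so that only the imaginary part of $\dot b_\ind(1)=0$ is a real constraint. When writing this up, do include the two routine verifications you currently elide: first, that the prescribed values $\dot a(\alpha_j)=\mi\,\alpha_j a'(\alpha_j)c_\ind(\alpha_j)/b_\ind(\alpha_j)$ are compatible with the reality condition $\lambda^{2g}\overline{p(\bar\lambda^{-1})}=p(\lambda)$, so that an interpolating $\dot a\in P^{2g}_\bbR$ actually exists (the evaluation map $P^{2g}_\bbR\to\bbC^{2g}$ is not surjective; the paper handles this by showing the right--hand side of \eqref{eq:para:whitham:1} lies in $P^{3g+1}_\bbR$); and second, that the numerator in your formula for $\dot b_\ind$ also vanishes at those roots $\alpha$ of $a$ where $b_\ind(\alpha)=0$, which follows because \eqref{eq:para:whitham:2} then forces $c_\ind(\alpha)=0$.
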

\begin{proof}
That the Whitham equations~\eqref{eq:para:whitham:1}--\eqref{eq:para:whitham:2} describe the Whitham flow was shown in \cite[Section~4]{CS}. 
In the setting of the lemma we have $a(1)\in \bbR_+$. For given $\dot{a}\in T_a\mathcal{H}^g$ the condition $\dot{a}\in T_a\calS^g_1$ is equivalent to  $\dot{b}_1(1)=0=\dot{b}_2(1)$. By equation~\eqref{eq:para:whitham:1} this in turn is equivalent to
\begin{equation} \label{eq:para:whitham:ck'}
c_\ind'(1) = c_\ind(1) \, \left(\tfrac{1}{2} + \tfrac{a'(1)}{2\,a(1)} \right) \quad\text{for $\ind=1,2$}\; .
\end{equation}
Due to equation~\eqref{eq:para:whitham:2} we know that $Q(1)=0$, so
\begin{equation}
\label{eq:para:whitham:Q'}
	a(1)\,Q'(1) = b_2'(1)\,c_1(1)-b_1'(1)\,c_2(1)\;,
\end{equation}
and
\begin{equation}
\label{eq:para:whitham:Q''}
	a(1)\,Q''(1) = b_2''(1)\,c_1(1) + 2\,b_2'(1)\,c_1'(1) - b_1''(1)\,c_2(1)-2\,b_1'(1)\,c_2'(1) - 2\,a'(1)\,Q'(1) \; .
\end{equation}
This shows that $c_1(1)$ and $c_2(1)$ uniquely determine the polynomial $Q \in \bbC^2[\lambda]$. Besides the common root of $b_1$, $b_2$ and $Q$ at $\lambda=1$, $b_1$ and $b_2$ have $g$ distinct roots. The values of $c_1$ at the roots of $\frac{b_1}{\lambda-1}$, and of $c_2$ at these roots of $\frac{b_2}{\lambda-1}$ are uniquely determined by equation~\eqref{eq:para:whitham:2}. Therefore $Q$ uniquely determines polynomials $\tilde{c}_1,\tilde{c}_2 \in\bbC^{g-1}[\lambda]$ and the divisor $q\in\bbC^1[\lambda]$ of the polynomial long division of $\frac{Qa}{\lambda-1}\in\bbC^{2g+1}[\lambda]$ divided by $\frac{b_1}{\lambda-1}\frac{b_2}{\lambda-1}\in\bbC^{2g}[\lambda]$ such that $(c_1,c_2)=(\Tilde{c}_1+q\frac{b_2}{\lambda-1},\Tilde{c}_2)$ solves equation~\eqref{eq:para:whitham:2}. The general solution $(c_1,\,c_2) \in (\bbC^{g+1}[\lambda])^2$ of equation~\eqref{eq:para:whitham:2} is then of the form $(c_1,c_2) = (\tilde{c}_1+q\frac{b_2}{\lambda-1},\tilde{c}_2) + p\,(\tfrac{b_1}{\lambda-1},\tfrac{b_2}{\lambda-1})$ with $p\in \bbC^1[\lambda]$. There exists $k\in \{1,2\}$ so that $b_\ind'(1)\neq 0$. Then there exists a unique polynomial $p\in \bbC^1[\lambda]$ so that $c_\ind(1)$ is the prescribed value and $c_\ind'(1)$ satisfies equation~\eqref{eq:para:whitham:ck'}. By definition of $Q$ and due to $b_\ind'(1)\ne0$, the other $c_{k}$ has the required value and derivative at $\lambda=1$. So $c_1$ and $c_2$ are uniquely determined in $\bbC^{g+1}[\lambda]$ by $c_1(1)$ and $c_2(1)$. Note that this is true even if for one $\ind\in\{1,2\}$ both terms $b_\ind'(1)$ and $b_\ind''(1)$ vanish and $Q$ does not depend on $c_\ind(1)$. In this case $Q$ alone does not uniquely determine the solutions $c_1$ and $c_2$ of~\eqref{eq:para:whitham:2}--\eqref{eq:para:whitham:ck'}. 

We next show $c_\ind \in P^{g+1}_\bbR$. For $p\in\bbC^m[\lambda]$ we have
\begin{gather}\begin{aligned}\label{derivative conjugate}
m\lambda^m\overline{p(\bar{\lambda}^{-1})}-2\lambda\tfrac{d}{d\lambda}\left(\lambda^m\overline{p(\bar{\lambda}^{-1})}\right)&=m\lambda^m\overline{p(\bar{\lambda}^{-1})}-2m\lambda^m\overline{p(\bar{\lambda}^{-1})}+\lambda^{m+1}\overline{\bar{\lambda}^{-2}\tfrac{d}{d\Bar{\lambda}^{-1}}p(\bar{\lambda}^{-1})}\\&=-\lambda^m\left(\overline{mp(\bar{\lambda}^{-1})-2\bar{\lambda}^{-1}p'(\bar{\lambda}^{-1})}\right).
\end{aligned}\end{gather}
For $m(mp-2\lambda p')-2\lambda\tfrac{d}{m\lambda}(dp-2\lambda p')=m^2p-4\lambda(m-1)p'+4\lambda^2p''$ we obtain
\begin{multline}\label{second derivative conjugate}
m^2\lambda^m\overline{p(\bar{\lambda}^{-1})}-4\lambda(m-1)\tfrac{d}{d\lambda}\left(\lambda^m\overline{p(\bar{\lambda}^{-1})}\right)+4\lambda^2\tfrac{d^2}{d\lambda^2}\left(\lambda^m\overline{p(\bar{\lambda}^{-1})}\right)\\=\lambda^m\left(\overline{m^2p(\bar{\lambda}^{-1})-4\bar{\lambda}^{-1}(m-1)p'(\bar{\lambda}^{-1})+4\lambda^2p''(\Bar{\lambda}^{-1})}\right).
\end{multline}
In order to show the invariance of the equations~\eqref{eq:para:whitham:ck'}--\eqref{eq:para:whitham:Q''} with respect to the transformation $(a,c_1,c_2,Q)\mapsto\left(\lambda^{2g}\overline{a(\Bar{\lambda}^{-1})},\lambda^{g+1}\overline{c_1(\Bar{\lambda}^{-1})},\lambda^{g+1}\overline{c_2(\Bar{\lambda}^{-1})},\lambda^2\overline{Q(\Bar{\lambda}^{-1})}\right)$, we rewrite these equations as
\begin{align*}
2a(1)\left((g+1)c_\ind(1)-2c_\ind'(1)\right)&=\left(2ga(1)-2a'(1)\right)c(1),\\
2a(1)Q'(1)=a(1)(2Q'(1)-2Q(1))&=(3b_1(1)-2b_1'(1))c_2(1)-(3b_2(1)-2b_2'(1))c_1(1),
\end{align*}
\vspace{-8mm}\begin{multline*}a(1)\left(4Q''(1)-4Q'(1)+4Q(1)\right)+2(2a'(1)-2ga(1))(2Q'(1)-2Q(1))\\=(4b_2''(1)-4gb_2'(1)+(g+1)^2b_2(1))c_1(1)-(4b_1''(1)-4gb_1'(1)+(g+1)^2b_1(1))c_2(1).
\end{multline*}
By equation~\eqref{derivative conjugate} this transformation replaces both sides of the first two equations by the negatives of their complex conjugates, and by~\eqref{derivative conjugate}--\eqref{second derivative conjugate} both sides of the third equations by their complex conjugates, respectively. So for given $(c_1(1),c_2(1))\in\bbR^2$ this transformation preserves the unique solution $(c_1,c_2,Q)$ of equations~\eqref{eq:para:whitham:2}--\eqref{eq:para:whitham:Q''}, which shows $c_\ind \in P_\bbR^{g+1}$ and $Q \in P_\bbR^2$. 

For every root $\alpha$ of $a$ there exists $k\in \{1,2\}$  so that $b_\ind(\alpha) \neq 0$. The corresponding equation in \eqref{eq:para:whitham:1} prescribes a value of $\dot{a}(\alpha)$, and in the case of $b_1(\alpha),b_2(\alpha) \neq 0$, equation~\eqref{eq:para:whitham:2} ensures that the two prescribed values for $\dot{a}(\alpha)$ are equal. The right hand side of~\eqref{eq:para:whitham:1} may be rewritten as
$$2\mi\,\lambda\,a\,c_\ind' - \mi \,c_\ind\,(a+\lambda a')=\mi\left(a\big(2\lambda c_\ind'-(g+1)c_\ind\big)-\big(\lambda a'-ga\big)c_\ind\right),$$
and belongs by equation~\eqref{derivative conjugate} to $P_\bbR^{3g+1}$. So $b_\ind\in P_\bbR^{g+1}$ implies $\alpha^{2g}\overline{\dot{a}(\Bar{\alpha}^{-1})}=\dot{a}(\alpha)$. Because the highest coefficient of $a\in\mathcal{H}^g$ is the square root of the product of all roots of $a$, which is unimodular, there exists a unique $\dot{a}\in T_a\mathcal{H}_g$ taking all these values $\dot{a}(\alpha)$ at the roots of $a$. By equation~\eqref{eq:para:whitham:1} we then have
$$ \dot{b}_\ind = \frac{2\mi\,\lambda\,a\,c_\ind' - \mi \,c_\ind\,(a+\lambda a') + \dot{a}\,b_\ind}{2a} \; . $$
The numerator vanishes at every root $\alpha$ of $a$ by the choice of $\dot{a}(\alpha)$ if $b_\ind(\alpha)\neq 0$, and due to equation~\eqref{eq:para:whitham:2} if $b_\ind(\alpha)=0$. So this $\dot{b}_\ind$ belongs to $\bbC^{g+1}[\lambda]$. Furthermore, it belongs to $P_\bbR^{g+1}$ since the numerator belongs to $P_\bbR^{3g+1}$ and $a\in \mathcal{H}^g\subset P_\bbR^{2g}$. In total the value $(c_1(1),c_2(1))\in\bbR^2$ and~\eqref{eq:para:whitham:1}--\eqref{eq:para:whitham:Q''} uniquely determine $(\dot{a},\dot{b}_1,\dot{b}_2)\in T_{a}\calS_1^g\times P^{g+1}_\bbR \times P^{g+1}_\bbR$ for given $(a,b_1,b_2)\in\calS^g\times\calB_a\times\calB_a$.
\end{proof}
%
\section{The boundary of $\calS^2_1$ in $P_\bbR^4$}\label{Se:closure}
In Proposition~\ref{P:boundary-S21-C4lambda} we determine the closure of $\calS^2_1$ in $\bbC^4[\lambda]$. Given \,$a\in\calS^2_1$\, and a point on the boundary described in Proposition~\ref{P:boundary-S21-C4lambda}, we will show that one of the Whitham flows of Lemma~\ref{L:para:whitham} starts at \,$a$\, and terminates at the given boundary point. 
\begin{lemma}
\label{L:boundary-S21-C4lambda-pre}
For $a_0 = (\lambda-\lambda_0)^2(\lambda-\alpha)(\bar{\alpha}\lambda-1)/(\lambda_0\, |\alpha|)$ with $\lambda_0 \in \bbS^1$ and $0<|\alpha|<1$, the following three statements are equivalent:
\begin{enumerate}
\item[(i)] $a_0$ belongs to the closure of $\calS^2_{\lambda_0}$ in \,$\bbC^4[\lambda]$\,.
\item[(ii)] $a_0$ belongs to the closure of $\calS^2$ in \,$\bbC^4[\lambda]$\,.
\item[(iii)] $\mathrm{d}f(\lambda_0)=0$, where $f=\tfrac{b_2}{b_1}$ for a basis $b_1,b_2$ of $\calB_{a_0}$.
\end{enumerate}
\end{lemma}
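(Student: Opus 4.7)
The implication (i)$\Rightarrow$(ii) is immediate from $\calS^2_{\lambda_0}\subseteq\calS^2$; the plan concentrates on the other two implications.

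For (ii)$\Rightarrow$(iii), I would take a sequence $(a_n)\subset\calS^2$ with $a_n\to a_0$ and associated Sym points $\lambda_{0,n}\in\bbS^1$, then pass to a subsequence with $\lambda_{0,n}\to\tilde\lambda_0\in\bbS^1$. By Corollary~\ref{C:adouble} the normalized basis $(b_{1,n},b_{2,n})$ of $\calB_{a_n}$ converges to the normalized basis $(b_1,b_2)$ of the extended $\calB_{a_0}$. Since $a_0$ has a node at $\lambda_0$, the holomorphicity of the pulled-back differential $\Theta(b_j)$ at the two points above $\lambda_0$ on the normalization forces $b_j(\lambda_0)=0$, while the limit of $b_{j,n}(\lambda_{0,n})=0$ gives $b_j(\tilde\lambda_0)=0$. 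If $\tilde\lambda_0\neq\lambda_0$, then $b_1,b_2\in P^3_\bbR$ are both divisible by $(\lambda-\lambda_0)(\lambda-\tilde\lambda_0)$, reducing $f=\hat b_2/\hat b_1$ to a M\"obius transformation with $\hat b_j\in P^1_\bbR$; I would rule out this case using the winding number formalism of Lemma~\ref{L:intro:S2Vpm1} together with the reality constraints on $\calB_{a_0}$, which force this degenerate limit to be incompatible with the specific form of $a_0$. In the remaining case $\tilde\lambda_0=\lambda_0$, I would write $b_{j,n}=(\lambda-\lambda_{0,n})u_{j,n}$ and Taylor-expand the condition $b_{j,n}(\lambda_0)\to 0$ to first order in $\lambda_0-\lambda_{0,n}$; this forces the Wronskian $b_2'b_1-b_1'b_2$ of the limit to vanish at $\lambda_0$, which is precisely $df(\lambda_0)=0$.

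For (iii)$\Rightarrow$(i), I would use the characterization that $df(\lambda_0)=0$ combined with $b_1(\lambda_0)=b_2(\lambda_0)=0$ is equivalent to the existence of $b\in\calB_{a_0}\setminus\{0\}$ vanishing to order at least three at $\lambda_0$, necessarily of the form $b=c(\lambda-\lambda_0)^3$ with $c$ determined up to real scalar by the reality condition on $P^3_\bbR$. To construct an approximating sequence $(a_n)\subset\calS^2_{\lambda_0}$, I would apply the blow-up method of Section~\ref{Se:blow up sym point} at $(a_0,\lambda_0)$ so that the two coalescing simple roots of nearby $a_n$ are separated from $\lambda_0$ in the new chart. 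The defining conditions for $\calS^2_{\lambda_0}$ (a smooth 2-dimensional submanifold by Lemma~\ref{L:S2-smooth}) then extend smoothly to the exceptional fibre exactly when (iii) is satisfied, and the implicit function theorem produces the required sequence. An alternative route is to run the Whitham flow of Lemma~\ref{L:para:whitham} from a nearby $a\in\calS^2_{\lambda_0}$ in a direction that drives $a$ towards $a_0$, using condition (iii) to guarantee that the flow extends to the boundary value $a_0$.

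The principal obstacle I anticipate is the case analysis in (ii)$\Rightarrow$(iii): handling the degenerate limit $\tilde\lambda_0\neq\lambda_0$ requires a delicate combination of Corollary~\ref{C:adouble}, the winding number formalism of Lemma~\ref{L:intro:S2Vpm1}, and the reality structure of the extended $\calB_{a_0}$. A secondary subtlety is setting up the blow-up chart in (iii)$\Rightarrow$(i) so that the implicit function theorem cleanly produces a sequence in $\calS^2_{\lambda_0}$ respecting both the Sym-point constraint and the simultaneous coalescence of two simple roots at $\lambda_0$.
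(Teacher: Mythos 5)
Your architecture differs from the paper's, which proves the cycle (i)$\Rightarrow$(iii)$\Rightarrow$(ii)$\Rightarrow$(i) with the winding number $n(a)$ doing the work in both nontrivial steps, and each of your two nontrivial implications has a gap at its decisive point. In (ii)$\Rightarrow$(iii), the Taylor expansion proves nothing: $b_{j,n}(\lambda_0)=(\lambda_0-\lambda_{0,n})\,u_{j,n}(\lambda_0)\to 0$ holds automatically because $\lambda_{0,n}\to\lambda_0$ and the $u_{j,n}$ are bounded, so it places no constraint on $u_j(\lambda_0)$, $u_j'(\lambda_0)$ and cannot force the Wronskian $u_2'u_1-u_1'u_2$ to vanish at $\lambda_0$. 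Since $b_1(\lambda_0)=b_2(\lambda_0)=0$ is already automatic from the node (via Lemma~\ref{L:adouble}), the condition $\mathrm{d}f(\lambda_0)=0$ is a genuinely further constraint and requires nontrivial input. The paper's mechanism is the winding-number jump formula \eqref{eq:CS-Theorem35} from \cite[Thm~3.5]{CS}: if $\mathrm{d}\tilde{f}(\lambda_0)\neq 0$, then $n(a)=n(a_0)-\mathrm{sign}(\mathrm{d}\tilde{f}(\lambda_0))$ is constant on $O\cap(\mathcal{H}^2\setminus\calS^2)$ for a small neighbourhood $O$ of $a_0$, so $O$ meets only one of $V_{\pm1}$, contradicting $a_0\in\overline{\calS^2}\subset\overline{V_1}\cap\overline{V_{-1}}$ (Lemma~\ref{L:intro:S2Vpm1}). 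You invoke winding numbers only to exclude the side case $\tilde\lambda_0\neq\lambda_0$ --- which the paper instead handles via $\deg\gcd\calB_{\tilde a_0}=0$ and Corollary~\ref{C:adouble} in its proof of (ii)$\Rightarrow$(i) --- but not where they are indispensable.

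For (iii)$\Rightarrow$(i), neither of your routes works as stated. The blow-up of Section~\ref{Se:blow up sym point} is performed at the total-coalescence point $(\lambda-1)^4$, not at a node where only two roots meet, and you give no argument that the defining conditions of $\calS^2_{\lambda_0}$ extend to an exceptional fibre in a form to which the implicit function theorem applies; the Whitham-flow alternative presupposes that a flow line starting in $\calS^2_{\lambda_0}$ terminates at $a_0$, which is essentially the content of Sections~\ref{Se:global Whitham}--\ref{Se:limits a to e} and is not available at this stage. The paper instead argues topologically: near $a_0$ the set $\overline{\mathcal{H}^2}$ is a manifold with boundary with $O\cap\mathcal{H}^2$ connected; $\deg f=2$ and the zero of $\mathrm{d}f$ at $\lambda_0$ is simple, so by \eqref{eq:CS-Theorem35} the neighbourhood meets both of the open, disjoint sets $V_1$ and $V_{-1}$, which therefore cannot cover the connected set $O\cap\mathcal{H}^2$, forcing $O\cap\calS^2\neq\varnothing$, i.e.\ (iii)$\Rightarrow$(ii); the passage from $\overline{\calS^2}$ to $\overline{\calS^2_{\lambda_0}}$ is then the short rotation argument based on Corollary~\ref{C:adouble}. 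If you want to salvage your plan, the essential missing ingredient in both directions is the jump formula \eqref{eq:CS-Theorem35} combined with Lemma~\ref{L:intro:S2Vpm1}.
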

Note that the condition $\mathrm{d}f(\lambda_0)=0$ is invariant under M\"obius transformations, so that it does not depend on the choice of the basis \,$b_1, b_2$\, and is well--defined even if $f(\lambda_0)=\infty$. 
\begin{proof}
Note that 
$$\overline{\mathcal{H}^2} = \left\{ a\in\C^4[\lambda] \left| \, |a(0)|=1, \,\overline{\lambda^4 a(1/\overline{\lambda})}=a(\lambda), \,\lambda^{-2}a(\lambda)\geq 0 \text{ for } \lambda\in \bbS^1 \right. \right\} \;.$$
If some polynomial $a \in \overline{\mathcal{H}^2}$ has roots of higher order, then there exists a unique $\tilde{a}\in\mathcal{H}^1\cup\mathcal{H}^0$ which has simple roots at the roots of $a$ of odd order and no others.
The quotient $a/\tilde{a}$ is the square of a polynomial $p$, which is unique up to sign. Therefore $\Sigma_{\tilde{a}}$ is the normalisation of $\Sigma_a$. It follows from Lemma \ref{L:adouble} for any $b\in\calB_a$, $\Theta(b)$ has no poles at higher order roots of $a$. Hence the polynomial $p$ divides $\gcd(\calB_a)$, and the map $b \mapsto p\, b$ is an isomorphism $\calB_{\tilde{a}} \to \calB_a$. This implies that $a$ and $\tilde{a}$ define the same function $f$, and therefore $n(a)=n(\tilde{a})$ holds. 

For  $a_0=(\lambda-\lambda_0)^2(\lambda-\alpha)(\bar{\alpha}\lambda-1)/(\lambda_0|\,\alpha|)$, we have $\tilde{a}_0 = (\lambda-\alpha)(\bar{\alpha}\lambda-1)/|\alpha| \in \mathcal{H}^1$ and therefore $n(a_0)=n(\tilde{a}_0)=0$ by equation~\eqref{eq:Thm35CS}.

(i) $\Rightarrow$ (iii): Now suppose $a_0\in\overline{\calS^2_{\lambda_0}}$.
By the implication (D) $\Rightarrow$ (B) in \cite[Theorem~5.8]{CS}, every neighbourhood $O$ of $a_0$ in $\C^4[\lambda]$ has non--empty intersection with $V_1$ and with $V_{-1}$.

Assume for a contradiction that $\mathrm{d}f(\lambda_0) \neq 0$ which is equivalent to $\mathrm{d}\tilde{f}(\lambda_0)\neq 0$. Here $\tilde{f}$ is defined in~\eqref{eq:ftildedef} with the normalised basis $(b_1,b_2)$ of $\calB_a$. We define $\mathrm{sign}(\mathrm{d}\tilde{f}(\lambda_0))$ to be either $+1$ or $-1$, according to whether the tangent map $\mathrm{d}\tilde{f}(\lambda_0)$ preserves or reverses the orientation of $\bbS^1$.
By the last formula in the proof of \cite[Theorem~3.5]{CS} there exists a neighbourhood $O$ of $a_0$ in $\C^4[\lambda]$ so that 
\begin{equation}
\label{eq:CS-Theorem35}
n(a)=n(a_0)-\mathrm{sign}(\mathrm{d}\tilde{f}(\lambda_0))
\end{equation}
for every $a \in O \cap (\mathcal{H}^2 \setminus \calS^2)$.  Therefore we have
\begin{align*}
	O \cap (\mathcal{H}^2\setminus \calS^2) \subset V_{-1} & \text{ for } \mathrm{sign}(\mathrm{d}\tilde{f}(\lambda_0))>0\;, \\
	O \cap (\mathcal{H}^2\setminus \calS^2) \subset V_{1} & \text { for } \mathrm{sign}(\mathrm{d}\tilde{f}(\lambda_0))<0\;.
\end{align*}
This implies $\mathrm{d}f(\lambda_0)=0$.

(iii) $\Rightarrow$ (ii): Let $\mathrm{d}f(\lambda_0)=0$. We now show that $\overline{\mathcal{H}^2}$ is near $a_0$ a manifold with boundary. Its elements near $a_0$ are of the form
$(\lambda-\beta)(\bar{\beta}\lambda-1)(\lambda-\alpha')(\bar{\alpha}'\lambda-1)/(|\alpha'|\,|\beta|)$
with $\beta \in B(\lambda_0,\epsilon)$ and $\alpha' \in B(\alpha,\epsilon)$ for some $\epsilon>0$. Inserting $\beta=re^{\mi \varphi}$ with $r>0,\,\varphi \in \R$ gives
$$ \frac{(\lambda-\beta)(\bar{\beta}\lambda-1)}{|\beta|} = \frac{\bar{\beta}}{|\beta|}\lambda^2 - \frac{1+\beta \bar{\beta}}{|\beta|}\lambda + \frac{\beta}{|\beta|} = e^{-\mi\varphi}\lambda^2 - (r+r^{-1})\lambda + e^{\mi\varphi} \; . $$
So $(r+r^{-1},\varphi,\alpha')\in[2,\infty)\times\bbR\times\mathbb{C}$ are local coordinates near $a_0$ of the manifold with boundary $\overline{\mathcal{H}_2}$ and the boundary near $a_0$ is given by 
$$ \partial \overline{\mathcal{H}^2} = \left\{ \left. a \in \overline{\mathcal{H}^2} \;\right|\; \text{$a$ has a double zero on $\bbS^1$} \right\} \; . $$
Thus every neighbourhood of $a_0 \in \partial \overline{\mathcal{H}^2}$ contains an open neighbourhood $O$ of $a_0$ such that $O \cap \mathcal{H}^2$ is connected. 
\begin{figure}\label{figure:delH2}
\centering
\begin{tikzpicture}
\definecolor{verylightgray}{gray}{0.9}
\begin{scope}[scale=1.0]
	\fill[verylightgray] (0,-2)--(4,-2)--(4,2)--(0,2)--(0,-2);
	\fill[gray] (0,-0.6) arc[start angle=-90, end angle=90, radius=0.6];
	\draw[black,very thick] (0,-2)--(0,2);
	\draw[black,very thick] (0,0)--(4,0);
	\fill[black] (0,0) circle[radius=0.1];
	\draw (2.0,1.0) node[anchor=east] {$V_1$};
	\draw (2.0,-1.0) node[anchor=east] {$V_{-1}$};
	\draw (4.0,0) node[anchor=west] {$\calS^2$};
	\draw (0,-2.0) node[anchor=north] {$\partial \mathcal{H}^2$};
	\draw (-0.1,0) node[anchor=east] {$a_0$};
	\draw[black,thin] (0.2,0.2)--(-0.5,0.7);
	\draw (-0.5,0.8) node[anchor=east] {$O$};
\end{scope}
\end{tikzpicture}
\caption{Sets in the proof of Lemma~\ref{L:boundary-S21-C4lambda-pre}}
\end{figure}
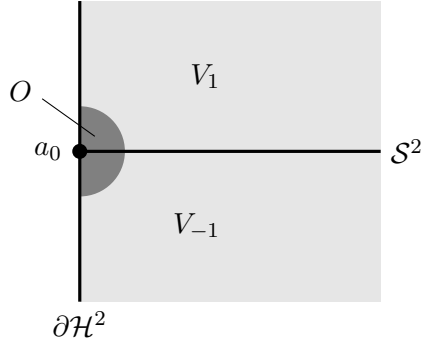
Since $b_1$ and $b_2$ have a common root at $\lambda=\lambda_0$, $\deg(f)$ is either $1$ or $2$. But \cite[Theorem~3.2]{CS} excludes $\deg(f)=1$, so $\deg(f)=2$ holds. By the Riemann--Hurwitz formula, $\mathrm{d}f$ has two roots. These must be distinct, because a double root would require the covering map $f$ to have at least three sheets. Therefore the root of $\mathrm{d}f$ at $\lambda=\lambda_0$ is simple. By equation~\eqref{eq:CS-Theorem35}, we have $O \cap V_1\neq \varnothing$ and $O \cap V_{-1} \neq \varnothing$. The sets $V_1$ and $V_{-1}$ are open and disjoint, but $O\cap\calH^2$ is connected, so $(O\cap V_1) \cup (O \cap V_{-1}) \subsetneq O\cap\calH^2$. This implies $O \cap \calS^2 \neq \varnothing$\ and $a_0\in\partial\calS^2$.  

(ii) $\Rightarrow$ (i): Let $(a_n)_{n\in\mathbb{N}}$ be a sequence in $\calS^2$ that converges to $a_0$. Due to Corollary \ref{C:adouble} the sequence $(\lambda_n)_{n\in\mathbb{N}}$ of common roots of the elements of $\calB_{a_n}$ depends continuously on $a_n$ and converges to a common root of the elements of $\calB_{a_0}$. By equation~\eqref{eq:Thm35CS}, $\deg \gcd \calB_{\tilde{a}_0}=0$. Therefore $\lim \lambda_n=\lambda_0$.
The rotation $\lambda \mapsto \lambda\,\lambda_0\,\lambda_n^{-1}$ transforms an element $a_n \in\calS^2$ into an element $\hat{a}_n$ of $\calS^2_{\lambda_0}$. Because of $\lim \lambda_n=\lambda_0$, also the sequence $(\hat{a}_n)$ converges to $a_0$. Hence $a_0$ is in the closure of $\calS^2_{\lambda_0}$.
\end{proof}
 
\begin{proposition}\label{P:boundary-S21-C4lambda}
The boundary of $\calS^2_{\lambda_0}$ in $\C^4[\lambda]$ is 
\begin{equation} \label{eq:boundary-S21}
	\left\{ \left. \frac{(\lambda-\lambda_0)^2(\lambda-\alpha)(\bar{\alpha}\lambda-1)}{\lambda_0\,|\alpha|}\; \right|\; 0<|\alpha|<1, \,\mathrm{d}f(\lambda_0)=0 \right\} \;\cup\; \left\{ \frac{(\lambda-\lambda_0)^4}{\lambda_0^2}\right\} \; . 
\end{equation}
\end{proposition}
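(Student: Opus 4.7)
I would prove both inclusions of~\eqref{eq:boundary-S21} by combining Lemma~\ref{L:boundary-S21-C4lambda-pre} with a case analysis of the strata of $\overline{\mathcal{H}^2}\setminus\mathcal{H}^2$.

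For the inclusion $\supseteq$, the first set in~\eqref{eq:boundary-S21} is contained in $\overline{\calS^2_{\lambda_0}}\setminus\calS^2_{\lambda_0}$ directly by Lemma~\ref{L:boundary-S21-C4lambda-pre} (iii)$\Rightarrow$(i). For the quartic point $(\lambda-\lambda_0)^4/\lambda_0^2$ I would take a sequence $\alpha_n\to\lambda_0$ along the real one-parameter locus $\{df(\lambda_0)=0\}$ inside $B(0,1)\setminus\{0\}$, whose elements lie in $\overline{\calS^2_{\lambda_0}}$ by the previous step, and use the continuity calculation
\[
\frac{(\lambda-\alpha)(\bar\alpha\lambda-1)}{|\alpha|}\xrightarrow[\alpha\to\lambda_0]{}\frac{(\lambda-\lambda_0)^2}{\lambda_0}
\]
to identify the quartic as an iterated limit lying in $\overline{\calS^2_{\lambda_0}}$.

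For the inclusion $\subseteq$, note first that the defining condition $b(\lambda_0)=0$ for all $b\in\calB_a$ is closed in $\mathcal{H}^2$ by Corollary~\ref{C:adouble}, so every $a_0$ in $\overline{\calS^2_{\lambda_0}}\setminus\calS^2_{\lambda_0}$ lies in $\overline{\mathcal{H}^2}\setminus\mathcal{H}^2$. The reality involution $\alpha\leftrightarrow\bar\alpha^{-1}$ together with the positivity $\lambda^{-2}a(\lambda)\geq 0$ on $\bbS^1$ (which forces every $\bbS^1$-root to have even multiplicity) leaves only four possible degeneration types for such $a_0$: \emph{(A)} a single $\bbS^1$-double root $\lambda_1$ with two further simple roots $\alpha,\bar\alpha^{-1}$, $0<|\alpha|<1$; \emph{(B)} two distinct $\bbS^1$-double roots $\lambda_1\neq\lambda_2$; \emph{(C)} the quartic root $(\lambda-\lambda_1)^4/\lambda_1^2$; or \emph{(D)} a pair of coinciding off-$\bbS^1$ double roots $\alpha,\bar\alpha^{-1}$. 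Writing $a_0=p^2\tilde{a}_0$ as in the proof of Lemma~\ref{L:boundary-S21-C4lambda-pre}, one has $\calB_{a_0}=p\cdot\calB_{\tilde{a}_0}$ with $\tilde{a}_0\in\mathcal{H}^0\cup\mathcal{H}^1$; since $\deg\gcd\calB_{\tilde{a}_0}=0$ by~\eqref{eq:Thm35CS}, the necessary limiting condition $b(\lambda_0)=0$ for all $b\in\calB_{a_0}$ (inherited from $a_n\in\calS^2_{\lambda_0}$ through Corollary~\ref{C:adouble}) is equivalent to $p(\lambda_0)=0$. This rules out (D), (A) with $\lambda_1\neq\lambda_0$, (B) with $\lambda_1,\lambda_2\neq\lambda_0$, and (C) with $\lambda_1\neq\lambda_0$. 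For the surviving case (A) with $\lambda_1=\lambda_0$, Lemma~\ref{L:boundary-S21-C4lambda-pre} yields the additional condition $df(\lambda_0)=0$, matching the first set of~\eqref{eq:boundary-S21}; case (C) with $\lambda_1=\lambda_0$ is the isolated quartic point.

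\textbf{Main obstacle.} The hardest step is excluding case (B) with $\lambda_1=\lambda_0\neq\lambda_2$. The plan is to apply the local winding-number formula~\eqref{eq:CS-Theorem35} of Lemma~\ref{L:boundary-S21-C4lambda-pre} simultaneously at both boundary strata of $\overline{\mathcal{H}^2}$ meeting at a type~(B) corner. For such $a_0$ one has $\tilde{a}_0=1\in\mathcal{H}^0=V_1$, so $n(a_0)=1$; the two transverse directions opening the double roots each contribute a $\pm 1$ correction, so any nearby $a\in\mathcal{H}^2\setminus\calS^2$ satisfies $n(a)\in\{-1,\,1,\,3\}$. Since only $n\in\{-1,\,1\}$ occurs in $\mathcal{H}^2\setminus\calS^2$ by~\eqref{eq:Thm35CS}, the intermediate value $n=0$ characterising membership in $\calS^2\supset\calS^2_{\lambda_0}$ is not attained in any neighbourhood of $a_0$, so no sequence in $\calS^2_{\lambda_0}$ can accumulate at $a_0$. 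This excludes~(B) and completes the classification.
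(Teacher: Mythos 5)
Your overall architecture matches the paper's: restrict to $\partial\mathcal{H}^2$, classify the degeneration types, use the factorisation $a_0=p^2\tilde a_0$ together with $\calS^g=\varnothing$ for $g\in\{0,1\}$ to force $p(\lambda_0)=0$, invoke Lemma~\ref{L:boundary-S21-C4lambda-pre} on the generic stratum, and treat the quartic separately. But the step you yourself flag as the main obstacle --- excluding a double root at $\lambda_0$ together with a second double root at $\lambda_2\in\bbS^1\setminus\{\lambda_0\}$ --- does not work as written. The winding number $n$ is defined only on $\mathcal{H}^2\setminus\calS^2$, and by~\eqref{eq:Thm35CS} it never takes the value $0$ there; membership in $\calS^2$ is \emph{not} characterised by ``$n=0$'' but by Lemma~\ref{L:intro:S2Vpm1}, $\calS^2=\overline{V_1}\cap\overline{V_{-1}}\cap\mathcal{H}^2$. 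So observing that $n=0$ is unattained near $a_0$ proves nothing (it is unattained on all of $\mathcal{H}^2\setminus\calS^2$), and if taken seriously your mechanism would ``prove'' $\calS^2=\varnothing$. Moreover the claim that the two opened double roots each contribute an independent $\pm1$ correction, so that $n(a)\in\{-1,1,3\}$ nearby, is too weak even for the right conclusion: since both $1$ and $-1$ lie in that set, you have not excluded that $V_1$ and $V_{-1}$ both meet every neighbourhood of $a_0$, which is precisely the condition for $a_0\in\overline{\calS^2}$. The corrections are in fact not free signs: for this configuration $\tilde a_0\in\mathcal{H}^0=V_1$, $f$ is a degree-one M\"obius map, so $\tilde f|_{\bbS^1}$ is an orientation-preserving diffeomorphism and $\mathrm{sign}(\mathrm{d}\tilde f)=+1$ at \emph{both} double roots; hence every nearby $a\in\mathcal{H}^2\setminus\calS^2$ has $n(a)=-1$, so a whole neighbourhood of $a_0$ misses $V_1$, contradicting $\overline{\calS^2}\subset\overline{V_1}$. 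This is the paper's argument (carried out by first opening the root at $\lambda_2$ to land in $\mathcal{H}^1=V_0$ and then applying~\eqref{eq:CS-Theorem35} at $\lambda_0$), and some version of it is needed; your version supplies neither the determination of the signs nor a valid contradiction.

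There is a second gap in the reverse inclusion: you place $(\lambda-\lambda_0)^4/\lambda_0^2$ in $\overline{\calS^2_{\lambda_0}}$ by sending $\alpha_n\to\lambda_0$ ``along the locus $\{\mathrm{d}f(\lambda_0)=0\}$'', but you never show that this locus is nonempty, let alone that it accumulates at $\lambda_0$. The paper establishes both facts by an explicit construction: starting from $(\lambda-k)(\lambda-1/k)\in\mathcal{H}^1$ and the function $r(k)$ of \cite[Proposition~2.2]{KSS}, it exhibits $\alpha(k)=k\,\lambda_0\,\bigl(\tfrac{2r+\mi(r^2-1)}{r^2+1}\bigr)^{-1}$ satisfying $\mathrm{d}f(\lambda_0)=0$ with $\alpha(k)\to\lambda_0$ as $k\to 1$. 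Without some such input the quartic could a priori fail to lie in the closure of the first set of~\eqref{eq:boundary-S21}.
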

\begin{proof}
Because \,$\calS^2_{\lambda_0}$\, is a closed subvariety of \,$\calH^2$\,, 
the boundary of $\calS^2_{\lambda_0}$ is contained in $\partial \mathcal{H}^2 = \overline{\mathcal{H}^2} \setminus \mathcal{H}^2$. 
For a general $a_0\in \partial \mathcal{H}^2$, the condition $|a_0(0)|=1$ excludes roots at $\lambda=0$. Thus any such $a_0$ has a higher order root at some $\lambda\in \C^\times$. From $\lambda^{-2} a(\lambda)\geq 0$ for $|\lambda|=1$ it follows that any unimodular root of $a_0$ has even order. Consequently, $a_0$ has either one ore two (possibly coinciding) double roots on $\bbS^1$, or two double roots away from $\bbS^1$ interchanged by $\lambda \mapsto \bar{\lambda}^{-1}$.

We now show for $a_0\in \partial\calS^2_{\lambda_0}$ that $a_0$ has an even order root at $\lambda=\lambda_0$. Thereby we exclude the possibility that $a_0$ has two even order roots on $\bbS^1\setminus \{\lambda_0\}$, or pairs of double roots off $\bbS^1$. Let $(b_1,b_2)$ be the normalised basis of $\calB_{a_0}$. By Corollary \ref{C:adouble} both $b_1$ and $b_2$ define continuous functions of $a_0\in \overline{\mathcal{H}^2}$. For $a_0\in \partial \calS^2_{\lambda_0}$ with higher order roots, we write $a_0(\lambda)=p^2(\lambda) \, \tilde{a}_0(\lambda)$ with $\tilde{a}_0 \in \mathcal{H}^{2-\deg{p}}$. The proof of Lemma \ref{L:adouble} shows that $p$ divides both $b_1$ and $b_2$, and $b_1/p,b_2/p \in \calB_{\tilde{a}}$. In particular, if $p(\lambda_0)\neq 0$, then $\tilde{a}_0\in \calS^{2-\deg{p}}$. Since $\calS^{g}=\varnothing$ for $g\in \{0,1\}$ by \cite{CS}, $p(\lambda_0)=0$ follows. 

We next exclude the case that $a_0$ has a double root at $\lambda=\lambda_0$ and another double root at some $\beta \in \bbS^1 \setminus \{\lambda_0\}$. Assume that this case occurs for some $a_0\in \partial \calS^2_{\lambda_0}$. Then $f$ has degree $1$, so $\mathrm{d}f$ has no zeros. We now use the notation of the proof of Lemma~\ref{L:boundary-S21-C4lambda-pre}. Let $\widehat{a}_0 = \bar{\beta}\,(\lambda-\beta)^2 \, \tilde{a}_0$. Then by \cite[Lemma~8]{CS1} and \cite[Theorem~3.2]{CS} there exists a neighbourhood $\widehat{O}$ of $\widehat{a}_0$ in $\C^2[\lambda]$ such that for $\widehat{a} \in \widehat{O}\cap \mathcal{H}^1$, $\mathrm{d}\tilde{f}_{\widehat{a}}(\lambda_0)$ is non--zero and $\mathrm{sign}(\mathrm{d}\tilde{f}_{\widehat{a}}(\lambda_0)) = \mathrm{sign}(\mathrm{d}\tilde{f}_{a_0}(\lambda_0)) = 1$. By equation~\eqref{eq:Thm35CS} we have $n(\widehat{a})=0$. We now choose a neighbourhood $O$ of $a_0$ in $\C^4[\lambda]$ whose pre--image with respect to the map $\widehat{a} \mapsto\bar{\lambda}_0(\lambda-\lambda_0)^2\,\widehat{a}$ is contained in $\widehat{O}$. Equation~\eqref{eq:CS-Theorem35} applies to $a \in O \cap \mathcal{H}^2$ and gives $n(a)=n(\widehat{a})-\mathrm{sign}(\mathrm{d}\tilde{f}_{\widehat{a}}(\lambda_0))=-1$. So $O\cap V_1 = \varnothing$. This contradicts Lemma~\ref{L:intro:S2Vpm1}. Now Lemma~\ref{L:boundary-S21-C4lambda-pre} shows that $\partial \calS^2_{\lambda_0}$ is contained in the set \eqref{eq:boundary-S21}.

We finally show that conversely the set \eqref{eq:boundary-S21} is contained in $\partial \calS^2_{\lambda_0}$. For the first set of the union, this is shown in Lemma~\ref{L:boundary-S21-C4lambda-pre}. Now we show that $\bar{\lambda}_0^2\,(\lambda-\lambda_0)^4$ is contained in the closure of the first set of the union. Consider $(\lambda-k)\, (\lambda- 1/k) \in \mathcal{H}^1$, with $k\in (0,1)$. Because of \cite[Proposition~2.2]{KSS} the corresponding $f$ is up to M\"obius transformations equal to $\frac{(\lambda-r)(\lambda-r^{-1})}{\lambda^2-1}$, where $r\in (k,1)$ is a root of a $b$ with a non--exact 1-form $\Theta(b)$. Then $df$ has roots at $\tfrac{2r \pm \mi (r^2-1)}{r^2+1} \in \bbS^1$. In the limit $k\to 1$ we have $r \to 1$. For $\alpha(k) = k\,\lambda_0\,\left( \tfrac{2r + \mi(r^2-1)}{r^2+1} \right)^{-1}$, we have $a(k):=\tfrac{(\lambda-\lambda_0)^2 \, (\lambda-\alpha(k))\, (\overline{\alpha(k)}\lambda-1)}{\lambda_0\,  |\alpha(k)|} \in \overline{\calS^2_{\lambda_0}}$ by Lemma~\ref{L:boundary-S21-C4lambda-pre}. In the limit $k\to 1$, $a(k)$ converges to $\bar{\lambda}_0^2\,(\lambda-\lambda_0)^4$, therefore $\bar{\lambda}_0^2\,(\lambda-\lambda_0)^4$ is in $\overline{\calS^2_{\lambda_0}}$. 
\end{proof}
\begin{lemma}
We have
$$ \overline{V_1} \cap \overline{V_{-1}} = \overline{\calS^2} = \calS^2 \cup \partial \calS^2
\quad \text{and} \quad
\partial \calS^2 = \bigcup_{\lambda_0\in \bbS^1} \partial \calS^2_{\lambda_0} \;, $$
where all boundaries and closures are taken in \,$\bbC^4[\lambda]$\, and $\partial \calS^2_{\lambda_0}$ is described in Proposition~\ref{P:boundary-S21-C4lambda}.
\end{lemma}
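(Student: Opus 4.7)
The plan is to prove the central inclusion $\overline{V_1}\cap\overline{V_{-1}}\subset\overline{\calS^2}$; the reverse follows from Lemma~\ref{L:intro:S2Vpm1} by taking closures, whereupon $\overline{\calS^2}=\calS^2\cup\partial\calS^2$ is the tautological decomposition of the closure. The final identity $\partial\calS^2=\bigcup_{\lambda_0\in\bbS^1}\partial\calS^2_{\lambda_0}$ will then follow from a rotation argument exploiting compactness of $\bbS^1$.

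For the key inclusion, I fix $a\in\overline{V_1}\cap\overline{V_{-1}}\subset\overline{\mathcal{H}^2}$. If $a\in\mathcal{H}^2$, Lemma~\ref{L:intro:S2Vpm1} immediately gives $a\in\calS^2$. Otherwise $a\in\partial\mathcal{H}^2$, and the classification recalled at the start of the proof of Proposition~\ref{P:boundary-S21-C4lambda} places $a$ into one of three types: (i) one double root at some $\lambda_0\in\bbS^1$ and a conjugate simple pair off $\bbS^1$; (ii) two double roots on $\bbS^1$ (possibly coinciding at a quadruple root); (iii) two double roots at $\beta,\bar\beta^{-1}$ off $\bbS^1$. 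Type~(i) is handled by Lemma~\ref{L:boundary-S21-C4lambda-pre}: if $\mathrm{d}f(\lambda_0)\ne0$ then its proof (via~\eqref{eq:CS-Theorem35}) puts a whole neighbourhood of $a$ in $\mathcal{H}^2$ in a single $V_j$, contradicting the hypothesis, so $\mathrm{d}f(\lambda_0)=0$ and $a\in\overline{\calS^2_{\lambda_0}}\subset\overline{\calS^2}$. Type~(ii) with coinciding roots lies in $\partial\calS^2_{\lambda_0}$ directly by Proposition~\ref{P:boundary-S21-C4lambda}; for type~(ii) with distinct roots, the argument already given in the proof of that proposition --- using the factorisation with $\hat{a}=\bar\beta(\lambda-\beta)^2\tilde{a}_0$ and~\eqref{eq:CS-Theorem35} to force $n\equiv-1$ near $a$ --- shows $a\notin\overline{V_1}$, again a contradiction.

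The main obstacle is type~(iii), which is not covered by Lemma~\ref{L:boundary-S21-C4lambda-pre}. My strategy is to exhibit a neighbourhood $O$ of $a$ with $O\cap\mathcal{H}^2\subset V_1\cup V_{-1}$ and $O\cap\mathcal{H}^2$ connected, forcing $a$ into only one $\overline{V_j}$. I first rule out $\calS^2$-accumulation at $a$: for any hypothetical $(a_n)\subset\calS^2$ with $a_n\to a$, I pass to a subsequence along which the Sym points $\lambda_0(a_n)\in\bbS^1$ converge to some $\lambda_0\in\bbS^1$; Corollary~\ref{C:adouble} forces $\lambda_0$ to be a common root of $\calB_a$. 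Writing $a=p^2\tilde{a}$ with $p(\lambda)=(\lambda-\beta)(\bar\beta\lambda-1)/|\beta|$ and $\tilde{a}\in\mathcal{H}^0$, the isomorphism $\calB_{\tilde a}\to\calB_a$, $b\mapsto pb$ (as used at the start of the proof of Lemma~\ref{L:boundary-S21-C4lambda-pre}), combined with $\gcd(\calB_{\tilde a})=1$ from~\cite[Thm~3.2]{CS}, yields $\gcd(\calB_a)=p$, whose roots $\beta,\bar\beta^{-1}$ lie off $\bbS^1$, contradicting $\lambda_0\in\bbS^1$. For connectedness I parametrise $\overline{\mathcal{H}^2}$ near $a$ by the unordered pair $\{\beta_1,\beta_2\}$ of roots perturbing the double root at $\beta$; via elementary symmetric functions this yields a smooth chart $\mathrm{Sym}^2(\bbC)\cong\bbC^2$ in which the double-root stratum is the diagonal, of real codimension two, so its complement is connected.

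Finally, $\bigcup_{\lambda_0}\partial\calS^2_{\lambda_0}\subset\partial\calS^2$ because each $\calS^2_{\lambda_0}\subset\calS^2$ while Proposition~\ref{P:boundary-S21-C4lambda} puts $\partial\calS^2_{\lambda_0}$ inside $\partial\mathcal{H}^2$, disjoint from $\mathcal{H}^2\supset\calS^2$. Conversely, if $a\in\partial\calS^2$ is the limit of $(a_n)\subset\calS^2$, I extract a subsequence along which the Sym points $\lambda_0(a_n)\to\lambda_0\in\bbS^1$; rotating each $a_n$ by the ratio $\lambda_0(a_n)/\lambda_0\to1$ produces $\hat{a}_n\in\calS^2_{\lambda_0}$ with $\hat{a}_n\to a$, so $a\in\overline{\calS^2_{\lambda_0}}\setminus\calS^2_{\lambda_0}=\partial\calS^2_{\lambda_0}$.
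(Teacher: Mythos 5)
Your proof is correct and its skeleton matches the paper's: reduce the first equality to showing that points of $\partial \mathcal{H}^2 \setminus \partial \calS^2$ cannot lie in $\overline{V_1} \cap \overline{V_{-1}}$, treat the second equality as tautological, and obtain the third from compactness of $\bbS^1$ together with the rotation trick. Your handling of a single double root on $\bbS^1$ and of two distinct double roots on $\bbS^1$ is exactly the paper's (re-use of the winding-number computations from Lemma~\ref{L:boundary-S21-C4lambda-pre} and Proposition~\ref{P:boundary-S21-C4lambda}); if anything your case list is the more complete one, since the paper's written proof only names the last two configurations explicitly. The genuine divergence is the case of two double roots off $\bbS^1$. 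The paper disposes of it in one line: by Corollary~\ref{C:adouble} the map $a \mapsto f_a$ extends continuously to $\overline{\mathcal{H}^2}$ near such an $a$, so the winding number $n$ is locally constant there and $a$ cannot be approached by both $V_1$ and $V_{-1}$. You instead argue topologically: you first exclude $\calS^2$-accumulation at $a$ by showing, again via Corollary~\ref{C:adouble}, that a limit of Sym points would have to be a unimodular root of $\gcd(\calB_a)=p$, whose roots $\beta,\bar\beta^{-1}$ lie off $\bbS^1$; you then observe that $O \cap \mathcal{H}^2$ is, in a $\mathrm{Sym}^2(\bbC)$-chart by the two roots near $\beta$, the complement of the (complex, hence real codimension two) discriminant, so it is connected, and a connected set inside the disjoint open union $V_1 \cup V_{-1}$ meets only one piece. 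Both routes are sound; the paper's is shorter because it exploits the continuity of $f_a$ and the winding-number formula directly, while yours trades that citation for an explicit connectedness argument — which does rely on the perturbed roots staying off $\bbS^1$ so that positivity and membership in $\mathcal{H}^2$ are preserved, true here precisely because the double roots are off $\bbS^1$.
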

\begin{proof}
The second equality is obvious. For a sequence in $\calS^2$ with limit in $\partial \calS^2$, the sequence of the corresponding $\lambda_0 \in \bbS^1$ has a convergent subsequence, therefore $\bigcup_{\lambda_0\in \bbS^1} \partial \calS^2_{\lambda_0}$ is closed, whence the third equality follows.

Due to Lemma~\ref{L:intro:S2Vpm1} we have $\overline{\calS^2} \subset \overline{V_1} \cap \overline{V_{-1}} \subset \overline{\mathcal{H}^2}$. To prove the first equality, it therefore suffices to show that the points $a \in \partial \mathcal{H}^2 \setminus \partial \calS^2$ do not belong to $\overline{V_1} \cap \overline{V_{-1}}$. Any such $a$ either has two double roots off $\bbS^1$ or two different double roots on $\bbS^1$. In the first case, $a \mapsto f_a$ has a continuous extension to $\overline{\mathcal{H}^2}$ near $a$ by Corollary \ref{C:adouble}, so by \cite[equation~(2)]{CS}, the winding number $n(a)$ is locally constant on that neighbourhood. In the proof of Proposition~\ref{P:boundary-S21-C4lambda} we showed that in the second case, the winding number $n(a)$ is also locally constant near $a$. In either case, this implies $a\not\in \overline{V_1} \cap \overline{V_{-1}}$.
\end{proof}
%
%
\section{A blowup at the Sym point}\label{Se:blow up sym point}
By Proposition~\ref{P:boundary-S21-C4lambda}, the  boundary of \,$\calS^2_1$\, in \,$\bbC^4[\lambda]$\, contains exactly one element $a(\lambda)=(\lambda-1)^4$ whose spectral curve has geometric genus zero. In this section we blowup this spectral curve to a one--dimensional family of elliptic curves. This family will become important in the proof of Theorem~\ref {Th:main}. More precisely, in Lemma~\ref{Le:case a} the composition $\phi\circ b$ is extended to this family and maps it to the boundary hypotenuse $\{\big(\varphi,\pi-\varphi)\mid\varphi\in[0,\pi]\}$ of the triangle $(\phi\circ b)[\calS_1^2]=\triangle$. The central element of this family is mapped by $\phi$ to $(\frac{\pi}{2},\frac{\pi}{2})$ and belongs to the Wente family which we study in section~\ref{Se:wente}.

After describing how we blowup $\calS^2_1$ at the boundary point $a(\lambda)=(\lambda-1)^4$, in Theorem~\ref{th:blow-up} we give an explicit description of the exceptional fibre of this blowup. The third statement of the theorem identifies the blowup with a set of spectral curves with properties similar to the defining properties of $\calS^2_1$. The second statement then explicitly describes this set of spectral curves.

Let us first present a different representation $\Hat{\calS}^2_0$ of  $\calS^2_1$. We replace the parameter $\lambda$ by a variant of its Cayley transform, namely the unique M\"obius transform which maps  $\lambda=0,\infty,1,\mi,-1,-\mi$ to  $\varkappa=\mi,-\mi,0,1,\infty, -1$, respectively. This transform is given by 
\begin{equation}
\label{eq:moebius-kappa}
\lambda \mapsto \varkappa=\frac{\lambda-1}{\mi(\lambda+1)}\,.
\end{equation}
The corresponding transformed objects are decorated by\; $\Hat{{}}$\;. Due to Proposition~\ref{P:boundary-S21-C4lambda} the elements of $\calS^2_1$ and their limits do not vanish at $\lambda=-1$ which corresponds to $\varkappa=\infty$. This is used in Lemma~\ref{L:boundary:S20-bounded} to show that $\Hat{\calS}^2_0$ is a bounded subset of $\bbR^4[\varkappa]$ with compact closure. 

\begin{definition}\label{def:hat objects}
Let $\Hat{\calH}^2$ denote the space of polynomials $a\in\bbR^4[\varkappa]$ with the following three properties: the highest coefficient is $1$, the values at real $\varkappa$ are positive and all roots are pairwise different. For any $a\in\Hat{\calH}^2$ let $\Hat{\calB}_a$ denote the  subspace of $b\in\bbR^3[\varkappa]$ such that $\Hat{\Theta}_b=\frac{b(\varkappa)}{\Hat{\nu}}\frac{d\varkappa}{\varkappa^2+1}$ is a meromorphic 1-form on the curve $\{(\varkappa,\Hat{\nu})\mid\Hat{\nu}^2=(\varkappa^2+1)a(\varkappa)\}$ with purely real periods. Finally, we denote
\begin{align*}
\Hat{\calS}^2_0&:=\Big\{a\in\Hat{\calH}^2\mid b(0)=0\text{ for all }b\in\Hat{\calB}_a\Big\}\quad\text{and}\\
\Hat{\calF}&:=\{(a,b_1,b_2)\in\Hat{\calS}^2_0\times\bbR^3[\varkappa]\times \bbR^3[\varkappa]\mid b_1, b_2\text{ is a basis of }\Hat{\calB}_a\}.
\end{align*}
\end{definition}
The differentials \,$\mi \Hat{\Theta}_{\Hat{b}}$\, with \,$\Hat{b} \in \Hat{\calB}_a$\, correspond under the M\"obius transformation to the differentials \,$\Theta(b)$\, with \,$b\in \calB_a$\,. 
Let us now construct a diffeomorphism $\calF\simeq\Hat{\calF}$:
\begin{lemma}\label{cayley transform}
The following map is a bundle diffeomorphism from $\calF$ onto $\Hat{\calF}$:
\begin{align*}
(a,b_1,b_2)\mapsto\left(\varkappa\mapsto\tfrac{(\mi+\varkappa)^4}{a(-1)}a(\tfrac{\mi-\varkappa}{\mi+\varkappa}),\varkappa\mapsto\tfrac{2\mi(\mi+\varkappa)^3}{\sqrt{a(-1)}}b_1(\tfrac{\mi-\varkappa}{\mi+\varkappa}),\varkappa\mapsto\tfrac{2\mi(\mi+\varkappa)^3}{\sqrt{a(-1)}}b_2(\tfrac{\mi-\varkappa}{\mi+\varkappa})\right).
\end{align*}
\end{lemma}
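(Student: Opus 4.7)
The plan is to verify the map is well-defined (lands in $\Hat{\calF}$) and admits a smooth inverse constructed from the inverse M\"obius transformation. We keep handy the identities $\lambda+1=2\mi/(\mi+\varkappa)$, $(\mi+\varkappa)(\mi-\varkappa)=-(1+\varkappa^2)$, and the pullback $d\lambda/\lambda=2\mi\,d\varkappa/(\varkappa^2+1)$. The positivity condition in the definition of $\calH^2$ evaluated at $\lambda=-1\in\bbS^1$ yields $a(-1)>0$, so $\sqrt{a(-1)}$ is an unambiguous positive real number.

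First I would check that $\Hat{a}\in\Hat{\calH}^2$. The M\"obius substitution is bijective on $\CPone$ and none of the four distinct roots of $a$ sits at $\lambda=-1$ by positivity, so they correspond bijectively to four distinct roots of $\Hat a$ in $\bbC$. The leading coefficient is $\lim_{\varkappa\to\infty}\varkappa^{-4}\Hat a(\varkappa)=a(-1)/a(-1)=1$. For $\varkappa\in\bbR$ one has $\lambda\in\bbS^1$ and the algebraic identity $(\mi+\varkappa)^4\lambda^2=(1+\varkappa^2)^2$ gives $\Hat a(\varkappa)=\frac{(1+\varkappa^2)^2}{a(-1)}\cdot\lambda^{-2}a(\lambda)>0$. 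Real values on $\bbR$ together with polynomiality then imply $\Hat a\in\bbR^4[\varkappa]$. Similarly, writing $b_i=c_0+c_1\lambda+\bar c_1\lambda^2+\bar c_0\lambda^3$ (the shape forced by $b_i\in P_\bbR^3$) and using $\overline{\mi+\varkappa}=-(\mi-\varkappa)$ for real $\varkappa$, one checks that $(\mi+\varkappa)^3 b_i(\lambda)$ is purely imaginary on $\bbR$, so after multiplication by $2\mi/\sqrt{a(-1)}$ we obtain a real-valued polynomial, i.e.\ $\Hat b_i\in\bbR^3[\varkappa]$. The vanishing $\Hat b_i(0)=0$ is immediate from $b_i(1)=0$, which is the defining condition of $\calS^2_1$.

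The heart of the proof is the differential correspondence. Substituting $\Hat a(\varkappa)=\frac{(\mi+\varkappa)^4}{a(-1)}a(\lambda)$ into $\Hat\nu^2=(\varkappa^2+1)\Hat a(\varkappa)$ and using $\nu^2=\lambda a(\lambda)$ yields $\nu^2=-a(-1)\Hat\nu^2/(\mi+\varkappa)^6$, so a consistent choice of branch gives $\nu=\mi\sqrt{a(-1)}\,\Hat\nu/(\mi+\varkappa)^3$. Combined with $d\lambda/\lambda=2\mi\,d\varkappa/(\varkappa^2+1)$ and the definition of $\Hat b$, a direct computation produces $\Theta(b)=-\mi\,\Hat\Theta_{\Hat b}$. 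Hence $\Theta(b)$ has purely imaginary periods on $\Sigma_a$ iff $\Hat\Theta_{\Hat b}$ has real periods on $\Hat\Sigma_{\Hat a}$, i.e.\ $b\in\calB_a\iff\Hat b\in\Hat\calB_{\Hat a}$. Since $b\mapsto\Hat b$ is manifestly $\bbR$-linear, it maps the $2$-dimensional space $\calB_a$ isomorphically onto $\Hat\calB_{\Hat a}$, so bases go to bases. Combined with the previous paragraph, this shows $\Hat a\in\Hat\calS^2_0$ and $(\Hat a,\Hat b_1,\Hat b_2)\in\Hat\calF$.

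For the inverse, solving the M\"obius relations yields $a(\lambda)=\frac{a(-1)(\lambda+1)^4}{16}\Hat a\!\left(\frac{\lambda-1}{\mi(\lambda+1)}\right)$ with the positive real value $a(-1)$ fixed by the normalisation $|a(0)|=1$ (equivalently, unit modulus of the leading coefficient of $a$); an analogous formula recovers the $b_i$. All these expressions are rational and nowhere singular on the domain, hence smooth, and the bundle projection is visibly preserved on either side. The only real obstacle is careful bookkeeping of signs and powers of $\mi$ in the differential identification; once that is settled, everything else is routine computation.
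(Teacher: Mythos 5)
Your proposal is correct and follows essentially the same route as the paper: verifying $\Hat a\in\Hat{\calH}^2$ via the monicity and the identity $(\mi+\varkappa)^4\lambda^2=(1+\varkappa^2)^2$, checking reality of $\Hat b_i$ from the $P^3_\bbR$ condition on the unit circle, identifying the curves via $\nu^2=-a(-1)\Hat\nu^2/(\mi+\varkappa)^6$ and the differentials via $d\lambda/\lambda=2\mi\,d\varkappa/(\varkappa^2+1)$, and matching $b(1)=0$ with $\Hat b(0)=0$. The only difference is that you spell out the inverse map explicitly, which the paper leaves implicit; this adds nothing essential but is harmless.
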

\begin{proof}
All three functions on the right hand side are polynomials with respect to $\varkappa$ and denoted by $(\Hat{a},\Hat{b}_1,\Hat{b}_2)$. The highest coefficient of $\Hat{a}$ is the alternating sum of the coefficients of $a$ divided by $a(-1)$, and hence \,$\Hat{a}$\, is monic. Since $\varkappa\mapsto\tfrac{\mi-\varkappa}{\mi+\varkappa}$ is the inverse of the Cayley transform $\lambda\mapsto\tfrac{\lambda-1}{\mi(\lambda+1)}$ and maps real $\varkappa$ onto unimodular $\lambda$, the following formula shows $\Hat{a}\in\Hat{\calH}^2$ for $a\in\calH^2$:
$$ \hat{a}(\varkappa) = \tfrac{(\mi+\varkappa)^4}{a(-1)}a(\tfrac{\mi-\varkappa}{\mi+\varkappa})=\tfrac{(\varkappa^2+1)^2}{a(-1)}(\tfrac{\mi-\varkappa}{\mi+\varkappa})^{-2}a(\tfrac{\mi-\varkappa}{\mi+\varkappa}) > 0 
\quad\text{for \,$a\in\calH^2$\, and \,$\varkappa\in\bbR$\,.} $$

For $\lambda=\tfrac{\mi-\varkappa}{\mi+\varkappa}$ we have
$$\lambda a(\lambda) = \tfrac{\mi-\varkappa}{\mi+\varkappa}a(\tfrac{\mi-\varkappa}{\mi+\varkappa})=-\tfrac{a(-1)}{(\mi+\varkappa)^6}(\varkappa^2+1)\tfrac{(\mi+\varkappa)^4}{a(-1)}a(\tfrac{\mi-\varkappa}{\mi+\varkappa}) \; . $$ 
Therefore
\begin{gather}\label{maps between curves}
(\lambda,\nu)\mapsto\left(\varkappa,\hat{\nu}\right)\quad\text{with}\quad\varkappa=\tfrac{\lambda-1}{\mi(\lambda+1)}\text{ and }\Hat{\nu}=\tfrac{\mi(\mi+\varkappa)^3}{\sqrt{a(-1)}}\nu
\end{gather}
maps the curve $\{(\lambda,\nu)\mid\nu^2=\lambda a(\lambda)\}$ onto the curve $\{ (\varkappa,\Hat{\nu}) \mid \Hat{\nu}^2=(\varkappa^2+1)\Hat{a}(\varkappa)\}$. Since $\tfrac{d\lambda}{\lambda}$ transforms into $\tfrac{\mi+\varkappa}{\mi-\varkappa}d(\tfrac{\mi-\varkappa}{\mi+\varkappa})=\tfrac{2\mi d\varkappa}{\varkappa^2+1}$ the following linear operator maps $\calB_a$ onto $\Hat{\calB}_{\Hat{a}}$:
\begin{align*}
b&\mapsto\hat{b},&\text{with}&&\hat{b}(\varkappa)&=\tfrac{2\mi(\mi+\varkappa)^3}{\sqrt{a(-1)}}b(\tfrac{\mi-\varkappa}{\mi+\varkappa})
\end{align*}
It is an isomorphism since $a(-1)>0$. The following calculation shows $\Hat{b}\in\bbR^3[\varkappa]$ for $b\in P_\bbR^3$:
$$\overline{\Hat{b}(\Bar{\varkappa})}=\tfrac{-2\mi(-\mi+\varkappa)^3}{\sqrt{a(-1)}}\overline{b\big(\tfrac{\mi-\Bar{\varkappa}}{\mi+\Bar{\varkappa}}\big)}=\tfrac{2\mi(\mi-\varkappa)^3}{\sqrt{a(-1)}}\overline{b\big(\overline{\tfrac{\mi+\varkappa}{\mi-\varkappa}}\big)}=\tfrac{2\mi(\mi-\varkappa)^3}{\sqrt{a(-1)}}(\tfrac{\mi-\varkappa}{\mi+\varkappa})^{-3}b(\tfrac{\mi-\varkappa}{\mi+\varkappa})=\tfrac{2\mi(\mi+\varkappa)^3}{\sqrt{a(-1)}}b(\tfrac{\mi-\varkappa}{\mi+\varkappa})=\Hat{b}(\varkappa).$$
Now $b$ vanishes at $\lambda=1$ if and only if $\Hat{b}$ vanishes at $\varkappa=0$, which finishes the proof.
\end{proof}
The element $a(\lambda)=(\lambda-1)^4$ of the family in Proposition~\ref{P:boundary-S21-C4lambda} corresponds to $\Hat{a}(\varkappa)=\varkappa^4$ in the closure of $\Hat{\calS}^2_0$ in $\bbR^4[\varkappa]$. It will turn out that this point in the closure of $\calS^2_1$ is the limit of all sequences in \,$\calS^2_1$\, whose image under \,$\phi \circ b$\, converges to the boundary hypotenuse $\{(\varphi,\pi-\varphi)\mid\varphi\in[0,\pi]\}$ of the triangle $\phi[\calS_1^2]=\triangle$. Therefore we shall blow up this point to a one--dimensional family of elliptic curves. In Lemma~\ref{Le:case a} we shall see that \,$\phi \circ b$\, continuously extends to a diffeomorphism from this family onto the boundary hypotenuse. The construction of the blowup is simplified by the fact that all the lower-order coefficients of \,$\hat{a}(\varkappa)=\varkappa^4$\, vanish. This is an additional advantage of using the coordinate \,$\varkappa$\, instead of \,$\lambda$\,. 
\begin{lemma}\label{map psi}
We define $\bbS^3[\varkappa]=\{\varkappa^4+a_1\varkappa^3+a_2\varkappa^2+a_3\varkappa+a_4\in\bbR^4[\varkappa]\mid a_1^{12}+a_2^6+a_3^4+|a_4|^3=1\}$ and the action
\begin{align}\label{action}
\bbR_+\times\bbR^4[\varkappa]&\to\bbR^4[\varkappa],&(C,a)&\mapsto C.a&\text{with}&&(C.a)(\varkappa)&=C^4a(\tfrac{\varkappa}{C}) \;,
\end{align}
which extends to an action of $[0,\infty)$ on $\bbR^4[\varkappa]$. Its restriction to $\bbR_+\times\bbS^3[\varkappa]$ is a diffeomorphism
\begin{gather}\label{diffeo psi}
\Psi:\bbR_+\times\bbS^3[\varkappa]\to\bbR^4[\varkappa]\setminus\{\varkappa^4\}.
\end{gather}
\end{lemma}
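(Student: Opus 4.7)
My plan is to verify the statement by (i) writing the action in coordinates and identifying it as a weighted dilation, (ii) exhibiting an explicit inverse via a weighted homogeneous norm, and (iii) checking regularity. Writing $a(\varkappa) = \varkappa^4 + a_1\varkappa^3 + a_2\varkappa^2 + a_3\varkappa + a_4$, a direct expansion of $C^4 a(\varkappa/C)$ gives
\[
(C.a)(\varkappa) = \varkappa^4 + C a_1 \varkappa^3 + C^2 a_2 \varkappa^2 + C^3 a_3 \varkappa + C^4 a_4,
\]
which is polynomial in $(C, a_1, a_2, a_3, a_4)$. Hence the action extends smoothly to $C \in [0,\infty)$ with $0.a = \varkappa^4$, and the group law $C_1.(C_2.a) = (C_1 C_2).a$ is immediate.

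The exponents $12, 6, 4, 3$ appearing in the defining equation of $\bbS^3[\varkappa]$ are chosen precisely so that the function
\[
N(a) \;:=\; \bigl(a_1^{12} + a_2^6 + a_3^4 + |a_4|^3\bigr)^{1/12}
\]
satisfies the homogeneity $N(C.a) = C\,N(a)$ for all $C \geq 0$. Since each summand is non-negative and they vanish simultaneously if and only if $a_1 = a_2 = a_3 = a_4 = 0$, one has $N(a) > 0$ precisely on $\bbR^4[\varkappa] \setminus \{\varkappa^4\}$, and $\bbS^3[\varkappa] = N^{-1}(1)$. The candidate inverse is
\[
\Psi^{-1}:\; c \;\longmapsto\; \bigl(N(c),\; N(c)^{-1}.c\bigr),
\]
which sends $\bbR^4[\varkappa] \setminus \{\varkappa^4\}$ into $\bbR_+\times\bbS^3[\varkappa]$; that it is a two-sided inverse follows at once from the homogeneity of $N$ and the group law of the action.

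It remains to verify smoothness. The map $\Psi$ is polynomial in its arguments, so is of class $C^\infty$. To show $\bbS^3[\varkappa]$ is a submanifold and hence $\Psi^{-1}$ is differentiable, I would compute the gradient of $F(a) := N(a)^{12}$, namely
\[
\nabla F(a) = \bigl(12 a_1^{11},\; 6 a_2^5,\; 4 a_3^3,\; 3|a_4|\,a_4\bigr),
\]
which vanishes only at $a = \varkappa^4$; the implicit function theorem then exhibits $\bbS^3[\varkappa]$ as a regular level set of $F$ and shows $N^{-1}$ is differentiable on the open complement of $\varkappa^4$. The main technical subtlety is that $|a_4|^3$ is of class $C^2$ but not $C^3$ at $a_4 = 0$, so the manifold structure on $\bbS^3[\varkappa]$ and the inverse $\Psi^{-1}$ are a priori only $C^2$ along the stratum $\{a_4 = 0\}$; this regularity suffices for the subsequent applications of $\Psi$ in the paper, and everywhere else the map is $C^\infty$.
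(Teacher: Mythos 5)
Your proof is correct and takes essentially the same route as the paper, whose entire argument is to introduce the weighted-homogeneous length $|a|=\big(a_1^{12}+a_2^6+a_3^4+|a_4|^3\big)^{1/12}$, note $|C.a|=C\,|a|$, and write the inverse as $a\mapsto\big(|a|,\tfrac{1}{|a|}.a\big)$. Your extra observation that $|a_4|^3$ is only $C^2$ at $a_4=0$ --- so that $\bbS^3[\varkappa]$ as an embedded level set, and hence $\Psi^{-1}$, are a priori only $C^2$ along the stratum $\{a_4=0\}$ --- is a legitimate point that the paper's proof does not address; it is harmless for the later applications (which use $\Psi$ only through bijectivity, continuity and the scaling property) and would disappear entirely if one normalised instead by $a_1^{24}+a_2^{12}+a_3^{8}+a_4^{6}=1$.
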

\begin{proof}
The set $\bbS^3[\varkappa]$ is the subset of monic $a\in\bbR^4[\varkappa]$ with $|a|=1$ with the following length:
$$a\mapsto|a|:=\big(a_1^{12}+a_2^6+a_3^4+|a_4|^3\big)^{\frac{1}{12}},\text{ which has the property }|C.a|=C|a|\text{ for all }C>0.$$
Due to this property the inverse of $\Psi$ is given by $a\mapsto\big(|a|,\frac{1}{|a|}.a\big)$.
\end{proof}
Now we are ready to define the blowup of $\Hat{\calS}^2_0$ at $\varkappa^4$:
\begin{definition}
The exceptional fibre of the blowup of $\Hat{\calS}^2_0$ at $\varkappa^4$ is defined as the subset $\Hat{\calE}^2_0\subset\bbS^3[\varkappa]$ such that $\{0\}\times\Hat{\calE}^2_0$ is the intersection of the closure of $\Psi^{-1}[\Hat{\calS}^2_0]$ in $[0,\infty)\times\bbS^3[\varkappa]$ with $\{0\}\times\bbS^3[\varkappa]$.
\end{definition}
This blowup is constructed in analogy to the real blowup $[0,\infty)\times\bbS^3$ of the point $0\in\bbR^4$. For the polynomials the usual scalar multiplication of the vector space $\bbR^4$ is replaced by the action~\eqref{action}. In order to state the subsequent theorem we now give the analogue to Definition~\ref{def:hat objects}.
\begin{definition}
For any $a\in\overline{\Hat{\calH}^2}$ let $\calB^\circ_a$ be the subspace of $b\in\bbR^3[\varkappa]$ such that the 1-form 
\begin{align}\label{limit 1-form blow up 3}
\Theta^\circ(b)&:=\frac{b(\varkappa)}{\nu}d\varkappa
\end{align}
is meromorphic without residues and has purely real periods on the curve $\{(\varkappa,\nu)\mid\nu^2=a(\varkappa)\}$. Furthermore, let $\calS^\circ_0$ denote $\{a\in\Hat{\calH}^2\mid b(0)=0\text{ for all }b\in\calB^\circ_a\}$ and $\overline{\calS^\circ_0}$ its closure in $\bbR^4[\varkappa]$.
\end{definition}
This definition emphasises the similarity between $\Hat{\calB}_a$ and $\calB^\circ_a$ and between $\Hat{\calS}^2_0$ and $\calS^\circ_0$. We shall see in the proof of the following theorem that this similarity extends to some statements of Lemma~\ref{L:boundary-S21-C4lambda-pre}. We have $\Theta^\circ(a')=2d\nu$, and hence the space $\calB^\circ_a$ contains the derivative $a'$, which is not contained in $\Hat{\calB}_a$.
Therefore $\calB^\circ_a$ will turn out to have a natural basis $(b,a')$, where $b$ is the unique monic element of $\calB^\circ_a\cap\bbR^2[\varkappa]$. In particular we have \,$a_3=0$\, for \,$a\in \overline{\calS_0^\circ}$\,. Note that the \,$\Theta(b)^\circ$\, with \,$b\in \calB_a^\circ$\, have up to third order poles at both points corresponding to \,$\varkappa = \infty$\,. 

Since all conditions on the elements of $\calS^\circ_0$ are invariant with respect to rescaling of the parameter $\varkappa$, this space is together with $\overline{\calS^\circ_0}\setminus\{\varkappa^4\}$ invariant with respect to the action~\eqref{action}. In the next theorem we shall show that the exceptional fibre $\Hat{\calE}^2_0$ is exactly the quotient of $\overline{\calS^\circ_0}\setminus\{\varkappa^4\}$ by this action. This just means that the condition on all elements of $\Hat{\calB}_a$ to vanish at $\varkappa=0$ is preserved by the blowup. In part~(iii) of the theorem we represent the elements of the quotient space of $\overline{\calS^\circ_0}\setminus\{\varkappa^4\}$ by the action~\eqref{action} in terms of a different normalisation, namely $a_2=1$. This normalisation is easier to preserve along the corresponding Whitham flow than the normalisation of $\bbS^3[\varkappa]$ and simplifies the description of the set of orbits of the action~\eqref{action} on $\overline{\calS^\circ_0}\setminus\{\varkappa^4\}$.
\begin{theorem}\label{th:blow-up}
\hspace{2em}
\begin{enumerate}
\item[(i)] The action~\eqref{action} preserves $\calS^\circ_0$ and $\overline{\calS^\circ_0}\setminus\{\varkappa^4\}$.
\item[(ii)] The map $\big({-}\tfrac{2}{\sqrt{3}},\tfrac{2}{\sqrt{3}}\big)\to\calS^\circ_0\cap\bbS^3[\varkappa],a_1\mapsto\big(a_1^{12}+1+g^3(a_1)\big)^{-\frac{1}{12}}.\big(\varkappa^4+a_1\varkappa^3+\varkappa^2+g(a_1)\big)$ is bijective, where $g:\big({-}\frac{2}{\sqrt{3}},\frac{2}{\sqrt{3}}\big)\to(0,\infty)$ is an analytic function with the following properties:
\begin{align}\label{def g}
g(-a_1)&=g(a_1)\text{ and }g(a_1)>\tfrac{(3a_1^2-4)(a_1^2-4)}{8(8-a_1^2)}\text{ for all }a_1\in\big({-}\tfrac{2}{\sqrt{3}},\tfrac{2}{\sqrt{3}}\big),&\lim_{a_1\to\pm\frac{2}{\sqrt{3}}}g(a_1)&=0.
\end{align}
\item[(iii)] The exceptional fibre $\Hat{\calE}^2_0$ is equal to $\{0\}\times\big(\overline{\calS^\circ_0}\cap\bbS^3[\varkappa]\big)$.
\end{enumerate}
\end{theorem}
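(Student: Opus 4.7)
The plan is to prove the three parts in order. Part (i) reduces to a direct substitution-of-variables calculation, part (ii) is the technical core requiring period analysis on a family of elliptic curves, and part (iii) combines (i) and (ii) via a continuity/compactness argument.

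For part (i), I would substitute $\varkappa = C\tilde{\varkappa}$ in the curve $\nu^2 = (C.a)(\varkappa) = C^4 a(\tilde{\varkappa})$, which maps isomorphically onto $\tilde{\nu}^2 = a(\tilde{\varkappa})$ via $\tilde{\nu} = \nu/C^2$; the form $b(\varkappa)\,d\varkappa/\nu$ transforms to $\tilde{b}(\tilde{\varkappa})\,d\tilde{\varkappa}/\tilde{\nu}$ with $\tilde{b}(\tilde{\varkappa}) = b(C\tilde{\varkappa})/C$. This sets up a bijection $\calB^\circ_{C.a} \leftrightarrow \calB^\circ_a$ preserving residues, periods, and (since $\varkappa = 0$ corresponds to $\tilde{\varkappa} = 0$) vanishing at the origin, so $C.a \in \calS^\circ_0 \iff a \in \calS^\circ_0$; invariance of $\overline{\calS^\circ_0}\setminus\{\varkappa^4\}$ then follows by continuity.

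For part (ii), the starting observation is that $\Theta^\circ(a') = 2\,d\nu$ is exact, so $a' \in \calB^\circ_a$. Since $a'$ has degree $3$, the evaluation $\calB^\circ_a \to \bbR$, $b\mapsto b_3$, is surjective with one-dimensional kernel $\calB^\circ_a \cap \bbR^2[\varkappa]$, and $\calB^\circ_a$ admits the basis $(b, a')$ with $b$ the unique monic quadratic element. Writing $b(\varkappa) = \varkappa^2 + c_1\varkappa + b_0$, a local expansion of $\Theta^\circ(b)$ at $\varkappa = \infty$ yields residues $\pm(c_1 - a_1/2)$ at the two points above infinity, forcing $c_1 = a_1/2$; the real-period condition on the cycle anti-symmetric under $\rho$ then uniquely determines $b_0 = b_0(a)$ as a real-analytic function of $a$. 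For $a \in \calS^\circ_0$ I would translate the conditions $b(0)=0$ for all $b \in \calB^\circ_a$ into $a_3 = 0$ (from $a'(0) = 0$) and $b_0(a) = 0$; after normalising $a_2 = 1$ via the scaling action, this becomes a single equation $b_0(a_1, a_4) = 0$ to solve for $a_4$. The non-degeneracy $\partial b_0/\partial a_4 \neq 0$ follows from Riemann bilinear relations (the period of $d\varkappa/\nu$ on the anti-symmetric cycle does not vanish), so the implicit function theorem yields the claimed analytic function $a_4 = g(a_1)$. Symmetry $g(-a_1) = g(a_1)$ is immediate from the involution $\varkappa \mapsto -\varkappa$, which fixes $a_2, a_4$ and flips $a_1$. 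The interval $(-2/\sqrt{3}, 2/\sqrt{3})$ and the boundary limits $g \to 0$ at its endpoints come from combining the condition $a \in \Hat{\calH}^2$ with the implicit equation: at $a_1 = \pm 2/\sqrt{3}$, $g(a_1) \to 0$ and the quartic $\varkappa^2(\varkappa^2 + a_1\varkappa + 1)$ acquires a double root at the origin, leaving $\Hat{\calH}^2$. The strict lower bound $g(a_1) > (3a_1^2-4)(a_1^2-4)/(8(8-a_1^2))$ is a non-trivial property of the implicit equation, to be verified by analysing the sign of the difference along the curve $a_4 = g(a_1)$.

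For part (iii), I would establish both inclusions of $\Hat{\calE}^2_0 = \overline{\calS^\circ_0} \cap \bbS^3[\varkappa]$. For $\subseteq$: given $(C_n, a_n) \to (0, a)$ with $\Psi(C_n, a_n) \in \Hat{\calS}^2_0$, the rescaling $\varkappa = C_n\tilde{\varkappa}$ converts the genus-$2$ spectral curve $\Hat{\nu}^2 = (\varkappa^2+1)(C_n.a_n)(\varkappa)$ into $\tilde{\nu}_n^2 = (1 + C_n^2\tilde{\varkappa}^2)\,a_n(\tilde{\varkappa})$, which degenerates to $\tilde{\nu}^2 = a(\tilde{\varkappa})$ as the branch points $\pm\mi$ recede to infinity on the rescaled curve (dropping the genus from $2$ to $1$). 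An argument along the lines of Lemma~\ref{L:adouble} and Corollary~\ref{C:adouble} shows the correspondingly rescaled basis of $\Hat{\calB}_{\Psi(C_n, a_n)}$ converges to a basis of $\calB^\circ_a$, and $\Hat{b}(0) = 0$ passes to $b(0) = 0$, giving $a \in \overline{\calS^\circ_0}$. For $\supseteq$, every $a \in \calS^\circ_0 \cap \bbS^3[\varkappa]$ should be approximated by a sequence in $\Psi^{-1}[\Hat{\calS}^2_0]$ via the implicit function theorem on $[0,\infty) \times \bbS^3[\varkappa]$, using the smoothness from (ii); closing up yields the full inclusion. The main technical obstacle lies in (ii): identifying the exact interval $(-2/\sqrt{3}, 2/\sqrt{3})$ and verifying the strict inequality $g(a_1) > (3a_1^2-4)(a_1^2-4)/(8(8-a_1^2))$ require delicate period and discriminant calculations on the elliptic curve, and the analysis of the degeneration at the endpoints is especially subtle.
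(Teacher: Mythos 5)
Your part~(i) coincides with the paper's argument (the substitution $(\varkappa,\nu)\mapsto(C\varkappa,C^2\nu)$ and the induced isomorphism $\calB^\circ_a\to\calB^\circ_{C.a}$), and your reduction of membership in $\calS^\circ_0$ to the two conditions $a_3=0$ and $b(0)=0$ for the unique monic $b\in\calB^\circ_a\cap\bbR^2[\varkappa]$ is also how the paper begins part~(ii). But from that point on there are genuine gaps. The implicit function theorem with $\partial b_0/\partial a_4\neq 0$ only gives that the zero set of $b_0$ is locally a graph over $a_1$; it does not give that this zero set is a \emph{single} branch, that it is a graph over exactly the interval $\big({-}2/\sqrt{3},2/\sqrt{3}\big)$, that $g\to 0$ at the endpoints, or the strict inequality $g(a_1)>\tfrac{(3a_1^2-4)(a_1^2-4)}{8(8-a_1^2)}$ --- and these are precisely the content of (ii). Your proposal defers all of them ("to be verified by analysing the sign of the difference"), but supplies no mechanism that would produce that particular rational function. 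In the paper these facts come from a completely different device: explicit polynomial Whitham vector fields on the coefficients (normalised to $a_2=1$), for which $h_1(a_1)=\tfrac{(3a_1^2-4)(a_1^2-4)}{8(8-a_1^2)}$ is the nullcline of $\dot a_1$; the graph of $g$ is identified as the intersection of the region $R_4$ with the stable manifold of the hyperbolic critical point at $(a_1,a_4)=(2/\sqrt{3},0)$, and points with $a_4\le h_1(a_1)$ are excluded because their integral curves cross $\{0\}\times(0,\tfrac14)$, which Lemma~\ref{lemma 1} forbids (the only admissible point on $a_1=0$ is $(0,\alpha_0^{-2})$ with $\alpha_0^{-2}>\tfrac14$). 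Moreover, the endpoints $\pm 2/\sqrt{3}$ are pinned down by Lemma~\ref{lemma 5}, which shows that boundary points of $\calS^\circ_0$ outside $\Hat{\calH}^2$ must satisfy $a_2=\tfrac34 a_1^2$; your explanation ("the quartic acquires a double root at the origin") would apply to $a_4\to 0$ at \emph{any} value of $a_1$ and so does not single out $2/\sqrt{3}$. Your proposal contains no analogue of this lemma.

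There is a second gap in part~(iii), in the inclusion $\overline{\calS^\circ_0}\cap\bbS^3[\varkappa]\subset\Hat{\calE}^2_0$. Invoking the implicit function theorem "using the smoothness from (ii)" is circular: one must actually exhibit genus-two curves in $\Hat{\calS}^2_0$ (where \emph{all} elements of the two-dimensional space $\Hat{\calB}_a$ vanish at $\varkappa=0$) converging into the exceptional fibre, and the local structure of $\Hat{\calS}^2_0$ near the degenerate point $\varkappa^4$ is exactly what is in question. The paper instead transfers the winding-number argument of Lemma~\ref{L:boundary-S21-C4lambda-pre}: a small connected neighbourhood in $\Psi^{-1}[\Hat{\calH}^2]$ meets both $\Psi^{-1}[\Hat V_1]$ and $\Psi^{-1}[\Hat V_{-1}]$, hence meets $\Psi^{-1}[\Hat{\calS}^2]$, and then a M\"obius correction $g_n$ moves the common root of the approximating bases exactly to $\varkappa=0$ to land in $\Hat{\calS}^2_0$ rather than merely $\Hat{\calS}^2$. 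Your $\subseteq$ direction is essentially the paper's Lemma~\ref{lemma 3}, but you would still need Lemma~\ref{lemma 5} to handle limits that leave $\Hat{\calH}^2$.
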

The proof of this theorem uses arguments similar to those in the proof of Lemma~\ref{L:boundary-S21-C4lambda-pre} and is prepared in four lemmata. Let us now briefly describe how we use these lemmata. Lemma~\ref{lemma 1} is used twice. It contains an important part of the proof of Lemma~\ref{lemma 2} and also of the proof of Theorem~\ref{T:wente:wente}. Lemma~\ref{lemma 2} extends the inequality $\deg(\gcd(\calB_a))\le 1$ for $a\in\calH^2$ to $\deg(\gcd(\calB^\circ_a))\le 1$ for $a\in\Hat{\calH}^2$. Lemma~\ref{lemma 3} shows that given any $a$ in the exceptional fibre of the blowup of $\Hat{\calH}^2$ and any $b\in\calB^\circ_a$, there exist sequences \,$(a_n)$\, in \,$\Hat{\calH}^2$\, and $(b_n) \in\Hat{\calB}_{a_n}$ such that \,$(a_n)$\, converges to \,$a$\, and the blown up sequence of \,$(b_n)$\, converges to \,$b$\,.
Lemma~\ref{lemma 5} characterises the elements of $\overline{\calS^\circ_0}\setminus\Hat{\calH}^2$ and the elements of $\Hat{\calE}^2_0\setminus\Hat{\calH}^2$. The first characterisation is used in the proof of Theorem~\ref{th:blow-up}~(ii) and the second will imply together with Lemma~\ref{lemma 3} that $\Hat{\calE}^2_0\subset\overline{\calS^\circ_0}$ which is part of Theorem~\ref{th:blow-up}~(iii).
\begin{lemma}\label{lemma 1}
The polynomial $\varkappa^4+\alpha\varkappa^2+1$ belongs to $\Hat{\calH}^2$ if and only if $\alpha\in(-2,2)\cup(2,\infty)$. For such $\alpha$ there exists a unique $\beta\in\bbR$ for which $\frac{\varkappa^2+\beta}{\nu}d\varkappa$ has real periods on the curve $\nu^2=\varkappa^4+\alpha\varkappa^2+1$. Further, for \,$\alpha \in (-2,2)$\,, $\beta$ is a strictly increasing function of $\alpha$ with $\beta<\tfrac{\alpha}{2}$ which extends continuously to $\alpha=\pm2$ with values $\beta=\frac{\alpha}{2}$. Finally $(\alpha-2\beta)^2<\alpha^2-4$ holds for $\alpha\in(2,\infty)$.
\end{lemma}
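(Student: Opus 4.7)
The proof is in three stages. Membership $\varkappa^4 + \alpha\varkappa^2 + 1 \in \Hat{\calH}^2$ is elementary: since the polynomial is monic with real coefficients and equals $1$ at $\varkappa = 0$, substituting $y = \varkappa^2 \ge 0$ reduces strict positivity on $\bbR$ to analysis of $y^2 + \alpha y + 1$, whose two roots by Vieta have product $+1$ (and hence the same sign); a case split on the discriminant $\alpha^2-4$ and the sign of the roots shows $a > 0$ on $\bbR$ exactly when $\alpha > -2$, and the distinct-roots requirement excludes $\alpha = \pm 2$, giving $\alpha\in(-2,2)\cup(2,\infty)$. For uniqueness of $\beta$, the key observations are that $\Sigma_a = \{\nu^2 = a(\varkappa)\}$ has genus one, that $(\varkappa^2+\beta)\,d\varkappa/\nu$ varies with $\beta$ by multiples of the unique holomorphic differential $\omega = d\varkappa/\nu$, and that it is residue-free with second-order poles only above $\varkappa=\infty$. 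The Lefschetz formula applied to $\rho\colon(\varkappa,\nu)\mapsto(\bar\varkappa,\bar\nu)$ yields $\mathrm{tr}(\rho_*|H_1(\Sigma_a,\bbR))=0$, hence a unique (up to scale) $\rho_*$-antisymmetric cycle $C_-$; reality of periods on the $\rho_*$-symmetric cycle is automatic, while on $C_-$ it becomes a single $\bbR$-linear equation in $\beta$ with nonzero coefficient $\int_{C_-}\omega$, yielding a unique real $\beta$.

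For $\alpha \in (-2,2)$, I would express $\beta$ explicitly by pushing the form through the degree-$2$ cover $(\varkappa,\nu)\mapsto(y,\mu) = (\varkappa^2,\varkappa\nu)$ onto the Weierstrass elliptic curve $\mu^2 = y^3 + \alpha y^2 + y$, on which $(y+\beta)\,dy/(2\mu)$ becomes the induced form and $\beta$ appears as a weighted average of $y$ against a positive weight over an explicit real interval determined by the branch points. Monotonicity $d\beta/d\alpha > 0$ follows by differentiating the defining integral identity under the integral sign and recognising the result as a strict Cauchy--Schwarz-type inequality (the weight is not concentrated at a single point). The bound $\beta < \alpha/2$ is a pointwise estimate of $y$ against the constant $\alpha/2$ on the integration interval, strict in the interior; at $\alpha \to \pm 2$ the branch points coalesce, the endpoint singularities of the weight dominate both numerator and denominator at the same rate, and dominated convergence forces the weighted average to the endpoint value $\pm 1 = \alpha/2$.

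The hardest step is the strict inequality $(\alpha-2\beta)^2 < \alpha^2 - 4$ for $\alpha\in(2,\infty)$, where the four roots of $a$ collapse onto the imaginary axis at $\pm i\varkappa_1,\pm i\varkappa_2$ with $\varkappa_{1,2}^2 = (\alpha\mp\sqrt{\alpha^2-4})/2$, the real locus of $\Sigma_a$ acquires a second component, and the explicit description of $C_-$ must be re-established before $\beta$ can be written as an elliptic integral. After the same reduction to the Weierstrass form $\mu^2 = y(y-y_1)(y-y_2)$ with $y_{1,2}<0$, the inequality is equivalent to $\beta^2-\alpha\beta+1<0$, i.e.\ $\beta\in(\varkappa_1^2,\varkappa_2^2)$, and reduces to a sharp comparison of the weighted-average representation of $\beta$ against the endpoints of the relevant integration interval. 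Care is required to orient $C_-$ consistently with the $\alpha \to 2^+$ limit (where $\beta \to 1$ and $\varkappa_{1,2}^2 \to 1$) and with $\alpha \to \infty$ (where $\varkappa_1 \to 0$, $\varkappa_2 \to \infty$); these two limits together pin down the orientation and the direction of the inequality.
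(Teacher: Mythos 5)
Your opening two steps are fine and essentially match the paper: membership in $\Hat{\calH}^2$ is the elementary discriminant analysis, and the existence and uniqueness of $\beta$ comes down to the fact that on the genus--one curve the reality of all periods reduces to the vanishing of the single, automatically purely imaginary, period over a $\rho$--anti--invariant cycle, which is one $\bbR$--linear equation in $\beta$ with nonzero coefficient $\int d\varkappa/\nu$. The gap is in everything after that. Your plan for $\alpha\in(-2,2)$ rests on writing $\beta$ as a weighted average of $y=\varkappa^2$ against a \emph{positive} weight over a \emph{real} interval determined by the branch points, but for $\alpha\in(-2,2)$ no such interval exists: the four roots of $\varkappa^4+\alpha\varkappa^2+1$ are the unimodular points $\pm e^{\pm\mi\theta/2}$ with $\cos\theta=-\alpha/2$, so in the model $\mu^2=y(y^2+\alpha y+1)$ the finite branch points are $0$ and a non--real conjugate pair $y_1,\bar{y}_1$, and the anti--invariant cycle projects to a path joining $y_1$ to $\bar{y}_1$ (equivalently, the vertical segment joining $\varkappa_1$ to $\bar{\varkappa}_1$ in the right half plane), along which $\mu$, resp.\ $\nu$, is genuinely complex. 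The vanishing of the period is then one real equation mixing $\RE(\varkappa^2/\nu)$ and $\IM(\varkappa^2/\nu)$; it is not of the form $\int(\varkappa^2-c)\,w$ with $w>0$, so the pointwise estimate for $\beta<\alpha/2$, the Cauchy--Schwarz argument for monotonicity, and the dominated--convergence limit at $\alpha=\pm2$ all lose their starting point. The paper avoids exactly this: it derives the isoperiodic ODE $\dot{\alpha}=4-\alpha^2$, $\dot{\beta}=1-\alpha\beta+\beta^2$ preserving the real--period condition, reads off strict monotonicity from the signs of $\dot\alpha,\dot\beta$, gets the boundary values $\beta\to\pm1$ from the removable--singularity Lemma~\ref{L:adouble}, and proves $\alpha-2\beta>0$ by showing the trajectory is the unstable manifold of a blown--up non--hyperbolic equilibrium at $(\alpha,\beta)=(-2,-1)$, the inequality then being propagated by the sign of $\tfrac{d}{dt}(\alpha-2\beta)$ on $\{\alpha=2\beta\}$. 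This ODE mechanism, not an integral estimate, is the substance of the middle statement.

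For $\alpha\in(2,\infty)$ your weighted--average representation does exist (all branch points lie on $\mi\bbR$), but you leave open precisely the point on which the final inequality turns: which of the two real segments, $\mi(\varkappa_1,\varkappa_2)$ where $a<0$, or $\mi(-\varkappa_1,\varkappa_1)$ where $a>0$, carries the period that must vanish. Averaging $t^2$ over the first puts $\beta\in(\varkappa_1^2,\varkappa_2^2)$, i.e.\ the claimed inequality; over the second it puts $\beta\in(0,\varkappa_1^2)$, i.e.\ the reverse inequality. Orientation cannot decide this — reversing a cycle changes the sign of its period, not whether it vanishes — so your ``two limits pin down the orientation and the direction of the inequality'' step does not do the work. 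What is needed is the $\rho$--symmetry type of the two cycles, and here you should be careful: on $\mi(-\varkappa_1,\varkappa_1)$ one has $\nu\in\bbR$ and $d\varkappa\in\mi\bbR$, so $(\varkappa^2+\beta)\,d\varkappa/\nu$ is purely imaginary there and the cycle over that segment is the anti--invariant one with automatically imaginary period, whereas the cycle around the cut $\mi[\varkappa_1,\varkappa_2]$ (where the form is real) has automatically real period. The paper's proof instead imposes the vanishing of the integral along the subintervals where $a<0$. Identifying the correct cycle, and reconciling the answer with the direction of the stated inequality, is the entire content of the last claim and is absent from your proposal.
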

\begin{remark}
The unique root \,$\alpha_0 \in (0,2)$\, of the function \,$\alpha \mapsto \beta$\, from Lemma~\ref{lemma 1} will be important in several places in the sequel, including Theorem~\ref{T:wente:wente} on the Wente family. This number can be described explicitly in terms of elliptic functions. Numerically, one obtains $\alpha_0 \approx 1.3044\dotsc$.
\end{remark}
\begin{proof}
Set $a(\varkappa)=\varkappa^4+\alpha\varkappa^2+1$ and $b(\varkappa)=\varkappa^2+\beta$. If $\alpha\le-2$ or $\alpha=2$, then $a$ has real roots, which is excluded by $a(\varkappa)>0$ for real $\varkappa$. Next we consider $\alpha\in(2,\infty)$. In this case $a$ has four purely imaginary roots and is negative on two subintervals of $\mi\bbR\setminus\{0\}$. The 1-form $b(\varkappa/\nu\,d\varkappa$ is real on these subintervals. Hence there exists a unique $\beta\in\bbR$ such that $b$ has two roots in both subintervals and the integral of $b(\varkappa/\nu\,d\varkappa$ along these subintervals vanish. This is equivalent to all periods being purely real. An explicit calculation yields $(\alpha-2\beta)^2\le\alpha^2-4$ for such $\beta$. This shows the last statement.

If $\alpha\in(-2,2)$ then $a$ has for all combinations of signs of the real and the imaginary part a unique root and the real elliptic curve $\{(\varkappa,\nu)\in\bbC^2\mid\nu^2=a(\varkappa)\}$ is smooth and endowed with the anti--holomorphic involution $(\varkappa,\nu)\mapsto(\Bar{\varkappa},\Bar{\nu})$. Figure~\ref{fig:canonical basis} shows a canonical basis of the first homology group.
\begin{figure}
\centering
	\begin{tikzpicture}
	\begin{scope}[scale=1.0]
	\draw[densely dotted] (-4,0) to (4,0);
	\draw[densely dotted] (0,-2.5) to (0,2.5);
	\fill (2,1) circle[radius=0.04];
	\fill (2,-1) circle[radius=0.04];
	\fill (-2,1) circle[radius=0.04];
	\fill (-2,-1) circle[radius=0.04];
	\draw [->] (2.5,0) to [out=90,in=0] (2,1.2) to [out=180,in=90] (1.5,0) to [out=270,in=180] (2,-1.2) to [out=0,in=270] (2.5,0);
	\draw [->] (0,1.7) to [out=180,in=90] (-2.2,1) to [out=270,in=180] (0,0.3) to [out=0,in=270] (2.2,1) to [out=90,in=0] (0,1.7);
	\draw (2.45,0.5) node[anchor=west] {\footnotesize $A$};
	\draw (-1.2,1.7) node[anchor=south] {\footnotesize $B$};
	\draw (4,0) node[anchor=west] {\footnotesize $\bbR$};
	\draw (0,2.5) node[anchor=south] {\footnotesize $\mi\bbR$};
	\end{scope}
	\end{tikzpicture}
\caption{Canonical basis of the first homology group} \label{fig:canonical basis}
\end{figure}
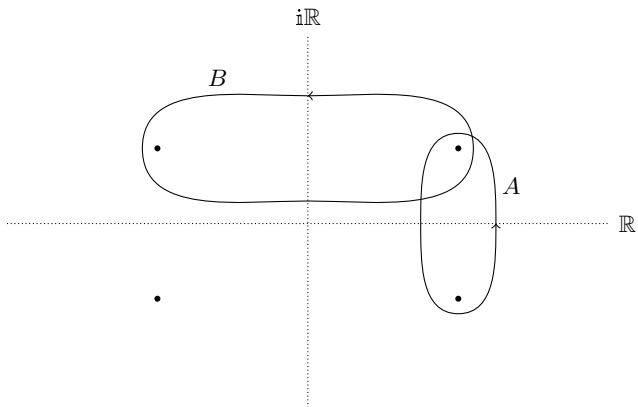
The fixed point set of $\rho$ consists of two cycles over $\varkappa\in\bbR$ which are both homologous to $\pm B$, and the cycle $A$ is anti--symmetric with respect to $\rho$, since $\rho$ maps $A$ into itself with two fixed points. As $\rho^\ast b(\varkappa)/\nu \,d\varkappa = \overline{b(\varkappa)/\nu \,d\varkappa}$, the integral of this 1-form along $A$ is imaginary and the integral along $B$ is real. Hence this 1-form has purely real periods, if and only if the integral along $A$ vanishes. Due to $\int_A\tfrac{d\varkappa}{\nu}\in\mi\bbR\setminus\{0\}$ there exists a unique $\beta\in\bbR$ such that $b(\varkappa)/\nu\,d\varkappa$ has purely real periods. This implies the second statement.

For $\alpha=-2$ the curve has two double points at $\varkappa=\pm 1$ and for $\alpha=-2$ at $\varkappa=\pm\mi$. Due to Lemma~\ref{L:adouble}, the pullbacks to the normalisation of the limits of the family of 1-forms $b(\varkappa)/\nu\,d\varkappa$ as \,$\alpha \to \pm 2$\, 
are holomorphic at the double points of the elliptic curve at $\varkappa^2=\pm 1$. In particular, the function $\alpha\mapsto\beta$ extends continuously to $\alpha=\pm2$ with the values $\beta=\frac{\alpha}{2}$.

It remains to prove the monotonicity of this function and $\alpha-2\beta>0$ for $\alpha\in(-2,2)$. By the theory of Whitham deformations, for any polynomial \,$c$\, there exists a flow preserving the periods of the meromorphic 1-form $\gamma\,b(\varkappa)/\nu\,d\varkappa$ such that the $t$-derivative of the local anti--derivative of the 1-form is equal to the global meromorphic function $c/\nu$. The Whitham equations then take the form
\begin{align}\label{eq:whitham hat}
	\frac{d}{dt}\gamma\frac{b(\varkappa)}{\nu}&=\frac{d}{d\varkappa}\frac{c(\varkappa)}{\nu}&\Longleftrightarrow&&2a(\varkappa)(\dot{\gamma}b(\varkappa)+\gamma\dot{b}(\varkappa))-\dot{a}(\varkappa)\gamma b(\varkappa)&=2a(\varkappa)c'(\varkappa)-a'(\varkappa)c(\varkappa).
\end{align}
Here we choose $c(\varkappa)=\gamma\varkappa(1-\alpha\beta+\beta^2)+\tfrac{1}{2}\gamma\varkappa(\alpha-2\beta)b(\varkappa)$. In our choice of \,$c$\,, the first summand eliminiates the poles in the resulting ode for \,$(\alpha, \beta, \gamma)$\, at the zeros of the resultant \,$1-\alpha\beta+\beta^2$\, of \,$a$\, and \,$b$\,, and the second summand serves as a rescaling to preserve the condition that \,$a(0)=1$\,. 

Comparing coefficients of $\varkappa^6$ in \eqref{eq:whitham hat} gives $\dot{\gamma}=\tfrac{1}{2}(\alpha-2\beta)\gamma$. Comparing coeffcients of $\varkappa^0$ gives
$$2(\dot{\gamma}\beta+\gamma\dot{\beta})=2\gamma(1-\alpha\beta+\beta^2+\tfrac{1}{2}(\alpha-2\beta)\beta)=\gamma(2-\alpha\beta),$$
which yields $\dot{\beta}=\tfrac{1}{2}(2-\alpha\beta-(\alpha-2\beta)\beta)=1-\alpha\beta+\beta^2$. We insert $\dot{\beta}$ and $\dot{\gamma}$ and calculate
$$\dot{a}(\varkappa)=\frac{a'(\varkappa)c(\varkappa)+2a(\varkappa)(\dot{\gamma}b(\varkappa)+\gamma\dot{b}(\varkappa)-c'(\varkappa))}{\gamma b(\varkappa)}=(4-\alpha^2)\varkappa^2.$$
In conclusion we obtain the following vector field on $\{(\alpha,\beta,\gamma)\in\bbR^3\mid\gamma\ne0\}$:
\begin{align*}
\dot{\alpha}&=4-\alpha^2,&\dot{\beta}&=1-\alpha\beta+\beta^2,&\dot{\gamma}=\tfrac{1}{2}(\alpha-2\beta)\gamma.
\end{align*}
Since $\dot{\alpha}$ depends only on $\alpha$, and $\dot{\beta}$ and $\tfrac{d}{dt}\ln\gamma$ depend only on $\alpha$ and $\beta$, this is also a vector field on $\alpha\in\bbR$ and on $(\alpha,\beta)\in\bbR^2$. The positivity of $\dot{a}$ and $\dot{b}$ for $\alpha\in(-2,2)$ implies that at the end points of any maximal trajectory in $(\alpha,\beta)\in(-2,2)\times\bbR$ either $\alpha\to\pm2$ or $\beta\to\pm\infty$ and that $\alpha\mapsto\beta$ is strictly increasing. We proved above that $\beta$ remains finite for $\alpha\in(-2,2)$ and converges at the end points $\alpha=\pm 2$ to $\beta\to\tfrac{1}{2}\alpha$. Hence $\alpha\mapsto\beta$ is strictly increasing on the unique maximal trajectory with purely real periods of $b(\varkappa)/\nu\,d\varkappa$ with the end points $(\alpha,\beta)=\pm(2,1)$.

For $\alpha\in(-2,2)$ and $\beta=\tfrac{1}{2}\alpha$ we have $\tfrac{d}{dt}(\alpha-2\beta)=2-\alpha^2+2\alpha\beta-2\beta^2=\tfrac{1}{2}(4-\alpha^2-(\alpha-2\beta)^2)>0$. Therefore the flow preserves $\alpha>2\beta$, and this inequality holds for all $\alpha\in(-2,2)$ if it holds for $\alpha\in(-2,-2+\epsilon)$ for some $\epsilon>0$. For the proof we show that $\alpha\mapsto\beta$ moves along an unstable manifold of the vector field at $(\alpha,\beta)=(-2,-1)$ on which $\alpha>2\beta$ holds. The root $(\alpha,\beta)=(-2,-1)$ is not hyperbolic, so we blow up the vector field by the coordinates $(x,y)$ for $\alpha=xy-2$ and $\beta=x-1$. Here $(x,y)\in\{0\}\times\bbR$ is the exceptional fibre of the blowup and the vector field transforms to
\begin{align*}
\dot{x}&=\dot{\beta}=1\!+\!(x\!-\!1)(x\!-\!xy\!+\!1)
=x(x+y-xy),&\dot{y}&=\tfrac{\dot{\alpha}-\dot{x}y}{x}=\tfrac{4-(xy-2)^2-x(x+y-xy)y}{x}=y(4\!-\!x\!-\!y).
\end{align*}
The flow can leave the exceptional fibre only at a root of $\dot{y}$ for which $x=0$. There are only two such roots: $(x,y)=(0,0)$ and $(x,y)=(0,4)$. The Jacobian of the vector field is $\big(\begin{smallmatrix}2x+y-2xy&x-x^2\\-y&4-x-2y\end{smallmatrix}\big)$. At the roots the Jacobian is equal to $\big(\begin{smallmatrix}0&0\\0&4\end{smallmatrix}\big)$ and $\big(\begin{smallmatrix}4&0\\-4&-4\end{smallmatrix}\big)$, respectively. The root at $(x,y)=(0,0)$ is not hyperbolic and the unique eigenvector of the unique non--zero eigenvalue corresponds to a trajectory which remains inside the exceptional fibre. The other root at $(x,y)=(0,4)$ is hyperbolic with a one--dimensional stable manifold and a one--dimensional unstable manifold. The stable manifold is also contained in the exceptional fibre $(x,y)\in\{0\}\times\bbR$, but the unstable manifold moves out of the exceptional fibre. For small positive $x$, and $y$ nearby $4$ we have $\alpha=xy-2>-2$, $\beta=x-1>-1$ and $\alpha-2\beta=x(y-2)>0$. The logarithmic derivative of $\gamma$ is $\tfrac{1}{2}\alpha-2\beta=\tfrac{1}{2}x(y-2)$ and nearby $(x,y)=(0,4)$ is approximately $2x$ and thus smaller than $\dot{x}$ which is approximately $4x$. Therefore there exists a solution of $\gamma$ on the unstable manifold with positive $x$, on which $(\alpha,\beta,\gamma)$ converges in the limit $t\to-\infty$ to $(-2,-1,1)$. In this limit the integral of $\gamma\,b(\varkappa)/\nu \,d\varkappa$ along the cycle $A$ vanishes, since $A$ collapses to the point at $\varkappa=1$. Since this integral is preserved along the flow, this integral vanishes on the entire unstable manifold. Hence this unstable manifold moves along the unique function $\alpha\mapsto\beta$ such that the periods of $b(\varkappa)/\nu\,d\varkappa$ are purely real. In particular, the inequality $\alpha>2\beta$ holds along this function, since $\alpha-2\beta=x(y-2)$ is positive for sufficiently negative $t$.
\end{proof}
\begin{lemma}\label{lemma 2}
Consider $a(\varkappa)=\varkappa^4+a_1\varkappa^3+\frac{1}{4}a_1^2\varkappa^2+a_4\in\bbR^4[\varkappa]$ and $b(\varkappa)=\varkappa(\varkappa+\tfrac{1}{2}a_1)\in\bbR^2[\varkappa]$ such that $a(\varkappa)\ge0$ for real $\varkappa$ and $\tfrac{b(\varkappa)}{\nu}d\varkappa$ has on the curve $\nu^2=a(\varkappa)$ real periods. Then $a_4$ vanishes.
\end{lemma}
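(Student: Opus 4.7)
The plan is to shift the variable to eliminate the cubic and linear coefficients of \,$a$\, and \,$b$\,, then rescale to reduce the claim directly to Lemma~\ref{lemma 1}. Substituting $y = \varkappa + a_1/4$ gives
\[
a(\varkappa) = y^4 - \tfrac{a_1^2}{8}\,y^2 + \delta, \qquad b(\varkappa) = y^2 - \tfrac{a_1^2}{16},
\]
with $\delta := a_4 + a_1^4/256$. A direct computation then reveals the striking identity $a = b^2 + a_4$. Since $a(0) = a_4$ and $a \ge 0$ on $\bbR$, we have $a_4 \ge 0$ immediately. I argue by contradiction and assume $a_4 > 0$, so that $\delta > 0$.

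Since $\delta > 0$, the real rescaling $z := y/\delta^{1/4}$ brings $a$ and $b$ into exactly the normalised form of Lemma~\ref{lemma 1}:
\[
\tilde a(z) := \frac{a}{\delta} = z^4 + \tilde\alpha\, z^2 + 1, \qquad \tilde b(z) := \frac{b}{\sqrt{\delta}} = z^2 + \tilde\beta,
\]
where $\tilde\alpha = -a_1^2/(8\sqrt{\delta})$ and $\tilde\beta = -a_1^2/(16\sqrt{\delta})$. The decisive relation is $\tilde\beta = \tilde\alpha/2$, together with $\tilde\alpha \le 0$. Since the change of variable is real with $\delta^{1/4}>0$, one checks that $b/\nu\,d\varkappa = \delta^{1/4}\,\tilde b/\tilde\nu\,dz$ on the curve $\tilde\nu^2 = \tilde a(z)$, so reality of all periods is preserved under this reduction. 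Moreover the positivity $a \ge 0$ on $\bbR$ transfers to $\tilde a \ge 0$ on $\bbR$, which forces $\tilde\alpha \ge -2$.

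It then remains to invoke Lemma~\ref{lemma 1}. For $\tilde\alpha \in (-2,2)$ that lemma yields the strict inequality $\tilde\beta < \tilde\alpha/2$, in direct conflict with the identity $\tilde\beta = \tilde\alpha/2$ established above. Combined with the range $\tilde\alpha \in [-2,0]$, only the boundary value $\tilde\alpha = -2$ remains to be ruled out. But $-a_1^2/(8\sqrt{\delta}) = -2$ forces $\sqrt{\delta} = a_1^2/16$, hence $\delta = a_1^4/256$, and substituting back into the defining relation $\delta = a_4 + a_1^4/256$ yields $a_4 = 0$, contradicting the standing assumption $a_4 > 0$. This completes the contradiction, forcing $a_4 = 0$. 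The main delicacy is to verify that the shift and the real rescaling genuinely preserve reality of periods (so that the strict inequality of Lemma~\ref{lemma 1} on the open range $\tilde\alpha \in (-2,2)$ can be wielded exactly as stated) and to handle the singular endpoint $\tilde\alpha = -2$ by a direct algebraic contradiction rather than through the lemma itself.
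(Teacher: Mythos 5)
Your proof is correct and follows essentially the same route as the paper: your shift by $-a_1/4$ and rescaling by $\delta^{1/4}=\sqrt[4]{a_4+(a_1/4)^4}$ are exactly the paper's substitution $\varkappa\mapsto C\varkappa-\tfrac14 a_1$ with $C=\delta^{1/4}$, producing the same normalised pair with $\tilde\beta=\tilde\alpha/2$ and the same contradiction with the strict inequality $\beta<\tfrac{\alpha}{2}$ of Lemma~\ref{lemma 1}. The only cosmetic difference is that the paper excludes the endpoint $\tilde\alpha=-2$ directly from $C>\tfrac14|a_1|$ (an immediate consequence of $a_4>0$), whereas you rule it out by back-substitution; the two observations are logically identical.
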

\begin{proof}
The conditions on $a$ imply $a_4=a(0)\ge0$ which in turn implies $(\tfrac14 a_1)^4 + a_4 \ge0$. We now assume \,$a_4 > 0$\,, choose \,$C = \sqrt[4]{(\tfrac14 a_1)^4 + a_4} > \tfrac14|a_1| \geq 0$\,
and consider the transformed polynomials 
\begin{align*}
C^{-4}a(C\varkappa-\tfrac{1}{4}a_1)&=(\varkappa-\tfrac{a_1}{4C})^2(\varkappa-\tfrac{a_1}{4C}+\tfrac{a_1}{2C})^2+\tfrac{a_4}{C^4}=\varkappa^4+\alpha\varkappa^2+1&\text{with}&&\alpha&=-2(\tfrac{a_1}{4C})^2\,,\\
C^{-2}b(C\varkappa-\tfrac{1}{4}a_1)&=(\varkappa-\tfrac{a_1}{4C})(\varkappa-\tfrac{a_1}{4C}+\tfrac{a_1}{2C})=\varkappa^2+\beta&\text{with}&&\beta&=\tfrac12 \alpha.
\end{align*}
This transformation preserves the property that $\tfrac{b(\varkappa)}{\nu}d\varkappa$ has purely real periods on the curve $\nu^2 = a(\varkappa)$. Furthermore, the transformed polynomials satisfy the conditions of Lemma~\ref{lemma 1}. Since \,$C > \tfrac14 |a_1|$\, we have  \,$-2<\alpha\leq 0$\,, and therefore Lemma~\ref{lemma 1} implies \,$\beta < \tfrac{\alpha}{2}$\,, which is a contradiction. 
\end{proof}
\begin{lemma}\label{lemma 3}
Let $(a_n)_{n\in\bbN}$ be a sequence in $\bbS^3[\varkappa]\cap\calH^2$ which converges to $a \in \bbS^3[\varkappa]$, and let $(C_n)_{n\in\bbN}$ be a sequence of positive numbers with limit $0$. For any $b\in\calB^\circ_a$ there exists a sequence $(b_n)_{n\in\bbN}$ with $b_n\in\Hat{\calB}_{C_n.a_n}$ for all $n\in\bbN$ such that $(\Tilde{b}_n)_{n\in\bbN}$ with $\Tilde{b}_n(\varkappa)=C_n^{-1}b_n(C_n\varkappa)$ converges in $\bbR^3[\varkappa]$ to $b$.
\end{lemma}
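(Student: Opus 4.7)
The plan is to reformulate the statement via the obvious rescaling and then prove convergence of the resulting family of $2$-dimensional subspaces in the Grassmannian.

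\emph{Rescaling.} Substituting $\varkappa=C_n\tilde\varkappa$ in the period integrals defining $\Hat\calB_{C_n.a_n}$ and setting $\tilde\nu_n:=\hat\nu_n/C_n^2$ converts the equation $\hat\nu_n^2=(\varkappa^2+1)(C_n.a_n)(\varkappa)$ into $\tilde\nu_n^2=(C_n^2\tilde\varkappa^2+1)a_n(\tilde\varkappa)$. A direct calculation yields
\begin{equation*}
\frac{b_n(\varkappa)}{\hat\nu_n}\frac{d\varkappa}{\varkappa^2+1}=\frac{\tilde b_n(\tilde\varkappa)}{\tilde\nu_n}\frac{d\tilde\varkappa}{C_n^2\tilde\varkappa^2+1},\qquad \tilde b_n(\tilde\varkappa)=C_n^{-1}b_n(C_n\tilde\varkappa),
\end{equation*}
so $b_n\in\Hat\calB_{C_n.a_n}$ if and only if $\tilde b_n$ lies in the $2$-dimensional real subspace $V_n\subset\bbR^3[\varkappa]$ of polynomials for which the right-hand 1-form has purely real periods on the rescaled curve. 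The dimension of $V_n$ equals $2$ by Lemma~\ref{cayley transform}.

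\emph{Strategy.} Both $V_n$ and $\calB_a^\circ$ sit as $2$-planes in the $4$-dimensional real vector space $\bbR^3[\varkappa]$, and the Grassmannian of $2$-planes is compact. It therefore suffices to show that every subsequential Grassmannian accumulation point $V$ of $(V_n)$ is contained in $\calB_a^\circ$: the matching dimensions would then force $V=\calB_a^\circ$, give $V_n\to\calB_a^\circ$, and yield for each $b\in\calB_a^\circ$ an approximating sequence $\tilde b_n\in V_n$ with $\tilde b_n\to b$. The desired polynomials are then $b_n(\varkappa):=C_n\tilde b_n(\varkappa/C_n)\in\Hat\calB_{C_n.a_n}$.

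\emph{Passing to the limit.} Let $\tilde b_{n_k}\to\tilde b$ with $\tilde b_{n_k}\in V_{n_k}$; I want $\omega:=\tilde b(\tilde\varkappa)/\nu\,d\tilde\varkappa$ on $\nu^2=a(\tilde\varkappa)$ to be meromorphic without residues and to have purely real periods. The four cycles of the scaled genus-$2$ curve decompose into two that remain in a bounded region of the $\tilde\varkappa$-plane and converge to homology generators of the genus-$1$ limit curve, together with two that pass through the branch points $\pm i/C_n$ escaping to infinity. On the bounded cycles, uniform convergence $a_n\to a$ on compact sets and $(C_n^2\tilde\varkappa^2+1)^{-1}\to 1$ on bounded sets imply the periods of $\tilde\omega_{n_k}$ converge to those of $\omega$, so real periods pass to the limit. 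For the no-residue condition, a local expansion in the uniformiser at $\tilde\varkappa=\pm i/C_n$ shows that $\tilde\omega_n$ has only double poles without residues there; as these branch points collide with $\infty^\pm$ of the limit curve, an argument in the spirit of Lemma~\ref{L:adouble} applied to suitable auxiliary cycles (together with a careful Laurent expansion at infinity) transfers the vanishing residue datum from $\pm i/C_n$ to $\mathrm{Res}_{\infty^\pm}\omega=0$, so $\tilde b\in\calB_a^\circ$.

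\emph{Main obstacle.} The delicate step is the residue analysis in the degenerating limit: $\tilde\omega_n$ has no pole at infinity on the scaled curve while $\omega$ has a third-order pole at $\infty^\pm$ on the limit curve, and one must track how the collision of the vanishing-residue branch points $\pm i/C_n$ with the points $\infty^\pm$ of the limit curve forces $\mathrm{Res}_{\infty^\pm}\omega$ to vanish in the limit. Once this is established, the dimension count gives $V=\calB_a^\circ$, the family $V_n$ converges in the Grassmannian to $\calB_a^\circ$, and the existence of the claimed sequence $(b_n)$ follows.
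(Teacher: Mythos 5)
Your Grassmannian strategy is structurally sound and genuinely different from the paper's argument, which instead coordinatizes $\Hat{\calB}_{C_n.a_n}$ by the value $b_n(\mi)$ and $\calB^\circ_a$ by its two highest coefficients, and then proves invertibility of the resulting composition by an explicit perturbation estimate. Compactness of the Grassmannian of $2$-planes together with the dimension count $\dim V_n=2=\dim\calB^\circ_a$ would indeed reduce the whole lemma to showing that every convergent sequence $\tilde b_{n_k}\in V_{n_k}$ has its limit in $\calB^\circ_a$; the period part of that reduction (convergence of periods along the two cycles that stay in a bounded region) is fine, and your route even avoids the separate boundedness argument the paper needs.

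However, the step you yourself flag as the ``main obstacle'' --- the vanishing of $\mathrm{Res}_{\infty^\pm}\omega$ --- is a genuine gap, and the mechanism you propose for closing it is the wrong one. The residues of $\tilde\omega_n$ at the branch points $\pm\mi/C_n$ vanish automatically for \emph{any} $1$-form that is odd under the hyperelliptic involution, so this local datum carries no information and cannot be ``transferred'' to the limit, where the two points over $\varkappa=\infty$ are unbranched and a nonzero pair of opposite residues is a priori possible; likewise a Laurent expansion at infinity only shows that the residue equals $\mp$ the $\varkappa$-coefficient of $\tilde b$, which is exactly the nontrivial condition to be proved. What actually forces the residues to vanish is a global period identity: take the cycle on $\tilde\nu_n^2=(C_n^2\tilde\varkappa^2+1)a_n(\tilde\varkappa)$ projecting to a path joining $\mi/C_n$ to $-\mi/C_n$ on one side of all roots of $a_n$ and anti-symmetric under $(\tilde\varkappa,\tilde\nu)\mapsto(\bar{\tilde\varkappa},\bar{\tilde\nu})$. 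Its period is purely imaginary by that anti-symmetry and purely real because $b_n\in\Hat{\calB}_{C_n.a_n}$, hence zero; since these cycles converge to a small loop around one of the points over $\varkappa=\infty$, the zero periods converge to $2\pi\mi$ times the residue there. Without this (or an equivalent) argument the inclusion $V\subset\calB^\circ_a$ --- and with it your entire dimension count --- is unsupported. You should also record, via Riemann's Bilinear Relations, that $\dim\calB^\circ_a=2$, since that equality is the engine of the argument.
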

\begin{remark}\label{rescaling b}
The map $(\varkappa,\nu) \mapsto (C_n\varkappa,C_n^2\nu)$ from $\Sigma_n = \{(\varkappa,\nu) \mid \nu^2=(C_n^2\varkappa^2+1)a_n(\varkappa)\}$ to $\Sigma_{C_n.a_n}=\{(\varkappa,\nu)\mid\nu^2=(\varkappa^2+1)(C_n.a_n)(\varkappa)\}$ pulls back the 1-form $b_n(\varkappa)/(\varkappa^2+1)\,\nu)\, d\varkappa$ on \,$\Sigma_{C_n.a_n}$\, to the 1-form $\Tilde{b}_n(\varkappa)/((C_n^2\varkappa^2+1)\nu) \,d\varkappa$ on \,$\Sigma_n$\,. The curves \,$\Sigma_n$\, converge for $n\to\infty$ to the curve $\Sigma_\infty = \{(\varkappa,\nu)\mid\nu^2=a(\varkappa)\}$. So Lemma~\ref{lemma 3} states that any 1-form $b(\varkappa)/\nu \,d\varkappa$ without residues and with purely real periods on \,$\Sigma_\infty$\, is the limit of a sequence of 1-forms on \,$\Sigma_n$\, which are the pullbacks of 1-forms $b_n(\varkappa)/((\varkappa^2+1)\nu)\,d\varkappa$ with purely real periods on the curves \,$\Sigma_{C_n.a_n}$\,.
\end{remark}
\begin{proof}
By Riemann's Bilinear Relations on any compact Riemann surface there exists  for any given basis of the first homology group a dual basis of the real space of holomorphic 1-forms with respect to the pairing given by the imaginary parts of the periods. This implies that for any Mittag Leffler distribution $f$ there exists a unique meromorphic 1-form $\omega$ such that $df-\omega$ is holomorphic and $\omega$ has purely real periods. Hence $\Hat{\calB}_{C_n.a_n}\to\bbC, b\mapsto b(\mi)$ is an isomorphism of real linear spaces. Similarly $\calB^\circ_a\to\bbR^2$, which maps $\Tilde{b}$ onto the two highest coefficients of $\Tilde{b}$ is an isomorphism. Now for given $a\in\bbS^3[\varkappa]\cap\calH^2$ the lemma follows from the combination of the following two statements:
\begin{enumerate}
\item[(i)] For sufficiently large $n$ the composition of the following maps is an \,$\bbR$-linear isomorphism: first map $z\in\bbC$ onto the unique $b_n\in\Hat{\calB}_{C_n.a_n}$ with $b_n(\mi)=z$, then map $b_n$ onto $\Tilde{b}_n$ with $\Tilde{b}_n(\varkappa)=C_n^{-1}b_n(C_n\varkappa)$ and finally map $\Tilde{b}_n$ onto the two highest coefficients of $\Tilde{b}_n$.
\item[(ii)] For any sequence $(b_n)_{n\in\bbN}$ with $b_n\in\Hat{\calB}_{C_n.a_n}$ the sequence $(\Tilde{b}_n)_{n\in\bbN}$ with $\Tilde{b}_n(\varkappa)=C_n^{-1}b(C_n\varkappa)$ converges in \,$\bbR^3[\varkappa]$\, if and only if the sequence of the two highest coefficients of $(\Tilde{b}_n)_{n\in\bbN}$ converges. Moreover, any such limit belongs to $\calB^\circ_a$.
\end{enumerate}
For the proof of (ii) we claim that the boundedness of the two highest coefficients of any such sequence $(\Tilde{b}_n)_{n\in\bbN}$ already implies that the sequence itself is bounded. In fact otherwise there exists such a sequence with $\|\Tilde{b}_n\|=1$ for some norm on $\bbR^3[\varkappa]$ such that the two highest coefficients converge to zero. After passing to a subsequence, \,$(\tilde{b}_n)$\, converges to an element $\Tilde{b}$ whose two highest coefficents vanish. Furthermore, all periods of $\Tilde{b}(\varkappa)/\nu\,d\varkappa$ on the limit curve $\nu^2= a(\varkappa)$ are limits of real numbers and hence real.
Choose a cycle on each of the curves $\nu^2=(C_n^2\varkappa^2+1)a_n(\varkappa)$ which is anti--symmetric with respect to the involution $(\varkappa,\nu)\mapsto(\Bar{\varkappa},\Bar{\nu})$ and which projects to a path on the \,$\varkappa$-plane which connects the branch points \,$\pm \mi/C_n$\, but passes on one side of all the roots of \,$a_n$\,. The periods of $\Tilde{b}_n(\varkappa)/\nu\,d\varkappa$ along these cycles vanish. These periods converge to the residue of $\Tilde{b}(\varkappa)/\nu\,d\varkappa$ at either of the points above \,$\varkappa=\infty$\,, and therefore both of these residues vanish. This implies $\Tilde{b}\in\bbR^1[\varkappa]\cap\calB^\circ_a$, and therefore $\Tilde{b}=0$ since the elements of $\calB^\circ_a$ are determined by their two highest coefficients. This contradicts $\|\Tilde{b}_n\|=1$ and shows the claim. 

Now suppose that the two highest coefficients of $(\Tilde{b}_n)_{n\in\bbN}$ converge. Then by the claim, this sequence is contained in a compact ball of $\bbR^3[\varkappa]$. The sequence $(\Tilde{b}_n)_{n\in\bbN}$ can have only one accumulation point, namely the unique element \,$\tilde{b} \in \calB_a^\circ$\, whose two highest coefficients are the limits of the highest coefficients of the sequence. Hence the sequence converges to \,$\tilde{b}$\,. 
Conversely, if $(\Tilde{b}_n)_{n\in\bbN}$ converges, the sequence of the two highest coefficents converges. This proves (ii).

For the proof of (i) we have to show that for sufficiently large $n$, the map from $b_n(\mi)$ to $(\Tilde{b}_{2,n},\Tilde{b}_{3,n})$ with $\Tilde{b}_n(\varkappa)=\Tilde{b}_{3,n}\varkappa^3+\Tilde{b}_{2,n}\varkappa^2+\Tilde{b}_{1,n}\varkappa+\Tilde{b}_{0,n}$ is an isomorphism.  Due to $b_n(\mi)=C_n\Tilde{b}_n(\mi/C_n)$ we have
\begin{align*}
\RE(b_n(\mi))&=-C_n^{-1}\Tilde{b}_{2,n}+C_n\Tilde{b}_{0,n},&\IM(b_n(\mi))&=-C_n^{-2}\Tilde{b}_{3,n}+\Tilde{b}_{1,n}.
\end{align*}
By the above claim, $|\Tilde{b}_{0,n}+\mi\Tilde{b}_{0,n}|\le M |\Tilde{b}_{2,n}+\mi\Tilde{b}_{2,n}|$ for some $M>0$. Hence for $C_n\le 1/\sqrt{3M}$, the composition of $x+\mi y\mapsto -C_n^{-1}x-C_n^{-2}\mi y$ with $b(\mi)\mapsto(\Tilde{b}_{2,n},\Tilde{b}_{3,n})$ obeys
$$|\Tilde{b}_{2,n}+\mi\Tilde{b}_{n,3}-(x+\mi y)|=C_n^2|\Tilde{b}_{0,n}+\mi\Tilde{b}_{1,n}|\le\tfrac{1}{3}|\Tilde{b}_{2,n}+\mi\Tilde{b}_{n,3}|.$$
We insert this into $|\Tilde{b}_{2,n}+\mi\Tilde{b}_{3,n}|\le|x+\mi y|+|\Tilde{b}_{2,n}+\mi\Tilde{b}_{n,3}-(x+\mi y)|$ and obtain
$$|\Tilde{b}_{2,n}+\mi\Tilde{b}_{n,3}-(x+\mi y)|\le\tfrac{1}{3}|\Tilde{b}_{2,n}+\mi\Tilde{b}_{3,n}|\le\tfrac{1}{2}|x+\mi y|.$$
Therefore $(x,y)\mapsto(\Tilde{b}_{2,n},\Tilde{b}_{3,n})$ is invertible, and hence the composition of $z\mapsto-(C_n\RE(z),C_n^2\IM(z))$ with this map is also invertible. This composition is the map in (i). This proves (i).
\end{proof}
\begin{lemma}\label{lemma 5}
Any $\varkappa^4+a_1\varkappa^3+a_2\varkappa^2+a_3\varkappa+a_4\in\big(\overline{\calS^\circ_0}\cup\Hat{\calE}^2_0\big)\setminus\Hat{\calH}^2$ satisfies $a_3=0=a_4$ and $a_2=\frac{3}{4}a_1^2$.
\end{lemma}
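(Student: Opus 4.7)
The starting observation common to both cases is that every $b\in\calB^\circ_a$ satisfies $b(0)=0$. For $a\in\overline{\calS^\circ_0}\setminus\Hat{\calH}^2$ this passes to the limit from the defining condition on a sequence $a_n\in\calS^\circ_0$ with $a_n\to a$. For $a\in\Hat{\calE}^2_0\setminus\Hat{\calH}^2$, Lemma~\ref{lemma 3} realises any $b\in\calB^\circ_a$ as $b=\lim\Tilde{b}_n$ with $\Tilde{b}_n(\varkappa)=C_n^{-1}b_n(C_n\varkappa)$ and $b_n\in\Hat{\calB}_{C_n.a_n}$; since $a_n\in\Hat{\calS}^2_0$ forces $b_n(0)=0$, one has $\Tilde{b}_n(0)=0$ and hence $b(0)=0$. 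Applying this to $a'\in\calB^\circ_a$ yields $a_3=a'(0)=0$. Combining $b(0)=0$ with the no-residue-at-infinity identity $b_1=\tfrac{1}{2}a_1$ for monic $b\in\bbR^2[\varkappa]$ identifies the unique monic quadratic in $\calB^\circ_a$ with $b(0)=0$ as $b=\varkappa(\varkappa+\tfrac{1}{2}a_1)$.

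\textbf{Step 1 ($a_4=0$).} Since $a\ge 0$ on $\bbR$ is a uniform limit of strictly positive polynomials with simple roots, $a\notin\Hat{\calH}^2$ forces a multiple root. By Lemma~\ref{L:adouble} (applied to $\mi b$ to convert real periods to purely imaginary ones), the pullback of $b/\nu\,d\varkappa$ to the normalisation is holomorphic at every higher-order root. A real double root at $\alpha\neq 0$ forces $b(\alpha)=\alpha(\alpha+\tfrac{1}{2}a_1)=0$, so $\alpha=-\tfrac{1}{2}a_1$; combining with $a(\alpha)=a'(\alpha)=0$ and $a_3=0$ yields $a_1(a_1^2-4a_2)=0$ and $a_4=\tfrac{1}{16}a_1^2(a_1^2-4a_2)$, both of which force $a_4=0$. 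A conjugate pair of double roots $a=(\varkappa^2-p\varkappa+q)^2$ with $q>0$ gives $a_3=-2pq=0$, so $p=0$, $a_1=0$, $b=\varkappa^2$, and then $b/\nu\,d\varkappa$ has a nonzero residue at each preimage of $\pm\mi\sqrt{q}$ on the normalisation, contradicting Lemma~\ref{L:adouble}. Hence $a_4=0$, and $a=\varkappa^2q(\varkappa)$ with $q(\varkappa)=\varkappa^2+a_1\varkappa+a_2$.

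\textbf{Step 2 ($a_2=\tfrac{3}{4}a_1^2$).} Take an approximating sequence $a_n\to a$ in $\calS^\circ_0$ (or, for $\Hat{\calE}^2_0$, the rescaled sequence provided by Lemma~\ref{lemma 3}, where the factor $(C_n^2\varkappa^2+1)^{-1}\to 1$ uniformly near $\varkappa=0$ contributes only at subleading order to the $A$-period). Factoring $a_n=((\varkappa-u_n)^2+v_n^2)q_n(\varkappa)$ with $u_n,v_n\to 0$ and $q_n\to q$, the identity $2u_nq_{n,0}=(u_n^2+v_n^2)q_{n,1}-a_{n,3}$ together with $a_{n,3}=o(v_n^2)$ (exactly zero for $a_n\in\calS^\circ_0$, and enforced by passage to a subsequence in the exceptional-fibre case) gives $u_n=\tfrac{v_n^2q_{n,1}}{2q_{n,0}}+o(v_n^2)$; in particular $u_n=O(v_n^2)$. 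The $A_n$-cycle around the pair of colliding roots is antisymmetric under $\rho$, so its period is purely imaginary, and the real-period condition forces $\oint_{A_n}b_n/\nu_n\,d\varkappa=0$ exactly. Parameterising $A_n$ by $\varkappa=u_n+\mi v_n\sin\theta$ with $\theta\in[-\pi,\pi]$ reduces the integrand to $\mi b_n(\varkappa)/\sqrt{q_n(\varkappa)}\,d\theta$; expanding $b_n$ and $1/\sqrt{q_n(\varkappa)}$ in $\varkappa$ to second order around $\varkappa=0$ and using $\int\sin\theta\,d\theta=0$, $\int\sin^2\theta\,d\theta=\pi$ yields
\begin{equation*}
	0=\oint_{A_n}\frac{b_n}{\nu_n}\,d\varkappa=\frac{\mi\pi}{\sqrt{q_{n,0}}}\Bigl[a_{n,1}u_n-\Bigl(1-\tfrac{a_{n,1}q_{n,1}}{4q_{n,0}}\Bigr)v_n^2\Bigr]+O(v_n^3).
\end{equation*}
Substituting the expansion of $u_n$ and sending $n\to\infty$ with $q_1=a_1$, $q_0=a_2$ gives $\tfrac{3a_1^2}{4a_2}=1$, i.e.\ $a_2=\tfrac{3}{4}a_1^2$. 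The degenerate sub-case $a_2=0$ is handled directly: then $a=\varkappa^3(\varkappa+a_1)\ge 0$ on $\bbR$ forces $a_1=0$, so $a=\varkappa^4$ and the statement holds trivially.

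The main obstacle is the second-order expansion of the vanishing $A$-period. The factor $\tfrac{3}{4}$ arises as the sum $\tfrac{a_1q_1}{2q_0}+\tfrac{a_1q_1}{4q_0}$ of two $v_n^2$-contributions: the first from $a_{n,1}u_n$, nonzero at this order precisely because of $a_{n,3}=0$, and the second from the $O(\varkappa)$ Taylor correction of $1/\sqrt{q_n(\varkappa)}$. Keeping signs consistent, checking that the $O(v_n)$ piece is odd in $\sin\theta$ and integrates to zero, and verifying that $u_n^2=O(v_n^4)$ together with the $(C_n^2\varkappa^2+1)^{-1}-1$ and residual $\Tilde{a}_{n,3}$ corrections in the exceptional-fibre case are of higher order, are the delicate points.
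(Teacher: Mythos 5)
Your argument follows the paper's broad strategy (get $a_3=0$ from $a'(0)=0$, identify the monic quadratic $b=\varkappa(\varkappa+\tfrac12 a_1)$, kill $a_4$ at the multiple root, then extract $a_2=\tfrac34a_1^2$ from the vanishing $A$-period around the colliding pair of roots), but your execution of the last step is genuinely different: you expand the vanishing period $\oint_{A_n}b_n/\nu_n\,d\varkappa$ explicitly to order $v_n^2$, whereas the paper converts the same information into the statement that the local holomorphic function $q_n/\nu_n$ has vanishing derivative at $\varkappa=0$ and deduces $\left.\tfrac{d}{d\varkappa}\right|_{\varkappa=0}b/a'=0$ by an order-of-vanishing argument. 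Your Step~1 is also more elementary than the paper's, which routes the case of a nonzero multiple root through Lemma~\ref{lemma 2}; your direct algebra (with $a_3=0$, the conditions $a'(-\tfrac12 a_1)=0$ and $a(-\tfrac12 a_1)=0$ force $a_4=0$) is correct, and your residue computation disposes of conjugate pairs of non-real double roots, which the paper handles only implicitly (the roots of $b$ are real). I have checked the second-order expansion: with $u_n=\tfrac{q_{n,1}}{2q_{n,0}}v_n^2+o(v_n^2)$ the surviving contributions are $\tfrac{a_1^2}{2a_2}+\tfrac{a_1^2}{4a_2}-1=0$, which is exactly $a_2=\tfrac34 a_1^2$.

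There is, however, a genuine gap in the exceptional-fibre half of Step~2. For $a\in\Hat{\calE}^2_0$ the approximating polynomials satisfy only $C_n.a_n\in\Hat{\calS}^2_0$; the rescaled basis elements $\Tilde{b}_{1,n},\Tilde{b}_{2,n}$ from Lemma~\ref{lemma 3} vanish at $\varkappa=0$, but $a_n'$ is not among them, so there is no a priori reason why $a_{n,3}=a_n'(0)$ should be $o(v_n^2)$ rather than merely $o(1)$, and ``passage to a subsequence'' cannot enforce a rate. Since $a_{n,3}=-2u_nq_{n,0}+(u_n^2+v_n^2)q_{n,1}$, the assertion $a_{n,3}=o(v_n^2)$ is exactly equivalent to the relation $u_n=\tfrac{q_{n,1}}{2q_{n,0}}v_n^2+o(v_n^2)$ that your expansion requires, so as written the step is circular. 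The estimate is true, and your own method supplies the missing argument: run the identical second-order expansion on the vanishing $A_n$-period of $\Tilde{b}_{2,n}\to a'$, which also vanishes at $0$ and has $\Tilde{b}_{2,n}'(0)\to 2a_2\neq0$; this gives $u_n=\bigl(\tfrac{3a_1}{4a_2}-\tfrac{a_1}{4a_2}\bigr)v_n^2+o(v_n^2)=\tfrac{a_1}{2a_2}v_n^2+o(v_n^2)$, which is the same asymptotics as in the $\calS^\circ_0$ case, after which your expansion for $\Tilde{b}_{1,n}\to b$ closes the argument. This is in essence how the paper sidesteps the issue: in the exceptional-fibre case it replaces $q_n/\nu_n$ by the quotient $q_{1,n}/q_{2,n}$ of the anti-derivatives of the two rescaled basis forms, so that no information about $a_n'$ itself is ever used.
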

\begin{proof}
For any \,$a \in \overline{\calS_0^\circ}$\, we have \,$a' \in \calB_a^\circ$\, and therefore \,$a_3=a'(0)=0$\,. By Lemma~\ref{lemma 3} the same holds for \,$a \in \Hat{\calE}^2_0$\,. Moreover, any \,$a \in \overline{\Hat{\calH}^2}\setminus \Hat{\calH}^2$\, has a higher order root \,$\varkappa^*$\,.
By Lemma~\ref{L:adouble}, the unique monic $b\in\calB^\circ_a\cap\bbR^2[\varkappa]$ vanishes at \,$\varkappa^*$\,. We have either \,$\varkappa^*=0$\, and then \,$a_4=0$\,, or \,$\varkappa^*\neq 0$\, and then  $\deg(\gcd(b,a'))\ge 2$. In the latter case we are in the situation of Lemma~\ref{lemma 2}, and therefore \,$a_4=0$\, holds in both cases. 
Because of \,$a \in \overline{\Hat{\calH}^2}$\,, we have \,$a(\varkappa) \geq 0$\, for real \,$\varkappa$\,. Therefore \,$a_2=0$\, implies \,$a_1=0$\,, which means  $a_2=\tfrac{3}{4}a_1^2$. Thus we suppose \,$a_2 \neq 0$\, in the sequel.
The lemma follows if in analogy to Lemma~\ref{L:boundary-S21-C4lambda-pre} for such $a$ and the corresponding $b=\varkappa(\varkappa+\tfrac{1}{2}a_1)$ the derivative
$$\frac{d}{d\varkappa}\frac{b(\varkappa)}{a'(\varkappa)}=\frac{\varkappa+\frac{1}{2}a_1}{4\varkappa^2+3a_1\varkappa+2a_2}=\frac{(4\varkappa^2+3a_1\varkappa+2a_2)-(8\varkappa+3a_1)(\varkappa+\tfrac{1}{2}a_1)}{(4\varkappa^2+3a_1\varkappa+2a_2)^2}$$
vanishes at $\varkappa=0$. In fact, this is equivalent to $a_2=\tfrac{3}{4}a_1^2$. 
We now proceed by arguments similar to those in the proof of \cite[Lemma~8]{CS1}. 

We first consider the case \,$a \in \overline{\calS^\circ_0}$\,.  
Then \,$a$\, is a limit of a sequence \,$(a_n)_{n\in \N}$\, in \,$\calS^\circ_0$\,. By Lemma~\ref{L:adouble}, the corresponding sequence \,$(b_n)_{n\in \N}$\, of the unique monic elements \,$b_n \in \calB^\circ_{a_n} \cap \R^2[\varkappa]$\, converges to \,$b$\,. Any \,$a_n$\, has two distinct roots near \,$\varkappa=0$\, and they are complex conjugates of each other. The path on the curve $\nu_n^2=a_n(\varkappa)$ that projects to the straight line in the \,$\varkappa$-plane joining these two roots of \,$a_n$\, is anti--symmetric with respect to the involution $(\varkappa,\nu_n)\mapsto(\Bar{\varkappa},\Bar{\nu}_n)$. Hence the integral of $b_n/\nu_n\,d\varkappa$ along this path vanishes. Consequently on a tubular neighbourhood of this path, there exist a unique anti-derivative \,$q_n$\, of $b_n/\nu_n\,d\varkappa$ which vanishes at both ramification points. This \,$q_n$\, is  anti-symmetric with respect to $\sigma:(\varkappa,\nu_n)\mapsto(\varkappa,-\nu_n)$ and hence the quotient $q_n/\nu_n$ is symmetric with respect to $\sigma$ and therefore a local function of $\varkappa$. Since \,$\nu_n$\, has only simple roots at the ramification points, \,$\varkappa \mapsto q_n/\nu_n$\, is holomorphic. Because of \,$b_n(0)=0=a_n'(0)$\,, we have  \,$\left.\tfrac{d}{d\varkappa}\right|_{\varkappa = 0} q_n/\nu_n = 0$\,. Since \,$a_2\neq 0$\, and \,$b(0)=0$\,, there exists a regular anti-derivative \,$q$\, of \,$b/\nu\,d\varkappa$\, on the curve $\nu^2=a(\varkappa)$ which vanishes at the singularity at \,$\varkappa=0$\,.
The corresponding local function \,$\varkappa \mapsto q/\nu$\, is holomorphic on a neighbourhood of \,$\varkappa=0$\,. By construction, it is the limit of the local holomorphic functions \,$q_n/\nu_n$\,. In particular \,$\left.\tfrac{d}{d\varkappa}\right|_{\varkappa = 0} q/\nu = 0$\,.
Because \,$a$\, has at \,$\varkappa=0$\, a zero of the order exactly \,$2$\,, \,$q$\, and \,$\nu$\, are local functions in \,$\varkappa$\, there. Moreover \,$\nu$\, has a zero of the order exactly \,$1$\,, and \,$q$\, has a zero of order at least \,$1$\,. Therefore there exists a unique \,$t\in \bbR$\, such that \,$q+t\nu$\, has a zero of order at least \,$2$\,. Then the equation  \,$\left.\tfrac{d}{d\varkappa}\right|_{\varkappa = 0} q/\nu = 0$\, implies that \,$q+t\nu$\, has in fact a zero of order at least \,$3$\,, and by the definition of \,$q$\,, the same holds for the polynomial \,$b+\tfrac{t}{2}a'$\,. This shows \,$\left.\tfrac{d}{d\varkappa}\right|
_{\varkappa = 0} b/a' = 0$\,. 

In a second step we extend this argument to the case  $a\in\Hat{\calE}^2_0$. Then the pair $(0,a)$ is the limit of a sequence $(C_n,a_n)_{n\in\mathbb{N}}$ in $(0,\infty)\times\bbS^3[\varkappa]$ with $C_n.a_n\in\Hat{\calS}^2_0$. Let $b$ be the unique monic element of $\calB^\circ_a\cap\bbR^2[\varkappa]$. By Lemma~\ref{lemma 3}, $(b,a')$ is the limit of a sequence $(\Tilde{b}_{1,n},\Tilde{b}_{2,n})_{n\in\mathbb{N}}$ such that $(b_{1,n},b_{2,n})$ with $b_{i,n}(\varkappa)=C_n\Tilde{b}_{i,n}(\varkappa/C_n)$ is a basis of $\Hat{\calB}_{C_n.a_n}$. As above, for sufficiently large $n$ the 1-forms $(\Tilde{b}_{1,n}/((C_n^2\varkappa^2+1)\,\Tilde{\nu}_n)\,d\varkappa,\,\Tilde{b}_{2,n}/((C_n^2\varkappa^2+1)\,\Tilde{\nu}_n)\,d\varkappa)$ have anti--derivatives $(q_{1,n},q_{2,n})$ on an analogous tubular neighbourhood of both ramification points nearby $\varkappa=0$ of the curve $\Tilde{\nu}_n^2=(C_n^2\varkappa^2+1)a_n(\varkappa)$. Again the assumption that they vanish at these ramification points makes them unique. For sufficiently large $n$ the ramification points are simple roots of $q_{2,n}$, since $a_2\neq 0$. The functions \,$\varkappa \mapsto q_{1,n}/q_{2,n}$\, converge to \,$\varkappa \mapsto 2q/\nu$\, by the same arguments as in the first case. Now $\left.\tfrac{d}{d\varkappa}\right|_{\varkappa = 0} \, q_{1,n}/q_{2,n} = 0$ again yields \,$\left.\tfrac{d}{d\varkappa}\right|
_{\varkappa = 0} b/a' = 0$\,. 
\end{proof}
\noindent{\it Proof of Theorem~\ref{th:blow-up}.} For $C>0$, any $a\in\Hat{\calH}^2$ and $b\in\bbR^3[\varkappa]$ the biholomorphic map $(\varkappa,\nu)\mapsto(C\varkappa,C^2\nu)$ from the curve $\{(\varkappa,\nu)\mid\nu^2=a(\varkappa)\}$ onto the curve $\{(\varkappa,\nu)\mid\nu^2=C.a(\varkappa)\}$ pulls back the 1-form $b/\nu\,d\varkappa$ to the 1-form $\Tilde{b}/\nu \,d\varkappa$ with $\Tilde{b}(\varkappa)=C^{-1}b(C\varkappa)$. Hence $b\in\calB^\circ_{C.a}$ is equivalent to $\Tilde{b}\in\calB^\circ_a$. Now the action $\Psi$~\eqref{action} preserves $\calS^\circ_0$ and by continuity also $\overline{\calS^\circ_0}\setminus\{\varkappa^4\}$. This finishes the proof of~(i).

For the proof of~(ii) we note that $a\in\Hat{\calH}^2$ belongs to $\calS^\circ_0$ if and only if $a$ obeys two conditions:
\begin{enumerate}
\item[(a)] $a'(0)=0$;
\item[(b)] There exists a monic $b\in\bbR^2[\varkappa]$ so that the 1-form $b/\nu \,d\varkappa$ on the curve $\{ (\varkappa,\nu) \mid \nu^2=a(\varkappa) \}$ has zero residue at the poles over $\varkappa=\infty$, has purely real periods and vanishes at $\varkappa=0$.
\end{enumerate}
Condition~(a) is equivalent to $a_3=0$, while (b) is equivalent to the requirement that the following meromorphic 1-form has purely real periods on the curve $\{(\varkappa,\nu)\mid\nu^2=a(\varkappa)\}$:
\begin{gather}\label{blow up form 1}
b/\nu \, d\varkappa=\varkappa(\varkappa+\frac{a_1}{2})/\nu \,d\varkappa.
\end{gather}
Now we use the Whitham equation to deform such pairs $(a,b)$. For any polynomial $c\in\bbR^3[\varkappa]$ the following ODE on the coefficients of $a$ and $b$ describes flows which preserve all periods of~\eqref{blow up form 1}:
\begin{align}\label{blow up whitham}
2a\dot{b}-\dot{a}b&=2ac'-a'c.
\end{align}
For spectral curves without singularities $a$ does not vanish on $\varkappa\in\bbR$. In this case the ODE preserves the root of $b$ at $\varkappa=0$ if and only if the right hand side vanishes at $\varkappa=0$. Since $a'(0)$ vanishes, this is equivalent to $c'(0)=0$. With the choice $c(\varkappa)=c_3+c_1\varkappa^2$, equation~\eqref{blow up whitham} is equivalent to:
\begin{multline*}
2(\varkappa^4+a_1\varkappa^3+a_2\varkappa^2+a_4)\tfrac{1}{2}\dot{a}_1\varkappa-(\dot{a}_1\varkappa^3+\dot{a}_2\varkappa^2+\dot{a}_4)(\varkappa^2+\tfrac{1}{2}a_1\varkappa)\\=2(\varkappa^4+a_1\varkappa^3+a_2\varkappa^2+a_4)2\varkappa c_1-(4\varkappa^3+3a_1\varkappa^2+2a_2\varkappa)(c_1\varkappa^2+c_3).
\end{multline*}
Comparing coefficients of \,$\varkappa$\, in this equation gives:
\begin{align*}
\varkappa^5&: 0=0,&\varkappa^4&:\tfrac{1}{2}a_1\dot{a}_1-\dot{a}_2=a_1c_1,&\varkappa^3&:\dot{a}_1a_2-\tfrac{1}{2}a_1\dot{a}_2=2a_2c_1-4c_3,\\
\varkappa^2&:-\dot{a}_4=-3a_1c_3,&\varkappa^1&:\dot{a}_1a_4-\tfrac{1}{2}a_1\dot{a}_4=4a_4c_1-2a_2c_3,&\varkappa^0&:0=0.
\end{align*}
Inserting $\dot{a}_2=\tfrac{1}{2}a_1\dot{a}_1-a_1c_1$ and $\dot{a}_4=3a_1c_3$ in the other equations gives
\begin{align*}
\dot{a}_1\big(a_2-\tfrac{1}{4}a_1^2\big)&=2\big(a_2-\tfrac{1}{4}a_1^2\big)c_1-4c_3,&\dot{a}_1a_4&=4a_4c_1+\big(\tfrac{3}{2}a_1^2-2a_2\big)c_3.
\end{align*}
This implies
$$2\big(a_2-\tfrac{1}{4}a_1^2\big)a_4c_1=-\left(4a_4+\big(\tfrac{3}{2}a_1^2-2a_2\big)\big(a_2-\tfrac{1}{4}a_1^2\big)\right)c_3.$$
With the following choice for the coefficients $c_1$ and $c_3$, we obtain a smooth vector field:
\begin{gather}\begin{aligned}\label{eq:whitham local}
c_1&=\big(2a_2-\tfrac{3}{2}a_1^2\big)\big(a_2-\tfrac{1}{4}a_1^2\big)-4a_4,\qquad c_3=2\big(a_2-\tfrac{1}{4}a_1^2\big)a_4,\\
\dot{a}_1&=2\left(\big(2a_2-\tfrac{3}{2}a_1^2\big)\big(a_2-\tfrac{1}{4}a_1^2\big)-4a_4\right)-8a_4=\big(4a_2-3a_1^2\big)\big(a_2-\tfrac{1}{4}a_1^2\big)-16a_4,\\
\dot{a}_2&=a_1\left(\big(2a_2-\tfrac{3}{2}a_1^2\big)\big(a_2-\tfrac{1}{4}a_1^2\big)-8a_4-\big(2a_2-\tfrac{3}{2}a_1^2\big)\big(a_2-\tfrac{1}{4}a_1^2\big)+4a_4\right)=-4a_1a_4,\\
\dot{a}_4&=6a_1\big(a_2-\tfrac{1}{4}a_1^2\big)a_4.
\end{aligned}\end{gather}
Due to part~(i) the action $(a,b)\mapsto(C.a,\Tilde{b})$ with $\Tilde{b}(\varkappa)=Cb(\frac{\varkappa}{C})$ (compare Lemma~\ref{lemma 3}) preserves $\calS^\circ_0$. This group action is induced by the following solution of the Whitham equations~\eqref{blow up whitham}:
\begin{align}\label{eq:rescaling}
\dot{a}(\varkappa)&=\varkappa a'(\varkappa)-4a(\varkappa), &\dot{b}(\varkappa)&=\varkappa b'(\varkappa)-b(\varkappa), &c(\varkappa)&=\varkappa b(\varkappa).
\end{align}
The highest coefficient of $b$ is not preserved by~\eqref{eq:rescaling}, in contrast to~\eqref{eq:whitham local}.

Now we show four claims. For any $a\in\Hat{\calH}^2$ and $b\in\calB^\circ_a$ the pair $a^-(\varkappa):=a(-\varkappa)$ and $b^-(\varkappa):=b(-\varkappa)$ obeys $a^-\in\Hat{\calH}^2$ and $b^-\in\calB^\circ_{a^-}$. Since $(a,b)\mapsto(a^-,b^-)$ preserves the conditions~(a)--(b), this shows

\noindent{\it Claim~1: The involution $(a_1,a_2,a_3,a_4)\!\mapsto\!(-a_1,a_2,-a_3,a_4)$ preserves the set of coefficients of $a\!\in\!\calS^\circ_0$.}

\noindent{\it Claim~2: Any $a\in\overline{\calS^\circ_0}\setminus\{\varkappa^4\}$ with $a_1=0$ obeys $0<a_2$ and $\frac{1}{4}a_2^2<a_4$.}

Let $a\in\overline{\calS^\circ_0}$ with $a_1=0$. If $a\not\in\Hat{\calH}^2$, then Lemma~\ref{lemma 5} gives $a=\varkappa^4$. Otherweise Lemma~\ref{lemma 1} applies to $C.a\in\overline{\calS^\circ_0}$ for some $C>0$. The continuity of the periods of the 1-form~\eqref{blow up form 1} in dependence of $a\in\overline{\calS^\circ_0}$ gives $\beta=0$ and $\alpha\in(0,2)$ with $(a_2/\alpha)^2=a_4$. This proves Claim~2.

\noindent{\it Claim~3: For $a\in\overline{\calS^\circ_0}\setminus\{\varkappa^4\}$ we have $a_2\ne \frac{1}{4}a_1^2$. Furthermore, any path--connected component of $\calS^\circ_0$ contains an element $a\in\calS^\circ_0$ with $a_2>0$.}

Lemma~\ref{lemma 2} implies that if the coefficients of any $a\in\overline{\calS^\circ_0}$ obey $a_2=\frac{1}{4}a_1^2$ then they also obey $a_4=0$, which means $a\in\overline{\calS^\circ_0}\setminus\Hat{\calH}^2$. Now Lemma~\ref{lemma 5} yields $a_2=\frac{3}{4}a_1^2$ and $a(\varkappa)=\varkappa^4$. This shows the first statement of Claim~3. To prove the second statement we present a vector field along $\calS^\circ_0$ whose maximal integral curves each contain an $a\in\calS^\circ_0$ with $a_2>0$. The vector field is constructed so that it preserves \,$a_4$\,, and it is given by the sum of~\eqref{eq:whitham local} and $\frac{3}{2}a_1(a_2-\frac{1}{4}a_1^2)$ times~\eqref{eq:rescaling}:
\begin{align}\label{eq:whitham local 3}
\dot{a}_1&=(a_2-\tfrac{1}{4}a_1^2)(4a_2-\tfrac{9}{2}a_1^2)-16a_4,&\dot{a}_2&=-a_1(3a_2(a_2-\tfrac{1}{4}a_1^2)+4a_4),&\dot{a}_4&=0.
\end{align}
It suffices to consider $a\in\calS^\circ_0$ with $a_2\le0$. Such $a$ obey \,$0<a_4$\,. Due to Claim~2, the coefficient $a_1$ has no root along an integral curve of~\eqref{eq:whitham local 3} as long as \,$a_2 \leq 0$\,. Furthermore, for $a_2\le 0$ we have $\dot{a}_1\ge -16a_4$. Therefore a maximal integral curve starting with $a_2(0)\le0$ and $a_1(0)<0$ obeys $a_1(0)-16a_4t\le a_1(t)<0$, and $a_2(0)\le a_2(t)\le 0$  by $0<\dot{a}_2(t)$ for $0\leq t\le t_0:=\inf\{s>0\mid a_2(s)>0\}<\infty$. In particular, $0<a_2(t)$ for $t>t_0$. For $a_2(0)\le0$ and $a_1(0)>0$ a similar argument gives $-\infty<t_0 :=\sup\{s<0\mid a_2(s)>0\}\le 0$ with $a_1(t),a_2(t_0)>0$ for $t<t_0$. This proves Claim~3.

\noindent{\it Claim 4: Any $a\in\calS^\circ_0$ obeys $\frac{3}{4}a_1^2<a_2$.}

We consider a vector field along $\calS^\circ_0$ which preserves \,$a_2$\,. It is given by the sum of $-a_2$ times~\eqref{eq:whitham local} and $2a_1a_4$ times~\eqref{eq:rescaling}. Since $a_2>0$ in all applications, by Theorem~\ref{th:blow-up}~(i) we may set $a_2=1$.
\begin{align}\label{eq:whitham local 2}
\dot{a}_1&=2a_4(8-a_1^2)-(4-3a_1^2)(1-\tfrac{1}{4}a_1^2),&\dot{a}_2&=0,&\dot{a}_4&=a_1a_4\big(\tfrac{3}{2}a_1^2-6-8a_4\big).
\end{align}
The set of roots of $\dot{a}_1$ and $\dot{a}_4$ are $\graph(h_1)$ and $\{(a_1,a_4)\mid a_1a_4=0\}\cup\graph(h_4)$ with
\begin{align*}
h_1: a_1 \mapsto a_4 &=\frac{(3a_1^2-4)(a_1^2-4)}{8(8-a_1^2)},&h_4: a_1 \mapsto a_4&=\tfrac{3}{32}(a_1^2-4),
\end{align*}
respectively. We first focus on the stable manifold of the critical point $s_1$\, at \,$(a_1,a_4) = (2,0)$. On
$$
R_1:=\left\{(a_1,a_4)\mid a_1\in\big(2,\sqrt{8}),h_4(a_1)\!<\!a_4\!<\!h_1(a_1)\right\}\cup\left\{(a_1,a_4)\mid a_1\in\big[\sqrt{8},\infty\big), h_4(a_1)\!<\!a_4\right\}
$$
we have $\dot{a}_1<0$ with equality on $\graph(h_1)$ and $\dot{a}_4<0$ with equality on $\graph(h_4)$. Hence the vector field~\eqref{eq:whitham local 2} points inwards to \,$\overline{R_1}$\, both on \,$\graph(h_1)$\, and on \,$\graph(h_4)$\,, and the integral curves are trapped in $\overline{R_1}$. Furthermore, $\overline{R_1}$ belongs to the stable manifold of $s_1$. Next we consider
$$
R_2=\big\{(a_1,a_4)\mid a_1\in\big(2,\infty\big), 0<a_4<h_4(a_1)\big\}.
$$
In $R_2$ we have $\dot{a}_1<0$ and $0<\dot{a}_4$ with equality on $\{(a_1,a_4)\mid a_4=0\}\cup\graph(h_4)$. Hence after finite positive time, any integral curve in $R_2$ crosses $\graph(h_4)$ and enters $R_1$. In particular, $\overline{R_2}$ also belongs to the stable manifold of $s_1$. Finally we consider, see Figure~\ref{fig:plots}, 
$$
R_3=\big\{ (a_1,a_4) \mid a_1\in\big( 2/\sqrt{3}, 2 \big), 0<a_4\big\}\cup\big\{(a_1,a_4)\mid a_1\in\big[2,\sqrt{8}\big),h_1(a_1)<a_4\big\}.
$$
In $R_3$ we have $0<\dot{a}_1$ with equality on $\graph(h_1)$ and $\dot{a}_4<0$ with equality on $\{(a_1,a_4)\mid a_4=0\}$. Hence after finite positive time any integral curve in this region crosses $\graph(h_1)$ and enters $R_1$. Let \,$s_2$\, denote the critical point at \,$(a_1,a_4)= (\frac{2}{\sqrt{3}},0)$\,. Then $\overline{R_3}\setminus\{s_2\}$ and the union $\overline{R_1}\cup\overline{R_2}\cup\overline{R_3}\setminus\{s_2\}$ also belong to the stable manifold of $s_1$.
\begin{figure}[t]
  \centering
  \includegraphics[scale=0.6]{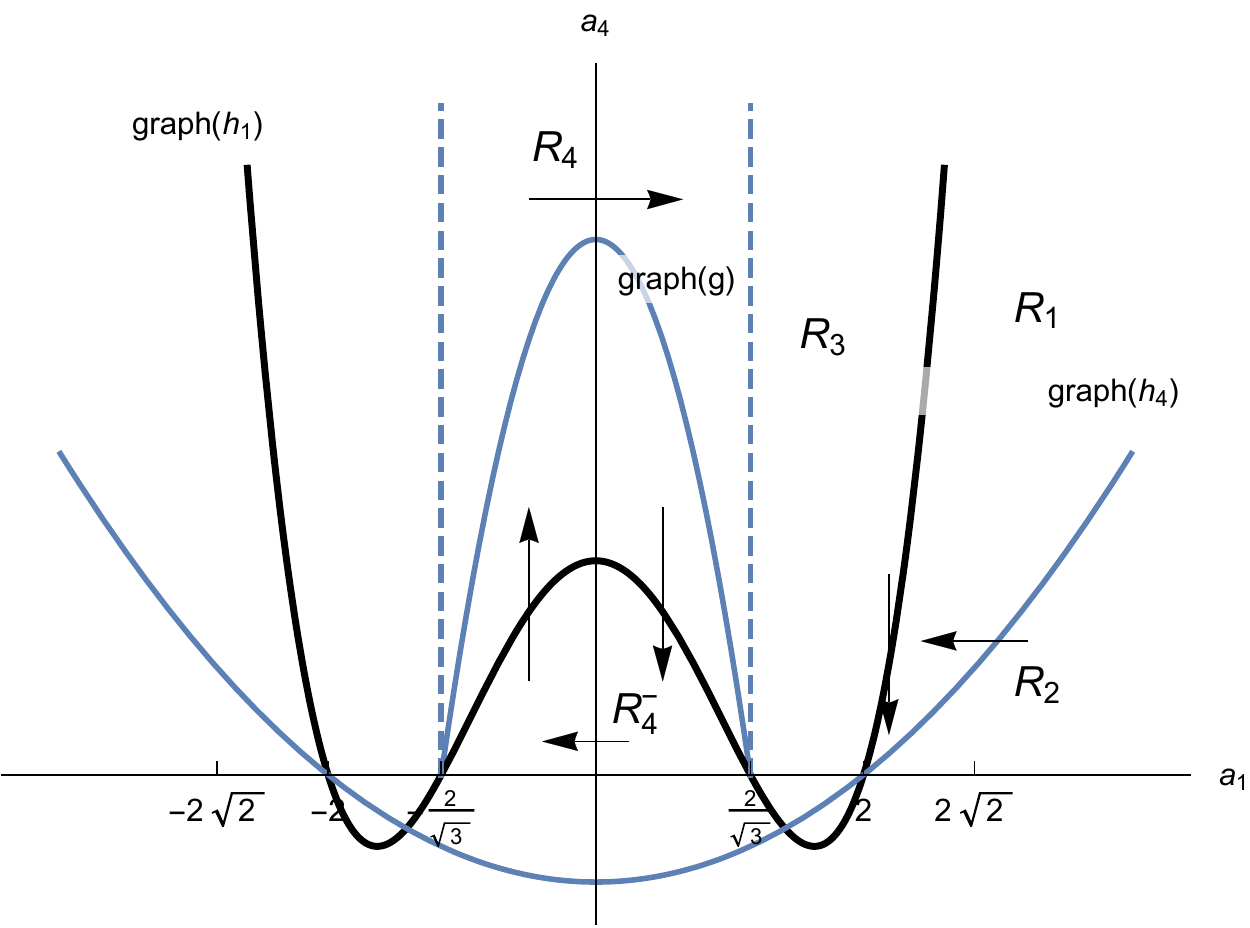}
  \caption{Sets and graphs in the proof of Theorem~\ref{th:blow-up}. The arrows indicate the direction of the Whitham flow.}
    \label{fig:plots}
\end{figure}
By Lemma~\ref{lemma 2} and~\ref{lemma 5} the coefficients of $a\in\overline{\calS^\circ_0}\setminus\{\varkappa^4\}$ obey $\frac{1}{4}a_1^2\ne a_2$. Now we prove $\frac{1}{4}a_1^2<a_2$ by contradiction. Suppose the coeffcients of $a\in\calS^\circ_0$ obey $a_2<\frac{1}{4}a_1^2$ and $a_2\le0$.
The proof of Claim~3 presents a vector field whose integral curve with such an initial point flows to $a\in\calS^\circ_0$ with $0<a_2<\frac{1}{4}a_1^2$. Hence the existence of $a\in\calS^\circ_0$ with $a_2<\frac{1}{4}a_1^2$ implies the existence of such an $a\in\calS^\circ_0$ with $0<a_1<\frac{1}{4}a_1^2$. By Theorem~\ref{th:blow-up}~(i), $\Tilde{a}= (1/\sqrt{a_2}).a$ belongs to $\calS^\circ_0$ with coefficients $\Tilde{a}_2=1<\frac{1}{4}\Tilde{a}_1^2$. By Claim~1 we may in addition assume $\Tilde{a}_1>0$. Then $(\Tilde{a}_1,\Tilde{a}_4)\in\overline{R_1}\cup\overline{R_2}\cup\overline{R_3}\setminus\{s_2\}$ belongs to the stable manifold of $s_1$. Since the flow stays in $\calS^\circ_0$, the limit $(a_1,a_2,a_3,a_4) \to (2,1,0,0)$ belongs to $\overline{\calS^\circ_0}$. This contradicts Lemma~\ref{lemma 5} and shows $\frac{1}{4}a_1^2<a_2$ for any $a\in\calS^\circ_0$. Furthermore, if $a\in\calS^\circ_0$ obeys $\frac{1}{4}a_1^2<a_2\le\frac{3}{4}a_1^2$, then the coefficients $(\Tilde{a}_1,\Tilde{a}_4)$ or $(-\Tilde{a}_1,\Tilde{a}_4)$ of $\Tilde{a}=(1/\sqrt{a_2}).a$ belong to \,$\overline{R_3} \setminus\{s_2\}$\,, which leads to the same contradiction. This proves Claim~4.

To finish the proof of Theorem~\ref{th:blow-up}~(ii) we consider the region
$$
R_4:=\big({-} 2/\sqrt{3},2/\sqrt{3}\big) \times (0,\infty).
$$
The derivative $\dot{a}_1$ is negative for $a_4<h_1(a_1)$ and positive for $h(a_1)<a_4$, and the derivative $\dot{a}_4$ is positive for $a_1<0$ and negative for $a_1>0$. The line segment $(-2/\sqrt{3}, 2/\sqrt{3})\times \{0\}$ is an integral curve and equal to the intersection of the unstable manifold of $s_2$ with the stable manifold of $-s_2$. By Lemma~\ref{lemma 5} only its end points might belong to $\overline{\calS^\circ_0}$. At $\pm s_2$ the Jacobi matrix of~\eqref{eq:whitham local 2} is
\begin{align*}
\begin{pmatrix}
a_1(8-3a_1^2-4a_4)&16-2a_1^2\\a_4(\tfrac{9}{2}a_1^2-6-8a_4)&a_1(\tfrac{3}{2}a_1^2-6-16a_4)\end{pmatrix}&=\begin{pmatrix}\pm\tfrac{8}{\sqrt{3}}&-\tfrac{40}{3}\\0&\mp\tfrac{8}{\sqrt{3}}\end{pmatrix}.
\end{align*}
Therefore these singularities are hyperbolic and only one eigenvalue has a non--zero $a_4$--component. By the Hartman--Grobman linearisation theorem these singularities have one--dimen\-sional stable and unstable manifolds. At $-s_2$ the stable manifold and at $s_2$ the unstable manifold are completely contained in the invariant line $(a_1,a_4)\in\bbR\times\{0\}$. Since $\dot{a}_1$ is negative on
$$
R_4^-:=\big\{(a_1,a_4)\in R_4\mid 0<a_4<h_1(a_1)\big\},
$$
and since $\dot{a}_4$ is positive on $\{(a_1,h_1(a))\mid a_1\in({-}2/\sqrt{3},0)\}$ and negative on $\{(a_1,h_1(a))\mid a_1\in(0,2/\sqrt{3})\}$, any integral curve in $R_4^-$ enters $R_4^-$ at some point of $\{(a_1,h_1(a))\mid a_1\in(0,2/\sqrt{3})\}$, crosses the line $\{(0,a_4)\mid a_4\in (0,1/4)\}$ in a unique point, and leaves $R_4^-$ at some point of $\{(a_1,h_1(a))\mid a_1\in({-}2/\sqrt{3},0)\}$. Lemma~\ref{lemma 1} implies that any $(a_1,a_4)\in\overline{R_4^-}\setminus\{\pm s_2\}$ together with $a_2=1$ cannot be the coefficients of any \,$a \in \overline{\calS^\circ_0}$. This argument also applies to the root $(a_1,a_4)=(0,\frac{1}{4})$ of~\eqref{eq:whitham local 2}.

Any integral curve in $\big\{(a_1,a_4)\in R_4\mid 0<a_1,h_1(a_1)<a_4\big\}$ obeys $\dot{a}_4<0<\dot{a}_1$. Therefore either
\begin{enumerate}
\item after finite positive time it crosses the boundary in $\graph(h_1)$ and enters $R_4^-$, or
\item after finite positive time it crosses the boundary in \,$\{2/\sqrt{3}\} \times (0,\infty)$\,  and enters $R_3$, or
\item it is entirely contained in the stable manifold of $s_2$.
\end{enumerate}
In the first two cases the pairs \,$(a_1, a_4)$\, of the integral curve together with \,$a_2=1$\, cannot be the coefficients of any $a \in \calS^\circ_0$. If $\alpha_0\in(0,2)$ denotes the unique root of $\alpha\mapsto \beta$ in Lemma~\ref{lemma 1}, then \,$s_3$\, with  $(a_1,a_4)=(0,\alpha_0^{-2})$ is the unique element of $\{0\}\times(0,\infty)\subset R_4$ whose entries together with $a_2=1$ are the coefficients of some $a\in\calS^\circ_0$. Therefore the integral curve through $s_3$ is the intersection of $R_4$ with the stable manifold of $s_2$. By the $(a_1,a_4)\mapsto(-a_1,a_4)$ symmetry this integral curve is also the intersection of $R_4$ with the unstable manifold of $-s_2$. Furthermore, it is the graph of a unique analytic function $g: ({-}2/\sqrt{3},2/\sqrt{3}) \to (0,\infty)$ obeying~\eqref{def g}. Hence $\graph(g)$ is the set of those $(a_1,a_4)\in R_4$ whose entries together with $a_2=1$ are the coefficients of some $a\in\calS^\circ_0$. By Claim~4 we conclude that the set $a\in\calS^\circ_0$ is characterised by $a_2>0$ and the condition that the coefficients $(\Tilde{a}_1,\Tilde{a}_4)$ of $\Tilde{a}=(1/\sqrt{a_2}).a\in\calS^\circ_0$ belong to $\graph(g)$. This finishes the proof of Theorem~\ref{th:blow-up}~(ii).

First we observe $\Hat{\calE}^2_0\subset\calS^\circ_0\cup\{\varkappa^4+a_1\varkappa^3+\frac{3}{4}a_1^2\mid a_1\in\bbR\}$ from Lemmata~\ref{lemma 3} and~\ref{lemma 5}. The last property of \eqref{def g} in Theorem~\ref{th:blow-up}~(ii) gives $\{\varkappa^4+a_1\varkappa^3+\frac{3}{4}a_1^2\mid a_1\in\bbR\}\subset\overline{\calS^\circ_0}$. This implies $\Hat{\calE}^2_0\subset\overline{\calS^\circ_0}$.

To prove $\big(\calS^\circ_0\cap\bbS^3[\varkappa]\big)\subset\Hat{\calE}^2_0$ we transfer the arguments of Lemma~\ref{L:boundary-S21-C4lambda-pre} to the present situation. In a neighbourhood of $\{0\}\times\big(\Hat{\calH}^2\cap\bbS^3[\varkappa]\big)$ the closure of $\Psi^{-1}[\Hat{\calH}^2]$ in $[0,\infty)\times\bbR^4[\varkappa]$ is a manifold with boundary, whose boundary is equal to $\{0\}\times\big(\Hat{\calH}^2\cap\bbS^3[\varkappa]\big)$. Given $a\in\calS^\circ_0\cap\bbS^3[\varkappa]$, a neighbourhood of $(0,a)$ in $[0,\infty)\times\bbS^3[\varkappa]$ contains an open neighbourhood $O$ such that $O\cap\Psi^{-1}[\Hat{\calH}^2]$ is connected and is neither disjoint from $\Psi^{-1}[\Hat{V}_{-1}]$ nor from $\Psi^{-1}[\Hat{V}_1]$, cf.~Figure~\ref{figure:delH2} in the proof of Lemma~\ref{L:boundary-S21-C4lambda-pre}. Hence $O\cap\Psi^{-1}[\Hat{\calS}^2]\ne\emptyset$. Therefore $(0,a)$ is the limit of a sequence $(C_n,a_n)_{n\in\mathbb{N}}$ in $(0,\infty)\times\bbS^3[\varkappa]$ with $C_n.a_n\in\Hat{\calS}^2$. Due to Lemma~\ref{lemma 3} the corresponding basis $(b,a')$ of $\calB_a^\circ$ is the limit of a sequence of pairs of polynomials, such that each pair has a unique common root $\varkappa_n$. The roots \,$\varkappa_n$\, converge to the unique common root $\varkappa=0$ of $(b,a')$. Note that $C_n.a_n\in\Hat{\calH}^2$ implies $a_n\in\Hat{\calH}^2$. Let $(\alpha_{1,n},\alpha_{2,n})_{n\in\mathbb{N}}$ denote the roots of $a_n$ in the upper half plane. We may label them in such a way that they converge to roots of $a$. For any $n\in\mathbb{N}$, let $g_n$ denote the real M\"obius transformation which fixes the values $\varkappa=\pm C_n^{-1}\mi$ and maps $\varkappa_n$ to $0$. The sequence $(g_n)_{n\in\mathbb{N}}$ converges uniformly to the identity map on any compact subset of $\varkappa\in\bbC$. Therefore the following sequence $(\Tilde{a}_n)_{n\in\mathbb{N}}$converges to $a$:
$$\Tilde{a}_n(\varkappa)=(\varkappa-g_n(\alpha_{1,n}))(\varkappa-g_n(\Bar{\alpha}_{1,n}))(\varkappa-g_n(\alpha_{2,n}))(\varkappa-g_n(\Bar{\alpha}_{2,n})).$$
By the definition of $\varkappa_n$ and $g_n$, we have $C_n.\Tilde{a}_n \in \Hat{\calS}^2_0$. Hence $a\in\Hat{\calE}^2_0$, $\big(\calS^\circ_0\cap\bbS^3[\varkappa]\big)\subset\Hat{\calE}^2_0$, and finally $\big(\overline{\calS^\circ_0}\cap\bbS^3[\varkappa]\big)\subset\Hat{\calE}^2_0$ since $\Hat{\calE}^2_0$ is closed in $\bbS^3[\varkappa]$. This concludes the proof of Theorem~\ref{th:blow-up}~(iii).\qed
%
\section{The Wente family}\label{Se:wente}
We next study the \emph{Wente family} $\mathcal{W}:=\calS^2_1\cap\bbR^4[\lambda]$. A cmc torus in $\R^3$ of spectral genus $2$ is called a \emph{Wente torus} \cite{Wen, Ab} if the corresponding polynomial $a\in \calS^2_1$ is a member of $\mathcal{W}$. We call $\calH^2\cap\bbR^4[\lambda]$ the \emph{Abresch family}, since Abresch \cite{Ab} was the first to construct the corresponding solutions of the $\sinh$--Gordon equation. It has two connected components which we introduce now:
\begin{definition}
\label{D:wente:A+-A-}
Let $\calA_+$ denote the connected component of $\calH^2\cap\bbR^4[\lambda]$ whose elements have four different real roots, and $\calA_-$ the connected component of $\calH^2\cap\bbR^4[\lambda]$ whose elements have four different roots of the form $\alpha,\Bar{\alpha},\alpha^{-1}$ and $\Bar{\alpha}^{-1}$ with \,$\alpha \in \bbC \setminus (\bbR \cup \bbS^1)$\,. 
\end{definition}
  
\begin{theorem}
\label{T:wente:wente}
Let $\alpha_0\in(0,2)$ be the unique root of the strictly increasing function $\alpha\mapsto\beta$ in Lemma~\ref{lemma 1}. Then $\calW$ is the following connected non--compact 1--dimensional submanifold of $\calA_-$:
\begin{equation}
\label{eq:wente:wente:W}
\mathcal{W} = \; \bigr\{ \lambda^4-(4+\alpha_0\alpha_+)\lambda^3+(6+2\alpha_0\alpha_++\alpha_+^2)\lambda^2-(4+\alpha_0\alpha_+)\lambda+1 \,\bigr|\,\alpha_+\in\bbR_+\bigr\}
\end{equation}
%
%
\end{theorem}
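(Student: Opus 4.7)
The plan is to exploit an additional involution on $\calB_a$ that is present for palindromic real $a$, reducing the period condition defining $\calS^2_1$ to an instance of Lemma~\ref{lemma 1} on an elliptic curve.

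First, the reality condition together with $a\in\bbR[\lambda]$ forces $a$ to be palindromic, so $a(\lambda)=\lambda^4+c_1\lambda^3+c_2\lambda^2+c_1\lambda+1$, and the additional symmetry $\tau\colon(\lambda,\nu)\mapsto(1/\lambda,\nu/\lambda^3)$ of the spectral curve induces on $\calB_a$ the involution $b\mapsto\tilde b$ with $\tilde b(\lambda)=\lambda^3 b(1/\lambda)$. Under the $\bbR$-linear isomorphism $\calB_a\to\bbC$, $b\mapsto b(0)$, this becomes complex conjugation (the reality condition on $P^3_{\bbR}$ gives $b_3=\overline{b_0}$), so $\calB_a$ splits as $\calB_a^+\oplus\calB_a^-$ into one-dimensional palindromic and antipalindromic real subspaces. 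Every antipalindromic $b$ automatically satisfies $b(1)=0$, so $a\in\calW$ is equivalent to $b_+(1)=0$ for the monic palindromic generator $b_+\in\calB_a^+$, and this forces $b_+(\lambda)=(1-\lambda)^2(1+\lambda)$.

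The condition $\Theta(b_+)\in\mi\bbR\cdot H^1$ is then reduced to an elliptic problem by two birational substitutions. Setting $\eta=\sqrt\lambda$ and $\tilde\mu=\nu/\eta$ gives $\tilde\mu^2=a(\eta^2)$; the palindromy yields $\tilde\mu^2=\eta^4 q(W)$ with $W=\eta^2+1/\eta^2$ and $q(W)=W^2+c_1 W+(c_2-2)$, so $N:=\tilde\mu/\eta^2$ satisfies $N^2=q(W)$. Using $(\eta^2-1)/\eta=\eta-1/\eta$ together with $(\eta-1/\eta)^2=W-2$, a direct calculation reduces
\begin{equation*}
\Theta(b_+)=\frac{\sqrt{W-2}}{N}\,dW,
\end{equation*}
and the further substitution $m=\sqrt{W-2}$ brings this to $2m^2/N\,dm$ on the elliptic curve $N^2=m^4+Am^2+B$, where $A=4+c_1$ and $B=2+2c_1+c_2=a(1)$, which is strictly positive by positivity of $a$ on $\bbS^1$. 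Tracing the anti-holomorphic involution $\rho$ through these substitutions, $\rho$ becomes $(m,N)\mapsto(-\bar m,\bar N)$, so imaginary periods of $2m^2/N\,dm$ correspond, after $m=\mi\tilde m$ and the rescaling $\tilde m=\sqrt{\alpha_+}\,m''$ with $\alpha_+=\sqrt B>0$, to real periods of $m''^2/N''\,dm''$ on $N''^2=m''^4+\alpha\,m''^2+1$ with $\alpha=-A/\sqrt B$. By Lemma~\ref{lemma 1} this holds exactly when the associated $\beta$-value vanishes, i.e.\ $\alpha=\alpha_0$; hence $A=-\alpha_0\alpha_+$, and unwinding $c_1=A-4$ and $c_2=B-2c_1-2$ produces the formula in~\eqref{eq:wente:wente:W}, while the reverse direction follows by reversing the chain.

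It remains to verify that each such $a$ lies in $\calA_-$ and that the parameterization realizes a connected non-compact $1$-manifold. The discriminant $A^2-4B=\alpha_+^2(\alpha_0^2-4)$ is negative since $\alpha_0\in(0,2)$, so $q$ has a complex conjugate pair of roots and stays positive on $[-2,2]$; this gives $a\in\calH^2$ with four distinct roots, none real and none unimodular, lying in an orbit $\{\alpha,\bar\alpha,\alpha^{-1},\bar\alpha^{-1}\}$, so $a\in\calA_-$. Since $\alpha_+=\sqrt{a(1)}$ is determined by $a\in\calW$, the smooth map $\alpha_+\mapsto a$ is a bijection from $(0,\infty)$ onto $\calW$, yielding the asserted submanifold structure. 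The main technical obstacle is the careful bookkeeping of signs and branches through the birational chain, in particular verifying that the anti-holomorphic involution inherited on the final elliptic quotient is exactly the one aligning our imaginary-period condition with the real-period condition of Lemma~\ref{lemma 1}.
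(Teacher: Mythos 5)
Your route is genuinely different from the paper's and, in outline, it works. The paper proves Theorem~\ref{T:wente:wente} dynamically: it sets up the Whitham flow tangent to $\calW$ (Lemma~\ref{L:wente:whitham-wente}), shows $\calW$ is a union of maximal integral curves, finds the conserved quantity $\alpha$ with $a_-=a_++4\alpha\sqrt{a_+}+16$, and pins down $\alpha=\alpha_0$ only in the degenerate limit $a_+\to 0$ by blowing up at $\lambda=1$ and invoking Lemma~\ref{lemma 1} on the limiting elliptic curve, the constancy of periods along the flow then propagating the condition (Lemma~\ref{L:wente:delta-unique}). You instead exploit statically that for palindromic real $a$ the extra involution makes the genus-two period condition descend to the elliptic curve $N^2=m^4+Am^2+B$, so that Lemma~\ref{lemma 1} applies pointwise to every $a$, with no flow, no blowup, and no existence/uniqueness analysis of integral curves. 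This is shorter and more transparent for this theorem in isolation; the paper's machinery is however reused elsewhere (the ODEs of Lemma~\ref{L:wente:whitham-wente} and the identification of $\calW$ with the diagonal Whitham flow in Section~8). The ``bookkeeping'' you flag as the main obstacle is in fact unproblematic: since the descended form pulls back to $\Theta(b_+)$ and pushes forward to twice itself, ``all periods purely imaginary'' passes back and forth between $\Sigma_a$ and $E$ without tracking individual cycles, and the anti-holomorphic involution does become $(m,N)\mapsto(-\bar m,\bar N)$ as you assert.

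There is, however, one concrete gap in the forward direction. Your reduction yields $\alpha=-A/\sqrt{B}$, and Lemma~\ref{lemma 1} only treats $\alpha\in(-2,2)\cup(2,\infty)$: the range $(-2,2)$ gives the unique root $\alpha_0$, and the inequality $(\alpha-2\beta)^2<\alpha^2-4$ rules out $\beta=0$ for $\alpha>2$, which disposes of the component of $\calA_+$ with four positive real roots. But the component of $\calA_+$ whose elements have four \emph{negative} real roots produces $\alpha<-2$ (the discriminant $A^2-4B$ of $q$ is then positive with $A>2\sqrt{B}$), a case about which Lemma~\ref{lemma 1} says nothing, so your argument does not yet exclude $\calW\cap\calA_+\neq\varnothing$ there. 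The fix is short: for $\alpha<-2$ the quartic $m''{}^4+\alpha m''{}^2+1$ has four real roots $\pm p,\pm p^{-1}$, the polynomial is negative on $(-p,-p^{-1})$, and the period of $m''{}^2/N''\,dm''$ around that cut is a nonzero purely imaginary number, so the real-period condition fails; but this step must be supplied, and one cannot instead appeal to Lemma~\ref{global cycles}, whose proof already uses Theorem~\ref{T:wente:wente}.
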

The objective of the remainder of the section is the proof of this theorem. The proof consists of several lemmata. First we show that a certain type of  Whitham flow stays in $\mathcal{W}$. 
\begin{lemma}\label{L:wente:whitham-wente}
In the setting of Lemma~\ref{L:para:whitham} suppose that $a\in\mathcal{W}$.
\begin{itemize}
\item[(a)] $a$ can be written in the form
\begin{equation} \label{eq:wente:whitham-wente:a}
	a(\lambda) = \lambda^4 + \tfrac14(a_+-a_-)\lambda^3+\tfrac12(a_++a_--4)\lambda^2 + \tfrac14(a_+-a_-)\lambda+1 \;,
\end{equation}
where $a_\pm = a(\pm 1) >0$. Moreover, there exists a basis $(b_1,b_2)$ of $\calB_a$ so that
\begin{equation}\label{eq:para:whitham-real:b-real}
\overline{b_1(\bar{\lambda})} = b_1(\lambda) \quad\text{and}\quad \overline{b_2(\bar{\lambda})} = -b_2(\lambda)
\end{equation}
holds, and then there exist $\beta_1,\beta_2,\beta_3 \in \bbR$ with
\begin{equation}\label{eq:wente:whitham-wente:b}
b_1(\lambda) = \beta_1\,(\lambda-1)^2\,(\lambda+1) \quad\text{and}\quad b_2(\lambda)=\mi\,\beta_2\,(\lambda-1)\,(\lambda^2+(\beta_3+2)\lambda+1) \;.
\end{equation}
\item[(b)]
The Whitham vector field $(\dot{a},\dot{b}_1,\dot{b}_2,c_1,c_2,Q)$ with $c_1(1)=0$ and $c_2(1) \in \bbR$ then satisfies 
\begin{gather}\label{eq:para:whitham-real:dot-real}
\begin{aligned}
\overline{\dot{a}(\bar{\lambda})} & = \dot{a}(\lambda)\;, &  
\overline{\dot{b}_1(\bar{\lambda})} & = \dot{b}_1(\lambda)\;, &  \overline{\dot{b}_2(\bar{\lambda})} & = -\dot{b}_2(\lambda)\;, & \\
\overline{c_1(\bar{\lambda})} & = -c_1(\lambda)\;, &  \overline{c_2(\bar{\lambda})} & = c_2(\lambda)\;, &  \overline{Q(\bar{\lambda})} & = Q(\lambda) \; . 
\end{aligned}
\end{gather}
Such a Whitham flow is tangential to $\mathcal{W}$. 

\item[(c)] We write
\begin{gather}\label{eq:wente:whitham-wente:dotabk}
\begin{aligned}
\dot{a}(\lambda)& =\tfrac14(\dot{a}_+-\dot{a}_-)\lambda^3 + \tfrac12(\dot{a}_++\dot{a}_-)\lambda^2 + \tfrac14(\dot{a}_+-\dot{a}_-)\lambda\;, \\ \dot{b}_1(\lambda) & = \dot{\beta}_1\,(\lambda-1)^2\,(\lambda+1) \quad\text{and} \\ \dot{b}_2(\lambda) & = \mi\,\dot{\beta}_2\,(\lambda-1)\,(\lambda^2+(\beta_3+2)\lambda+1) + \mi\,\beta_2\,\dot{\beta}_3\,(\lambda-1)\,\lambda 
\end{aligned}
\end{gather}
with $\dot{a}_\pm,\dot{\beta}_\ind \in \bbR$. For $c_1(1)=0$ and $c_2(1)=-4\,a_+\,\beta_2\,\beta_3 \in \bbR$ we then have
\begin{align}\label{eq:wente:whitham-wente:dgl-apm}
\dot{a}_+ & = 4\,a_+\,a_-,&
\dot{a}_- & = 2\,(a_++a_--16)\,a_-\\
\label{eq:wente:whitham-wente:dgl-beta1}
\dot{\beta}_1 & = -a_-\,\beta_1,\\
\label{eq:wente:whitham-wente:dgl-beta23}
\dot{\beta}_2 & = -(a_-+4\beta_3)\,\beta_2,&
\dot{\beta}_3 & = 4\,\beta_3^2+2\,a_-\,\beta_3+4\,a_- \; .
\end{align}
\end{itemize}
\end{lemma}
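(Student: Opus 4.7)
For part~(a), the $\calH^2$-reality condition $\lambda^4\overline{a(\bar\lambda^{-1})}=a(\lambda)$ combined with $a\in\bbR^4[\lambda]$ forces $a$ to be palindromic, and writing the palindromic coefficients in terms of the positive numbers $a_\pm:=a(\pm 1)$ yields \eqref{eq:wente:whitham-wente:a} directly. For the basis of $\calB_a$, I would exploit that the reality of $a$ gives the extra anti-holomorphic involution $\tilde\rho:(\lambda,\nu)\mapsto(\bar\lambda,\bar\nu)$ on $\Sigma_a$. The involution $T:b\mapsto\overline{b(\bar\lambda)}$ then preserves $\calB_a$, because $\int_\gamma\Theta(Tb)=\overline{\int_{\tilde\rho\gamma}\Theta(b)}\in\mi\bbR$ for any cycle $\gamma$. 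Under the isomorphism $\calB_a\to\bbC,\,b\mapsto b(0)$, $T$ corresponds to complex conjugation, so the $\pm1$-eigenspaces of $T$ are each one-dimensional over $\bbR$. Choosing $b_1$ from the $+1$- and $b_2$ from the $-1$-eigenspace gives \eqref{eq:para:whitham-real:b-real}; the $P_\bbR^3$-condition then makes $b_1$ palindromic real and $\tilde b_2:=-\mi b_2$ anti-palindromic real, and the vanishing at $\lambda=1$ (automatic for $\tilde b_2$, prescribed by $\calS^2_1$ for $b_1$) gives the explicit forms \eqref{eq:wente:whitham-wente:b}.

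For part~(b), I would introduce the $\bbR$-algebra automorphism $\sigma_0:f(\lambda)\mapsto\overline{f(\bar\lambda)}$ of $\bbC[\lambda]$, which commutes with $\partial_\lambda$ and sends $\mi\mapsto-\mi$. Using $\sigma_0 a=a$, $\sigma_0 b_1=b_1$, $\sigma_0 b_2=-b_2$ and applying $\sigma_0$ term by term to \eqref{eq:para:whitham:1}--\eqref{eq:para:whitham:2}, one checks that the tuple $(\sigma_0\dot a,\sigma_0\dot b_1,-\sigma_0\dot b_2,-\sigma_0 c_1,\sigma_0 c_2,\sigma_0 Q)$ solves the same Whitham system. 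Its initial data matches the original: $(-\sigma_0 c_1)(1)=-\overline{c_1(1)}=0$ since $c_1(1)=0$, and $(\sigma_0 c_2)(1)=\overline{c_2(1)}=c_2(1)$ since $c_2(1)\in\bbR$. The uniqueness clause of Lemma~\ref{L:para:whitham} then forces \eqref{eq:para:whitham-real:dot-real}; in particular $\dot a\in T_a\calH^2\subset P_\bbR^4$ has real palindromic coefficients, hence the flow is tangent to $\mathcal{W}$.

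For part~(c), the reality conditions together with the $\calS^2_1$-tangency condition $\dot b_\ind(1)=0$ (equivalently $c_\ind'(1)=\tfrac{3}{2}c_\ind(1)$, using $a'(1)=2a_+$ so that $a'(1)/a(1)=2$) sharply restrict $c_1$ and $c_2$. Writing $c_1=\mi\tilde c_1$ with $\tilde c_1$ real anti-palindromic of degree three automatically gives $\tilde c_1(1)=0$, and then $\tilde c_1'(1)=0$ forces $\tilde c_1=A(\lambda-1)^3$ with $A\in\bbR$; similarly $c_2$ is real palindromic of degree three with $c_2(1)=2(C+D)$ prescribed. Substituting \eqref{eq:wente:whitham-wente:a}, \eqref{eq:wente:whitham-wente:b} and \eqref{eq:wente:whitham-wente:dotabk} together with these ansätze into \eqref{eq:para:whitham:1}--\eqref{eq:para:whitham:2} and comparing coefficients of powers of $\lambda$ determines $A$, $Q$, the remaining free parameter in $c_2$, and the derivatives $\dot a_\pm,\dot\beta_\ind$ in terms of $c_2(1)$. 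The edge case $b_1'(1)=0$ flagged in Lemma~\ref{L:para:whitham} is covered by $b_1''(1)=4\beta_1\neq 0$ together with \eqref{eq:para:whitham:Q''}, which pins down $A$. Setting $c_2(1)=-4a_+\beta_2\beta_3$ finally yields the five ODEs \eqref{eq:wente:whitham-wente:dgl-apm}--\eqref{eq:wente:whitham-wente:dgl-beta23}.

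The main obstacle is the sheer volume of polynomial bookkeeping in part~(c): one has polynomial identities up to degree seven and must verify that the constraints produced by coefficient matching consistently reduce to a five-dimensional ODE system in $(a_\pm,\beta_1,\beta_2,\beta_3)$ with the rational right-hand sides displayed. The specific choice $c_2(1)=-4a_+\beta_2\beta_3$ is presumably engineered to cancel a cross term in $\dot b_2$ that would otherwise mix $\dot\beta_2$ and $\dot\beta_3$ obstructively, ensuring that $\dot\beta_2$ and $\dot\beta_3$ decouple cleanly as in \eqref{eq:wente:whitham-wente:dgl-beta23}.
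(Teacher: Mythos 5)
Your proposal is correct and follows essentially the same route as the paper: part (a) via the palindromic structure of $a$ and the eigenspaces of $b\mapsto\overline{b(\bar\lambda)}$ on $\calB_a$ (identified through $b\mapsto b(0)$), part (b) via applying the conjugation symmetry to the Whitham system and invoking the uniqueness clause of Lemma~\ref{L:para:whitham}, and part (c) via the same ansätze $c_1=\mi\gamma_1(\lambda-1)^3$, $c_2=(\lambda+1)\cdot(\text{palindromic quadratic})$, $Q$ with a double root at $\lambda=1$ pinned down by $b_1''(1)=4\beta_1$, followed by coefficient comparison with the normalisation $c_2(1)=-4a_+\beta_2\beta_3$. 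The only blemishes are cosmetic (e.g.\ the stray "$c_2(1)=2(C+D)$"), not structural.
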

\begin{proof}
For (a) we note that because of $a\in\mathcal{W}$, all coefficients of $a$ are real. Thus the reality condition and the normalisation for $a\in\mathcal{H}^2$ imply that the highest and the lowest coefficient of $a$ are equal to one, whereas the $\lambda^3$--coefficient and the $\lambda$--coefficient are equal. This implies equation~\eqref{eq:wente:whitham-wente:a}. The statement $a(\pm 1) > 0$ follows from the positivity condition for $a\in \mathcal{H}^2$. 
		
By definition $\calB_a$ is preserved under $b(\lambda) \mapsto \overline{b(\bar{\lambda})}$. Since $b(0)$ uniquely determines $b\in\calB_a$, we conclude
$$b(0)\in\bbR \;\Longleftrightarrow\; b\in\bbR[\lambda]\quad\text{and}\quad  b(0) \in \mi\bbR \;\Longleftrightarrow\; b\in\mi\bbR[\lambda]\; . $$
This shows the existence of $b_1$ and $b_2$. Then $b_1$ vanishes at $\lambda=1$ by the definition of $\calS^2_1$ and at $\lambda=-1$ because it is both real and purely imaginary there. Therefore $b_1$ is of the form $b_1(\lambda) = (\lambda-1)\,(\lambda+1)\,p(\lambda)$ with $p \in \mi P^1_\bbR$ and $\overline{p(\bar{\lambda})} = p(\lambda)$. Thus $p(\lambda) = \beta_1\,(\lambda-1)$ with $\beta_1 \in \bbR$, and hence $b_1$ is as in equation~\eqref{eq:wente:whitham-wente:b}. Further $b_2$ is also zero at $\lambda=1$, and therefore of the form $b_2(\lambda)=\mi\,(\lambda-1)\,p(\lambda)$ with $p \in P^2_\bbR\cap\bbR^2[\lambda]$. Now $p(\lambda)=\beta_2\,(\lambda^2+(\beta_3+2)\lambda+1)$ with $\beta_2,\beta_3\in\bbR$, which shows \eqref{eq:wente:whitham-wente:b} for $b_2$.

For (b), if the $b_\ind$ satisfy the conditions of equation~\eqref{eq:para:whitham-real:b-real}, then some anti--derivative of $\Theta(b_1)$ is real on the real line, and some anti--derivative of $\Theta(b_2)$ is purely imaginary on the real line. If we want to preserve this property under the Whitham flow, then the equations for $c_\ind$ in \eqref{eq:para:whitham-real:dot-real} must hold. In this case we have $c_1(1) \in \bbR\cap \mi\bbR= \{0\}$ and $c_2(1) \in \bbR$. If this is the case, then $Q'(1), Q''(1) \in \bbR$ by equations~\eqref{eq:para:whitham:Q'} and \eqref{eq:para:whitham:Q''}. Therefore $Q$ satisfies \eqref{eq:para:whitham-real:dot-real}. Because of the uniqueness of solutions of Lemma~\ref{L:para:whitham}, the corresponding solutions satisfy \eqref{eq:para:whitham-real:dot-real}. 
	
In (c), the representations of $\dot{a}$ and $\dot{b}_\ind$ in \eqref{eq:wente:whitham-wente:dotabk} follow immediately from (a), because the flow in question is tangential to $\mathcal{W}$. We have $c_1(0)=0$ and therefore also $c_1'(0)=0$ by equation~\eqref{eq:para:whitham:ck'}. It follows that $c_1(\lambda)=\mi\,(\lambda-1)^2\,p(\lambda)$ holds, where $p\in \mi P^1_\bbR$ satisfies $\overline{p(\bar{\lambda})}=p(\lambda)$. Thus $\lambda-1$ divides $p$, and hence $c_1(\lambda)=\mi\,\gamma_1\,(\lambda-1)^3$ holds with some $\gamma_1 \in \bbR$. Also $c_2$ vanishes at $\lambda=-1$ because it is both real and purely imaginary there, and thus  of the form $c_2(\lambda) = \gamma_2\,(\lambda+1)\,(\lambda^2+(\gamma_3+2)\lambda+1)$ with some $\gamma_2,\gamma_3\in \bbR$. By equations~\eqref{eq:para:whitham:Q'} and \eqref{eq:para:whitham:Q''} we now obtain $Q(1)=Q'(1)=0$ and 
$$ Q''(1) = -\frac{b_1''(1)\,c_2(1)}{a_+} = -\frac{4\beta_1 \, 2\gamma_2(\gamma_3+4)}{a_+} $$
and hence
$$ Q(\lambda)=-\frac{4\,\beta_1\,\gamma_2\,(\gamma_3+4)}{a_+}\,(\lambda-1)^2 \; . $$
By inserting the expansions of the polynomials $b_\ind,\,c_\ind,\,Q$ and $a$ into equation~\eqref{eq:para:whitham:2} and comparing coefficients of $\lambda$ we obtain a system of linear equations in $\gamma_\ind$. Under the additional condition $c_2(1)=-4\,a_+\,\beta_2\,\beta_3$, it has the unique solution
$$ \gamma_1 = 2\,a_-\,\beta_1\;,\quad \gamma_2 = -2\,\beta_2\,(a_-+4\beta_3)\quad\text{and}\quad \gamma_3 =  \frac{a_+\,\beta_3}{a_-+4\beta_3}-4 \; . $$
Inserting the representations of the various polynomials and the equations for $\gamma_\ind$ into equation~\eqref{eq:para:whitham:1} for $\ind=1$ and collecting like powers of $\lambda$ yields another system of linear equations in $\dot{a}_+$, $\dot{a}_-$ and $\dot{\beta}_1$, which has the unique solution given by equations~\eqref{eq:wente:whitham-wente:dgl-apm} and \eqref{eq:wente:whitham-wente:dgl-beta1}. Treating equation~\eqref{eq:para:whitham:1} for $\ind=2$ in the same way, one obtains a system of linear equations in $\dot{a}_+$, $\dot{a}_-$, $\dot{\beta}_2$ and $\dot{\beta}_3$ with the unique solution given by equations~\eqref{eq:wente:whitham-wente:dgl-apm} and \eqref{eq:wente:whitham-wente:dgl-beta23}.
\end{proof}
Wente proved \,$\calW \neq \varnothing$\, in \cite{Wen}. In Corollary~\ref{wente existence} we provide an independent proof for this. 
\begin{lemma}\label{L:wente:submfd}
The Wente family $\mathcal{W}$ is a smooth, 1--dimensional submanifold of $\calS^2_1$. The connected components of $\mathcal{W}$ are images of maximal integral curves of the Whitham flow described in Lemma~\ref{L:wente:whitham-wente}. On every such integral curve, $a_+$ is strictly increasing, and we have $a_+\to 0$ at the lower boundary and $a_+\to\infty$ at the upper boundary of the curve. 
\end{lemma}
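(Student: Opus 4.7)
My plan is to use the Whitham machinery from Lemma~\ref{L:wente:whitham-wente} to show that $\calW$ is a smooth $1$-dimensional submanifold whose connected components coincide with the integral curves of the specific flow in part~(c), and then analyse the planar ODE \eqref{eq:wente:whitham-wente:dgl-apm} for $(a_+,a_-)$ together with Proposition~\ref{P:boundary-S21-C4lambda} to control the endpoint behaviour.

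For the submanifold structure, Lemma~\ref{L:para:whitham} gives a real-linear bijection $(c_1(1),c_2(1))\in\bbR^2\to T_a\calS^2_1$, and Lemma~\ref{L:wente:whitham-wente}(b) shows that the Whitham vector fields tangent to $\calW$ are exactly those with $c_1(1)=0$. Hence $\dim T_a\calW\leq 1$, with equality iff the vector field with $c_2(1)\neq 0$ is non-vanishing. Choosing $c_2(1)=-4a_+\beta_2\beta_3$ as in Lemma~\ref{L:wente:whitham-wente}(c), the formula $\dot a_+=4a_+a_-$ is non-zero iff $c_2(1)\neq 0$. The positivity condition gives $a_\pm>0$, and $\beta_2\neq 0$ since otherwise $b_2\equiv 0$; if $\beta_3=0$ then by \eqref{eq:wente:whitham-wente:b} both $b_1$ and $b_2$ are divisible by $(\lambda-1)(\lambda+1)$, forcing $\lambda+1\mid\gcd(\calB_a)$, which contradicts $\gcd(\calB_a)=\lambda-1$ on $\calS^2$ (from \cite[Thm~3.2]{CS} and the proof of Lemma~\ref{L:S2-smooth}). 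Thus $\calW$ is a smooth $1$-dimensional submanifold, its connected components are the maximal integral curves of the non-vanishing smooth vector field from part~(c), and strict monotonicity $\dot a_+=4a_+a_->0$ follows from \eqref{eq:wente:whitham-wente:dgl-apm}.

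For the lower endpoint of the maximal interval $(T_-,T_+)$, suppose $m=\inf_t a_+>0$. Going backward, $a_+\in[m,a_+(t_0)]$ is bounded; I claim $a_-$ is also bounded backward. Indeed, if $a_-(t_1)\to\infty$ along some $t_1\to T_-$, take $t_1$ close enough to $T_-$ so that $a_-(t_1)>16$; since $\dot a_-=2(a_++a_--16)a_->0$ whenever $a_->16$, and in particular at any crossing $a_-=16$ one has $\dot a_-=32a_+>0$ which forbids a down-crossing, continuity gives $a_->16$ on $[t_1,t_0]$ with $a_-$ forward-increasing, so $a_-(t_0)\geq a_-(t_1)\to\infty$, a contradiction. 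Hence the backward trajectory is bounded and has a subsequential limit $(m,\ell)$ corresponding to a polynomial in $\overline{\calS^2_1}$. If this polynomial lies in $\mathcal{H}^2$, closedness of $\calS^2_1$ in $\mathcal{H}^2$ places the limit in $\calS^2_1$ and the flow can be extended, contradicting maximality; if it lies in $\partial\mathcal{H}^2$, it belongs to $\overline{\calS^2_1}\cap\partial\mathcal{H}^2=\partial\calS^2_1$, so by Proposition~\ref{P:boundary-S21-C4lambda} it has $a_+=0$, contradicting $m>0$. Hence $m=0$.

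For the upper endpoint, suppose $M=\sup_t a_+<\infty$. From $\frac{d}{dt}\ln a_+=4a_-$ and monotone convergence $a_+\to\bar M\leq M$, we obtain $\int_{t_0}^{T_+}a_-\,dt=\tfrac14(\ln\bar M-\ln a_+(t_0))<\infty$. On the other hand, for $a_-\geq 32$ the ODE gives $\dot a_-\geq a_-^2$, forcing $a_-\geq 1/(1/a_-(t_1)-(t-t_1))$ and hence finite-time blow-up with $\int a_-\,dt=\infty$, which contradicts the previous integrability. So $a_-\leq 32$ throughout, $(a_+,a_-)$ stays bounded, the flow extends to $T_+=\infty$, and integrability of $a_-$ on $[t_0,\infty)$ combined with the boundedness of $\dot a_-$ (hence uniform continuity of $a_-$) forces $a_-\to 0$ by Barbalat's lemma. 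The resulting limit $(\bar M,0)$ corresponds to a polynomial with double root at $\lambda=-1$ and $a_+=\bar M>0$, which lies in $\partial\mathcal{H}^2\setminus\partial\calS^2_1$ and thus outside $\overline{\calS^2_1}$, a final contradiction. Hence $M=\infty$. The main difficulty is in this last step, where Proposition~\ref{P:boundary-S21-C4lambda} is essential to rule out the spurious limit polynomials with double root at $\lambda=-1$.
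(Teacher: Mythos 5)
Your argument for the submanifold structure and for the strict monotonicity of $a_+$ is essentially the paper's: injectivity of $(c_1(1),c_2(1))\mapsto\dot a$ from Lemma~\ref{L:para:whitham}, the identification of the tangent direction to $\calW$ with $c_1(1)=0$ via Lemma~\ref{L:wente:whitham-wente}, and $\dot a_+=4a_+a_->0$. For the endpoint behaviour the paper is very terse (it only records that convergence to a point of $\partial\calS^2_1$ forces $a_+\to0$ by Proposition~\ref{P:boundary-S21-C4lambda}, and that otherwise $a_+\to\infty$), whereas you carry out a genuine phase--plane analysis. Your lower-endpoint argument is correct, up to the remark that if $T_-=-\infty$ the phrase ``the flow can be extended'' is vacuous; but in that case $\int_{-\infty}^{t_0}4a_+a_-\,dt=a_+(t_0)-m<\infty$ together with $a_+\geq m>0$ forces $a_-\to0$ along a subsequence, and the same $\partial\mathcal{H}^2$ contradiction applies, so this is only a presentational loose end.

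The genuine gap is in the upper-endpoint case, at the step ``forcing $a_-\geq 1/(1/a_-(t_1)-(t-t_1))$ and hence finite-time blow-up with $\int a_-\,dt=\infty$.'' The comparison function $g(t)=\bigl(1/a_-(t_1)-(t-t_1)\bigr)^{-1}$ has divergent integral only over the \emph{full} interval $[t_1,t^*)$ with $t^*=t_1+1/a_-(t_1)$; the bound $a_-\geq g$ holds only while the trajectory exists, and a priori the maximal time $T_+$ could be strictly smaller than $t^*$, in which case $\int_{t_1}^{T_+}g\,dt<\infty$ and no contradiction with the finiteness of $\int a_-\,dt$ results. A one-sided superquadratic lower bound on $\dot a_-$ does not by itself force a divergent integral at blow-up: a solution of $\dot a_-=a_-^3$ blows up like $(T-t)^{-1/2}$ and is integrable. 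To close the argument you need two further observations: first, that $T_+$ coincides with the blow-up time of the planar ODE --- this follows from your own extension mechanism, since if $(a_+,a_-)$ stayed bounded up to a finite $T_+$ the limiting polynomial would satisfy $a(1)\geq a_+(t_0)>0$ and hence lie in $\calS^2_1$ by Proposition~\ref{P:boundary-S21-C4lambda}, contradicting maximality; and second, the matching upper bound $\dot a_-\leq 2(M+a_-)a_-$ (valid because $a_+\leq M$), which yields $a_-(t)\geq c/(T_+-t)$ near $T_+$ and therefore $\int a_-\,dt=\infty$. Alternatively, and closer to the paper's own route, the quotient $\dot a_-/\dot a_+$ integrates to the first integral $a_-=a_++4\alpha\sqrt{a_+}+16$ of the planar system (this is exactly Lemma~\ref{L:wente:delta}), so $a_+\leq M$ immediately bounds $a_-$, the blow-up alternative never occurs, and only your Barbalat branch remains, which then finishes the proof.
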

\begin{proof}
Recall that $\calS^2_1$ is a smooth, real 2--dimensional manifold by Lemma~\ref{L:S2-smooth}. For $a\in \calS^2_1$ and a basis $(b_1,b_2)$ of $\calB_a$ we consider the Whitham flows constructed in Lemma~\ref{L:para:whitham}. For $(c_1(1),c_2(1))\in\bbR^2$, $\dot{a}=0$ first implies that $(c_1,c_2)$ vanishes at all roots of $a$, and then $(c_1,c_2)=(0,0)$. This shows that the linear map $g: \bbR^2 \to T_a\calS^2_1$, which associates to $(c_1(1),c_2(1))\in \bbR^2$ the $\dot{a}$ defined in Lemma~\ref{L:para:whitham}, is injective. Because of $\dim \calS^2_1=2$, $g$ is in fact a linear isomorphism. By the inverse function theorem, the flow of these vector fields defines a map $\widetilde{g}$ of a neighbourhood of $(0,0)\in\bbR^2$ into $\calS^2_1$ so that $\widetilde{g}(0,0)=a$ and  $\mathrm{d}_{(0,0)}\widetilde{g}=g$ holds. This map is a submersion.

Now suppose $a\in \mathcal{W}$ and let the basis $(b_1,b_2)$ of $\calB_a$ be chosen as in Lemma~\ref{L:wente:whitham-wente}(a). Then $g(1,0)$ is transversal to $\mathcal{W}$, whereas $g(0,1)$ is tangential to $\mathcal{W}$. This shows that $\mathcal{W}$ is near $a$ a 1--dimensional submanifold of $\calS^2_1$. 
It also follows that the connected components of $\mathcal{W}$ are images of maximal integral curves of the Whitham flow defined by $c_1(1)=0$, $c_2(1)=1$. 

On such an integral curve we have $a_\pm = a(\pm 1)>0$ because of $a \in \calS^2_1$, and therefore the first equation in~\eqref{eq:wente:whitham-wente:dgl-apm} shows that $a_+$ is strictly increasing. At its boundary points, the integral curve either converges to a boundary point of $\calS^2_1$, which implies that $a_+ \to 0$ by Proposition~\ref{P:boundary-S21-C4lambda}, or else $a_+ \to \infty$. Because $a_+$ is strictly increasing, $a_+\to 0$ occurs at the lower boundary and $a_+\to\infty$ occurs at the upper boundary of the curve. 
\end{proof}
In the sequel we consider the vector field defined by equations~\eqref{eq:wente:whitham-wente:dgl-apm}--\eqref{eq:wente:whitham-wente:dgl-beta23} also as a vector field on $(a_+,a_-,\beta_1,\beta_2,\beta_3) \in \bbR^5$. It is a remarkable fact that the differential equations for $a_+$ and $a_-$ do not depend on the $\beta_\ind$ in our situation, and thus split off to give a vector field on $(a_+,a_-)\in \bbR^2$ defined by equations~\eqref{eq:wente:whitham-wente:dgl-apm}. Similarly the differential equations for $(a_+,a_-,\beta_2,\beta_3)$ do not depend on $\beta_1$, and thus define a vector field on $(a_+,a_-,\beta_2,\beta_3)\in \bbR^4$. Any smooth integral curve of the latter differential equation can be supplemented by $\beta_1=\exp\bigr( -\int a_-(s)\,\mathrm{d}s \bigr)$ to produce an integral curve of the full system of differential equations \eqref{eq:wente:whitham-wente:dgl-apm}--\eqref{eq:wente:whitham-wente:dgl-beta23} with the same domain of definition as before. 

Lemma~\ref{L:wente:submfd} shows that the connected components of $\mathcal{W}$ correspond to maximal integral curves of the vector field on $(a_+,a_-)\in\bbR^2$ given by equations~\eqref{eq:wente:whitham-wente:dgl-apm} with $a_+>0$. However, not all integral curves of that vector field flow along $\mathcal{W}$; in fact we will see that only a single integral curve has this property. The reason is that whereas the periods of the differential $\Theta(b_1)$ with $b_1$ given by equation~\eqref{eq:wente:whitham-wente:b} on the spectral curve defined by $\nu^2=\lambda\,a(\lambda)$ with $a$ given by equation~\eqref{eq:wente:whitham-wente:a} are constant along integral curves, there is no reason in general why these periods should be purely imaginary. Hence the corresponding $a$ will generally not be in $\calS^2_1$ and in particular not in $\mathcal{W}$. 
\begin{lemma}
\label{L:wente:delta}
For every maximal integral curve of the vector field on $(a_+,a_-)$ defined by equations~\eqref{eq:wente:whitham-wente:dgl-apm} with $a_+ > 0$ there exists $\alpha \in \bbR$ so that $a_- = a_+ + 4\alpha\,\sqrt{a_+} + 16$ holds.  
\end{lemma}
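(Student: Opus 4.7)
The plan is to treat \eqref{eq:wente:whitham-wente:dgl-apm} as an autonomous planar flow on the open half-plane $\{a_+>0\}$ and exhibit a smooth first integral whose constant value along an integral curve will serve as $4\alpha$. To guess the integral, I would observe that $\dot a_+ = 4 a_+ a_-$ does not vanish where $a_- \neq 0$, so on those portions of a trajectory $a_-$ is a function of $a_+$ satisfying the linear inhomogeneous first-order ODE
\begin{equation*}
\frac{d a_-}{d a_+} \;=\; \frac{\dot a_-}{\dot a_+} \;=\; \frac{a_+ + a_- - 16}{2\,a_+}.
\end{equation*}
Its integrating factor is $a_+^{-1/2}$, and a direct integration rewrites the equation as $\frac{d}{da_+}\bigl(a_-\,a_+^{-1/2}\bigr) = \tfrac{1}{2}a_+^{-1/2} - 8\,a_+^{-3/2}$, whose general solution is $a_- = a_+ + 16 + 4\alpha\,\sqrt{a_+}$ with integration constant $\alpha \in \bbR$. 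This motivates the candidate first integral
\begin{equation*}
I(a_+,a_-) \;:=\; \frac{a_- - a_+ - 16}{4\,\sqrt{a_+}}, \qquad a_+ > 0.
\end{equation*}

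Next I would verify directly that $I$ is constant along the full vector field \eqref{eq:wente:whitham-wente:dgl-apm} (not merely on $\{a_-\neq 0\}$), which is a short, purely algebraic check. Using the identity $\dot a_- - \dot a_+ = 2\,a_-\,(a_- - a_+ - 16)$, the chain rule gives
\begin{equation*}
\dot I \;=\; \frac{\dot a_- - \dot a_+}{4\,\sqrt{a_+}} \;-\; \frac{(a_- - a_+ - 16)\,\dot a_+}{8\,a_+^{3/2}},
\end{equation*}
and substituting $\dot a_+ = 4\,a_+\,a_-$ makes both terms equal to $\tfrac{a_-\,(a_- - a_+ - 16)}{2\,\sqrt{a_+}}$ with opposite signs, so $\dot I \equiv 0$ on the whole half-plane $\{a_+>0\}$. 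This check is independent of whether $a_-$ vanishes, so it also covers the possible equilibria on $\{a_-=0\}$, where the integral curve is a single point and both sides of the asserted identity are trivially compatible for $\alpha := -(a_+ + 16)/(4\sqrt{a_+})$.

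Finally, since $a_+ > 0$ on a maximal integral curve by hypothesis, $I$ is well-defined and smooth there, and the computation above shows $I$ takes a constant value $\alpha \in \bbR$ on that curve. Rearranging $I = \alpha$ yields the claimed relation $a_- = a_+ + 4\alpha\,\sqrt{a_+} + 16$. The only non-routine step in this plan is identifying the conserved quantity $I$; once the linear first-order ODE for $a_-(a_+)$ is read off from the ratio $\dot a_-/\dot a_+$, the remainder of the argument reduces to elementary manipulations and a direct algebraic verification, so I do not anticipate any substantive obstacle.
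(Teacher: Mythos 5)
Your proposal is correct and follows essentially the same route as the paper: both reduce to the linear first-order ODE $\mathrm{d}a_-/\mathrm{d}a_+ = (a_+ + a_- - 16)/(2a_+)$ obtained from the ratio $\dot a_-/\dot a_+$ and integrate it (the paper by variation of parameters applied to $y=a_--16$, you via the integrating factor $a_+^{-1/2}$). Your additional direct check that $I$ is a first integral of the full planar vector field is a small but welcome refinement, since it also covers the equilibria on $\{a_-=0\}$, where the reparametrisation of the trajectory by the variable $a_+$ degenerates.
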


\begin{proof}
We consider a maximal integral curve, and regard $y = a_--16$ as a function of the strictly increasing variable $a_+$. Then it follows from equations~\eqref{eq:wente:whitham-wente:dgl-apm} that $y$ satisfies the linear ODE
\begin{equation}\label{eq:wente:delta:ode}
\frac{\mathrm{d}y}{\mathrm{d}a_+} = \frac{\dot{a}_-}{\dot{a}_+} = \frac{1}{a_+}y + \frac{1}{2} \; .
\end{equation}
The general solution of $\tfrac{\mathrm{d}y_h}{\mathrm{d}a_+} = \tfrac{1}{a_+}y_h$ is given by $y_h(x)=C\,\sqrt{a_+}$ with a constant $C$. By variation of parameters it follows that the general solution of \eqref{eq:wente:delta:ode} is $y(a_+) = f(a_+)\,\sqrt{a_+}$, where $f'(a_+)=\tfrac{1}{2\sqrt{a_+}}$. We thus have $f(a_+)=\sqrt{a_+}+4\alpha$ with a constant $\alpha \in \bbR$ and hence $y(a_+)=a_++4\alpha\,\sqrt{a_+}$. 
\end{proof}
\begin{lemma}
\label{L:wente:flowout}
Every maximal integral curve of the vector field on $(a_+,a_-)$ given by equations~\eqref{eq:wente:whitham-wente:dgl-apm} with $a_+\downarrow0$ at the lower boundary extends to a unique maximal integral curve of the vector field on $(a_+,a_-,\beta_1,\beta_2,\beta_3)$ given by equations~\eqref{eq:wente:whitham-wente:dgl-apm}--\eqref{eq:wente:whitham-wente:dgl-beta23} with the same lower boundary (but a possibly smaller upper boundary), such that $\beta_1\, a_+^{1/4} \to 1$, $\beta_2 \to 1$ and $\beta_3\to-4$ at the lower boundary. For this integral curve, $\Theta(b_2)$ with $b_2$ defined by equation~\eqref{eq:wente:whitham-wente:b} has purely imaginary periods. 
\end{lemma}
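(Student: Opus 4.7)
The plan is to treat the five-dimensional extension asymptotically near the singular lower boundary, then deduce the imaginary-period property from the fact that the constructed flow is by design a Whitham flow tangent to $\calF$. Existence and uniqueness of the extension are pure ODE questions; the periods claim is where the geometric input enters.

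The ODE analysis splits cleanly into three pieces. First, $\beta_1$ decouples via a conserved quantity: combining $\dot a_+=4a_+a_-$ with $\dot\beta_1=-a_-\beta_1$ yields $\tfrac{d}{dt}(\beta_1 a_+^{1/4})=0$, so the condition $\beta_1 a_+^{1/4}\to 1$ pins $\beta_1$ uniquely. For $\beta_3$, set $u=\beta_3+4$, $v=a_--16$, which turns \eqref{eq:wente:whitham-wente:dgl-beta23} into the Riccati
\[ \dot u = 4u^2+2v(u-2). \]
By Lemma~\ref{L:wente:delta}, $v\sim 4\alpha\sqrt{a_+}$ near $t_l$, and the linearisation of the two-dimensional flow at $(0,16)$ gives $a_+\sim e^{64(t-t_l)}$, $v\sim e^{32(t-t_l)}$. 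Balancing $\dot u\approx -4v$ at leading order produces a unique formal series $u=-\tfrac{\alpha}{2}\sqrt{a_+}+\mathrm{O}(a_+)$, which a contraction-mapping argument on a one-sided neighbourhood of $t_l$ promotes to a genuine solution. Uniqueness is captured by the Riccati comparison
\[ u_2(t)-u_1(t)=\bigl(u_2(t_0)-u_1(t_0)\bigr)\exp\!\Bigl(\int_{t_0}^{t}\bigl(4(u_1+u_2)+2v\bigr)\,ds\Bigr); \]
the integrand is $\mathrm{O}(\sqrt{a_+})$ and hence integrable down to $t_l$, so letting $t_0\to t_l$ forces $u_1\equiv u_2$. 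Then $\beta_2$ is determined by the linear equation $\dot\beta_2=-(v+4u)\beta_2$, which is uniquely normalised by $\beta_2\to 1$ at $t_l$ since $v+4u$ is again integrable there. The upper boundary of this extension may be strictly smaller than that of the underlying $(a_+,a_-)$-curve, because the $4u^2$ term can drive $u$ to a finite-time singularity once it escapes the slow manifold.

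To establish that $\Theta(b_2)$ has purely imaginary periods, I would use that the constructed ODE coincides, by derivation, with the Wente restriction of the Whitham vector field of Lemma~\ref{L:para:whitham}, and Whitham vector fields preserve $\calF$. Hence once $b_2(t_0)\in\calB_{a(t_0)}$ holds at some smooth $t_0$, it holds throughout. To secure this I pass to the limit $t\to t_l$: $a(t)\to(\lambda-1)^4$ and $b_2(t)\to\mi(\lambda-1)^3$, and the pullback of $\mi(\lambda-1)^3/\nu\cdot d\lambda/\lambda$ to the normalisation $\tilde\nu^2=\lambda$ equals $\mi(\lambda-1)/(\tilde\nu\lambda)\,d\lambda$, which is holomorphic at the tacnode $\lambda=1$ and vanishes to first order there. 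The continuous $\calB_a$-section provided by Corollary~\ref{C:adouble} therefore sends $(-\mi,(\lambda-1)^4)$ precisely to $\mi(\lambda-1)^3$; matching the values at $\lambda=0$ (where $b_2(t)(0)=-\mi\beta_2(t)\to -\mi$) identifies the ODE-generated $b_2(t)$ with this continuous $\calB$-section in a neighbourhood of $t_l$. Hence $b_2(t)\in\calB_{a(t)}$ near $t_l$, and Whitham preservation propagates this to the whole maximal extension. The principal obstacle is exactly this matching across the singular point $t_l$, where the curve acquires a tacnode and the two homology cycles vanish; the compatibility ultimately rests on Lemma~\ref{L:adouble} together with the absence of a residue of the limit form at the tacnode, which is what guarantees that the ODE-generated $b_2$ and the $\calB$-section agree at $t_l$.
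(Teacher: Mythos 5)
Your ODE analysis of the extension is a legitimate alternative to the paper's route: the paper blows up the degenerate rest point $(a_+,a_-,\beta_2,\beta_3)=(0,16,1,-4)$ via $x=a_+^{1/4}$, $a_--16=4\alpha x^2+y_1x^4$, $\beta_2-1=y_2x^2$, $\beta_3+4=y_3x^2$, and gets existence and uniqueness from the Hartman--Grobman theorem at the hyperbolic zero $(0,1,-\alpha/16,-\alpha/2)$ of the blown-up field, whereas you extract the same leading asymptotics ($\beta_3+4\sim-\tfrac{\alpha}{2}\sqrt{a_+}$, $\beta_2-1\sim-\tfrac{\alpha}{16}\sqrt{a_+}$, and the conserved quantity $\beta_1a_+^{1/4}$) by hand from the Riccati equation. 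Two caveats: the lower boundary is at $t=-\infty$ (the rest point is reached only asymptotically), so your ``one-sided neighbourhood of $t_l$'' and the integrals ``down to $t_l$'' must be read as statements at $-\infty$, where they do hold because $\sqrt{a_+}\sim e^{32t}$ is integrable; and your uniqueness identity needs the competitor solution to satisfy $u_1+u_2$ integrable, which you only know a priori for the constructed solution --- an arbitrary solution with $\beta_3\to-4$ must first be shown to lie on the slow manifold (or one simply invokes Hartman--Grobman as the paper does).

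The periods argument, however, has a genuine gap. You propose to ``identify the ODE-generated $b_2(t)$ with the continuous $\calB$-section in a neighbourhood of $t_l$'' on the grounds that both converge to $\mi(\lambda-1)^3$ as $t\to t_l$. Agreement of two continuous families at the single degenerate boundary point does not imply agreement on a neighbourhood; and showing $b_2(t)\in\calB_{a(t)}$ for $t$ near the boundary is precisely the assertion to be proved, so this step is circular. The section of Corollary~\ref{C:adouble} is defined by the period conditions, not by the ODE, and nothing you have said forces it to satisfy \eqref{eq:wente:whitham-wente:dgl-apm}--\eqref{eq:wente:whitham-wente:dgl-beta23}. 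The correct closing move is shorter and uses ingredients you already have: equation~\eqref{eq:para:whitham:1} (from which \eqref{eq:wente:whitham-wente:dgl-apm}--\eqref{eq:wente:whitham-wente:dgl-beta23} were derived) makes \emph{every} period of $\Theta(b_2(t))$ constant in $t$ along any integral curve, whether or not the curve lies in $\calF$; these constant values therefore equal their limits as $t\to-\infty$, where $\Theta(b_2)\to d\bigl(2\mi(1+\lambda)/\sqrt{\lambda}\bigr)$ and the cycles degenerate to paths joining the two points over $\lambda=1$, so the limiting periods are integer multiples of a purely imaginary number. Hence the periods are purely imaginary for all $t$, with no need to compare $b_2(t)$ to the $\calB$-section at interior times.
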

\begin{remark}\label{convergency of 1-forms}
It is to be expected that $\beta_1$ is unbounded, but $\beta_2$, $\beta_3$ are bounded in the situation of the lemma, cf.\ \cite[the proof of Theorem~3.5]{KPS}. Our proof of Lemma~\ref{L:wente:flowout} does not depend on this statement.
Also note that in $\mathcal{W}$, $a_+\to 0$ implies that $b_2(1)\to0$ and $\beta_3\to-4$. 
\end{remark}
\begin{proof}[Proof of Lemma~\ref{L:wente:flowout}.]
By Lemma~\ref{L:wente:delta} there exists $\alpha\in \bbR$ so that $a_- = a_+ + 4\alpha\sqrt{a_+}+16$ holds. Thus $a_+\to 0$ implies $a_-\to 16$. We now consider the vector field on $(a_+,a_-,\beta_2,\beta_3) \in \R^4$ given by equations~\eqref{eq:wente:whitham-wente:dgl-apm} and \eqref{eq:wente:whitham-wente:dgl-beta23}. We show that this vector field has a suitable integral curve flowing out of $(a_+,a_-,\beta_2,\beta_3)=(0,16,1,-4)$. This point is a root of the vector field with non--hyperbolic Jacobi matrix. In order to study the integral curves nearby this point we blow it up.

We introduce the new variable $x=a_+^{1/4}$, which provides a local coordinate of the time domain because $a_+>0$ is strictly monotonic, and $a_+=x^4$. Moreover, the other variables we replace by
\begin{equation}\label{eq:wente:flowout:blowup}
a_--16 = 4\alpha\,x^2+y_1\,x^4\;,\quad \beta_2-1=y_2\,x^2 \quad\text{and}\quad \beta_3+4=y_3\,x^2 \; .
\end{equation}
With respect to $(x,y_1,y_2,y_3)$, the vector field defined by equations~\eqref{eq:wente:whitham-wente:dgl-apm} and \eqref{eq:wente:whitham-wente:dgl-beta23} is given by
\begin{gather*}\begin{aligned}
\dot{x} & = x\, (16 + 4\alpha\,x^2+y_1\,x^4) \\
\dot{y}_1 & = -2\,(y_1-1)\,(16+4\alpha\,x^2+y_1\,x^4) \\
\dot{y}_2 & = -4\alpha - 32y_2-4y_3-x^2 \, (y_1+12\alpha\,y_2+4\,y_2\,y_3+3y_1\,y_2\,x^2) \\
\dot{y}_3 & = -16\alpha - 32y_3 -x^2\, 4 (y_1-y_3^2) \; . 
\end{aligned}\end{gather*}
We will regard $x$ as the blowup variable, by which we blowup the functions $a_--16-4\alpha x^2$, $\beta_2-1$ and $\beta_3+4$ to give the blownup functions $y_1, y_2, y_3$. The exceptional fibre of this blowup is $\{x=0\}$. The vector field given above has exactly one zero on the exceptional fibre, namely at $(x,y_1,y_2,y_3)=(0,1,-\tfrac{1}{16}\alpha,-\tfrac{1}{2}\alpha)$. The Jacobi matrix of the vector field at this zero is
\[
\left( \begin{smallmatrix}
16 & 0 & 0 & 0 \\ 0 & -32 & 0 & 0 \\ 0 & 0 & -32 & -4 \\ 0 & 0 & 0 & -32 
\end{smallmatrix} \right) \; . 
\]
This matrix is hyperbolic. Its eigenvalues are $16$ and $-32$. The eigenvectors for the negative eigenvalue $-32$ are tangential to the exceptional fibre, but the eigenspace for the eigenvalue $16$, which is spanned by $(1,0,0,0)$, is transversal to the exceptional fibre. The Hartman--Grobman linearisation theorem therefore shows that there exists a unique integral curve of the blownup vector field that starts at $(x,y_1,y_2,y_3)=(0,1,-\tfrac{1}{16}\alpha,-\tfrac{1}{2}\alpha)$ into the direction $x>0$. Because of $y_1=1$ at the starting point, the $(x,y_1)$--component of this integral curve has the same tangent vector as the correspondingly blownup originally given integral curve $(a_+,a_-)$ of \eqref{eq:wente:whitham-wente:dgl-apm}, and thus these two curves are equal on the intersection of their domains of definition. If we now take equations~\eqref{eq:wente:flowout:blowup} to define functions $\beta_2$ and $\beta_3$ in terms of $x,y_1,y_2,y_3$, we obtain an integral curve $(a_+,a_-,\beta_2,\beta_3)$ of equations~\eqref{eq:wente:whitham-wente:dgl-apm} and \eqref{eq:wente:whitham-wente:dgl-beta23}. Because $x \to 0$ at the lower boundary, we have $\beta_2 \to 1$ and $\beta_3 \to -4$ there. Thus we have $a(\lambda) \to (\lambda-1)^4$ and $b_2(\lambda) \to \mi\,(\lambda-1)^3$, and therefore $\Theta(b_2) \to d\big(2\mi (1+\lambda)/\sqrt{\lambda}\big)$. In this limit all periods of $\Theta(b_2)$ are purely imaginary, since they converge to integer multiples of the difference $4\mi$ of the values of the function $2\mi (1+\lambda)/\sqrt{\lambda}$ at both points over $\lambda=1$. Because these periods are constant, the same holds along the maximal integral curve. 

Finally we supplement the integral curve $(a_+,a_-,\beta_2,\beta_3)$ with a function $\beta_1$ to an integral curve of the full system of differential equations \eqref{eq:wente:whitham-wente:dgl-apm}--\eqref{eq:wente:whitham-wente:dgl-beta23}. It follows from 
the first equation in~\eqref{eq:wente:whitham-wente:dgl-apm} and \eqref{eq:wente:whitham-wente:dgl-beta1} that  $\dot{\beta}_1/\dot{a}_+ = -\tfrac14\,(\beta_1/a_+)$ holds. Thus $\beta_1 = C a_+^{-1/4}$ with any constant $C\neq 0$ satisfies the requirements.
We may choose $C=1$, and then $\beta_1 \, a_+^{1/4} \to 1$ holds. 
\end{proof}
We saw in the preceeding lemma that on an integral curve of the Whitham flow that preserves $\mathcal{W}$, the differential $\Theta(b_2)$ always has purely imaginary periods. For the polynomial $a$ to lie in $\calS^2_1$, also the other corresponding differential form $\Theta(b_1)$ needs to have purely imaginary periods. The question of when this is the case is discussed in the next
\begin{lemma}\label{L:wente:delta-unique}
On the maximal integral curves of the vector field given by equations~\eqref{eq:wente:whitham-wente:dgl-apm}--\eqref{eq:wente:whitham-wente:dgl-beta23} with $a_+>0$ and $a_+\to 0$, $a_-\to 16$, $\beta_1\, a_+^{1/4}\to 1$, $\beta_2\to 1$, $\beta_3\to -4$ at a boundary of the curve, the corresponding differential form $\Theta(b_1)$ has purely imaginary periods if and only if the constant $\alpha$ from Lemma~\ref{L:wente:delta} is equal to the unique root $\alpha_0\in(0,2)$ of the map $\alpha\mapsto\beta$ from Lemma~\ref{lemma 1}.
\end{lemma}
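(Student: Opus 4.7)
The plan is to evaluate the periods of $\Theta(b_1)$ in the limit $a_+\to 0$, exploiting that they are constant along the integral curve by Lemma~\ref{L:wente:whitham-wente}(b). From Lemma~\ref{L:wente:delta} we have $a_- = a_+ + 4\alpha\sqrt{a_+}+16$; writing $\epsilon=\sqrt{a_+}$ the explicit form of Lemma~\ref{L:wente:whitham-wente}(a) becomes
\begin{equation*}
a(\lambda) = (\lambda-1)^4 - \alpha\,\epsilon\,\lambda\,(\lambda-1)^2 + \epsilon^2\,\lambda^2,
\end{equation*}
which degenerates to the distinguished boundary point $(\lambda-1)^4$ blown up in Section~\ref{Se:blow up sym point}. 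The strategy is to transport $(a,b_1)$ via the Cayley transform (Lemma~\ref{cayley transform}) to the $\varkappa$-picture, rescale as in Lemma~\ref{lemma 3} by the factor $C=\hat{a}_4^{1/4}$, and then identify the limit inside the exceptional fibre using Lemma~\ref{lemma 1}.

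A direct computation using $(\lambda-1)^2=4\varkappa^2/(\mi+\varkappa)^2$, $\lambda+1=2\mi/(\mi+\varkappa)$, $\lambda(\lambda-1)^2=-4(\varkappa^2+1)\varkappa^2/(\mi+\varkappa)^4$ and $\lambda^2=(\varkappa^2+1)^2/(\mi+\varkappa)^4$ gives
\begin{equation*}
\hat{a}(\varkappa) = \varkappa^4 + \frac{4\alpha\epsilon+2\epsilon^2}{16+4\alpha\epsilon+\epsilon^2}\,\varkappa^2 + \frac{\epsilon^2}{16+4\alpha\epsilon+\epsilon^2}, \qquad \hat{b}_1(\varkappa) = \frac{-16\,\beta_1}{\sqrt{a(-1)}}\,\varkappa^2,
\end{equation*}
so $C=\hat{a}_4^{1/4}\sim\sqrt{\epsilon}/2$, the hypothesis $\beta_1\,a_+^{1/4}\to 1$ forces $C\beta_1\to\tfrac12$, and $\sqrt{a(-1)}\to 4$. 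The rescaled pair $\Tilde{a}(\varkappa)=C^{-4}\hat{a}(C\varkappa)$, $\Tilde{b}_1(\varkappa)=C^{-1}\hat{b}_1(C\varkappa)$ then satisfies
\begin{equation*}
\Tilde{a}(\varkappa) \longrightarrow \varkappa^4+\alpha\,\varkappa^2+1, \qquad \Tilde{b}_1(\varkappa) \longrightarrow -2\,\varkappa^2.
\end{equation*}

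Under the Cayley transform the periods of $\Theta(b_1)$ equal $\mi$ times those of $\hat{\Theta}(\hat{b}_1)$, and the rescaling of Remark~\ref{rescaling b} identifies the latter (up to the factor $C^2\varkappa^2+1\to 1$) with periods of $\Tilde{b}_1/\nu\,d\varkappa$ on smooth curves degenerating to $\nu^2=\varkappa^4+\alpha\varkappa^2+1$. Choosing a system of cycles bounded away from the collapsing double points and invoking Lemma~\ref{L:adouble}, the periods converge to those of $-2\varkappa^2/\nu\,d\varkappa$ on the limit elliptic curve, so $\Theta(b_1)$ has purely imaginary periods iff this limit 1-form has purely real periods. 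By Lemma~\ref{lemma 1} the unique monic element of $\calB^\circ_{\Tilde{a}}\cap\bbR^2[\varkappa]$ is $\varkappa^2+\beta(\alpha)$, hence $-2\varkappa^2\in\calB^\circ_{\Tilde{a}}$ iff $\beta(\alpha)=0$, which by definition of $\alpha_0$ holds iff $\alpha=\alpha_0$. The main obstacle is to verify carefully that a generating set of cycles for the period lattice of $\Theta(b_1)$ on the genus-two curves can be realised as a continuous family of cycles bounded away from the degeneration which deforms to cycles of the limit elliptic curve to which Lemma~\ref{L:adouble} applies; everything else amounts to the bookkeeping of the factors arising from the Cayley transform, the normalisation $\beta_1\,a_+^{1/4}\to 1$, and the blowup rescaling by $C=\hat{a}_4^{1/4}$.
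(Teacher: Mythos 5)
Your proposal is correct and follows essentially the same route as the paper's own proof: transport via the Cayley transform of Lemma~\ref{cayley transform}, rescale as in the blowup of Section~\ref{Se:blow up sym point} using the asymptotics $a_-=16+4\alpha\sqrt{a_+}+\mathrm{O}(a_+)$, $\beta_1\sim a_+^{-1/4}$, identify the limit pair with $\bigl(\varkappa^4+\alpha\varkappa^2+1,\ c\,\varkappa^2\bigr)$, and invoke Remark~\ref{rescaling b} together with Lemma~\ref{lemma 1} and the constancy of periods along the Whitham flow. The only deviations are cosmetic (your normalising constant $C=\hat{a}_4^{1/4}$ versus the paper's $2/x$, yielding the limit $-2\varkappa^2$ in place of $-8\varkappa^2$ — an immaterial real scalar), and your explicit flagging of the cycle-tracking issue, which the paper handles the same way via Lemma~\ref{L:adouble}.
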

\begin{proof}
As in Section~\ref{Se:blow up sym point}, we first replace the parameter $\lambda$ by $\varkappa=\tfrac{\lambda-1}{\mi(\lambda+1)}$ and then blowup $\varkappa$ at $\varkappa=0$. The transformation in Lemma~\ref{cayley transform} transforms the pair $(a,b_1)$ defined in~\eqref{eq:wente:whitham-wente:a} and~\eqref{eq:wente:whitham-wente:b} into
\begin{align*}
  \Hat{a}(\varkappa)&=\frac{(\mi+\varkappa)^4}{a(-1)}a\big(\tfrac{\mi-\varkappa}{\mi+\varkappa}\big)
  =\frac{a_-\varkappa^4+(a_++a_--16)\varkappa^2+a_+}{a_-},\\\Hat{b}_1(\varkappa)
   &=\frac{2\mi(\mi+\varkappa)^3}{\sqrt{a(-1)}}b\big(\tfrac{\mi-\varkappa}{\mi+\varkappa}\big)
  =-\frac{16\beta_1\varkappa^2}{\sqrt{a_-}}.
\end{align*}
Consequently the 1-form $\Theta(b_1)$ on the spectral curve $\{(\lambda,\nu)\mid\nu^2=\lambda a(\lambda)\}$ is transformed into the 1-form $\tfrac{\Hat{b}_1(\varkappa)}{\Hat{\nu}}\tfrac{d\varkappa}{\varkappa^2+1}$ on the spectral curve $\{(\varkappa,\Hat{\nu})\mid\Hat{\nu}^2=(\varkappa^2+1)\Hat{a}(\varkappa)\}$. In the proof of Lemma~\ref{L:wente:flowout} we derived the following asymptotics for the blowup~\eqref{eq:wente:flowout:blowup}:
\begin{align*}
a_+&=x^4,&a_-&=16+4\alpha x^2+\mathrm{O}(x^4),&\beta_1&=x^{-1}+\mathrm{O}(x^0)
& \text{for \,$x=a_+^{1/4} \to 0$\,.}
\end{align*}
This implies that in this limit $\tfrac{2}{x}.\Hat{a}$ converges to $\varkappa^4+\alpha\varkappa^2+1$ and $\Tilde{b}(\varkappa)=\tfrac{2}{x}b\big(\tfrac{x}{2}\varkappa\big)$ to $-8\varkappa^2$. Now Remark~\ref{rescaling b} implies that $\Theta(b_1)$ has in this limit purely imaginary periods if and only if $\alpha$ is the unique $\alpha_0\in(0,2)$ in Lemma~\ref{lemma 1} with $\beta=0$. Because the periods of $\Theta(b_1)$ are constant along the Whitham integral curve, the same holds on the whole maximal integral curve.
\end{proof}
\begin{proof}[Proof of Theorem~\ref{T:wente:wente}]
We saw in Lemma~\ref{L:wente:submfd} that $\mathcal{W}$ is the union of all maximal integral curves of the Whitham flow from Lemma~\ref{L:wente:whitham-wente} contained in $\mathcal{W}$. But we do not yet know that there actually exists such an integral curve, nor that it is unique. To prove these two points, we begin by constructing an integral curve of the vector field given by equations~\eqref{eq:wente:whitham-wente:dgl-apm} so that the corresponding polynomials $a$ defined by equation~\eqref{eq:wente:whitham-wente:a} are members of $\mathcal{W}$. Let $\alpha_0$ be as in Lemma~\ref{L:wente:delta-unique}, and consider the autonomous differential equation
\begin{equation}\label{eq:wente:wente:phiode}
\dot{u} = 2\,u^3+8\,\alpha_0\,u^2 + 32\,u
\end{equation}
that is obtained by substituting $a_+=u^2$ and $a_-=a_++4\alpha_0\,\sqrt{a_+}+16$ in the first equation~\eqref{eq:wente:whitham-wente:dgl-apm}. We obtain a solution of this differential equation by separation of variables. Indeed, due to $|\alpha_0|<2$, and $u>0$ we have $2u^3+8\alpha_0u^2+32u>0$, and therefore the elliptic integral
$$ u \mapsto t = \int_{1}^{u}\frac{1}{2v^3+8\alpha_0v^2+32v}\,\mathrm{d}v$$
is strictly increasing. It tends to $-\infty$ for $u \to 0$ and to some finite $t_0>0$ for $u \to \infty$. Its inverse function $t \mapsto u(t)$ is a solution of \eqref{eq:wente:wente:phiode} that is defined for all $t < t_0$. Therefore 
\begin{equation}\label{eq:wente:wente:apam}
a_+(t) = u^2(t) \quad\text{and}\quad a_-(t) = u^2(t) + 4\alpha_0\,u(t)+16
\end{equation}
defines a maximal integral curve of the vector field given by equations~\eqref{eq:wente:whitham-wente:dgl-apm} with $a_+(t)\to 0$ for $t\to-\infty$ and $a_+(t) \to \infty$ for $t\to t_0$. By Lemma~\ref{L:wente:flowout} there exist functions $\beta_\ind(t)$ ($\ind=1,2,3$) defined at least for $t<t_1$ with some $t_1 \leq t_0$ so that $a_\pm(t)$ and $\beta_\ind(t)$ give a maximal integral curve of the vector field given by equations~\eqref{eq:wente:whitham-wente:dgl-apm}--\eqref{eq:wente:whitham-wente:dgl-beta23} and with $\beta_1\, a_+^{1/4}\to 1$, $\beta_2 \to 1$ and $\beta_3\to -4$ for $t\to -\infty$. By Lemma~\ref{L:wente:delta-unique} and Lemma~\ref{L:wente:flowout}, the periods of $\Theta(b_1)$ and $\Theta(b_2)$ are purely imaginary on this integral curve. Therefore $a$ defined by equation~\eqref{eq:wente:whitham-wente:a} for this integral curve is in $\calS^2_1$, and therefore in $\mathcal{W}$. Because the polynomials \,$a$\, are contained in $\calS^2_1$, the $\beta_\ind$ are in fact defined for all times $t<t_0$.

Conversely, if there were another maximal integral curve of the vector field given by equations~\eqref{eq:wente:whitham-wente:dgl-apm}--\eqref{eq:wente:whitham-wente:dgl-beta23} (up to scaling of $\beta_1$, $\beta_2$) whose polynomials \,$a$\, stay in $\calS^2_1$, then the equation of Lemma~\ref{L:wente:delta} would hold with some $\alpha\neq \alpha_0$. But then the periods of $\Theta(b_1)$ would not be purely imaginary by Lemma~\ref{L:wente:delta-unique}, which is a contradiction.

Therefore $\mathcal{W}$ is the image of the single maximal integral curve with $\alpha=\alpha_0$ constructed above. In particular $\mathcal{W}\neq \varnothing$ and $\mathcal{W}$ is connected and by Lemma~\ref{L:wente:submfd} a 1--dimensional submanifold of $\calS^2_1$. 
One obtains the explicit representation \eqref{eq:wente:wente:W} of $\mathcal{W}$ by substituting \eqref{eq:wente:wente:apam} into equation~\eqref{eq:wente:whitham-wente:a}.

Finally we saw in Lemma~\ref{lemma 1} that the blownup spectral curve for $a_+\to 0$ has no branch points on $\bbR \cup \mi\bbR$. Therefore the polynomial $a$ has no zeros where $\varkappa =\tfrac{\lambda-1}{\mi(\lambda+1)} \in \bbR \cup \mi\bbR$, hence $\lambda \in \bbS^1 \cup \bbR$, at least for $t$ near $-\infty$. Along the integral curve, zeroes of $a$ on $\bbS^1$ or $\bbR$ can only occur when two roots  coalesce. However, the polynomials \,$a$\, stay in $\calS^2_1$, so this cannot happen. Hence $a$ does not have any zeroes on $\bbR\cup\bbS^1$ for all times $t$, which means $\calW\subset\calA_-$ by Definition~\ref{D:wente:A+-A-}.

We saw that as the integral curve covers all of $\mathcal{W}$, the function $u$ covers $\bbR_+$, so equation~\eqref{eq:wente:wente:W} now follows from equations~\eqref{eq:wente:wente:apam} and~\eqref{eq:wente:whitham-wente:a}.
\end{proof}
\section{The global Whitham flow}\label{Se:global Whitham}
In this section we define for every $a\in\calS^2_1$ a canonical basis of the first homology group of the spectral curve. This will allow us to construct a basis of \,$\calB_a$\, that is associated to the canonical homology basis in a sense detailed below.  From the following lemma we will deduce the fact that this basis of $\calB_a$ is preserved along the corresponding Whitham flows introduced in Section~\ref{Se:local whitham}. 
%
\begin{lemma}\label{global cycles}
The space $\calS^2_1$ contains no polynomial whose four roots are all colinear.
\end{lemma}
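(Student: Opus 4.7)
The plan is to show that colinear configurations are too symmetric and must reduce to elements of the Wente family with real roots, which directly contradicts Theorem~\ref{T:wente:wente}.

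First I would combine the colinearity hypothesis with the reality condition on $a\in\calH^2$: the four distinct roots of $a$ are permuted by the inversion $\tau(\lambda)=\bar\lambda^{-1}$. If all four of them lie on a line $L$, then they also lie on $\tau(L)$, which is itself either a line or a circle. Since two distinct lines, or a line and a circle, meet in at most two points, we must have $\tau(L)=L$. The only lines preserved by $\tau$ are lines through the origin, so $L=e^{\mi\theta}\bbR$ for some $\theta\in[0,\pi)$.

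Next I would apply the rotation diffeomorphism between $\calS^2_1$ and $\calS^2_{\lambda_*}$ (with $\lambda_*\in\{e^{\mi\theta},e^{-\mi\theta}\}$ depending on the sign convention) to obtain a polynomial $\tilde a\in\calS^2_{\lambda_*}$ whose four roots lie on $\bbR$. Reality pairs these real roots as $(t,1/t)$, so $\prod_j t_j=1$; comparing this with the reality identity $\tilde a(0)=\overline{\tilde c}$ for the leading coefficient $\tilde c$ forces $\tilde c\in\bbR$, hence $\tilde a\in\bbR^4[\mu]$. For such a real polynomial the map $b(\mu)\mapsto\overline{b(\bar\mu)}$ is an $\bbR$-linear automorphism of $\calB_{\tilde a}$; applied to the defining condition $b(\lambda_*)=0$ for all $b\in\calB_{\tilde a}$ of $\calS^2_{\lambda_*}$ it yields $\tilde a\in\calS^2_{\bar\lambda_*}$ as well. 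By the discussion preceding Lemma~\ref{L:S2-smooth} (invoking \cite[Thm~3.2]{CS}), $\calB_{\tilde a}$ has a unique common zero on $\bbS^1$ for every $\tilde a\in\calS^2$, so $\lambda_*=\bar\lambda_*$; combined with $\lambda_*=e^{\pm\mi\theta}$ and $\theta\in[0,\pi)$, this forces $\theta=0$.

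Therefore $L=\bbR$ and $a$ itself has real coefficients and four real roots, i.e.\ $a\in\calW=\calS^2_1\cap\bbR^4[\lambda]$. But Theorem~\ref{T:wente:wente} gives $\calW\subset\calA_-$, and Definition~\ref{D:wente:A+-A-} makes clear that polynomials in $\calA_-$ have no real roots. This contradiction proves the lemma.

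The main obstacle I expect is the bookkeeping in the second paragraph: identifying the precise Sym-point subscript $\lambda_*$ that $\tilde a$ inherits under rotation, and verifying that reality plus the normalisation of $\calH^2$ really do promote the real-rooted $\tilde a$ into $\bbR^4[\mu]$ rather than merely into a unimodular scalar multiple thereof. Once these details are secure, the conjugation automorphism of $\calB_{\tilde a}$, the uniqueness of the common zero from \cite[Thm~3.2]{CS}, and Theorem~\ref{T:wente:wente} close the argument at once.
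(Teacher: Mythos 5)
Your argument is correct and follows essentially the same route as the paper's proof: rotate the line of roots to $\bbR$, use the conjugation symmetry $b(\lambda)\mapsto\overline{b(\bar\lambda)}$ of $\calB_a$ together with the uniqueness of the common root from \cite[Thm~3.2]{CS} to force the Sym point to be $\pm1$, reduce to the Wente family, and contradict $\calW\subset\calA_-$ from Theorem~\ref{T:wente:wente}. Your extra bookkeeping (showing the line must pass through the origin via $\tau$-invariance, and that the rotated polynomial has real coefficients) only makes explicit what the paper leaves implicit.
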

\begin{proof}
Suppose to the contrary that $a\in\calS^2_1$ has four colinear roots. After some rotation of $\lambda$, all roots of \,$a\in \calS^2_{\lambda_0}$\, are real and $(\lambda,\nu)\mapsto(\Bar{\lambda},\Bar{\nu})$ defines an involution of $\Sigma_a$. Consequently $\calB_a$ is invariant with respect to the map $b(\lambda)\mapsto\overline{b(\Bar{\lambda})}$. Due to \cite[Theorem~3.2]{CS}, $\deg(\gcd(\calB_a))=1$, so the common root $\lambda_0$ has to be a fixed point of both involutions $\lambda\mapsto\Bar{\lambda}$ and $\lambda\mapsto\Bar{\lambda}^{-1}$. This implies $\lambda_0=1$ or $\lambda_0=-1$. In the second case we replace $\lambda$ by $-\lambda$ and obtain in both cases $a\in\mathcal{W}$. Now by Theorem~\ref{T:wente:wente} we have $\calW\subset\calA_-$, which contradicts the assumption of four colinear roots of $a$.
\end{proof}
In Lemma~\ref{L:para:whitham} we constructed Whitham flows on the frame bundle $\calF$. Let us now describe how we construct the global section \,$a \mapsto (a, b_1,b_2)$\, of \,$\calF$\, whose existence is asserted in Theorem~\ref{Th:main}. The basis \,$(b_1,b_2)$\, of \,$\calB_a$\, depends on a choice of two oriented cycles $A_1$ and $A_2$ in $H_1(\Sigma_a,\bbZ)$. Up to orientation they are represented by the two unparameterised curves which are mapped by $\Sigma_a\to\CPone$, $(\lambda,\nu)\mapsto\lambda$ to the straight lines in the $\lambda$--plane which connect $\alpha_1$ with $\Bar{\alpha}^{-1}_1$ and $\alpha_2$ with $\Bar{\alpha}^{-1}_2$, respectively. Altogether, for given $a\in\calS^2_1$ there are 8 possible combinations of the choice of the labelling of the roots of $a$ in $B(0,1)$ and the choice of the orientations of the cycles $A_1$ and $A_2$.

We have that $\rho(A_\ind)$ is homologous to $-A_\ind$ because the corresponding submanifolds are invariant with respect to the anti--holomorphic involution $\rho$ of equation~\eqref{eq:rho} and intersect the fixed point set $\bbS^1$ of $\rho$ twice. For any $b\in \calB_a$ the following calculation using $\rho^\ast\Theta(b)=-\overline{\Theta(b)}$ shows $\int_{A_\ind} \Theta(b) = 0$:
$$ \int_{A_\ind} \Theta(b) = -\overline{\int_{A_\ind} \Theta(b)} = -\int_{A_\ind} \overline{\Theta(b)} = \int_{A_\ind} \rho^* \Theta(b) = \int_{\rho(A_\ind)} \Theta(b) = -\int_{A_\ind} \Theta(b) \; . $$
Moreover let $B_1$ be the cycle that encircles the branch points $\lambda=0$ and $\alpha_1$, and let $B_2$ be the cycle that encircles the branch points $\bar{\alpha}_2^{-1}$ and $\lambda=\infty$. Then the $A$--cycles and the $B$--cycles together form a canonical basis of $H_1(\Sigma_a,\bbZ)$. Because the anti--holomorphic involution $\rho$ reverses both the orientation and the intersection number of cycles, it follows that $\rho(B_\ind)-B_\ind$ is homologous to a linear combination of $A$--cycles. Figure~\ref{fig:symmetric cycles} shows the projections of these cycles to the $\lambda$--plane, each encircling a pair of fixed points of the hyperelliptic involution. This shows that given the oriented cycles $(A_1,A_2)$, there exists a unique basis $(b_1,b_2)$ of $\calB_a$ such that for $k,\ind=1,2$ we have
\begin{align*}
\int_{A_k}\Theta(b_\ind)&=0,&\int_{B_k}\Theta(b_\ind)&=2\pi\mi\delta_{kl}.
\end{align*}
This is equivalent to the following condition, which does not involve $B_1$ and $B_2$:
\begin{align}\label{period map}
\frac{1}{2\pi\mi}\int_C\Theta(b_\ind)&=A_\ind\cdot C&\text{for all}&&C&\in H_1(\Sigma_a,\bbZ).
\end{align}
Here $\cdot$ denotes the intersection form on $H_1(\Sigma_a,\bbZ)$ (compare \cite[Chapter~III Section~1]{FaKr}). Any such triple $(a,b_1,b_2)$ defines holomorphic functions $\mu_\ind:\Sigma_a^\circ\to\bbC$ with $d\ln\mu_\ind=\Theta(b_\ind)$. These functions are uniquely determined if we additionally assume that $\ln\mu_\ind$ has in a neighbourhood of $\lambda=0$ an anti--symmetric branch with respect to the hyperelliptic involution $\sigma$~\eqref{eq:sigma}. In this case the same holds for the branch $-\overline{\rho^\ast\ln\mu_\ind}$ in a neighbourhood of $\lambda=\infty$. Since all elements of $\calS^2_1$ have four distinct simple roots, the labelling of the roots \,$\alpha_1,\alpha_2$\, and the choices of orientations of $(A_1,A_2)$ extend uniquely along continuous paths in $\calS^2_1$. Note that if the straight lines connecting $\alpha_1$ with $\Bar{\alpha}_1^{-1}$ and $\alpha_2$ with $\Bar{\alpha}_2^{-1}$ could pass through each other, then the larger of the two \,$A$--cycles would be transfomed to a sum of both \,$A$--cycles, and the condition~\eqref{period map} would not be preserved. Since this is impossible by Lemma~\ref{global cycles}, the Whitham flows defined in Lemma~\ref{L:para:whitham} preserves the condition \eqref{period map} on \,$\calF$\,. In the following theorem we prove that the Whitham flows corresponding to $(c_1(1),c_2(1))=(\sqrt{a(1)},0)$ and $(c_1(1),c_2(1))=(0,\sqrt{a(1)})$ commute. We call them the first and second Whitham flow, respectively. Recall that $\phi$~\eqref{def:phi} maps $(a,b_1,b_2)$ to the value of the pair of functions $(\theta_{b_1}(y(a)),\theta_{b_2}(y(a)))$ defined in Lemma~\ref{L:theta} at the point $y(a)=(1,\nu)\in\Sigma_a$ with $\nu>0$.
\begin{remark}\label{choice of labelling and signs}
The 8 possibilities of the labelling $(\alpha_1,\alpha_2)$ of the roots of $a$ in $B(0,1)$ and the choice of orientations of $(A_1,A_2)$ are all permuted by the dihedral group $D_4$ which is the subgroup of the invertible real $2\times2$--matrices generated by $\big(\begin{smallmatrix}0&-1\\1&0\end{smallmatrix}\big)$ and $\big(\begin{smallmatrix}-1&0\\0&1\end{smallmatrix}\big)$. This group acts on the column vectors $\big(\begin{smallmatrix}b_1\\b_2\end{smallmatrix}\big)$, $\big(\begin{smallmatrix}A_1\\A_2\end{smallmatrix}\big)$ and the components $\big(\begin{smallmatrix}\phi_1\\\phi_2\end{smallmatrix}\big)$ of the map $\phi$~\eqref{def:phi}
as left multiplication. For all non--diagonal elements it permutes the labels of the roots $\alpha_1,\alpha_2 \in B(0,1)$ of $a$.
\end{remark}
\begin{theorem}\label{Th:global basis}
Fix $a_0\in\calS_1^2$ together with a labelling of its roots $(\alpha_{1,0},\alpha_{2,0})$ in $B(0,1)$.
\begin{enumerate}
\item[(i)] There exists a unique choice of orientations of $A_1$ and $A_2$ such that with the corresponding basis $(b_{1,0},b_{2,0})$ of $\calB_{a_0}$ obeying~\eqref{period map}, both components of $\phi(a_0,b_{1,0},b_{2,0})$~\eqref{def:phi} are positive.
\item[(ii)] The two Whitham flows on \,$\calF$\, defined in Lemma~\ref{L:para:whitham} by  $(c_1(1),c_2(1))=(\sqrt{a_0(1)},0)$ and $(c_1(1),c_2(1))=(0,\sqrt{a(1)})$ commute, and preserve both~\eqref{period map} and the labelling of \,$(\alpha_1, \alpha_2)$\,.
\item[(iii)] Let the Whitham flows from (ii) have the initial value $(a_0,b_{1,0},b_{2,0})$ at \,$t=0$\,. 
There exists a unique maximal open domain $0\in\Omega\subset\bbR^2$ of the corresponding two-dimensional flow
\begin{align}\label{eq:whitham flow}
\Omega&\to P_\bbR^4\times P_\bbR^3\times P_\bbR^3,&t=(t_1,t_2)\to(a_t,b_{1,t},b_{2,t})
\end{align}
whose restriction to any line $\{(sx,sy)\mid s\in\bbR\}\cap\Omega$ with $(x,y) \in\bbR^2\setminus\{0\}$ is the unique maximal integral curve of the Whitham flow for $(c_1(1),c_2(1))=\sqrt{a_t(1)}(x,y)$.
\item[(iv)] The composition of~\eqref{eq:whitham flow} with the map $\phi$~\eqref{def:phi} is equal to $t\mapsto t+\phi(a_0,b_{1,0},b_{2,0})$. It maps $\Omega$ into $\{\phi\in\bbR^2\mid\phi_1>0,\phi_2>0\}$ and the vector fields of the flow onto $(\frac{\partial}{\partial\phi_1},\frac{\partial}{\partial\phi_2})$.
\item[(v)] Let $(t_n)_{n\in\mathbb{N}}$ be a sequence in $\Omega$, all of whose accumulation points in $\bbR^2$ are end points $(x,y)$ of the maximal intervals of the Whitham flow for $(c_1(1),c_2(1))=\sqrt{a_t(1)}(x,y)$ in~(iii). Any accumulation point $(\alpha_1,\alpha_2)$ of the corresponding sequence $(\alpha_{1,n},\alpha_{2,n})_{n\in\mathbb{N}}$ in $B(0,1)$ of $(a_{t_n})_{n\in\mathbb{N}}$ belongs to one of the following sets:
\begin{enumerate}
\item[(A)] $\{(1,1)\}$.
\item[(B)] $\bigr(\big(B(0,1)\setminus\{0\}\big)\times\{1\}\bigr) \;\cup\;\bigr(\{1\}\times\big(B(0,1)\setminus\{0\}\big)\bigr)$.
\item[(C)] $\bigr(\{0\}\times\partial B(0,1)\bigr) \;\cup\;\bigr(\partial B(0,1)\times\{0\}\bigr)$.
\item[(D)] $\bigr(\{0\}\times\big(B(0,1)\setminus\{0\}\big)\bigr)\;\cup\;\bigr(\big(B(0,1)\setminus\{0\}\big)\times\{0\}\bigr)$.
\item[(E)] $\{(0,0)\}$.
\end{enumerate}\end{enumerate}
\end{theorem}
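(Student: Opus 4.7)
For (ii) and (iv), the key identity from Lemma~\ref{L:para:whitham} is $\dot q_\ind = \mi c_\ind/\nu$, which at the Sym point $y(a) = (1, \sqrt{a(1)})$ becomes
\[
\dot q_\ind(y(a)) \;=\; \frac{\mi\,c_\ind(1)}{\sqrt{a(1)}}.
\]
With the normalization $(c_1(1),c_2(1)) = (\sqrt{a(1)},0)$ this forces $-\mi\dot q_1(y(a))=1$ and $\dot q_2(y(a))=0$, and the mirror choice gives the transposed identities. Thus the two Whitham vector fields push forward under $\phi$ to $\partial/\partial\phi_1$ and $\partial/\partial\phi_2$, which is (iv). Since $\calS^2_1$ is two-dimensional and the map $(c_1(1),c_2(1))\mapsto\dot a$ from Lemma~\ref{L:para:whitham} is injective (hence an isomorphism onto $T_a\calS^2_1$), the two Whitham fields are linearly independent and $d\phi$ has rank $2$; the vanishing of the bracket of the coordinate fields in $\bbR^2$ therefore transfers back to give (ii). Preservation of the labelling of $(\alpha_1,\alpha_2)$ along the flow is automatic from Lemma~\ref{global cycles}, since the straight segments projecting the $A$-cycles cannot cross: no four roots of an element of $\calS^2_1$ are colinear. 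Part (iii) is then the standard integration of a commuting pair of vector fields to a maximal open domain $0\in\Omega\subset\bbR^2$.

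For (i), the group $D_4$ from Remark~\ref{choice of labelling and signs} acts on the eight combinations of labelling and cycle orientations; after fixing the labelling of $(\alpha_{1,0},\alpha_{2,0})$, the four sign choices of $(A_1,A_2)$ send $(\phi_1,\phi_2)$ to $(\pm\phi_1,\pm\phi_2)$. Uniqueness of the choice with both components positive is therefore equivalent to $\phi_\ind(a_0,b_{1,0},b_{2,0})\ne 0$. The plan is to verify the non-vanishing first on the explicit Wente family from Theorem~\ref{T:wente:wente} by pairing the symmetry-adapted basis of Lemma~\ref{L:wente:whitham-wente} with integrals over the cycles of Figure~\ref{fig:symmetric cycles}, where the relevant half-period becomes an elementary expression in the parameter $\alpha_+\in\bbR_+$. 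The non-vanishing then propagates to the rest of each connected component of $\calS^2_1$ since $\phi$ is a local diffeomorphism along the Whitham flow by (iv); a sign change would require crossing the locus $\phi_\ind=0$, and along the flow lines issuing from the Wente family such a crossing would contradict the local diffeomorphism property.

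The substantive content is (v). The plan is to extract, by compactness of $\overline{B(0,1)}^{\,2}$, a convergent subsequence of $((\alpha_{1,n},\alpha_{2,n}))$ with limit $(\alpha_1,\alpha_2)$, and then rule out every configuration not listed in (A)--(E). If both $\alpha_\ind$ lie in $B(0,1)\setminus\{0,1\}$ with $\alpha_1\ne\alpha_2$, the reflected roots $\bar{\alpha}_\ind^{-1}$ remain outside the closed unit disk, and the normalized sequence $(a_{t_n})$ converges in $\bbC^4[\lambda]$ to a polynomial in $\mathcal{H}^2$ with four distinct simple roots. Corollary~\ref{C:adouble} gives continuous extension of the basis of $\calB_a$ through the limit, and continuity of the evaluation $b\mapsto b(1)$ transfers the condition defining $\calS^2_1$ to the limit; local existence of the Whitham flow through this interior point then makes $(t_n)$ accumulate at an interior point of $\Omega$, contradicting the hypothesis. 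The two remaining degenerate configurations are $\alpha_1=\alpha_2\in B(0,1)\setminus\{0\}$ and $\alpha_\ind\in\partial B(0,1)\setminus\{1\}$ paired with $\alpha_{3-\ind}\in B(0,1)\setminus\{0\}$; in both, the limit polynomial carries a double root away from $\lambda=1$, which is forbidden in $\overline{\calS^2_1}$ by Proposition~\ref{P:boundary-S21-C4lambda}. What survives are exactly the five cases (A)--(E). The hard part will be the non-degenerate step, confirming that the limit really lies in $\calS^2_1$ rather than only in its closure in $\overline{\mathcal{H}^2}$: this requires combining the continuity of Corollary~\ref{C:adouble} with the non-collision of the $A$-cycles from Lemma~\ref{global cycles}, so that the period conditions, and hence the membership in $\calS^2_1$, persist in the limit.
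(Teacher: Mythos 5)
Your treatment of (ii)--(v) follows essentially the same route as the paper: the identity $\dot q_\ind(y(a))=\mi c_\ind(1)/\sqrt{a(1)}$ gives $\tfrac{\partial\phi_i}{\partial t_j}=\delta_{ij}$, commutativity is transferred from the coordinate fields through the local diffeomorphism $\phi$, the labelling is preserved by Lemma~\ref{global cycles}, and in (v) the dichotomy ``limit in $\calS^2_1$ forces $(x,y)\in\Omega$'' versus ``limit in $\partial\calS^2_1$ constrained by Proposition~\ref{P:boundary-S21-C4lambda}'' is exactly the paper's argument. Those parts are sound.

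The genuine gap is in (i). You correctly reduce uniqueness to the pointwise non-vanishing $\phi_\ind(a_0,b_{1,0},b_{2,0})\neq 0$, but your proposed proof of that non-vanishing --- verify it on the Wente family and propagate along Whitham flow lines using the local diffeomorphism property from (iv) --- does not work. First, along a flow line $\phi_\ind$ equals $t_\ind$ plus a constant, so it is an affine function of the flow parameter; the local diffeomorphism property of $\phi$ in no way prevents this function from passing through zero, and ``a sign change would require crossing the locus $\phi_\ind=0$'' is precisely the statement you are trying to prove is impossible, not a consequence of regularity of $\phi$. (In the paper the logic runs the other way: non-vanishing is established pointwise for \emph{every} element of $\calF$ satisfying \eqref{period map}, and only then does connectedness of $\Omega$ give preservation of the sign in (iv).) Second, the propagation presupposes that every point of $\calS^2_1$, or at least every connected component, is reached by a Whitham trajectory issuing from the Wente family; but connectedness of $\calS^2_1$ and surjectivity of the flow onto it are consequences of Theorem~\ref{Th:main}, whose proof depends on Theorem~\ref{Th:global basis}(i), so this is circular at the stage where (i) is needed. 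What is required instead is a direct argument at the fixed point $a_0$: the paper assumes $\phi_\ind=0$, deduces that $b_{\ind,0}$ has all three roots on $\bbS^1$, analyses the zero set $I_\ind$ of $\RE q_\ind$ (four embedded circles, four complementary components), and uses the monotonicity of $\IM q_\ind$ along the boundary of the distinguished disc $D_\ind$ together with the vanishing of the $A$-periods to force $\int_{HC_\ind}\Theta(b_{\ind,0})=0$, which is impossible for $a_0\in\calS^2_1$. Nothing in your proposal supplies a substitute for this harmonic-function argument, and the explicit Wente-family computation you defer to is also not carried out.
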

\begin{proof}
We first prove by contradiction that for any given $a_0\in\calS^2_1$ together with a labelling $(\alpha_{1,0},\alpha_{2,0})$ of its roots in $B(0,1)$ and any choice of orientations of $A_1$ and $A_2$ the corresponding unique basis $(b_{1,0},b_{2,0})$ of $\calB_{a_0}$ obeys $\phi_1(a_0,b_{1,0},b_{2,0})\ne0\ne\phi_2(a_0,b_{1,0},b_{2,0})$. Due to Remark~\ref{choice of labelling and signs} this implies (i).

So let us assume $\phi_\ind(a_0,b_{1,0},b_{2,0})=0$ for some $\ind\in\{1,2\}$. Consequently, by definition of $\phi$~\eqref{def:phi} and the function $q_\ind$ in Lemma~\ref{L:theta}, $b_{\ind,0}$ has besides the root at $\lambda=1$ at least one other root in $\bbS^1$. Hence due to the reality condition for \,$b_{\ind,0} \in P_\bbR^3$\,, all three roots of this polynomial are unimodular. Because all periods of $\Theta(b_{\ind,0})$ are imaginary, the real part $\RE q_\ind$ of the anti-derivative $q_\ind$ is single valued on $\Sigma_{a_0}$ with an integration constant uniquely determined by the condition $\sigma^\ast\RE q_\ind=-\RE q_\ind$. We claim that the subset $I_\ind\subset\Sigma_{a_0}$ where $\RE q_\ind$ vanishes is a union of the embedded copy of $\bbS^1$ given by the fixed point set $\Sigma_{a_0,\bbR}$ of $\rho$ together with three disjoint embbeded copies of $\bbS^1$. Each of the three latter copies passes through a pair of the fixed points of the hyperelliptic involution which are interchanged by the anti-holomorphic involution $\rho$ and intersects $\Sigma_{a_0,\bbR}$ in the two points over one root of $b_{\ind,0}$. This follows as in \cite[Lemma~9.5]{HKS3} from four observations: First $I_\ind$ is away from the roots of $b_{\ind,0}$ a submanifold, and is at any root of order $l$ contained in $I_\ind$ an intersection point of $l$-many local submanifolds. Secondly, $I_\ind$ contains the submanifold $\Sigma_{a_0,\bbR}$. Thirdly, $I_\ind$ contains all six fixed points of the hyperelliptic involution $\sigma$. Finally the boundary of any connected component of \,$\Sigma_{a_0} \setminus I_\ind$\, contains either the point \,$\lambda=0$\, or \,$\lambda=\infty$\,, since due to the maximum principle the harmonic function $\RE q_\ind$ is unbounded on any such component.

Hence $\Sigma_{a_0}\setminus I_\ind$ has four connected components. Let $D_\ind$ be the unique connected component over $\lambda\in B(0,1)$, whose boundary contains the segment of $\Sigma_{a_0,\bbR}$ which starts at the root $y(a_0)$ of $\Theta(b_{\ind,0})$ over $\lambda=1$ with positive $\nu$ and moves in anti-clockwise order around $\lambda\in\bbS^1$ until it reaches the next root of $\Theta(b_{\ind,0})$ over another root of $b_{\ind,0}$. The boundary $\partial D_\ind$ is divided into six segments: three segments of $\Sigma_{a_0,\bbR}$ between two roots of $\Theta(b_{\ind,0})$, and three half--circles passing through exactly one of the three fixed points of $\sigma$ over $\lambda\in B(0,1)$. We denote the three segments of $\Sigma_{a_0,\bbR}$ by $S_1$, $S_2$ and $S_3$ such that they cover $\partial B(0,1)$ in anti-clockwise order along $\partial D_\ind$ starting with $S_1$ at $\sigma(y(a_0))$ and ending with $S_3$ at $y(a_0)$.
%
\begin{figure} \label{fig:HC-cycles}
\begin{tikzpicture}[scale = .8]
  \draw (0,0) circle (3);

    \node[below] at (0,-0.15) {$0$};
    \node at (0,-1.5) {$D_\ind$};

    \draw (0,0) -- (190:4);

    \draw ({3+sqrt(2)},{-2+sqrt(2)}) arc (45:135:2) node[very near start, below right, inner sep = 8pt] {$\overline{\alpha}_1^{-1}$};
    \draw ({(3/sqrt(2))-sqrt(2)+2},{(3/sqrt(2)+sqrt(2)}) arc (0:-90:2) node[very near start, above right] {$\overline{\alpha}_2^{-1}$};

    \draw[densely dashed] (3:2.85) arc (3:42:2.85) node[midway, left] {$S_1$};
    \draw[densely dashed] (48:2.85) arc (48:187:2.85) node[midway, below right] {$S_2$};
    \draw[densely dashed] (193:2.85) arc (193:357:2.85) node[midway, above] {$S_3$};

    \draw[densely dashed] (3:2.85) arc (93:135:2.1547) node[near end, above, inner sep = 7pt] {$HC_1$};
    \draw[densely dashed] (357:2.85) arc (96:135:1.8572);

    \draw[densely dashed] (42:2.85) arc (-48:-87:2.1547) node[near end, below, inner sep = 6pt] {$HC_2$};
    \draw[densely dashed] (48:2.85) arc (-51:-90:1.8572);

    \draw[densely dashed] (187:2.85) -- (0,0.1515);
    \draw[densely dashed] (193:2.85) -- (0,-0.1515);

    \draw[densely dashed] (0,-0.1515) arc (282.744:462.744:0.1515);

    \draw[densely dashed] (0.7382,1.7041) arc (-249.856:-69.856:0.1714) node[midway, left] {$\alpha_2$};

    \draw[densely dashed] (1.4353,-0.4790) arc (141.01:321.01:0.184) node[midway, below] {$\alpha_1$};

    \draw (3,0) circle (3pt) node[above right] {$\lambda = 1$};
    \draw (190:3) circle (3pt);
    \draw (45:3) circle (3pt);

    \draw (4.5,2) circle (2pt) node[right, inner sep = 7pt] {roots of $b_\ind$};
    \draw[densely dashed] (4.3,1.5) -- (4.6,1.5) node[right, inner sep = 7pt] {$\partial D_\ind$};
\end{tikzpicture}
\caption{The segments $S_1$, $S_2$, $S_3$ and half--circles $HC_1$ and $HC_2$.}
\end{figure}
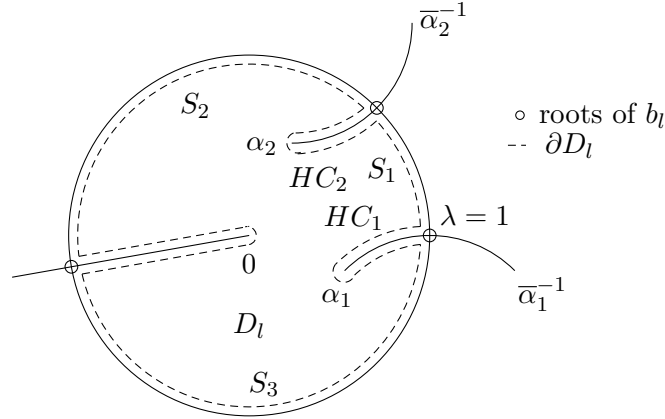
For a higher order root of $b_{\ind,0}$ one or two of these segments shrink to a point. The path from $\sigma(y(a))$ to $y(a)$ along $\Sigma_{a_0,\bbR}$ is equal to the concatenation $S_1+\sigma(S_2)+S_3$ where $\sigma(S_l)$ denotes the image of $S_l$ under $\sigma$. This gives
\begin{align}\label{vanishing phi}
\phi_\ind(a_0,b_{1,0},b_{2,0})&=\tfrac{1}{2}\IM\left(\int_{S_1}\Theta(b_{\ind,0})-\int_{S_2}\Theta(b_{\ind,0})+\int_{S_3}\Theta(b_{\ind,0})\right).
\end{align}
Let $HC_\ind$ denote the unique half--circle in $\partial D_\ind$ which passes through $\alpha_\ind$. In order to write the cycle $C_\ind$ which encircles both fixed points of $\sigma$ over $\lambda\in B(0,1)$ besides $(\lambda,\nu)=(\alpha_\ind,0)$ as a combination of the segments of $\partial D_\ind$ we distinguish between the following mutually exclusive cases:
\begin{enumerate}
\item $HC_\ind$ connects $S_1$ and $S_2$: $C_\ind=S_1+HC_\ind+S_2+\sigma(S_3)$.
\item $HC_\ind$ connects $S_2$ and $S_3$: $C_\ind=\sigma(S_1)+S_2+HC_\ind+S_3$.
\item $HC_\ind$ connects $S_3$ and $S_1$: $C_\ind=S_1+\sigma(S_2)+S_3+HC_\ind$.
\end{enumerate}
Hence we may calculate $\IM\int_{C_\ind}\Theta(b_{\ind,0})$ as linear combination of the imaginary parts of integrals of $\Theta(b_{\ind,0})$ along the segments of $\partial D_\ind$. Due to~\eqref{period map} this linear combination vanishes, which yields
\begin{gather}\label{def value phi}\begin{aligned}
\phi_\ind(a_0,b_{1,0},b_{2,0})&=\IM\int_{S_1}\Theta(b_{\ind,0})+\tfrac{1}{2}\IM\int_{HC_\ind}\Theta(b_{\ind,0})&\text{in case~(1)},\\
\phi_\ind(a_0,b_{1,0},b_{2,0})&=\tfrac{1}{2}\IM\int_{HC_\ind}\Theta(b_{\ind,0})+\IM\int_{S_3}\Theta(b_{\ind,0})&\text{ in case~(2)},\\
\phi_\ind(a_0,b_{1,0},b_{2,0})&=-\tfrac{1}{2}\IM\int_{HC_\ind}\Theta(b_{\ind,0})&\text{ in case~(3)}.
\end{aligned}\end{gather}
By definiton of $D_\ind$ the imaginary part of the integral of $\Theta(b_{\ind,0})$ is monotonic along $\partial D_\ind$. Hence $\phi_\ind(a_0,b_{1,0},b_{2,0})=0$ implies $\int_{HC_\ind}\Theta(b_{\ind,0})=0$ in case~(1)-(3), which is impossible for $a_0\in\calS^2_1$. This finishes the proof of~(i).

Now we consider the Whitham flows locally and prove (ii) and (iv). The labelling of the roots $(\alpha_{1,0},\alpha_{2,0})$ and the choices of orientations of $A_1$ and $A_2$ have a unique continuous extension to any simply connected open neighbourhood $O\subset\calS^2_1$ of $a_0$. Consequently the corresponding basis $(b_1,b_2)$ which satisfies~\eqref{period map} defines a smooth section of the frame bundle $\calF$ on $O$. The image of this section is a two--dimensional submanifold \,$\widehat{O}$\, of $\calF$ which is diffeomorphic to $O$. Both Whitham flows define vector fields along \,$\widehat{O}$\, since they preserve all periods of the differentials $\Theta(b_1)$ and $\Theta(b_2)$, and therefore also condition~\eqref{period map} on $(b_1,b_2)$. The restriction of $\phi$~\eqref{def:phi} to \,$\widehat{O}$\, is an immersion into $\bbR^2$. For sufficiently small $O$, this restriction of $\phi$ is a diffeomorphism. By \eqref{eq:ck} and the choice of \,$(c_1(1),c_2(1))$\,, the two Whitham flows obey \,$\tfrac{\partial \phi_i}{\partial t_j}(a_t, b_{1,t}, b_{2,t}) = \delta_{ij}$\,. In particular, the two Whitham vector fields are the pullbacks to \,$\widehat{O}$ of the vector fields $\frac{\partial}{\partial\phi_1}$ and $\frac{\partial}{\partial\phi_2}$ under $\phi$\,. Due to the proof of~(i) the components of $\phi$ can never vanish along the flow, and their positivity is preserved along the connected subset $\Omega$ in part~(iii). This proves (iv). Since $\frac{\partial}{\partial\phi_1}$ and $\frac{\partial}{\partial\phi_2}$ commute, part~(ii) follows.
  
(iii) For each $(x,y)\in\bbR^2 \setminus \{0\}$ the integral curve of the corresponding Whitham vector field is defined on a unique maximal interval $I_{(x,y)}$. There exists a unique subset $\Omega\subset\bbR^2$ with $\Omega\cap\{(sx,sy)\mid s\in\bbR\}=\{(sx,sy)\mid  s\in I_{(x,y)}\}$ for all $(x,y)\in\bbR^2\setminus\{0\}$.  Because solutions of ODEs depend continuously on their parameters,
$\Omega$ is open. We can uniquely define the map \eqref{eq:whitham flow} by the condition that its restriction to any line $\{(sx,sy)\mid s\in\bbR\}\cap\Omega$ with $(x,y) \in\bbR^2\setminus\{0\}$ is the unique maximal integral curve of the corresponding Whitham flow. Because the two Whitham vector fields from (ii) commute, this map is indeed the two-dimensional flow of these vector fields.

(v): The bounded sequence $(\alpha_{1,n}, \alpha_{2,n})_{n\in\mathbb{N}}$ has a  convergent subsequence in \,$\overline{B(0,1)}^2$\,. First we assume $\alpha_1\ne0$ and $\alpha_2\ne0$. Then the corresponding subsequence of $(a_{t_n})_{n\in\mathbb{N}}$ converges to some $a\in P_\bbR^4$. For any norm \,$\|\,\cdot\,\|$\, on $P_\bbR^3$, a subsequence of the renormalised basis $(\frac{b_{1,n}}{\|b_{1,n}\|},\frac{b_{2,n}}{\|b_{2,n}\|})_{n\in\mathbb{N}}$ converges to a basis of $\calB_a$, and $a$ belongs to the closure of \,$\calS^2_1$\,. We now show by contradiction that the assumption on \,$(t_n)_{n\in\mathbb{N}}$\, implies \,$a\in\partial\calS^2_1$\,. Indeed, otherwise the sequence \,$(a_{t_n},b_{1,t_n},b_{2,t_n})_{n\in\mathbb{N}}$\, has an accumulation point in \,$\calF$\, and the sequence $(\phi(a_{t_n},b_{1,t_n},b_{2,t_n}))_{n\in\mathbb{N}}$ has an accumulation point in \,$\R^2$\,.
So by~(iv) also $(t_n)_{n\in\mathbb{N}}$ has an accumulation point $(x,y)\in\bbR^2$, which is assumed to be an end point of the maximal interval of the Whitham flow for $(c_1(1),c_2(1))=\sqrt{a_t(1)}(x,y)$. If $(t_n)_{n\in\mathbb{N}}$ has such a limit $(x,y)$, and $(a_{t_n})_{n\in\mathbb{N}}$ has an accumulation point $a\in\calS^2_1$, then we consider the 2-dimensional Whitham flow from (iii) with initial point $a$. The flow covers an open neighbourhood of \,$a$\, in \,$\calS^2_1$\, which contains a subsequence of \,$(a_{t_n})$\,. The uniqueness of the Whitham flows implies that $(x,y)$ belongs to \,$\Omega$\,, and not to $\partial\Omega$, whenever the corresponding subsequence $(a_{t_n})_{n\in\mathbb{N}}$ has an accumulation point in $\calS^2_1$. Therefore, due to our assumption on $\,(t_n)_{n\in\mathbb{N}}$\,, any accumulation point of \,$(a_{t_n})_{n\in\mathbb{N}}$\, belongs to \,$\partial\calS^2_1$\,. Now by Proposition~\ref{P:boundary-S21-C4lambda} either (A) or (B) is true.

In the other case either $\alpha_1$ or $\alpha_2$ is zero. Then the other root belongs to the disjoint union $\overline{B(0,1)}=\partial B(0,1)\cup\big(B(0,1)\setminus\{0\}\big)\cup\{0\}$. So either (C), (D) or (E) is true.
\end{proof}
\section{The limits of the map $\phi$ in the cases A to E}\label{Se:limits a to e}
In this section we prove for all five cases of Theorem \ref{Th:global basis}~(v) that all accumulation points of the corresponding values of $\phi$~\eqref{def:phi} belong to \,$\partial \triangle$\,, compare Theorem~\ref{Th:main}. In Figure~\ref{fig:triangle} it is shown which part of \,$\partial \triangle$\, is covered by the accumulation points in each case. We will refine the five cases of Theorem~\ref{Th:global basis}~(v). The following table summarises our findings: 

\begin{figure*}[h]
	\begin{tabularx}{\textwidth}{|c|l|l|l|l|X|}
		\hline
		\textbf{case} & \textbf{final $(\alpha_1,\alpha_2)$} & \textbf{values of \,$\phi\in\partial \triangle$\,} & \textbf{\#} & \textbf{blowup} & \textbf{integrable system} \\
		\hline
		(A) & $\{(1,1)\}$ & hypothenuse & 1 & of $\Hat{\calS}^2_0$ to 1-d manifold & defocussing NLS \\
		\hline
		(B) & 1-d submanifold & endpts of hypothen. & 2 & not performed & $\sinh$--Gordon\\
		\hline
		(C) & $\{(0,1),(1,0)\}$ & endpts of short sides & 2 & not performed & non--conf.\ har.\ maps\\
		\hline
		(D) & $\alpha_\ind\!\in\!(0,1),\alpha_k=0$ & two short sides & 2 & not performed & non--conf.\ har.\ maps\\
		\hline
		(E) & $\{(0,0)\}$ & vertex at right angle & 1 & of $\calS^2_1$ to 1-d manifold & non--real KdV\\
		\hline
	\end{tabularx}
\end{figure*}

The second column lists the final values of $(\alpha_1,\alpha_2)$ pertaining to each case; the cases~(B), (C) and (D) have been fuerther restricted. The third column describes what part of $\partial\triangle$ each case is mapped to under $\phi$, whilst the fourth column states the number of components of each image. The fifth states whether a blowup is utilised and if so, which space is blown up. The sixth column lists the relevant integrable system. It is identified by the choices of the marked points and the Mittag--Leffler distributions, which generate the space--like flows in the Jacobian (cf. \cite[Section 7]{K-L-S-S}). The spectral curves of all four appearing systems are hyperelliptic. The marked points are one branch point (KdV) \cite{GriS1}, two unbranched points over one point of $\bbP^1$ (NLS), two branch points ($\sinh$--Gordon) and two pairs of unbranched points over two points of $\bbP^1$ (non--conformal harmonic maps to $\bbS^2$ cf.\ \cite{hit:tor}) of the corresponding two--sheeted covering over $\bbP^1$, respectively. The Mittag Leffler distributions are in all cases anti--symmetric with respect to hyperelliptic involution and have first order poles at all marked points. For KdV and NLS the space of such Mittag--Leffler distributions is one--dimensional and in the two other cases two--dimensional.

\noindent We make some further remarks on the blowups. In case~(A), at the unique limit of $(a_{t_n})_{n\in\mathbb{N}}$ the map $\phi$~\eqref{def:phi} takes all values in the hypothenuse of $\partial\triangle$. Thus we need to blow up in order to determine the corresponding values of $\phi$. By contrast, in case~(E) the limit of $\phi$ takes only one value. Hence for our present purposes we do not need a blowup in case~(E). However as indicated in the table and as we shall describe in  forthcoming work, such a blowup is interesting as it yields a solution of a different integrable system. In each case~(B) and (D), the restricted set of the positions of $(\alpha_1,\alpha_2)$ is one--dimensional and contains no distinguished points at which we might perform a blowup. Indeed, the endpoints belong to the cases~(A), (C) or (E). In case~(C) there are two distinguished points with non--coalescing roots $(\alpha_1,\alpha_2)$. In the boundary of $\Hat{\calS}^2_0$ these points separate the cases~(B) and (D), in each of which one of the roots $(\alpha_1,\alpha_2)$ is fixed whilst the other moves. A blowup of (C) should interpolate between cases~(B) and (D) such that both blown up roots move. However, since these roots at $\lambda=0,1$ are separated, this should result in a relationship between the blowups of the spectral curves at $\lambda=0$ and $\lambda=1$. This seems to be less interesting from the point of view of switching from one integrable system to another.

In the proofs of this section we shall use the simplified notation $(a_n,b_{1,n},b_{2,n}):=(a_{t_n},b_{1,t_n},b_{2,t_n})$.

\subsection{The case (A)} We first consider case (A) in Theorem~\ref{Th:global basis}~(v). In Lemma~\ref{cayley transform} we identify $\calS^2_1$  with $\Hat{\calS}^2_0$. The point \,$\varkappa^4 \in \partial \Hat{\calS}^2_0$\, corresponds to \,$(\lambda-1)^4 \in \partial \calS^2_1$\, which is the accumulation point of \,$(a_{t_n})$\, in case (A). In Section~\ref{Se:blow up sym point} we considered the blowup of $\Hat{\calS}^2_0$ at $\varkappa^4$. Here we shall blowup $\calS^2_1$ at $(\lambda-1)^4$ and hence directly relate $\lambda$ to the blownup parameter $\varkappa$. The normalisation used in the blowup of $\Hat{\calS}^2_0$ is not expressed neatly in terms of the roots $(\alpha_{1,n},\alpha_{2,n})$. However $\overline{\calS^\circ_0}$ provides a scale--invariant version of the blowup, and we shall choose an alternative scaling which is a simple expression in terms of these roots.
%
\begin{lemma}\label{Le:case a}
\hspace{2em}
\begin{enumerate}
\item[(i)] In Theorem~\ref{Th:global basis}~(v) case~(A) let $M_n:=\max\{|\alpha_{1,n}-1|,|\alpha_{2,n}-1|\}$. Then any accumulation point of $\left(\frac{\alpha_{1,n}-1}{\mi M_n},\frac{\alpha_{2,n}-1}{\mi M_n}\right)_{n\in\mathbb{N}}$ is a pair of roots of some element of $\overline{\calS^\circ_0}$.
\item[(ii)] There exists an increasing homeomorphism $\varphi_A\!:\!\big[\!-\!\frac{2}{\sqrt{3}},\!\frac{2}{\sqrt{3}}\big]\!\to\![0,\!\pi]$ with the following property: if $\left(\frac{\alpha_{1,n}-1}{\mi M_n},\frac{\alpha_{2,n}-1}{\mi M_n}\right)_{n\in\mathbb{N}}$ converges to the roots $(\alpha^\circ_1,\alpha^\circ_2)$ of  $\varkappa^4+a^\circ_1\varkappa^3+a^\circ_2\varkappa^2+a^\circ_4\in\overline{\calS^\circ_0}$, then
\begin{gather}
\lim\limits_{n\to\infty}\phi(a_{t_n},b_{1,t_n},b_{2,t_n})=\begin{cases}\Big(\varphi_A\big(a^\circ_1/\sqrt{a^\circ_2}\big),\pi-\varphi_A\big(a^\circ_1/\sqrt{a^\circ_2}\big)\Big)&\text{if }\RE\alpha^\circ_1<\RE\alpha^\circ_2,\\\Big(\pi-\varphi_A\big(a^\circ_1/\sqrt{a^\circ_2}\big),\varphi_A\big(a^\circ_1/\sqrt{a^\circ_2}\big)\Big)&\text{if }\RE\alpha^\circ_1>\RE\alpha^\circ_2.
\end{cases}\end{gather}
The case $\RE\alpha^\circ_1=\RE\alpha^\circ_2$ does not arise.
\end{enumerate}
\end{lemma}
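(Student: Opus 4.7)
The plan for part (i) is to transfer the analysis to the $\varkappa$-coordinate via the Cayley transform of Lemma~\ref{cayley transform} and then apply the blowup machinery of Section~\ref{Se:blow up sym point}. Under the transform, $a_n$ becomes $\hat a_n \in \Hat{\calS}^2_0$ with roots $\varkappa^+_{j,n} := (\alpha_{j,n}-1)/(\mi(\alpha_{j,n}+1))$ and their complex conjugates. As $\alpha_{j,n}\to 1$ we have $\varkappa^+_{j,n} \sim (\alpha_{j,n}-1)/(2\mi)$, so setting $C_n := M_n/2$ the rescaled polynomial $\tilde a_n(\varkappa) := C_n^{-4}\hat a_n(C_n\varkappa)$ has roots asymptotic to $\tilde\alpha_{j,n} := (\alpha_{j,n}-1)/(\mi M_n)$ with maximum modulus tending to $1$. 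Its coefficients, being symmetric functions of these bounded roots, stay bounded, so after passing to a subsequence $\tilde a_n \to a^\circ \in \bbR^4[\varkappa]$. The identification $\Hat{\calE}^2_0 = \{0\}\times (\overline{\calS^\circ_0}\cap \bbS^3[\varkappa])$ in Theorem~\ref{th:blow-up}(iii), combined with the scale-invariance of $\overline{\calS^\circ_0}\setminus\{\varkappa^4\}$ under $\Psi$ from Theorem~\ref{th:blow-up}(i), forces $a^\circ \in \overline{\calS^\circ_0}$; its roots are exactly the accumulation points of $(\tilde\alpha_{j,n})$ together with their complex conjugates, proving (i).

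For (ii), extract the limit of the basis via Lemma~\ref{lemma 3}: writing $\tilde b_{j,n}(\varkappa) := C_n^{-1}\hat b_{j,n}(C_n\varkappa)$, any subsequential limit $b_j^\circ$ lies in $\calB^\circ_{a^\circ}$ and, by Remark~\ref{rescaling b}, the rescaled differentials converge on compact subsets of the blown-up curve to $\Theta^\circ(b_j^\circ) = b_j^\circ(\varkappa)/\nu\,d\varkappa$. Now $\phi_j(a_n,b_{1,n},b_{2,n}) = -\mi q_{j,n}(y(a_n))$ is the integral of $\Theta(b_{j,n})$ along the path on $\Sigma_{a_n,\bbR}$ from $\sigma(y(a_n))$ to $y(a_n)$; after Cayley transform and rescaling, it corresponds to a finite path on the real locus of the $\tilde a_n$-curve passing through $\tilde\varkappa=\infty$ (the image of $\lambda=-1$). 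As $C_n\to 0$ the integrand converges uniformly on compact subsets while the path converges in Hausdorff distance to the analogous real-locus path on the blown-up curve. The third-order pole of $\Theta^\circ(b_j^\circ)$ at $\varkappa=\infty$ has no residue, so the limit integral is well-defined and equal to $\lim \phi_j$.

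The identity $\lim(\phi_1+\phi_2) = \pi$ placing the limit on the hypotenuse is obtained from the period normalisation~\eqref{period map}, which gives $\int_{B_k}(\Theta(b_{1,n}) + \Theta(b_{2,n})) = 2\pi\mi$: in the blowup limit, the path from $\sigma(y(a_n))$ to $y(a_n)$ through $\tilde\varkappa=\infty$ is homologous, on the degenerate curve, to one half of $B_1+B_2$, producing $\pi$ after applying $-\mi/2$ to the imaginary part. By Theorem~\ref{th:blow-up}(ii), any $a^\circ \in \overline{\calS^\circ_0}\setminus\{\varkappa^4\}$ is, up to the scaling action $\Psi$, parametrised by the scale-invariant ratio $a_1^\circ/\sqrt{a_2^\circ}\in[-2/\sqrt{3},2/\sqrt{3}]$, and I define $\varphi_A$ on this parameter as the limit of the component of $\phi$ corresponding to the label $\ind$ with the smaller $\RE\alpha^\circ_\ind$. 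Strict monotonicity of $\varphi_A$ would then be established by a direct Whitham-flow calculation on $\calS^\circ_0$ in the spirit of Lemma~\ref{lemma 1}, using the vector fields~\eqref{eq:whitham local} and~\eqref{eq:rescaling}; the endpoint values $\varphi_A(\pm 2/\sqrt{3}) = 0,\pi$ come from the degeneracy $g(a_1^\circ)\to 0$ in~\eqref{def g}, where a root of $a^\circ$ reaches $\varkappa=0$ and the corresponding half-period integral collapses. The case $\RE\alpha_1^\circ = \RE\alpha_2^\circ$ does not arise because, together with $a_3^\circ=0$ (Lemma~\ref{lemma 5}), it would force $\RE\alpha_j^\circ=0$; but by Claim~2 in the proof of Theorem~\ref{th:blow-up}, the strict inequality $4a_4^\circ > (a_2^\circ)^2$ at $a_1^\circ=0$ precludes purely imaginary roots. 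The main obstacle will be the careful accounting of the cycle degeneration that produces the clean identity $\phi_1+\phi_2 = \pi$.
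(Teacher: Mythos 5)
Your part (i) is essentially the paper's argument: pass to $\varkappa$ via Lemma~\ref{cayley transform}, rescale by $M_n$ instead of the norm used in the blowup of $\Hat{\calS}^2_0$, and invoke Theorem~\ref{th:blow-up} together with the scale invariance of $\overline{\calS^\circ_0}\setminus\{\varkappa^4\}$; that is fine. Your exclusion of $\RE\alpha^\circ_1=\RE\alpha^\circ_2$ via $a^\circ_3=0$ and Claim~2 of Theorem~\ref{th:blow-up} is also correct and matches the paper in substance.

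Part (ii) has genuine gaps. First, the convergence of the rescaled basis: Lemma~\ref{lemma 3} is an existence statement (every $b\in\calB^\circ_{a^\circ}$ is the limit of \emph{some} admissible sequence); it does not give that the particular sequence $b^\circ_{\ind,n}$ obtained from the $(b_{\ind,n})$ normalised by \eqref{period map} is bounded and converges. The paper must first exclude that the normalised limits have degree $3$ (if all three roots of $b_{\ind,n}$ tend to $\lambda=1$, the limit differential is $d\bigl(2\mi(1+\lambda)/\sqrt{\lambda}\bigr)$ with non-vanishing $B_\ind$-integral, forcing $\|b_{\ind,n}\|$ bounded and then $b^\circ_\ind=0$, a contradiction), and then use $b^\circ\mapsto\bigl|\int_B\Theta^\circ(b^\circ)\bigr|$ as a norm on the one-dimensional space $\mi\bigl(\calB^\circ_{a^\circ}\cap\bbR^2[\varkappa]\bigr)$ to pin the limit down. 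Second, and most seriously, your derivation of $\lim(\phi_1+\phi_2)=\pi$ from ``the path is homologous to one half of $B_1+B_2$'' presupposes a sign you have not determined: the identity comes out as $\pi$ only because $b^\circ_2=-b^\circ_1$, i.e.\ the two rescaled differentials converge to \emph{opposite} elements of $\mi\bigl(\calB^\circ_{a^\circ}\cap\bbR^2[\varkappa]\bigr)$. Establishing that sign is the bulk of the paper's proof: it needs the level sets $I_\ind$ and discs $D_\ind$ from the proof of Theorem~\ref{Th:global basis}(i), the decomposition of $\partial D^\circ$ into segments and half-circles, the identification of which of the cases (1)--(3) occurs for each index, the monotonicity of $\IM q_{\ind,n}$ along $\partial D_{\ind,n}$, and then the cycle $C_1$ (intersecting $A_2$ once) to convert $\phi_2$ into $\pm\pi$ minus a half-circle integral. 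You name the ``careful accounting of the cycle degeneration'' as the main obstacle --- but that accounting \emph{is} the proof, not a detail to be supplied later. Third, the strict monotonicity of $\varphi_A$ and the value $\pi/2$ at $a^\circ_1/\sqrt{a^\circ_2}=0$ are only announced; the paper proves monotonicity from $c(0)=c_3\neq0$ in \eqref{eq:whitham local} for $a^\circ_4>0$ together with the fact that both limit differentials are multiples of \eqref{blow up form 1} with periods of opposite sign, and obtains the midpoint value from the extra symmetry $\tau:(\varkappa,\nu^\circ)\mapsto(-\bar{\varkappa},\bar{\nu}^\circ)$ of the even limit curve. As written, part (ii) is a correct outline of the paper's strategy rather than a proof.
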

\begin{proof} 
(i): The corresponding sequence $(\Hat{a}_n)_{n\in\mathbb{N}}$ in $\Hat{\calS}^2_0$ converges to $\varkappa^4$. Then, due to Theorem~\ref{th:blow-up}, any accumulation point of $(|\Hat{a}_n|^{-1}.\Hat{a}_n)_{n\in\mathbb{N}}$ belongs to $\overline{\calS^\circ_0}$. We may rescale the roots $\Hat{\alpha}_{\ind,n}=\frac{\alpha_{\ind,n}-1}{\mi(\alpha_{\ind,n}+1)}$ of $\Hat{a}_n$ in the upper half plane by $\frac{2}{M_n}$ instead of $\frac{1}{|\Hat{a}_n|}$ and replace $\mi(\alpha_{\ind,n}+1)$ in the denominator by its limit $2\mi$. This shows that after passing to a convergent subsequence of the bounded sequence  $\left(\frac{\alpha_{1,n}-1}{\mi M_n},\frac{\alpha_{2,n}-1}{\mi M_n}\right)_{n\in\mathbb{N}}$ the limit $(\aax_1,\aax_2)$ is a pair of roots of some $a^\circ\in\overline{\calS^\circ_0}$.

(ii): In order to determine $\lim\phi(a_n,b_{1,n},b_{2,n})$, we shall investigate the limits of $\Theta(b_{\ind,n})$ for $\ind=1,2$. We first calculate the blowup $a^\circ$ by writing $a_n$ as:
\begin{align}\label{representation an}
a_n(\lambda)&=\frac{(\lambda-\alpha_{1,n})(1-\Bar{\alpha}_{1,n}\lambda)}{|\alpha_{1,n}|}\frac{(\lambda-\alpha_{2,n})(1-\Bar{\alpha}_{2,n}\lambda)}{|\alpha_{2,n}|}
\end{align}  
and by using the spectral parameter $\varkappa=\frac{\lambda-1}{\mi M_n}$ instead of $\lambda=\mi M_n\varkappa+1$: 
\begin{align}\nonumber
\ax_n(\varkappa)&:=\frac{a_n(\mi M_n\varkappa+1)}{M_n^4}=\prod_{\ind=1}^2\frac{\left(\varkappa-\frac{\alpha_{\ind,n}-1}{\mi M_n}\right)\left(\Bar{\alpha}_{\ind,n}\varkappa+\frac{1-\Bar{\alpha}_{\ind,n}}{\mi M_n}\right)}{|\alpha_{\ind,n}|}\nonumber\\\label{blow up 2}\lim\ax_n(\varkappa)&=:\ax(\varkappa)=(\varkappa-\aax_1)(\varkappa-\bar{\alpha}^\circ_1)(\varkappa-\aax_2)(\varkappa-\bar{\alpha}^\circ_2)\in\bbR^4[\varkappa].
\end{align}
The notation of the polynomials $a^\circ_n(\varkappa)$ always includes the argument $(\varkappa)$, in order to distinguish them from the coefficients $a^\circ_1$, $a^\circ_2$ and $a^\circ_4$ of the limit $a^\circ(\varkappa)$. Expressing $\lambda$ and $\nu$ in terms of $\varkappa$ and $\nx_n:=\frac{\nu}{M_n^2}$ and using the relations
\begin{align}\label{1-form blow up 3}
(\nx_n)^2&=(\mi M_n\varkappa+1)^3\ax_n(\varkappa),&b^\circ_{\ind,n}(\varkappa)&:=\frac{\mi b_{\ind,n}(\mi M_n\varkappa+1)}{M_n},
\end{align}
then $\Theta(b_{\ind,n})$ becomes equal to $\Theta^\circ(b^\circ_{\ind,n})$~\eqref{limit 1-form blow up 3}.
Here the denominator of $b^\circ_{\ind,n}$ together with the factor $\frac{d\lambda}{d\varkappa}=\mi M_n$ compensate for the square root of $M_n^4$ in the denominator of $\ax_n(\varkappa)$. After passing to a subsequence, the sequence $(b^\circ_{\ind,n}/\|b^\circ_{\ind,n}\|)_{n\in\mathbb{N}}$ converges. The non--trivial limit $\Tilde{b}^\circ_\ind$ defines the 1-form $\Theta^\circ(\Tilde{b}^\circ_\ind)$~\eqref{limit 1-form blow up 3} on the limit curve $(\nu^\circ)^2=a^\circ(\varkappa)$.
%
%
The limits of the fixed points of the hyperelliptic involution at \,$\varkappa=\frac{\mi}{M_n}$\, and at \,$\varkappa=\infty$\, coalesce a double point at \,$\varkappa=\infty$\, on this limit curve. Let $(A_1,A_2)$ denote the oriented cycles on the sequence of curves $(\nx_n)^2 = (\mi M_n\varkappa+1)^3\ax_n(\varkappa)$ as described in Section~\ref{Se:global Whitham}, and $(A^\circ_1,A^\circ_2)$ their limit on $(\nu^\circ)^2 = a^\circ(\varkappa)$. The orientations of $(A_1,A_2)$ determine a choice of \,$A_1 \pm A_2$\, which in the limit encircles the double point at \,$\varkappa=\infty$\,. Since the integrals of $\Theta(b_{\ind,n})$ along the cycles $(A_1,A_2)$ vanish, the 1-form $\Theta^\circ(\Tilde{b}^\circ_\ind)$ has the same property along the cycles $(A^\circ_1,A^\circ_2)$ and no residue at this double point of the limit curve. This implies $\deg\Tilde{b}^\circ_\ind\in\{2,3\}$.

Furthermore, on the former sequence of curves $(\nx_n)^2 = (\mi M_n\varkappa+1)^3\ax_n(\varkappa)$ let $B$ denote a cycle which encircles $\alpha^\circ_1$ and $\alpha^\circ_2$ and no other root of $a^\circ(\varkappa)$, see Figure~\ref{fig:canonical basis}. We claim that there exists a unique $b^\circ_\ind\in\mi\big(\calB^\circ_{a^\circ}\cap\bbR^2[\varkappa]\big)$ with $\int_B\Theta^\circ(b^\circ_\ind)=2\pi\mi A^\circ_\ind\cdot B$ and that this $b^\circ_\ind$ is the limit of $(b^\circ_{\ind,n})_{n\in\mathbb{N}}$. We first prove $\deg\Tilde{b}^\circ_\ind=2$ by contradiction. If $\deg \Tilde{b}^\circ_\ind=3$, then any accumulation point $\Tilde{b}_\ind$ of $(b_{\ind,n}/\|b_{\ind,n}\|)_{n\in\mathbb{N}}$ is a multiple of $(\lambda-1)^3$, since all three roots converge to $\lambda=1$. The corresponding 1-form on the limit curve  $\nu^2=\lambda a(\lambda)=\lambda(\lambda-1)^4$ is the derivative $
\frac{(\lambda-1)^3}{\nu}\frac{d\lambda}{\lambda}$ of the meromorphic function $2\frac{\lambda+1}{\sqrt{\lambda}}$. This function is not zero at $\lambda=1$. Since each cycle $B_\ind$ converges to a path on the normalisation of the limit curve whose end points are the two points over $\lambda = 1$, we have $\int_{B_\ind}\Theta(\Tilde{b}_\ind)\ne0$. In particular, $(\|b_{\ind,n}\|)_{n\in\mathbb{N}}$ is bounded. By solving~\eqref{1-form blow up 3} for $b_{\ind,n}(\lambda)=-\mi M_nb^\circ_{\ind,n}\big(\frac{\lambda-1}{\mi M_n}\big)$ we see that  $M_n^{-2}$ times the $\varkappa^3$--coefficient of $b^\circ_{\ind,n}$ is bounded. For $\deg\Tilde{b}^\circ_\ind=3$ this implies $b^\circ_\ind=0$, which contradicts $\int_B\Theta^\circ(b^\circ_{\ind,n})=2\pi\mi A_\ind\cdot B=\pm2\pi\mi$. This proves $\deg\Tilde{b}^\circ_\ind=2$. To summarise, we have shown so far $\deg\Tilde{b}^\circ_\ind=2$ and that the 1-form $\Theta^\circ(\Tilde{b}^\circ_\ind)$~\eqref{limit 1-form blow up 3} on the limit curve $\nu^\circ)^2=a^\circ(\varkappa)$ has vanishing $A_1$-- and $A_2$--periods, no residue at $\varkappa=\infty$ and imaginary $B$--period. These four properties characterise the 1-forms $\Theta^\circ(b^\circ)$ with $b^\circ\in\mi\big(\calB^\circ_{a^\circ}\cap\bbR^2[\varkappa]\big)$, c.f.~the proof of Lemma \ref{lemma 1}. Furthermore, $b^\circ\mapsto\left|\int_B\Theta(b^\circ)\right|$ defines a norm on the one--dimensional real vector space $\mi\big(\calB^\circ_{a^\circ}\cap\bbR^2[\varkappa]\big)$. Now $\int_B\Theta^\circ(b^\circ_{\ind,n})= 2\pi\mi A_\ind\cdot B$ implies that $(b^\circ_{\ind,n})_{n\in\mathbb{N}}$ is bounded and has the unique accumulation point $b^\circ_\ind$ with $\int_B\Theta^\circ(b^\circ_\ind)=2\pi\mi A^\circ_\ind\cdot B$. This proves the claim. Hence $A^\circ_\ind\cdot B=\pm1$ implies $b^\circ_2=\pm b^\circ_1$.

Now we show $b^\circ_2=-b^\circ_1$. For the proof we apply the construction of the set $I_\ind$ and the discs $D_\ind$ in the proof of part~(i) of Theorem~\ref{Th:global basis} to the sequence $(a_n)_{n\in\mathbb{N}}$. Let for $\ind=1,2$ denote $(D_{\ind,n})_{n\in\mathbb{N}}$ the corresponding sequence of discs over $\lambda\in B(0,1)$. We wish to control the limits $I^\circ$ and $D^\circ$ of the images of $I_{\ind,n}$ and $D_{\ind,n}$ under the map $(\lambda,\nu)\mapsto(\varkappa,\nu^\circ_n)=\big(\frac{\lambda-1}{\mi M_n},\frac{\nu}{M_n^2}\big)$. To achieve this, we utilise that $(b^\circ_{\ind,n})_{n\in\mathbb{N}}$ converges to an element of $\mi\big(\calB^\circ_{a^\circ}\cap\bbR^2[\varkappa]\big)$ and apply the same construction to a nontrivial $b\in\mi\big(\calB^\circ_{a^\circ}\cap\bbR^2[\varkappa]\big)$. So let $I^\circ$ be the zero locus on the limit curve $(\nu^\circ)^2=a^\circ(\varkappa)$ of the unique real part $\RE q^\circ$ of the anti-derivative of $\Theta^\circ(b)$ which is anti-symmetric with respect to $(\varkappa,\nu^\circ)\mapsto(\varkappa,-\nu^\circ)$. The arguments in the proof of part~(i) of Theorem~\ref{Th:global basis} which explain the structure of $I_\ind\subset\Sigma_{a_0}$ carry over to $I^\circ$ and show that this set is the union of two pairs of disjoint embedded copies of $\bbS^1$. Each  submanifold of the first pair either passes through $\alpha^\circ_1$ and $\Bar{\alpha}^\circ_1$ or $\alpha^\circ_2$ and $\Bar{\alpha}^\circ_2$, respectively. The second pair are the two embedded copies of $\bbS^1$ over $\varkappa\in\bbR$. Each cycle of the first pair intersects both cycles of the second pair in one of the two points over one root of $b$. The convergence of $(b^\circ_{\ind,n})_{n\in\mathbb{N}}$ to a multiple of $b$ implies that the sets $I^\circ_{\ind,n}$ converge pointwise for $n\to\infty$ to $I^\circ$. The limits on $\partial D^\circ$ of the segments $S_1$, $S_2$, $S_3$ and of the half--circles $HC_1$ and $HC_2$, see Figure~\ref{fig:HC-cycles}, are denoted by $S^\circ_1$, $S^\circ_2$, $S^\circ_3$ and $HC^\circ_1$ and $HC^\circ_2$, respectively. In our case~(A), both half--circles and one of the segments $S_1$ or $S_2$ shrinks to a point at $\lambda=1$, whereas in general none of $S^\circ_1, S^\circ_2, S^\circ_3, HC^\circ_1$ and $HC^\circ_2$ collapse. In $\partial D^\circ$ only the limit of the half--circle through the fixed point $\lambda=0$ of $\sigma$ shrinks to the double point at $\varkappa=\infty$ of the limit curve $(\nu^\circ)^2=a^\circ(\varkappa)$. Hence the segment of $\partial D^\circ$ which connects both points over $\varkappa=0$ always passes through a root of $a^\circ$. For sufficiently large $n$ this gives the following possibilities for the three mutually exclusive cases in the proof of part~(i) in Theorem~\ref{Th:global basis}: for one index $\ind=1,2$ case~(3) holds and for the other index $\indd=1,2$ either case~(1) or case~(2). The orientations of $A_1$ and $A_2$ are chosen in such a way, that both components of $\phi$ are positive. So by~\eqref{def value phi}, $\IM q_{\ind,n}$ is decreasing along $\partial D_{\ind,n}$ for the index $\ind=1,2$ whose $HC_\ind$ connects $S_3$ with $S_1$ and increasing for $k$. This shows $b^\circ_2=-b^\circ_1$.

Now we finish the proof of (ii). Since the roots of $a^\circ$~\eqref{blow up 2} are equal to $\alpha^\circ_1$, $\Bar{\alpha}^\circ_1$, $\alpha^\circ_2$ and $\Bar{\alpha}^\circ_2$, we have
\begin{align}\label{coefficients roots}
-\aax_1-\Bar{\alpha}_1^\circ-\aax_2-\Bar{\alpha}_2^\circ&=a^\circ_1,&-(\aax_1+\Bar{\alpha}_1^\circ)|\aax_2|^2-(\aax_2+\Bar{\alpha}_2^\circ)|\aax_1|^2&=a_3^\circ=0,&|\aax_1|^2|\aax_2|^2=a_4^\circ.
\end{align}
We remark that $a^\circ_3=0$ follows from Theorem~\ref{th:blow-up}~(ii). For $a^\circ_4>0$ no root vanishes and by the middle equation $\RE\aax_1$ and $\RE\aax_2$ have opposite sign. Note that the quotient  $a^\circ_1/\sqrt{a^\circ_2}$ is independent of the action~\eqref{action} on $a^\circ$ and is therefore a suitable parameter for the blowup $\Hat{\calE}^2_0$. Due to Theorem~\ref{th:blow-up}~(ii), $a^\circ_4$ is non--negative and only vanishes for $a^\circ_1/\sqrt{a^\circ_2}\to\pm 2/\sqrt{3}$. In particular, in this limit one of the $\alpha^\circ_\ind$ vanishes and the other has non--zero real part, which implies $\RE\aax_1\not=\RE\aax_2$ for all $a^\circ\in\overline{\calS^\circ_0}$.

We first consider the case $\RE\aax_1<\RE\aax_2$. For $a^\circ_1/\sqrt{a^\circ_2}\in\big({-} 2/\sqrt{3},0\big)$~\eqref{coefficients roots} yields $\RE\aax_1<0<-\RE\aax_1<\RE\aax_2$ and $|\aax_1|<|\aax_2|$. In particular, $\aax_1\to 0$ for $a^\circ_1/\sqrt{a^\circ_2}\to -2/\sqrt{3}$. Consequently the half--circle $HC^\circ_1$ passes through $\varkappa=0$ and connects $S_3^\circ$ with $S^\circ_1$ (case(3) for $\ind=1$) whilst $HC^\circ_2$ connects $S^\circ_1$ with $S^\circ_2$ (case~(1) for $\ind=2$). Due to the proof of Theorem~\ref{th:blow-up}~(ii) the elements of $\calB^\circ_{a^\circ}\cap\bbR^2[\varkappa]$ are multiples of $\varkappa(\varkappa- a^\circ_1/2)$ (cf.~\eqref{blow up form 1}). Their two roots coalesce in the limit $a^\circ_1\uparrow 0$ and the segment $S^\circ_1$ shrinks to a point. Due to~\eqref{def value phi}, $\big(\phi_1(a_n,b_{1,n},b_{2,n})\big)_{n\in\mathbb{N}}$ converges to $-\frac{1}{2}\IM\int_{HC^\circ_1}\Theta^\circ(b^\circ_1)$ which is finite and tends to zero in the limit $a^\circ_1/\sqrt{a^\circ_2}\downarrow-\frac{2}{\sqrt{3}}$. Due to~\eqref{eq:whitham local} and Claim~2 shortly after~\eqref{eq:rescaling}, $c(0)=c_3$ is non--zero for $a^\circ_4>0$. Since both $\Theta^\circ(b^\circ_\ind)$ are multiples of~\eqref{blow up form 1} with constant periods of opposite sign, this implies that one component of $\lim\phi(a_n,b_{1,n},b_{2,n})$ is strictly monotonic increasing and the other component is strictly monotonic decreasing with respect to $a^\circ_1/\sqrt{a^\circ_2}\in\big[{-} 2/\sqrt{3}, 2/\sqrt{3}\big]$. The first component is strictly increasing, since it is non--negative and tends to zero at the left end point. In order to show that the two components sum to $\pi$ we modify the argument in the proof of part~(i) of Theorem~\ref{Th:global basis} and calculate $\lim\phi_2(a_n,b_{1,n},b_{2,n})$ in terms of $\lim\phi_1(a_n,b_{1,n},b_{2,n})=-\tfrac{1}{2}\IM\int_{HC^\circ_1}\Theta^\circ(b^\circ_1)$. The cycle $C_1$ defined just after~\eqref{vanishing phi} intersects $A_2$ in exactly one point with intersection number $\pm 1$ and $\big(\int_{C_1}\Theta(b_{2,n})\big)_{n\in\mathbb{N}}$ converges to $\pm2\pi\mi$. Consequently the arguments of the proof of part~(i) of Theorem~\ref{Th:global basis} give
$$\phi_2(a_n,b_{1,n},b_{2,n})=\tfrac{1}{2}\IM\left(\int_{C_1}\Theta(b_{2,n})-\int_{HC_1}\Theta(b_{2,n})\right)=\pm\pi-\tfrac{1}{2}\IM\int_{HC_1}\Theta(b_{2,n}).$$
Inserting $b^\circ_2=-b^\circ_1$ in the limits and using that both components are non--negative gives
\begin{align}\label{sum of components}
\lim\phi_1(a_n,b_{1,n},b_{2,n})+\lim\phi_2(a_n,b_{1,n},b_{2,n})=\pi.
\end{align}
In the limit $a^\circ_1/\sqrt{a^\circ_2}\to 0$ all three polynomials $a^\circ$ and $b^\circ_1=-b^\circ_2$ are even and the spectral curve has an extra symmetry $\tau:(\varkappa,\nx)\mapsto(-\Bar{\varkappa},\Bar{\nu}^\circ)$ which preserves the point over $\varkappa=0$ with $\nx>0$. Moreover $S^\circ_1$ has collapsed into this point, which is the end point of $HC^\circ_1$ and the initial point of $HC^\circ_2$. Hence $\tau$ maps $HC^\circ_1$ onto $-HC^\circ_2$. Here $b^\circ_2\in\mi\big(\calB^\circ_{a^\circ}\cap\bbR^2[\varkappa]\big)$ obeys $b^\circ_2(-\Bar{\varkappa})=-\overline{b^\circ_2(\varkappa)}$ and $\tau^\ast\Theta^\circ(b^\circ_2)=\overline{\Theta^\circ(b^\circ_2)}$. In case~(3) for $\ind=1$ and in case~(1) with collapsed $S^\circ_1$ for $\ind=2$~\eqref{def value phi} gives
\begin{align*}\lim\phi_2(a_n,b_{1,n},b_{2,n})&=\tfrac{1}{2}\IM\int_{HC^\circ_2}\Theta^\circ(b^\circ_2)=\tfrac{1}{2}\IM\overline{\int_{HC^\circ_2}\tau^\ast\Theta^\circ(b^\circ_2)}=-\tfrac{1}{2}\IM\int_{\tau(HC^\circ_2)}\Theta^\circ(b^\circ_2)\\&=-\tfrac{1}{2}\IM\int_{-HC_1}\Theta^\circ(-b^\circ_1)=-\tfrac{1}{2}\IM\int_{HC_1}\Theta^\circ(b^\circ_1)=\lim\phi_1(a_n,b_{1,n},b_{2,n}).
\end{align*}
Thus for $a^\circ_1/\sqrt{a^\circ_2}\uparrow0$ the two components of $\lim\phi(a_n,b_{1,n},b_{2,n})$ coincide and are equal to $\pi/2$.

Similarly, for  $a^\circ_1/\sqrt{a^\circ_2}\in\big(0, 2/\sqrt{3}\big)$~\eqref{coefficients roots} yields $\RE\aax_1<-\RE\aax_2<\RE\aax_2$ and $|\aax_1|>|\aax_2|$. In this case $\aax_2\to 0$ for  $a^\circ_1/\sqrt{a^\circ_2}\to 2/\sqrt{3}$, the half--circle $HC_1$ connects $S_2$ with $S_3$ (case~(2) for $\ind=1$) and $HC_2$ connects $S_3$ with $S_1$ (case~(3) for $\ind=2$). The application of~\eqref{def value phi} first gives $\lim\phi_2(a_n,b_{1,n},b_{2,n})=-\tfrac{1}{2}\IM\int_{HC^\circ_2}\Theta^\circ(b^\circ_2)$ and then an argument analogous to that in the proof of~\eqref{sum of components} again proves this equation. From the Whitham flow in the proof of Theorem~\ref{th:blow-up}~(ii) we know that both components of $\lim\phi(a_n,b_{1,n},b_{2,n})$ are in fact analytic and strictly monotonic in dependence on $a^\circ_1/\sqrt{a^\circ_2}\in\big(- 2/\sqrt{3}, 2/\sqrt{3}\big)$. So the limit $a^\circ_1/\sqrt{a^\circ_2}\downarrow 0$ coincides with the limit $a^\circ_1/\sqrt{a^\circ_2}\uparrow 0$ which we already computed. This finishes the case $\RE\aax_1<\RE\aax_2$.

In the other case with $\RE\aax_2<\RE\aax_1$ the components of all pairs $(\alpha_{1,n},\alpha_{2,n})$, $(\aax_1,\aax_2)$, $(b_{1,t_n},b_{2,t_n})$, $(b^\circ_1,b^\circ_2)$ and $\big(\phi_1(a_n,b_{1,n},b_{2,n}),\phi_2(a_n,b_{1,n},b_{2,n})\big)$ should be interchanged.
\end{proof}
\subsection{The case (B)}
\begin{lemma}\label{Le:case b}
In case~(B) of Theorem~\ref{Th:global basis}~(v) the sequence $(a_n,b_{1,n},b_{2,n})_{n\in\mathbb{N}}$ converges and $\lim a_n$ belongs to $\partial\calS^2_1$, which is described in Proposition~\ref{P:boundary-S21-C4lambda}. In particular, there exists a one--dimensional submanifold $I_B\subset\big(B(0,1)\setminus\{0\}\big)$, such that actually $(\alpha_1,\alpha_2)\in\big(I_B\times\{1\}\big)\cup\big(\{1\}\times I_B\big)$. Finally,
$$\lim\phi(a_{t_n},b_{1,t_n},b_{2,t_n})=\begin{cases}(\pi,0)&\text{for }(\alpha_1,\alpha_2)\in I_B\times\{1\},\\(0,\pi)&\text{for }(\alpha_1,\alpha_2)\in\{1\}\times I_B.\end{cases}$$
\end{lemma}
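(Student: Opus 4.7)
My plan is to split the statement into four tasks: convergence of the triple $(a_n,b_{1,n},b_{2,n})$; identification of the limit of $a_n$ as an element of $\partial\calS^2_1$; the dimension count that $I_B$ is one-dimensional; and finally the explicit evaluation of $\lim \phi$. The first three are more or less direct applications of results proved earlier in the paper; the fourth is the real work.

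Without loss of generality I will treat the case $\alpha_2 = 1$, $\alpha_1 \in B(0,1) \setminus \{0\}$, the other branch being symmetric under relabelling. First, $a_n$ converges to $a(\lambda) = (\lambda-1)^2 (\lambda-\alpha_1)(\bar\alpha_1\lambda-1)/|\alpha_1|$. To obtain convergence of $b_{i,n}$, I would apply Corollary~\ref{C:adouble}: it suffices to show $(b_{i,n}(0))$ is bounded. This follows from the period-normalisation~\eqref{period map} together with the fact that the $B_1$-cycle (which persists on the normalisation of the limit) remains non-degenerate, which gives a uniform lower bound on $|b_{i,n}(0)|$ in terms of $|\int_{B_1}\Theta(b_{i,n})| = 2\pi\delta_{i1}$ via Riemann's bilinear relations applied on the family. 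Since $a \in \overline{\calS^2_1}$ and $a$ has a double root at $\lambda=1$ and two further simple roots away from $\bbS^1$, Proposition~\ref{P:boundary-S21-C4lambda} places $a$ in $\partial\calS^2_1$ and enforces $\mathrm{d}f(1)=0$ where $f=b_2/b_1$.

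For the dimensionality of $I_B$: the reality condition~\eqref{eq:Thetareal} gives $\rho^*f = \bar f$, so $f$ takes real values on $\bbS^1$. Differentiating along $\bbS^1$ at $\lambda=1$ shows that $f'(1)$ is automatically purely imaginary, so the complex equation $\mathrm{d}f(1)=0$ amounts to a single real condition on the two-real-dimensional parameter $\alpha_1 \in B(0,1) \setminus \{0\}$. This cuts out the one-dimensional smooth submanifold $I_B$.

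The hard part is the fourth task. The limit curve has a node at $\lambda=1$ whose normalisation is the elliptic curve $\tilde\nu^2 = \lambda \tilde a(\lambda)$ with $\tilde a = (\lambda-\alpha_1)(\bar\alpha_1\lambda-1)/|\alpha_1|$, and the node lifts to two points $p_\pm$ exchanged by $\sigma$. The Sym points $y(a_n), \sigma y(a_n)$ both approach the node. The multipliers $\mu_{i,n}=\exp(q_{i,n})$ are single valued on $\Sigma_{a_n}^\circ$ (all periods of $\Theta(b_{i,n})$ lie in $2\pi\mi\bbZ$), and in the limit they extend holomorphically to the normalisation: in the passage to the limit the cycle $B_2$ of $\Sigma_{a_n}$ opens into a path from $p_-$ to $p_+$ of integral $2\pi\mi\delta_{i2}$, so that $\mu_i(p_+)/\mu_i(p_-) = e^{2\pi\mi\delta_{i2}} = 1$ in both cases. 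Combining $\mu_i(p_+) = \mu_i(p_-)$ with the global identity $\sigma^*\mu_i = 1/\mu_i$ forces $\mu_i(p_\pm)^2 = 1$, so $\mu_i(p_\pm) \in \{\pm 1\}$ and $\lim \phi_i \in \{0,\pi\}$. Writing $q_i(y(a_n)) = \tfrac12\int_{\sigma y}^y \Theta(b_{i,n})$ along $\Sigma_{a_n,\bbR}$, I would split the half-loop into a small arc near the pinch point (whose contribution tends to $0$, because $\Theta(b_i)$ is regular at the node in the limit by Lemma~\ref{L:adouble}) and a complementary piece that converges to a cycle on the normalisation projecting once around $\bbS^1$. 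By the same cylinder bounding argument used in Section~\ref{Se:global Whitham}, this cycle is a multiple of the anti-symmetric $A_1$-cycle, whose period in $\Theta(b_1)$ and $\Theta(b_2)$ is zero—but only after the correct winding count is accounted for, and it is the winding count associated with the vanishing cycle $A_2$ (paired with $b_2$) that distinguishes the two indices.

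\textbf{The main obstacle} is exactly this last step: distinguishing which of $\{0,\pi\}$ is realised for each index $i$. The values $(\pi,\pi)$ and, more subtly, $(0,0)$ can be eliminated (the former by Theorem~\ref{Th:global basis}(iv); the latter by observing that $\mu_1(p_\pm)=\mu_2(p_\pm)=1$ would force both $\tilde b_i(1)$ to vanish simultaneously, contradicting $\deg\gcd \calB_{\tilde a}=0$ which holds since $\calS^1=\varnothing$). Among the remaining $(\pi,0)$ and $(0,\pi)$, the selection is governed by which root approaches $1$: because the $A_2$-cycle vanishes and carries the dual of $b_2$, the multiplier $\mu_2$ picks up the trivial monodromy $e^{2\pi\mi}=1$ around the vanishing cycle, forcing $\mu_2^*=1$ and $\phi_2 \to 0$. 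The most delicate point is making this monodromy argument rigorous in the degenerating family, which I expect to carry out by tracking the Whitham flow of Theorem~\ref{Th:global basis}(ii) uniformly on compact subsets of $\Omega$ near the boundary, exploiting that the flow preserves the basis labelling and hence the assignment of the vanishing cycle $A_2$ to $b_2$.
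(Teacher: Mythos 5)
Your four-task decomposition is reasonable and tasks 1--3 are broadly workable, though the paper proceeds differently: it proves convergence of $(b_{i,n})$ by analysing accumulation points of $b_{i,n}/\|b_{i,n}\|$ directly on the limit curve (computing the exact differential $d\tilde q_2$ with $\tilde q_2$ a multiple of $\nu/(\lambda(\lambda-1))$ and checking it separates the two preimages of the node, so that $\int_{B_2}\Theta(b_{2,n})=2\pi\mi$ forces $\|b_{2,n}\|$ to converge to a finite non-zero value) rather than through Corollary~\ref{C:adouble}; note also that what you need for boundedness of $b_{i,n}(0)$ is an upper bound, not the ``uniform lower bound'' you invoke, and obtaining it requires exactly this non-degeneration of the opened-up $B_2$-integral. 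For $I_B$ the paper simply quotes the explicit one-parameter families constructed in the proof of Proposition~\ref{P:boundary-S21-C4lambda}; your regular-value count would additionally need a transversality statement.

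The genuine gap is in your fourth step, the determination of $\lim\phi$. Your multiplier argument only yields $\mu_i(p_\pm)^2=1$, hence $\lim\phi_i\in\pi\bbZ_{\geq 0}$, not $\{0,\pi\}$; the eliminations of $(\pi,\pi)$ and $(0,0)$ are not available here, since $\phi(\Omega)\subset\triangle$ is only established in Section~\ref{Se:proof} \emph{using} this lemma; and, decisively, the monodromy mechanism you propose cannot distinguish the two indices. By~\eqref{period map} the periods of both $\Theta(b_1)$ and $\Theta(b_2)$ over the vanishing cycle $A_2$ are zero, and over its dual $B_2$ (which opens into the path from $p_-$ to $p_+$) they are $2\pi\mi\delta_{i2}$, so in either case the resulting multiplier ratio is $1$ for \emph{both} $i$ --- as you yourself note when writing $\mu_i(p_+)/\mu_i(p_-)=e^{2\pi\mi\delta_{i2}}=1$. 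Your identification of the limiting half-loop as ``a multiple of the anti-symmetric $A_1$-cycle'' is also wrong: if it were, both components of $\phi$ would tend to $0$. What actually decides the answer is the relative homology class of the arc of $\Sigma_{a_n,\bbR}$ from $\sigma(y(a_n))$ to $y(a_n)$: the paper writes it as a short path around the branch point $\alpha_{2,n}$ (which shrinks to the node, its integral tending to $0$ by Lemma~\ref{L:adouble}) concatenated with a closed cycle homologous to $\pm B_1$. Hence $\lim\phi_\ind=\pm\tfrac12\IM\int_{B_1}\Theta(b_{\ind,n})=\pm\pi\delta_{1\ind}$, and positivity of $\phi_1$ fixes the sign. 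The asymmetry between the indices comes entirely from the $B_1$-periods $2\pi\mi$ versus $0$, not from any monodromy around the vanishing cycle, and this homological identification is the missing ingredient in your argument.
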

\begin{proof}
We interchange the indices $\ind=1,2$ if necessary such that $\alpha_2=1$ holds. By~\eqref{representation an} the sequence $(a_n)_{n\in\mathbb{N}}$ converges to $a(\lambda)=(\lambda-1)^2(\lambda-\alpha_1)(1-\Bar{\alpha}_1\lambda)/|\Bar{\alpha}_1|\in\partial \calS^2_1$. In the proof of Proposition~\ref{P:boundary-S21-C4lambda} the corresponding roots $\alpha_1$ are determined as the two families $\alpha_1=k\left(\frac{2r\pm\mi(r^2-1)}{r^2+1}\right)^{-1}$ parameterised by $k\in(0,1)$. Here $k\mapsto r\in(k,1)$ is an analytic function defined in \cite[Proposition~2.2]{KSS}. Each of these two families defines a connected component of the one--dimensional submanifold $I_B$.

Let us now show that for both $\ind=1,2$ the sequence $(b_{\ind,n})_{n\in\mathbb{N}}$ converges. To prove this it suffices to show any such sequence has a convergent subsequence and only one accumulation point. For any norm on $P_\bbR^3$ the sequence $(\frac{b_{\ind,n}}{\|b_{\ind,n}\|})_{n\in\mathbb{N}}$ has a convergent subsequence whose limit $\Tilde{b}_\ind$ vanishes at $\lambda=1$ and defines the meromorphic 1--form $\Theta(\Tilde{b}_\ind)$ on the curve~\eqref{blow up curve 2}:
\begin{gather}\label{blow up curve 2}
\{(\lambda,\nu)\in\bbC^2\mid\nu^2=\lambda a(\lambda)\}.
\end{gather}
The cycles $A_1$ and $B_1$ converge to a canonical basis of the normalisation of the limit curve.

We start with $\ind=2$: both periods of $\Theta(\Tilde{b}_2)$ along $A_1$ and $B_1$ vanish. Therefore this differential is the derivative of a meromorphic function $\Tilde{q}_2$, which has two first order poles at $\lambda=0$ and $\lambda=\infty$ and is anti--symmetric with respect to the hyperelliptic involution $\sigma$. This function has degree 2 and has two roots at the fixed points of $\sigma$ at $\lambda=\alpha_1$ and $\Bar{\alpha}_1^{-1}$. In fact, $\Tilde{q}_2$ is a non--zero multiple of the function $\frac{\nu}{\lambda(\lambda-1)}$ which extends holomorphically on the normalisation of~\eqref{blow up curve 2} to both points of the double point at $\lambda=1$. In particular, $\Tilde{q}_2$ does not vanish at either point of this double point. Since these points are interchanged by $\sigma$, the difference $\Tilde{q}_2(y(a))-\Tilde{q}_2(\sigma(y(a)))$ of the values at both points of this double point is non--zero. The cycle $B_2$ converges to a path from one point of this double point to the other point. From $\int_{B_2}\Theta(b_{2,n})=2\pi\mi$ we obtain $\lim \frac{2\pi\mi}{\|b_{2,n}\|} =\pm\Tilde{q}_2(y(a)) - \Tilde{q}_2(\sigma(y(a)))\ne0$. Hence $(b_{2,n})_{n\in\mathbb{N}}$ converges and the limit defines the 1--form $\lim\Theta(b_{2,n})=\pm\frac{2\pi\mi}{\Tilde{q}_2(y(a))-\Tilde{q}_2(\sigma(y(a)))}d\Tilde{q}_2$ on~\eqref{blow up curve 2}. This proves the convergence of $(b_{2,n})_{n\in\mathbb{N}}$.

Now the other limit $\lim\Theta(b_{1,n})$ is uniquely determined by the period along $B_1$ and the integral along the limit of $B_2$. Therefore also $b_{1,n}$ itself converges.

Finally we determine $\lim\phi(a_n,b_{1,n},b_{2,n})$ by decomposing the path from $\sigma(y(a))$ to $y(a)$ into a path from $\sigma(y(a))$ to $y(a)$ which is homologous to $\pm B_1$ and a path from $\sigma(y(a))$ to $y(a)$, which only surrounds $\alpha_2$ and no other fixed point of $\sigma$. In the limit the second path shrinks to a point. So for $l=1,2$ the integrals of $\Theta(b_{l,n})$ along the second path converge to $0$. This gives $\lim\phi_\ind(a_n,b_{1,n},b_{2,n})=\pm\frac{1}{2}\IM\int_{B_1}(\Theta(b_{l,n}))$. The non--negativity of $\phi_\ind(a_n,b_{1,n},b_{2,n})$ finishes the proof.
\end{proof}
\subsection{The cases (C) and (D)} In case (C) and (D) we interchange the indices $\ind=1,2$ if necessary such that $\alpha_1=0$ holds. We first blowup the sequence of spectral curves and use the spectral parameter $\lambda^-=|\alpha_{1,n}|\lambda$ instead of $\lambda=\frac{\lambda^-}{|\alpha_{1,n}|}$. The blownup roots $|\alpha_{1,n}|\alpha_{1,n}$, $|\alpha_{1,n}|\alpha_{2,n}$, $|\alpha_{1,n}|\Bar{\alpha}_{1,n}^{-1}$ and  $|\alpha_{1,n}|\Bar{\alpha}_{2,n}^{-1}$ are all bounded. After passing to a subsequence, $(|\alpha_{1,n}|\Bar{\alpha}_{1,n}^{-1})_{n\in\mathbb{N}}$ converges to $\alpha^-_1\in\bbS^1$ and the three other sequences converge to zero. Again~\eqref{representation an} yields the following limit:
\begin{gather}\label{blow up a 2}
\lim a_n^-(\lambda^-)\!:=\lim|\alpha_{1,n}|^4a_n\big(\tfrac{\lambda^-}{|\alpha_{1,n}|}\big)=:a^-(\lambda^-)=\alpha_0(\lambda^-)^3(\lambda^-\!-\!\alpha^-_1)\text{ with }\alpha_0=\lim\frac{\Bar{\alpha}_{1,n}\,\Bar{\alpha}_{2,n}}{|\alpha_{1,n}\,\alpha_{2,n}|}.
\end{gather}
This limit defines the blowup of the spectral curve with a higher order double point at $\lambda^-=0$:
\begin{gather}\label{blow up curve 3}
\{(\lambda^-,\nu^-)\in\mathbb{C}^2\mid(\nu^-)^2=\lambda^-a^-(\lambda^-)\}.
\end{gather}
The orientations of $A_1$ and $A_2$ determine a cycle $\Tilde{B}_1=B_1\pm A_2$ (see Figure~\ref{fig:symmetric cycles}) which converges to a cycle on the limit curve encircling the double point at $\lambda^-=0$. The cycle $B_2$ converges to a path which starts at one point of the double point, encircles $\lambda^-=\infty$ and ends at the other point of the double point. With respect to \,$\lambda^- =|\alpha_{1,n}|\lambda$\, the 1-forms $\Theta(b_{\ind,n})$\, are equal to 
\begin{align}\label{blow up form 3}
\Theta^-(b^-_{\ind,n})&=\frac{b^-_{\ind,n}(\lambda^-)}{\nu^-_n}\frac{d\lambda^-}{\lambda^-}\text{ with}&b^-_{\ind,n}(\lambda^-)&:=|\alpha_{1,n}|^{\frac{5}{2}}b_{\ind,n}\big(\tfrac{\lambda^-}{|\alpha_{1,n}|}\big)\text{ and}&(\nu^-_n)^2&=\lambda^-a^-_n(\lambda^-).
\end{align}
The factor $|\alpha_{1,n}|^{\frac{5}{2}}$ in the definition of $b^-_{\ind,n}$ compensates for the factor in the relation $\nu^-_n=|\alpha_{1,n}|^{\frac{5}{2}}\nu$, which follows from the definitions of $\lambda^-$, $a^-_n$ and $\nu^-_n$.
\begin{lemma}\label{Le:case c and d}
\hspace{2em}
\begin{enumerate}
\item[(i)] In Theorem~\ref{Th:global basis}~(v) case~(C) actually $(\alpha_1,\alpha_2)\in\{(0,1),(1,0)\}$ holds.
\item[(ii)] In Theorem~\ref{Th:global basis}~(v) case~(D) actually $(\alpha_1,\alpha_2)\in \bigr(\{0\}\times(0,1)\bigr)\cup\bigr((0,1)\times\{0\}\bigr)$ holds.
\item[(iii)] 
\begin{align*}
\lim\big(\phi(a_{t_n},b_{1,t_n},b_{2,t_n})\big)&=\begin{cases}\big(\pi+2\big(\arctan\sqrt{\alpha_2}-\arctan\frac{1}{\sqrt{\alpha_2}}\big),0\big)&\text{for }(\alpha_1,\alpha_2)\in\{0\}\times(0,1],\\\big(0,\pi+2\big(\arctan\sqrt{\alpha_1}-\arctan\frac{1}{\sqrt{\alpha_1}}\big)\big)&\text{for }(\alpha_1,\alpha_2)\in(0,1]\times\{0\}.\end{cases}
\end{align*}
\end{enumerate}                                                                 \end{lemma}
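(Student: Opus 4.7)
The plan is to rescale the spectral curves differently from the blowup of Section~\ref{Se:blow up sym point}: I will introduce $\tilde\nu_n := \sqrt{|\alpha_{1,n}|}\,\nu$ and $\tilde b_{\ind,n} := \sqrt{|\alpha_{1,n}|}\,b_{\ind,n}$, leaving $\lambda$ unchanged, so that $\Theta(b_{\ind,n}) = \tilde b_{\ind,n}(\lambda)/\tilde\nu_n \cdot d\lambda/\lambda$ is preserved while $|\alpha_{1,n}|a_n(\lambda)$ converges in $\C^4[\lambda]$ to the cubic $\tilde a(\lambda) := -\lambda(\lambda-\alpha_2)(\bar\alpha_2\lambda-1)/|\alpha_2|$. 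The limit curve $\tilde\nu^2 = \lambda\tilde a(\lambda)$, of degree~$4$, acquires two nodes: one at $\lambda = 0$ (from the collision of the branch points $0$ and $\alpha_1$), and one at $\lambda = \infty$ (from the collision of $\bar\alpha_1^{-1}$ and $\infty$, visible because the even degree $4$ yields two preimages of $\lambda = \infty$ on the normalization). The normalization is a rational curve with simple branch points at $\alpha_2$ and $\bar\alpha_2^{-1}$, and the Sym point converges to the regular point $(1,\, |1-\alpha_2|/\sqrt{|\alpha_2|})$. In contrast to the blowup of Section~\ref{Se:blow up sym point}, this rescaling retains $\alpha_2$ in the limit curve.

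Next, by a boundedness argument analogous to Corollary~\ref{C:adouble}, I will pass to a subsequence so that $\tilde b_{\ind,n}$ converges in $\bbR^3[\lambda]$, and identify the limit 1-forms $\tilde\Theta_\ind$ on the normalization by their inherited period data: residues $\pm\delta_{\ind,1}$ at the two preimages of the node at $\lambda = 0$ (from $\int_{B_1}\Theta(b_{\ind,n}) = 2\pi\mi\delta_{\ind,1}$); residues at the two preimages of the node at $\lambda = \infty$ determined by the second vanishing cycle $\rho(B_1) = B_1 + k_1A_1 + k_2A_2$; vanishing $A_2$-period; and the reality $\rho^\ast\tilde\Theta_\ind = -\overline{\tilde\Theta_\ind}$. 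The anti-holomorphic involution $\rho$ exchanges the two nodes, so this data couples the residues at $\lambda = 0$ with those at $\lambda = \infty$. I will combine the positivity of $\phi_1,\phi_2$ from Theorem~\ref{Th:global basis}(i) along the (compact) Whitham trajectory with continuity into the interior of $\calS^2_1$, in particular via the closure relation with the Wente family of Theorem~\ref{T:wente:wente}, to force the branch points $\alpha_2$ and $\bar\alpha_2^{-1}$ to lie on the real axis. Combined with $|\alpha_2|\leq 1$ this yields $\alpha_2 \in (0,1]$, which proves part~(i) in case~(C) ($\alpha_2 = 1$) and part~(ii) in case~(D) ($\alpha_2 \in (0,1)$).

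For $\alpha_2 \in (0,1]$ real, I plan to parameterise the normalization rationally by $t^2 = (\lambda - \alpha_2)/(\lambda - 1/\alpha_2)$, so that the four marked points $P_0^\pm, P_\infty^\pm$ correspond to $t = \pm\alpha_2$ and $t = \pm 1$, while the Sym point $y_\infty$ sits at $t = \mi\sqrt{\alpha_2}$ (and $\sigma(y_\infty)$ at $t = -\mi\sqrt{\alpha_2}$). In this coordinate $\tilde\Theta_1$ is a sum of four simple logarithmic derivatives $dt/(t-t_j)$, whose anti-derivative is a sum of logarithms. The difference of antiderivatives at $t = \pm\mi\sqrt{\alpha_2}$ reduces, via the identity $\arctan\sqrt{\alpha_2} + \arctan(1/\sqrt{\alpha_2}) = \pi/2$, to the asserted formula $\phi_1 = \pi + 2\arctan\sqrt{\alpha_2} - 2\arctan(1/\sqrt{\alpha_2})$; the correct branch of the complex logarithm is pinned down by requiring continuity across the adjacent case~(A) already handled in Lemma~\ref{Le:case a}. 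The vanishing $\phi_2 = 0$ will then follow from the complementary residue structure of $\tilde\Theta_2$ together with the $\sigma$-antisymmetry of the primitives ensured by Lemma~\ref{L:theta}.

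The main obstacle is the step establishing realness of $\alpha_2$: a purely local analysis of the limit 1-forms is consistent with complex $\alpha_2$, so realness must be extracted globally, by combining the positivity of $\phi$ along the entire (compact) Whitham trajectory with the uniqueness of the Wente family as the distinguished real 1-dimensional stratum inside $\calS^2_1$. A secondary technical issue is fixing the correct branch of the complex logarithm in the final computation, which likewise appears to require a continuity argument rather than being determined by the local data alone.
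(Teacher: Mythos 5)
Your rescaling $\tilde\nu_n=\sqrt{|\alpha_{1,n}|}\,\nu$ and the final rational parameterisation $z^2=(\lambda-\alpha_2)/(\bar\alpha_2\lambda-1)$ with the arctan computation do match the second half of the paper's argument. But there is a genuine gap at exactly the point you flag as ``the main obstacle'': you assert that a purely local analysis of the limit 1-forms is consistent with complex $\alpha_2$ and therefore propose to extract realness globally from positivity of $\phi$ plus the Wente family. This is backwards. The paper's mechanism is local: on the rational limit curve the form $\lim\Theta(b_{1,n})$ is completely determined by its residues $\pm1$ at the two points over $\lambda=0$ together with the reality condition $\rho^*\Theta=-\overline{\Theta}$ (which fixes the residues over $\lambda=\infty$), giving in the $z$-coordinate the explicit expression $\tfrac{2(|\alpha|^2-1)(\bar\alpha z^2+\alpha)}{(z^2-\alpha^2)(\bar\alpha^2 z^2-1)}\,dz$ with $\alpha^2=\alpha_2$. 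Since every $b_{1,n}$ vanishes at the Sym point $\lambda=1$, so does the limit, and evaluating the numerator at the $z$-value over $\lambda=1$ yields $(\bar\alpha-\alpha)(1+|\alpha|^2)/(\bar\alpha^2-1)=0$, i.e.\ $\alpha\in\bbR$ and hence $\alpha_2\in(0,1)$. Your proposed substitute (``closure relation with the Wente family'') is not developed and does not obviously produce this conclusion; the Wente family concerns interior points of $\calS^2_1$ with all-real coefficients and has no evident bearing on which boundary strata of type (C)/(D) are attained.

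A second, related omission is the auxiliary blowup $\lambda^-=|\alpha_{1,n}|\lambda$ of the spectral parameter, which the paper needs \emph{before} the rescaling you describe. It is this two-scale analysis that (a) shows $(b^-_{\ind,n})$ converges and hence that $|\alpha_{1,n}|^{1/2}\|b_{1,n}\|$ has a \emph{nonzero} limit while $\lim\Theta(b_{2,n})=0$ --- which is what makes the residues of $\lim\Theta(b_{1,n})$ exactly $\pm1$ and gives $\lim\phi_2=0$ directly, rather than via the vague ``complementary residue structure'' you invoke; and (b) identifies any accumulation point of $b_{1,n}/\|b_{1,n}\|$ as a real multiple of $\mi\lambda(\lambda-1)$, whose only unimodular root is $\lambda=1$, so that in case (C) Lemma~\ref{L:adouble} forces the double root $\alpha_2\in\partial B(0,1)$ to equal $1$. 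Your proposal treats case (C) only implicitly (``realness plus $|\alpha_2|\le1$''), which on the circle would leave $\alpha_2=-1$ unexcluded; note that excluding $-1$ here is precisely what Lemma~\ref{L:boundary:S20-bounded} later relies on, so it cannot be deferred to a boundedness or compactness argument without circularity.
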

\begin{proof}
We interchange the indices $\ind=1,2$ if necessary such that $\alpha_1=0$ holds. The proof is similar to the proof of Lemma~\ref{Le:case b}. However, this time we first prove the convergence of $(b^-_{\ind,n})_{n\in\mathbb{N}}$~\eqref{blow up form 3} for $\ind=1,2$ on the blownup curve~\eqref{blow up curve 3}. Any limit $\Tilde{b}^-_\ind$ of a subsequence $(b^-_{\ind,n}/\|b^-_{\ind,n}\|)_{n\in\mathbb{N}}$ defines an anti--symmetric 1-form with respect to $\sigma:(\lambda^-,\nu^-)\mapsto(\lambda^-,-\nu^-)$ given by
\begin{align}\label{blow up form 3a}
\Theta^-(\Tilde{b}^-_\ind)&=\frac{\Tilde{b}^-_\ind(\lambda^-)}{\nu^-}\frac{d\lambda^-}{\lambda^-}.
\end{align}
Since $b_{\ind,n}$ has at least two roots which stay bounded for $n\to\infty$, the limit $\Tilde{b}^-_\ind$ has at least a double root at $\lambda^-=0$. Therefore the only possible poles of the meromorphic 1-form~\eqref{blow up form 3} on the curve~\eqref{blow up curve 3} are simple poles at $\lambda^-=0$ and a double pole at $\lambda^-=\infty$. Since $\lambda^-=\infty$ is a branch point the anti--symmetry with respect to $\sigma$ forces the residue there to be zero. If~\eqref{blow up form 3a} has zero integral along the limit of $\Tilde{B}_1$, then on the normalisation of the curve it has no pole at either point over $\lambda^-=0$. As the normalisation has genus zero, such a 1-form is the derivative of a meromorphic function which is anti--symmetric with respect to $\sigma$. This function has only one pole of first order at $\lambda^-=\infty$. Thus it is a biholomorphic map from the normalisation to \,$\CPone$\, and takes different values at the two points of the double point at $\lambda^-=0$. In particular, if the integral of~\eqref{blow up form 3a} along the limit of $B_2$ also is zero, then $\Tilde{b}^-_\ind=0$. If $(\|b^-_{\ind,n}\|)_{n\in\mathbb{N}}$ is unbounded, then, due to the relations $\big(\int_{\Tilde{B}_1}\Theta(b_{\ind,n}),\int_{B_2}\Theta(b_{\ind,n})\big)=2\pi\mi(\delta_{1,k},\delta_{2,k})$, the sequence $(b^-_{\ind,n}/\|b^-_{\ind,n}\|)_{n\in\mathbb{N}}$ has an accumulation point $\Tilde{b}^-_\ind$ so that $\big(\int_{\Tilde{B}_1}\Theta(\Tilde{b}^-_\ind),\int_{B_2}\Theta(\Tilde{b}^-_\ind)\big)=(0,0)$. Since this sequence does not have zero as an accumulation point, this proves the boundedness of $(\|b^-_{\ind,n}\|)_{n\in\mathbb{N}}$. The convergence of $(b^-_{\ind,n})_{n\in\mathbb{N}}$ follows, since the accumulation point $b^-_\ind$ is uniquely determined by $\big(\int_{\Tilde{B}_1}\Theta^-(b^-_\ind),\int_{B_2}\Theta^-(b^-_\ind)\big)=2\pi\mi(\delta_{1,\ind},\delta_{2,\ind})$.

For $\ind=1$ the order of the root at $\lambda^-=0$ of $b^-_1$ is at least two. Moreover, this order is two by $\mathrm{Res}_{\lambda^-=0}\Theta^-(b^-_1)\ne0$. Therefore one root of $b_{1,n}$ converges to $\lambda=\infty$ and, due to $b_{1,n}\in P_\bbR^3$, another one to $\lambda=0$. Hence any accumulation point of $(b_{1,n}/\|b_{1,n}\|)_{n\in\mathbb{N}}$ is a real multiple of $\mi\lambda(\lambda-1)\in P_\bbR^3$ and has only one unimodular root at \,$\lambda=1$\,. In case~(C)  Lemma~\ref{L:adouble} gives $\alpha_2=1$ which proves~(i).

For $\ind=2$ the limit $b^-_2$ is a multiple of $(\lambda^-)^3$, since~\eqref{blow up form 3a} has no residue at $\lambda^-=0$.

Now we calculate the limit of $(a_n,b_{1,n},b_{2,n})_{n\in\mathbb{N}}$ without blowing up $\lambda$ and determine for $\ind=1,2$ the limits of $(\Theta(b_{\ind,n}))_{n\in\mathbb{N}}$ on the corresponding limit curve. The formula~\eqref{representation an} for $a_n$ implies
$$\lim|\alpha_{1,n}|a_n=\Tilde{a}\quad\text{with}\quad \Tilde{a}(\lambda)=\frac{\lambda(\lambda-\alpha_2)(1-\Bar{\alpha}_2\lambda)}{|\alpha_2|}.$$
Therefore the pairs $(\lambda,|\alpha_{1,n}|^{\frac{1}{2}}\nu)$ converge to elements of the curve
\begin{gather}\label{limit curve 3}
\big\{(\lambda,\Tilde{\nu})\in\bbC^2\,\,\bigr|\,\,\Tilde{\nu}^2=\lambda \Tilde{a}(\lambda)\big\}.
\end{gather}
The cycles $\Tilde{B}_1$ converge to the cycle $\Tilde{B}_1$ on~\eqref{limit curve 3} encircling the double point at $\lambda=\infty$, and any accumulation point of the sequence of 1-forms $(\Theta(b_{\ind,n})/(|\alpha_{1,n}|^{\frac{1}{2}}\|b_{\ind,n}\|))_{n\in\mathbb{N}}$ is non--zero.

We showed above that any accumulation point of $(b_{1,n}/\|b_{1,n}\|)_{n\in\mathbb{N}}$ is a multiple of $\mi\lambda(\lambda-1)$. Because the limit is an element of $\bbC^2[\lambda]$ with non--zero highest coefficient, the convergence of $(b^-_{1,n})_{n\in\mathbb{N}}$ and the relation $|\alpha_{1,n}|^{\frac{1}{2}}b_{1,n}(\lambda)=|\alpha_{1,n}|^{-2}b^-_{1,n}(|\alpha_{1,n}|\lambda)$  imply that $(|\alpha_{1,n}|^{\frac{1}{2}}\|b_{1,n}\|)_{n\in\mathbb{N}}$ has a non--zero limit. Hence $(\Theta(b_{1,n}))_{n\in\mathbb{N}}$ converges to a 1-form on~\eqref{limit curve 3} with residues $\pm 1$ at the two points over $\lambda=0$.

For $\ind=2$, $\int_{\Tilde{B}_1}\Theta(b_{2,n})=0$, and any accumulation point of $(\Theta(b_{2,n})/\|b_{2,n}\|)_{n\in\mathbb{N}}$ has zero residue at both points over $\lambda=0$. Hence any accumulation point of $(b_{2,n}/\|b_{2,n}\|)_{n\in\mathbb{N}}$ belongs to $P_\bbR^3$ and has degree $3$. By~\eqref{blow up form 3} the highest coefficient of $|\alpha_{1,n}|^{\frac{1}{2}}b_{2,n}$ is $|\alpha_{1,n}|$ times the highest coefficient of $b^-_{2,n}$. Now the convergence of $(b^-_{2,n})_{n\in\mathbb{N}}$ implies $\lim|\alpha_{1,n}|^{\frac{1}{2}}b_{2,n}=0=\lim\Theta(b_{2,n})$.

As in the proof of Lemma~\ref{Le:case b} we decompose the path from $\sigma(y(a_n))$ to $y(a_n)$ into a path from $\sigma(y(a_n))$ to $y(a_n)$, which only surrounds $\alpha_2$ and no other fixed point of $\sigma$ and a path from $\sigma(y(a_n))$ to $y(a_n)$ which is homologous to $\pm B_1$. In both cases (C) and (D) we have $\lim\phi_2(a_n,b_{1,n},b_{2,n})_{n\in\mathbb{N}}=0$.

In case (C) the first path from $\sigma(y(a_n))$ to $y(a_n)$ shrinks to a point. By the non-negativity of $\phi_1(a_n,b_{1,n},b_{2,n})$ we obtain $\lim\phi_1(a_n,b_{1,n},b_{2,n})=\frac{1}{2}\IM\int_{B_1}\Theta(b_{1,n})=\pi$. This proves case~(C) in~(iii).

It remains to prove (ii) and in case~(D) determine $\lim\phi_1(a_n,b_{1,n},b_{2,n})$. To simplify this we replace $\Tilde{\nu}$ by the global parameter $z=\pm\mi\frac{\sqrt{|\alpha_2|}\Tilde{\nu}}{\lambda(1-\Bar{\alpha}_2\lambda)}$, where the sign is specified below. From~\eqref{limit curve 3} we get
\begin{align*}
z^2&=\frac{\lambda-\alpha_2}{\Bar{\alpha}_2\lambda-1}&\Longleftrightarrow&&\lambda&=\frac{z^2-\alpha_2}{\Bar{\alpha}_2z^2-1}.
\end{align*}
The involutions $\sigma$ and $\rho$ act as $z\mapsto -z$ and $z\mapsto\Bar{z}^{-1}$, respectively. The meromorphic 1-form $\lim\Theta(b_{1,n})$ satisfies $\rho^\ast\lim\Theta(b_{1,n})=-\overline{\lim\Theta(b_{1,n})}$ and has simple poles at both points over $\lambda=0$ with residues $\pm1$. For any choice of sign of $z$ we choose the sign $\alpha=\pm\sqrt{\alpha_2}$ such that
\begin{gather}\label{limit form 3b}\begin{aligned}
\lim\Theta(b_{1,n})&=\frac{dz}{z-\alpha}-\frac{dz}{z+\alpha}+\frac{\Bar{\alpha}dz}{\Bar{\alpha}z+1}-\frac{\Bar{\alpha}dz}{\Bar{\alpha}z-1}
=\frac{2(|\alpha|^2-1)(\Bar{\alpha}z^2+\alpha)}{(z^2-\alpha^2)(\Bar{\alpha}^2z^2-1)}dz.
\end{aligned}\end{gather}
Given $\alpha_2\in B(0,1)\setminus\{0\}$, part~(ii) now follows from the vanishing of $\lim b_{1,n}$ at $\lambda=1$:
$$1=\frac{\alpha(-1-|\alpha|^2)}{\Bar{\alpha}(-|\alpha|^2-1)}\quad\Longleftrightarrow\quad\alpha\in(-1,0)\cup(0,1)\quad\Longleftrightarrow\quad\alpha_2\in(0,1).$$
We conclude the proof by showing that $\alpha_2\mapsto\varphi_{CD}(\alpha_2)=\lim\phi_2(a_n,b_{1,n},b_{2,n})$ is an increasing continuous bijection $(0,1)\to(0,\pi)$. At $\lambda=1$, $z^2=\frac{1-\alpha_2}{\Bar{\alpha_2-1}}=-1$. We hitherto fix the sign of $z$ so that $z=\mi$ at the point $y(\Tilde{a})=\lim y(a_n)$ of~\eqref{limit curve 3} over $\lambda=1$ with $\Tilde{\nu}>0$. Hence the two paths along the real part of~\eqref{limit curve 3} from $\sigma(y(\Tilde{a}))$ to $y(\Tilde{a})$ correspond to the two paths along $z\in\bbS^1$ from $z=-\mi$ to $z=\mi$. Recall that the sign of $b_{1,n}$ is chosen so that $\phi_1(a_n,b_{1,n},b_{2,n})>0$. We now show that with our choice of $z$, \eqref{limit form 3b} implies $\alpha>0$. By our choice of $z$ we obtain
$$\lim\phi_1(a_n,b_{1,b},b_{2,n})=\tfrac{1}{2}\IM\int_{-\mi}^\mi\lim\Theta(b_{1,n})=\tfrac{1}{2}\IM\int_{-\mi}^\mi d\ln\frac{(z-\alpha)(z+\alpha^{-1})}{(z-\alpha^{-1})(z+\alpha)},$$
where the integral is performed along either path in $\bbS^1$ from $-\mi$ to $\mi$. As illustrated in Figure~\ref{fig:angles} this integral defines an increasing, continuous and bijective function $(0,1)\to(0,\pi)$. Indeed, integration along the anti--clockwise path and defining $\vartheta=\arctan\frac{1}{\alpha}-\arctan\alpha$, then
$$\tfrac{1}{2}\IM\int_{-\mi}^\mi d\ln\frac{z-\alpha}{z-\alpha^{-1}}+\tfrac{1}{2}\IM\int_{-\mi}^\mi d\ln\frac{z+\alpha^{-1}}{z+\alpha}=\tfrac{1}{2}\big((2\pi-\vartheta)-\vartheta\big)+\tfrac{1}{2}\big(-\vartheta-\vartheta\big)=\pi-2\vartheta.$$
%
%
\begin{figure}
\begin{tikzpicture}[scale = 2.5]
    \draw[densely dashed] (0,-1) arc (-90:90:1);
    \draw (0,-1) arc (270:90:1);

    \draw[fill = black] (0,1) circle (0.3pt) node[above] {$\mi$} node[above, inner sep = 26pt] {$2\pi-\theta$};
    \draw[fill = black] (0,-1) circle (0.3pt) node[below] {$-\mi$};

    \draw (0,1) -- (0.6,0) node[at end, left] {$\alpha$} -- (0,-1);
    \draw[fill = black] (0.6,0) circle (0.3pt);
    \draw (0,1) -- (-0.6,0) node[at end, right] {$-\alpha$} -- (0,-1);
    \draw[fill = black] (-0.6,0) circle (0.3pt);

    \draw (0,1) -- (1.6,0) node[at end, right] {$\alpha^{-1}$} -- (0,-1) node[very near end, above] {$\theta$};
    \draw[fill = black] (1.6,0) circle (0.3pt);
    \draw (0,1) -- (-1.6,0) node[at end, left] {$-\alpha^{-1}$} -- (0,-1) node[very near end, above] {$\theta$};
    \draw[fill = black] (-1.6,0) circle (0.3pt);

    \draw (0.35,-0.7813) arc (40:66:0.4127);
    \draw (-0.35,-0.7813) arc (140:114:0.4127);

    \draw (0.6,0) -- (12:1) -- (1.6,0);
    \draw (0.6,0) -- (-12:1) -- (1.6,0);

    \draw (0.165,0.90) arc (-35:298:0.2   );

    \draw (-0.35, 0.7813) arc (220:246:0.4127) node[midway, above right] {$\theta$};

    \draw[domain=17:153, variable=\x] plot ({0.9781+0.13*cos(\x)}, {-0.2079+0.13*sin(\x)});
    \draw[domain=-20:210, variable=\x] plot({0.9781+0.09*(cos(\x)}, {0.2079+0.09*sin(\x)});

    \draw[thick, ->] (-3:1) arc (-3:8:1);
\end{tikzpicture}
\caption{Angles in the proof of Lemma~\ref{Le:case c and d}}\label{fig:angles}
\end{figure}
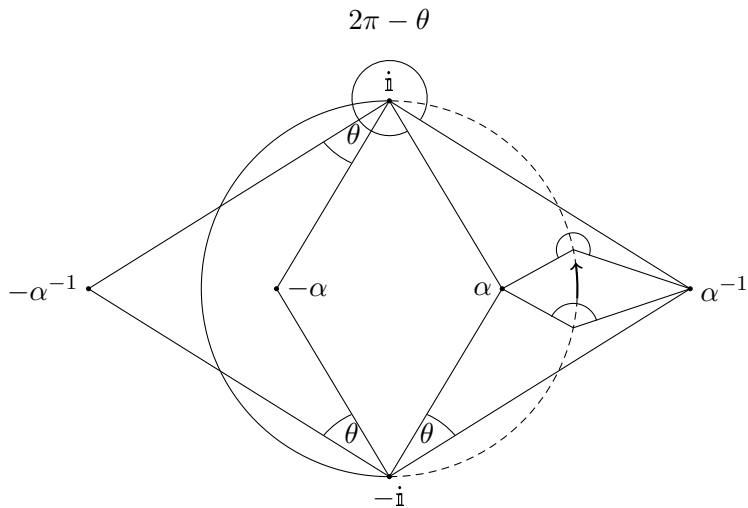
Note that the $2\pi$ is due to the fact that the path of integration passes between $\alpha$ and $\alpha^{-1}$, as is apparent from Figure~\ref{fig:angles}. Clearly $\pi-2\vartheta=\pi-2\big(\arctan\frac{1}{\alpha}-\arctan\alpha\big)$ is strictly increasing with respect to $\alpha=\sqrt{\alpha_2}\in(0,1)$ with the values $0$ and $\pi$ at the left and right end points, respectively.
\end{proof}
\subsection{The case (E)}
\begin{lemma}\label{Le:case e}
In Theorem~\ref{Th:global basis}~(v) case~(E) the sequence $(\phi(a_{t_n},b_{1,t_n},b_{2,t_n}))_{n\in\mathbb{N}}$ converges to $(0,0)$.
\end{lemma}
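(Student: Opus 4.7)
My approach is to perform a blowup at $\lambda=0$, analogous to that of Lemma~\ref{Le:case c and d}, supplemented by a $\rho$-symmetric blowup at $\lambda=\infty$. The crucial observation is that under either blowup the Sym point $y(a_n)$ converges to a branch point of the limit curve, at which the $\sigma$-anti-symmetric primitive $q_\ind^-$ of the limit differential must vanish. Concretely, set $M_n := \max(|\alpha_{1,n}|,|\alpha_{2,n}|)$ and, after passing to a subsequence, $\beta_{\ind,n} := \alpha_{\ind,n}/M_n \to \beta_\ind^-$ with $\max_\ind |\beta_\ind^-|=1$. Introduce the blown-up coordinate $\mu := \lambda/M_n$, the rescaled polynomial $a_n^-(\mu) := |\beta_{1,n}\beta_{2,n}|\,a_n(M_n\mu)$ (which converges, when both $\beta_\ind^-\ne 0$, to $a^-(\mu) = (\mu-\beta_1^-)(\mu-\beta_2^-)$, giving a limit elliptic curve $(\nu^-)^2 = \mu\,a^-(\mu)$ with branch points $0,\beta_1^-,\beta_2^-,\infty$), and the rescaled basis $b_{1,n}^-(\mu) := \sqrt{|\beta_{1,n}\beta_{2,n}|}\,b_{1,n}(M_n\mu)/\sqrt{M_n}$, so that $\Theta(b_{1,n}) = b_{1,n}^-(\mu)/(\nu^-\mu)\,d\mu$ with $\nu^- := \nu\sqrt{|\beta_{1,n}\beta_{2,n}|/M_n}$.

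The key scaling fact is that the principal part of $\Theta(b_{1,n})$ at $\lambda=\infty$, which is of the form $-2b_{1,3,n}\,dw_\lambda/w_\lambda^2$ in the uniformizer $w_\lambda := 1/\sqrt\lambda$, acquires an additional factor of $\sqrt{M_n}$ under the blowup, since $w_\mu := 1/\sqrt\mu = \sqrt{M_n}\,w_\lambda$, and therefore vanishes in the limit. Combined with the normalisation estimate $b_{1,0,n}\sim\sqrt{M_n}$ (forced by the $B_1$-period constraint on a shrinking cycle), this yields---through boundedness and uniqueness arguments analogous to those of Lemma~\ref{Le:case c and d}---that a subsequence of $b_{1,n}^-$ converges to a polynomial $b_1^-$ of degree at most $1$. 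In the uniformizer $w := 1/\sqrt\mu$ at the branch point $\mu=\infty$, one computes directly that $\Theta^-(b_1^-) = b_1^-(\mu)/(\nu^-\mu)\,d\mu = -2(b_{1,0}^-\,w^2 + b_{1,1}^-)\,dw$, which is holomorphic at $w=0$. Hence $q_1^-$ extends continuously to $w=0$, and the anti-symmetry $q_1^-(-w) = -q_1^-(w)$ under $\sigma$ forces $q_1^-(0) = 0$. Since the Sym point $y(a_n)$ corresponds to $w = \sqrt{M_n}\to 0$, continuity of $q_1^-$ yields $\phi_1(a_{t_n},b_{1,t_n},b_{2,t_n})\to 0$, and the identical argument applied to the $\rho$-symmetric blowup at $\lambda=\infty$ (where the ``outside-focused'' basis vector $b_{2,n}$ admits a bounded rescaling, and the Sym point again goes to the coalesced branch point) yields $\phi_2\to 0$.

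\textbf{Main obstacle.} The principal technical difficulty lies in the rigorous verification that $b_{1,n}^-$ converges to a polynomial of degree at most $1$: while the scaling heuristic above is compelling, a fully rigorous justification requires a Lemma~\ref{L:adouble}-type extension argument applied to the confluent branch point $\mu=\infty$, which arises here from the merger of three original branch points (the two rescaled outside roots $\bar\alpha_{\ind,n}^{-1}/M_n$ together with $\lambda=\infty$). In addition, the degenerate subcases in which some $\beta_\ind^- = 0$ (i.e.\ the two roots $\alpha_{1,n},\alpha_{2,n}$ approach $0$ at different rates) require an iterated blowup along the lines of Section~\ref{Se:blow up sym point}, though the conclusion $\phi\to(0,0)$ is unchanged.
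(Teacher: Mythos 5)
Your overall strategy is the same as the paper's: blow up the spectral curves so that the Sym point and its image under $\sigma$ collapse to a single point of a limit curve, show that suitably rescaled $b_{\ind,n}$ converge to polynomials whose associated $1$-forms are holomorphic at that point, and conclude that the integral $\tfrac12\int_{\sigma(y)}^{y}\Theta(b_{\ind,n})$ over the collapsing path tends to zero. The difference --- and the source of every difficulty you flag --- is the choice of scale. You take $\mu=\lambda/M_n$ with $M_n=\max_\ind|\alpha_{\ind,n}|$, which keeps the inner roots at finite $\mu$ but pushes the reflected roots, the branch point $\lambda=\infty$, the pole of $\Theta(b)$ there, \emph{and} the Sym point all out to $\mu=\infty$. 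The paper instead sets $\lambda^-=|\alpha_{1,n}|\lambda$ with $|\alpha_{1,n}|=\min_\ind|\alpha_{\ind,n}|$, so the reflected roots $|\alpha_{1,n}|\bar\alpha_{\ind,n}^{-1}$ stay in $\overline{B(0,1)}$ while $\lambda=0$, both inner roots and the Sym point collapse to the \emph{finite} point $\lambda^-=0$. That choice pays off twice. First, Lemma~\ref{L:adouble} applies directly at a finite confluence point, whereas at $\mu=\infty$ the degeneration appears as a drop in the degree of $\mu a^-_n(\mu)$ (branch points escaping to infinity), and the harmonic-function argument must be reworked in a chart at infinity; this is exactly the step you name as your ``main obstacle'' and do not carry out. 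Second, every $b_{\ind,n}\in P^3_\bbR$ has at least two roots in $\overline{B(0,1)}$ (the root at $\lambda=1$ plus at least one more, by the reality condition), and under the paper's rescaling these collapse onto $\lambda^-=0$ and give the limit polynomial a double zero there, which cancels the double pole of $\Theta(b_{\ind,n})$ at $\lambda=0$ automatically. In your chart the analogous cancellation at $\mu=\infty$ is precisely the unproven assertion that the principal part dies like $\sqrt{M_n}$ times the leading coefficient of $b_{1,n}$; since that leading coefficient is controlled only through the boundedness of $b^-_{1,n}$, which is what you are trying to prove, the ``scaling heuristic'' is circular as stated.

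Concretely, the gaps are: (1) the convergence of $b^-_{1,n}$ to a polynomial of degree at most $1$ is not established --- the compactness and uniqueness argument of Lemma~\ref{Le:case c and d} needs the periods over the $A$-cycles and over $B_2$ to pass to the limit, and all of these cycles run into the region near $\mu=\infty$ where your curves degenerate, so the argument does not transfer ``analogously'' without the missing extension of Lemma~\ref{L:adouble}; (2) the sub-case in which the two roots tend to zero at different rates (your $\beta^-_\ind=0$) is deferred to an unspecified iterated blowup, whereas the paper resolves it with an explicit second blowup at the intermediate scale $|\alpha_{2,n}|$ and a boundedness argument for the intermediate rescaling $\big|\tfrac{\alpha_{2,n}}{\alpha_{1,n}}\big|^2 b^-_{\ind,n}\big(\big|\tfrac{\alpha_{1,n}}{\alpha_{2,n}}\big|\tilde\lambda\big)$; and (3) the separate $\rho$-reflected blowup you invoke for $\phi_2$ is an extra complication the paper avoids entirely: in its chart both limit forms are holomorphic at the single collapse point $\lambda^-=0$, so both components of $\phi$ tend to zero simultaneously. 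None of your ideas is wrong, and the local computation at the branch point over $\mu=\infty$ together with the $\sigma$-anti-symmetry of $q^-_1$ is correct as far as it goes; but as written the proposal defers essentially all of the technical content of the lemma, and the choice of blowup makes that content strictly harder to supply than in the paper's treatment.
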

\begin{proof}
The proof is similar to some parts of the proof of Lemma~\ref{Le:case c and d}. Let $(a_n,b_{1,n},b_{2,n})_{n\in\mathbb{N}}$ be a sequence such that the corresponding $(\alpha_{1,n},\alpha_{2,n})_{n\in\mathbb{N}}$ converges to $(0,0)$. By possibly interchanging the indices $\ind=1,2$ and passing to a subsequence, we may assume that $|\alpha_{1,n}|\le|\alpha_{2,n}|$ holds for all $n\in\mathbb{N}$. We blowup the sequence of spectral curves and use the spectral parameter $\lambda^-=|\alpha_{1,n}|\lambda$ instead of $\lambda=\frac{\lambda^-}{|\alpha_{1,n}|}$. Again the blownup roots are all bounded. After passing to a subsequence they converge to $\alpha^-_\ind:=\lim|\alpha_{1,n}|\Bar{\alpha}_{\ind,n}^{-1}$ for $\ind=1,2$ and the limit curve is again~\eqref{blow up curve 3} with
\begin{align*}
  a^-(\lambda^-)&=\alpha_0(\lambda^-)^2(\lambda^--\alpha^-_1)(\lambda^--\alpha^-_2)\qquad\text{and with}&\alpha_0&=\lim\frac{\Bar{\alpha}_{1,n}\,\Bar{\alpha}_{2,n}}{|\alpha_{1,n}\,\alpha_{2,n}|}.
\end{align*}                                                                    The root $\alpha^-_1$ is unimodular and $\alpha^-_2\in\overline{B(0,1)}$. The argument is different for $\alpha^-_2=0$ and $\alpha^-_2\ne 0$.

We first assume $\alpha^-=0$, which held in the proof of Lemma~\ref{Le:case c and d}. Thus the same arguments again show that $(b^-_{\ind,n})_{n\in\mathbb{N}}$ converges for $\ind=1,2$. To finish the proof in this case we apply a blowup on an intermediate scale $\Tilde{\lambda}=|\alpha_{2,n}|\lambda$.  The blownup roots $|\alpha_{2,n}|\alpha_{1,n}$, $|\alpha_{2,n}|\alpha_{2,n}$, and  $|\alpha_{2,n}|\Bar{\alpha}_{2,n}^{-1}$ are all bounded. After passing to a subsequence, $(|\alpha_{2,n}|\Bar{\alpha}_{2,n}^{-1})_{n\in\mathbb{N}}$ converges to $\Tilde{\alpha}_2\in\bbS^1$ and the two other sequences converge to zero. Again~\eqref{representation an} yields the following limit:
\begin{gather}\label{blow up a 4}
\lim\Tilde{a}_n(\Tilde{\lambda}):=\lim|\alpha_{1,n}||\alpha_{2,n}|^3a_n\big(\tfrac{\Tilde{\lambda}}{|\alpha_{2,n}|}\big)=:\Tilde{a}(\Tilde{\lambda})=-\Tilde{\alpha}_2^{-1}\Tilde{\lambda}^2(\Tilde{\lambda}-\Tilde{\alpha}_2).
\end{gather}
This limit defines the blowup of the spectral curve with a cusp at $\Tilde{\lambda}=0$:
\begin{gather}\label{blow up curve 4}
\{(\Tilde{\lambda},\Tilde{\nu})\in\mathbb{C}^2\mid\Tilde{\nu}^2=\Tilde{\lambda}\,\Tilde{a}(\Tilde{\lambda})\}.
\end{gather}
The orientations of $A_1$ and $A_2$ determine a cycle $\Tilde{B}_1=B_1\pm A_2$ which converges to a cycle on~\eqref{blow up curve 4} encircling the double point at $\Tilde{\lambda}=\infty$. With respect to \,$\Tilde{\lambda}=|\alpha_{2,n}|\lambda$\, the 1-form $\Theta(b_{\ind,n})$\, equals
\begin{align}\label{blow up form 4}
\Tilde{\Theta}(\Tilde{b}_{\ind,n})&=\frac{\Tilde{b}_{\ind,n}(\Tilde{\lambda})}{\Tilde{\nu}_n}\frac{d\Tilde{\lambda}}{\Tilde{\lambda}}\quad\text{with}&\Tilde{b}_{\ind,n}(\Tilde{\lambda})&:=|\alpha_{1,n}|^{\frac{1}{2}}|\alpha_{2,n}|^2b_{\ind,n}\big(\tfrac{\Tilde{\lambda}}{|\alpha_{2,n}|}\big)\quad\text{and}&\Tilde{\nu}_n^2&=\Tilde{\lambda}\,\Tilde{a}_n(\Tilde{\lambda}).
\end{align}
The factor $|\alpha_{1,n}|^{\frac{1}{2}}|\alpha_{2,n}|^2$ in the definition of $\Tilde{b}_{\ind,n}$ compensates for the factor in the relation $\Tilde{\nu}_n=|\alpha_{1,n}|^{\frac{1}{2}}|\alpha_{2,n}|^2\nu$, which follows from~\eqref{blow up a 4} and~\eqref{blow up form 4}. We show that $(\Tilde{b}_{\ind,n})_{n\in\mathbb{N}}$ is bounded by writing $\Tilde{b}_{\ind,n}(\Tilde{\lambda})=\big|\tfrac{\alpha_{2,n}}{\alpha_{1,n}}\big|^2b^-_{\ind,n}\big(\big|\tfrac{\alpha_{1,n}}{\alpha_{2,n}}\big|\Tilde{\lambda}\big)$ in terms of the convergent sequence $(b^-_{\ind,n})_{n\in\mathbb{N}}$~\eqref{blow up form 3}. Indeed, by $\lim\big|\tfrac{\alpha_{1,n}}{\alpha_{2,n}}\big|=0$ we first obtain that for $d\in\{2,3\}$ the $\Tilde{\lambda}^d$--coefficients of $(\Tilde{b}_{\ind,n})_{n\in\mathbb{N}}$ are bounded. Because $b_{\ind,n}$ has at least two roots in $\overline{B(0,1)}$, any accumulation point of $\big(\Tilde{b}_{\ind,n}/\|\Tilde{b}_{\ind,n}\|\big)_{n\in\mathbb{N}}$ has a root of order at least two at $\Tilde{\lambda}=0$, and therefore has degree $d\in\{2,3\}$. Hence $(\Tilde{b}_{\ind,n})_{n\in\mathbb{N}}$ is bounded. In~\eqref{blow up curve 4} the curves from $\sigma(y(a_n))$ to $y(a_n)$ collapse for $n\to\infty$. Now $\lim\phi(a_n,b_{1,n},b_{2,n})=0$ follows since any accumulation point of $(\Tilde{\Theta}(\Tilde{b}_{\ind,n})_{n\in\mathbb{N}}$ is holomorphic at $\Tilde{\lambda}=0$ in the normalisation of~\eqref{blow up curve 4}.

If $\alpha^-_2\ne0$, then $B_2$ converges to a cycle encircling $\alpha^-_2$ and $\lambda^-=\infty$. We define $b^-_{1,n}$ as in~\eqref{blow up form 3} such that the corresponding 1-forms coincide with $\Theta(b_{\ind,n})$. As in Lemma~\ref{Le:case c and d} we prove now that $(b^-_{\ind,n})_{n\in\mathbb{N}}$ converges without renormalisation. Any accumulation point $\Tilde{b}^-_\ind$ of $(b^-_{\ind,n}/\|b^-_{\ind,n}\|)_{n\in\mathbb{N}}$ defines a 1-form~\eqref{blow up form 3a} on the curve~\eqref{blow up curve 3}. Again it has a root at $\lambda^-=0$ of order at least two, since $b_{\ind,n}$ has at least two roots in $\lambda\in\overline{B(0,1)}$. Hence the 1-forms~\eqref{blow up form 3a} are anti--symmetric with respect to $\sigma:(\lambda^-,\nu^-)\mapsto(\lambda^-,-\nu^-)$ and have only one pole at $\lambda^-=\infty$ which is a second order pole without residue. If the integral of one of the 1-forms~\eqref{blow up form 3a} along the limits of $\Tilde{B}_1$ and $B_2$ vanish, then this 1--form is the derivative of a meromorphic function with one simple pole at $\lambda^-=\infty$. Such a function has degree 1 and cannot exist on the limit curve~\eqref{blow up curve 3} which has geomtric genus one for $\alpha^-_2\ne 0$. The same argument as in the proof of Lemma~\ref{Le:case c and d} shows that the sequence $(b^-_{\ind,n})_{n\in\mathbb{N}}$ indeed converge to some $b^-_\ind$. In~\eqref{blow up curve 3} the path from $\sigma(y(a_n))$ to $y(a_n)$ collapses. Since the limit~\eqref{blow up form 3a} is holomorphic at $\lambda^-=0$ in the normalisation of~\eqref{blow up curve 3}, $\lim\phi(a_n,b_{1,n},b_{2,n})=0$ follows.
\end{proof}
%
%
\section{The Proof of Theorem~\ref{Th:main}}\label{Se:proof}
\noindent{\it Proof of Theorem~\ref{Th:main}.} Fix any $a_0\in\calS_1^2$ together with a labelling $(\alpha_{1,0},\alpha_{2,0})$ of the roots of $a_0$ in $B(0,1)$. Let $(A_1,A_2)$ be the corresponding cycles with the orientations in Theorem~\ref{Th:global basis}~(i) and $(b_{1,0},b_{2,0})$ the corresponding basis of $\calB_{a_0}$ which obeys~\eqref{period map}. First we show by contradiction that the maximal domain $\Omega$ of the commuting Whitham flows in Theorem~\ref{Th:global basis}~(iii) is bounded. So let $(t_n)_{n\in\mathbb{N}}$ be a sequence in $\Omega$ without accumulation point, and let $(\alpha_{1,n},\alpha_{2,n})_{n\in\mathbb{N}}$ be the corresponding labelled roots of $(a_{t_n})_{n\in\mathbb{N}}$ contained in $B(0,1)$. After passing to a subsequence $(\alpha_{1,n},\alpha_{2,n})_{n\in\mathbb{N}}$ has a limit $(\alpha_1,\alpha_2)\in\overline{B(0,1)}{}^2$. Since $(t_n)_{n\in\mathbb{N}}$ has no accumulation point, Theorem~\ref{Th:global basis}~(v) applies and $(\alpha_1,\alpha_2)$ belongs to one of the sets in the cases~(A)-(E). The Lemmata~\ref{Le:case a}--\ref{Le:case e} show in all these cases that $(\phi(a_{t_n},b_{1,t_n},b_{2,t_n}))_{n\in\mathbb{N}}$ has a convergent subsequence. Theorem~\ref{Th:global basis}~(iv) gives $t_n=\phi(a_{t_n},b_{1,t_n},b_{2,t_n})-\phi(a_0,b_{1,0},b_{2,0})$, which contradicts the assumption on $(t_n)_{n\in\mathbb{N}}$. This shows that the closure of $\Omega$ is sequentially compact and therefore bounded.

Theorem~\ref{Th:global basis}~(v) and Lemmata~\ref{Le:case a}--\ref{Le:case e} also show that $t\mapsto\phi(a_t,b_{1,t},b_{2,t})$ maps $\partial\Omega^\circ:=\{t\in\partial\Omega\mid st\in\Omega\text{ for all }s\in[0,1)\}$ into $\partial\triangle$. Theorem~\ref{Th:global basis}~(iv) then gives $\partial\Omega^\circ\subset\partial\triangle-\phi(a_0,b_{1,0},b_{2,0})$. By definition of $\Omega$, its intersection with any straight line $L$ through $t=0$ is an open and bounded subinterval of $L$ containing $t=0$ whose end points belong to $\partial\Omega^\circ\subset\partial\triangle-\phi(a_0,b_{1,0},b_{2,0})$.

Next we prove $\phi(a_0,b_{1,0},b_{2,0})\in\triangle$. In fact, for $\phi(a_0,b_{1,0},b_{2,0})\not\in\triangle$ any line $L$ through $t=0$ contains at least one half line starting at $t=0$ which is disjoint from $\triangle-\phi(a_0,b_{1,0},b_{2,0})$. This has just been excluded and shows $\phi(a_0,b_{1,0},b_{2,0})\in\triangle$. Now, due to the convexity of $\triangle-\phi(a_0,b_{1,0},b_{2,0})$, any such $L$ intersects $\partial\triangle-\phi(a_0,b_{1,0},b_{2,0})$ in exactly two points. This implies $L\cap\Omega=L\cap\big(\triangle-\phi(a_0,b_{1,0},b_{2,0})\big)$ for any such $L$. Then $\Omega=\triangle-\phi(a_0,b_{1,0},b_{2,0})$ follows and $t\mapsto\phi(a_t,b_{1,t},b_{2,t})$~\eqref{def:image phi} maps $\Omega$ onto $\triangle$.

We formulate the next step as a lemma, whose first statement we just have proven:
\begin{lemma}\label{wente existence}
In the situation of Theorem~\ref{Th:global basis} the domain $\Omega$ in part~(iii) contains the diagonal
\begin{align}\label{def:diagonal 2}
\big\{(\varphi,\varphi)-\phi(a_0,b_{1,0},b_{2,0})\mid\varphi\in(0,\tfrac{\pi}{2})\big\}\subset\Omega=\triangle-\phi(a_0,b_{1,0},b_{2,0}).
\end{align}
Furthermore, on this set the map $t\mapsto a_t$ (cf.\ \eqref{eq:whitham flow}) is a diffeomorphism onto $\calW$.
\end{lemma}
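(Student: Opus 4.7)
The first assertion---that the diagonal lies in $\Omega$---is immediate from the identity $\Omega=\triangle-\phi(a_0,b_{1,0},b_{2,0})$ established just above, since $(\varphi,\varphi)\in\triangle$ precisely when $\varphi\in(0,\tfrac\pi2)$. For the diffeomorphism statement, my strategy is to exploit a complex-conjugation involution that fixes $\calW$. Define $\Psi:\calS^2_1\to\calS^2_1$ by $\Psi(a)(\lambda):=\sum_k\overline{a_k}\lambda^k$; a direct check shows this is a well-defined involution of $\calS^2_1$ whose fixed point set is exactly $\calW=\calS^2_1\cap\bbR^4[\lambda]$. The first step is to lift $\Psi$ to an involution $\tilde\Psi$ of $\calF$ by $\tilde\Psi(a,b_1,b_2)=(\Psi(a),\Psi(b_2),\Psi(b_1))$, where the swap of $b_1$ and $b_2$ reflects the fact that the anti-holomorphic map $\lambda\mapsto\bar\lambda$ sends the roots $\alpha_1,\alpha_2\in B(0,1)$ of $a$ to $\bar\alpha_1,\bar\alpha_2$ and thereby interchanges the canonical cycles $A_1,A_2$ of Theorem~\ref{Th:global basis}(i). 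A careful check is required that the positivity convention of Theorem~\ref{Th:global basis}(i) is compatible with this swap, so that $\tilde\Psi$ is genuinely a bundle involution of $\calF$ and satisfies $\phi\circ\tilde\Psi=(\phi_2,\phi_1)$.

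The second step is to verify that $\tilde\Psi$ intertwines the two commuting Whitham vector fields of Theorem~\ref{Th:global basis}(ii) by interchanging them; this reduces to inspecting the Whitham equations of Lemma~\ref{L:para:whitham} under coefficient-conjugation together with the $c_1\leftrightarrow c_2$ swap forced by the swap $b_1\leftrightarrow b_2$. Consequently $\tilde\Psi$ conjugates the two-dimensional flow~\eqref{eq:whitham flow} with the map $(t_1,t_2)\mapsto(t_2,t_1)$. For $t$ on the diagonal the image $\phi(a_t,b_{1,t},b_{2,t})=(\varphi,\varphi)$ is fixed by the coordinate swap, and uniqueness of the Whitham flow through a given $\phi$-value (Theorem~\ref{Th:global basis}(iii)--(iv)) forces $\tilde\Psi(a_t,b_{1,t},b_{2,t})=(a_t,b_{1,t},b_{2,t})$, so that $\Psi(a_t)=a_t$ and $a_t\in\calW$.

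The third step is to promote the smooth map $(0,\tfrac\pi2)\to\calW$, $\varphi\mapsto a_t$, to a diffeomorphism. Injectivity is immediate because $\varphi=\phi_1(a_t,b_{1,t},b_{2,t})$ recovers the parameter. For surjectivity I would carry out an endpoint analysis: as $\varphi\uparrow\tfrac\pi2$ the corresponding $t$ approaches a boundary point of $\Omega$ of type~(A), and Lemma~\ref{Le:case a} forces $a_t\to(\lambda-1)^4$, which by Theorem~\ref{T:wente:wente} is exactly the endpoint of $\calW$ corresponding to $\alpha_+\downarrow 0$; as $\varphi\downarrow 0$, Lemma~\ref{Le:case e} identifies the limit with case~(E), so $(\alpha_{1,t},\alpha_{2,t})\to(0,0)$ and $a_t$ leaves every compact subset of $\bbC^4[\lambda]$, matching the noncompact end $\alpha_+\uparrow\infty$ of the Wente family. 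Since $\calW$ is a smooth connected $1$-manifold by Theorem~\ref{T:wente:wente}, a continuous injection from $(0,\tfrac\pi2)$ into $\calW$ whose closure exhausts both ends is necessarily a diffeomorphism.

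The hardest step is the first one: the bookkeeping needed to show that coefficient-conjugation, lifted to $\calF$ via the swap $b_1\leftrightarrow b_2$, is genuinely compatible with the positivity-of-$\phi$ orientation convention of Theorem~\ref{Th:global basis}(i) rather than introducing an unwanted sign change on $A_1$ or $A_2$. Without this compatibility the map $\tilde\Psi$ would not land in $\calF$ and the uniqueness argument in the second step would fail. A secondary but more mechanical task is to cross-reference the intrinsic parameter $\alpha_+$ of $\calW$ with the endpoint limits identified in Lemmata~\ref{Le:case a} and~\ref{Le:case e}, confirming that the diagonal trajectory runs between the two ends of $\calW$ rather than covering only a proper subinterval.
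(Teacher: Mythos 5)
Your first and third steps are essentially fine: the inclusion of the diagonal in $\Omega$ is immediate from $\Omega=\triangle-\phi(a_0,b_{1,0},b_{2,0})$, and the endpoint analysis via Lemmata~\ref{Le:case a} and~\ref{Le:case e} together with the connectedness and explicit parametrisation of $\calW$ in Theorem~\ref{T:wente:wente} would indeed upgrade an injective immersion of the diagonal into $\calW$ to a diffeomorphism onto $\calW$. The conjugation involution $\Psi$ is also the right object, up to a sign you partly anticipated: a direct computation gives $\phi\circ\tilde\Psi=(-\phi_2,-\phi_1)$ for your lift, so one must take $(a,b_1,b_2)\mapsto(\Psi(a),-\Psi(b_2),-\Psi(b_1))$, consistent with the relations $\tau^{\ast}b_{1,0}=-\overline{b_{2,0}}$ and $\tau^{\ast}b_{2,0}=-\overline{b_{1,0}}$ that the paper derives.

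The genuine gap is in your second step. The Whitham flow is unique through a given point of $\calF$, not through a given value of $\phi$: at this stage it is not yet known that $t\mapsto a_t$ is injective, that the flow is surjective onto $\calS^2_1$, or even that $\calS^2_1$ is connected --- all of these are deduced in Section~\ref{Se:proof} \emph{from} the lemma you are proving (the paper explicitly allows $t\neq t'$ with $a_t=a_{t'}$ and interchanged $\phi$-components until the very end). Hence from $\phi(\tilde\Psi(a_t,b_{1,t},b_{2,t}))=(\varphi,\varphi)=\phi(a_t,b_{1,t},b_{2,t})$ you cannot conclude that $\tilde\Psi$ fixes $(a_t,b_{1,t},b_{2,t})$: writing $\phi_0:=\phi(a_0,b_{1,0},b_{2,0})$, the curve $s\mapsto\tilde\Psi(a_{(s,s)-\phi_0},b_{1,(s,s)-\phi_0},b_{2,(s,s)-\phi_0})$ is an integral curve of the diagonal vector field starting at $\tilde\Psi(a_0,b_{1,0},b_{2,0})$, which a priori need not lie on the orbit of $(a_0,b_{1,0},b_{2,0})$ at all (it could lie in a different connected component of $\calS^2_1$). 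Your fixed-point argument is non-circular only when $a_0$ is already fixed by $\tilde\Psi$, i.e.\ $a_0\in\calW$, which neither proves the lemma for arbitrary $a_0$ (as the proof of Theorem~\ref{Th:main} requires) nor establishes $\calW\neq\varnothing$. The missing ingredient --- and the heart of the paper's proof --- is the blowup at $(\lambda-1)^4$: by Lemma~\ref{Le:case a} the diagonal orbit converges to the centre of the exceptional fibre $\Hat{\calE}^2_0$; by Lemmata~\ref{L:wente:flowout} and~\ref{L:wente:delta-unique} this centre is a hyperbolic zero of the blown-up Whitham field whose one-dimensional unstable manifold is exactly $\calW$; and uniqueness of that unstable manifold is what forces the diagonal orbit (and equally your curve $\tilde\Psi\circ\gamma$) to coincide with $\calW$. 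Your symmetry argument can only be completed by adding this input, at which point it essentially reduces to the paper's proof.
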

\begin{proof}
By the proof of Lemma~\ref{L:wente:flowout} $\calW$ is an unstable manifold of the Whitham vector field defined by~\eqref{eq:wente:whitham-wente:dgl-apm} and~\eqref{eq:wente:whitham-wente:dgl-beta23} at $(x,y_1,y_2,y_3)=(0,1,-\tfrac{1}{16}\alpha,-\tfrac{1}{2}\alpha)$ in the exceptional fibre of the blowup~\eqref{eq:wente:flowout:blowup}. Since on $\calW$ the periods of $\Theta(b_2)$ are purely imaginary, the same holds for the limit of $\Theta(b_2)$ on the corresponding limit of spectral curves. Lemma~\ref{L:wente:delta-unique} shows that this property uniquely determines the parameter $\alpha$ as the unique root $\alpha_0\in(0,2)$ of the map $\alpha\mapsto\beta$ in Lemma~\ref{lemma 1}. In the proof we first apply the transformation of Lemma~\ref{cayley transform} and then perform the same blowup as in Section~\ref{Se:blow up sym point}. This means that the point $(x,y_1,y_2,y_3)=(0,1,-\tfrac{1}{16}\alpha_0,-\tfrac{1}{2}\alpha_0)$ in the exceptional fibre of the blowup~\eqref{eq:wente:flowout:blowup} coincides with the centre of $\Hat{\calE}^2_0$ as described in Theorem~\ref{th:blow-up}. Finally, for any initial $a_0\in\calS^2_1$ Lemma~\ref{Le:case a} shows that this centre of $\Hat{\calE}^2_0$ is located at $t_\infty=(\frac{\pi}{2},\frac{\pi}{2})-\phi(a_0,b_{1,0},b_{2,0})\in\partial\Omega$. Therefore the lemma follows, if for any initial $a_0\in\calW$ the Whitham vector field along $\calW$ is at $a_0$ colinear to the the vector field of the flow in Theorem~\ref{Th:global basis} along the diagonal~\eqref{def:diagonal 2} with $c_1(1)=c_2(1)$.

For the proof of this we assume $a_0=a\in\calW$ for some $t\in\Omega$ and determine the basis $(b_1,b_2)$ of $\calB_{a}$ in~\eqref{eq:wente:whitham-wente:b} in terms of the basis $(b_{1,0},b_{2,0})$ in Theorem~\ref{Th:global basis}~(iii). In order to determine the action of the additional involution $\tau:(\lambda,\nu)\mapsto(\Bar{\lambda},\Bar{\nu})$ on the oriented cycles $(A_1,A_2)$ of $\Sigma_{a_0}$ we make use of the segments $S^\circ_1$, $S^\circ_2$ and $S^\circ_3$ introduced in the proof of Lemma~\ref{Le:case a}. In the case $\RE\alpha^\circ_a<\RE\alpha^\circ_2$ the central element is constructed as the limit $a^\circ\uparrow 0$ with a collapsed segment $S^\circ_1$. Since $\tau$ preserves $y(a_0)$ and $\sigma(y(a_0)$, it maps the path $HC^\circ_1$ from $y(a_0)$ to $\sigma(y(a_0))$ onto the negative $-HC^\circ_2$ of the path $HC^\circ_2$ from $\sigma(y(a_0))$ to $y(a_0)$. If we endow the cycles $(A_1,A_2)$ with the orientations induced on $HC^\circ_1$ and $HC^\circ_2$ as part of the boundary of $D^\circ$, then the cycle $A_1+A_2$ is homologous to a cycle which does not intersect the cycle $B$ introduced in the part of the proof of Lemma~\ref{Le:case a} which shows $b^\circ_2=\pm b^\circ_1$. However, since afterwards $b^\circ_2=-b^\circ_1$ was proven, the cycles $(A_1,A_2)$ have the same intersection number with $B$. Hence $\tau$ interchanges the oriented cycles $(A_1,A_2)$. Now~\eqref{period map} gives $\tau^\ast b_{1,0}=-\overline{b_{2,0}}$ and $\tau^\ast b_{2,0}=-\overline{b_{1,0}}$, since $\tau$ reverses the orientation and the intersection numbers.

Therefore $(b_1,b_2)=(b_{1,0}-b_{2,0},b_{1,0}+b_{2,0})$ obeys~\eqref{eq:para:whitham-real:b-real}. Furthermore, in Theorem~\ref{Th:global basis}~(iii) the flow along $c_1(1)=c_2(1)$ corresponds to the flow in~Lemma~\ref{L:wente:whitham-wente} with $c_1(1)=0$ which preserves $\calW$. This confirms that $\calW$ is the image of $t\mapsto a_t$ along the diagonal~\eqref{def:diagonal 2} with the limit point $t_\infty\in\partial\Omega$.
\end{proof}
It remains to prove that $t\mapsto a_t$ is a bijection from $\Omega$ onto $\calS^2_1$. Due to Lemma~\ref{wente existence}, for any given $\Tilde{a}_0\in\calW$ there exists $\Tilde{t}\in\Omega$ with $a_{\Tilde{t}}=\Tilde{a}_0$. Now the maximal flow in Theorem\ref{Th:global basis}~(iii) with initial $\Tilde{a}_0$ and the corresponding labelled roots $(\alpha_{1,\Tilde{t}},\alpha_{2,\Tilde{t}})$ coincides with the composition of $t\mapsto t+\Tilde{t}$ with the original flow, since by Theorem~\ref{Th:global basis}~(iv) the map~\eqref{def:phi} is a local diffeomorphism. Since this is true for any $a_0\in\calS^2_1$ together with a suitable labelling $(\alpha_{1,0},\alpha_{2,0})$ this implies first that the flow with initial $\Tilde{a}_0\in\calW$ and $(\alpha_{1,\Tilde{t}},\alpha_{2,\Tilde{t}})$ maps onto $\calS^2_1$ and then the surjectivity of the original flow.

Now we prove the injectivity of $\Omega\to\calS^2_1,\,t\mapsto a_t$. On~\eqref{def:diagonal 2} this follows from Theorem~\ref{T:wente:wente}. If $a_t=a_{\Tilde{t}}$, then the corresponding labelled roots $(\alpha_{1,t},\alpha_{2,t})$ and $(\alpha_{1,\Tilde{t}},\alpha_{2,\Tilde{t}})$ are either equal or interchanged. Since the corresponding values $\phi(a_t,b_{1,t},b_{2,t})\in\triangle$ and $\phi(a_{\Tilde{t}},b_{1,\Tilde{t}},b_{2,\Tilde{t}})\in\triangle$ have only positive components, they are either equal or have interchanged entries. In the second case along a path in $\Omega$ to~\eqref{def:diagonal 2} the corresponding pairs $t$ and $\Tilde{t}$ with interchanged components of $\phi$ both correspond to the same element of $\calS^2_1$. Since the map is a local diffeomorphism this is not true nearby~\eqref{def:diagonal 2}. So the smooth map $\Omega\mapsto\calS_1^2$ is injective. All together this shows that $\Omega\to\calS^2_1,\,t\mapsto a_t$ is a diffeomorphism. Finally, the inverse of this map composed with~\eqref{eq:whitham flow} defines a global section $\calF$ with the properties specified in Theorem~\ref{Th:main}. This finishes the proof.\qed
\begin{remark}
The spectral curves of Wente tori are those $a\in\calW$, such that $\calB_a$ has a basis $(b_1,b_2)$ for which each $\Theta(b_\ind)$, $\ind=1,2$, is the logarithmic derivative of a function $\mu_\ind$ satisfying
\begin{enumerate}
\item[(i)] In a neighbourhood of $\lambda=0$ there exists a branch of $\ln\mu_\ind$ which is anti--symmetric with respect to the hyperelliptic involution $\sigma$~\eqref{eq:sigma}.
\item[(ii)] $\mu_1(y(a))=1=\mu_2(y(a))$ or $\mu_1(y(a))=-1=\mu_2(y(a))$.
\end{enumerate}
Lemma~\ref{wente existence} shows that this set is mapped by $\phi$~\eqref{def:phi} diffeomorphically onto $\{\varphi,\varphi)\mid\tfrac{2\varphi}{\pi}\in(0,1)\cap\bbQ\}$. In particular, this lemma again proves the existence of Wente tori.
\end{remark}
%
\section{The boundary of $\Hat{\calS}^2_0$ in $\bbR^4[\varkappa]$}\label{sec:boundary}
We prove that $\Hat{\calS}^2_0$ is bounded. Hence each case in Theorem~\ref{Th:global basis}~(v) corresponds to a subset of $\partial\Hat{\calS}^2_0$.
\begin{lemma}
\label{L:boundary:S20-bounded}
$\Hat{\calS}^2_0$ is a bounded subset of $\bbR^4[\varkappa]$.
\end{lemma}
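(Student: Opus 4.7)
My plan is to argue by contradiction, combining the Cayley-type diffeomorphism of Lemma~\ref{cayley transform} with the global parameterisation given by Theorem~\ref{Th:main} and the classification of limiting behaviour already carried out in Theorem~\ref{Th:global basis}~(v) and Lemmata~\ref{Le:case a}--\ref{Le:case e}. The key conceptual point is that Cayley sends $\lambda=-1$ to $\varkappa=\infty$, so unboundedness of $\Hat{\calS}^2_0$ is equivalent to roots of the corresponding $a\in\calS^2_1$ approaching $\lambda=-1$, and no such limit can arise from the Whitham flow.

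So suppose for contradiction that $\Hat{\calS}^2_0$ is unbounded; I would pick a sequence $(\hat a_n)$ in $\Hat{\calS}^2_0$ with divergent coefficients. Since each $\hat a_n$ is monic of degree four, at least one root of $\hat a_n$ must escape to infinity in $\bbC$. By Lemma~\ref{cayley transform} these roots are the Cayley images of the roots of a unique $a_n\in\calS^2_1$; since the only finite point mapped to $\infty$ by $\lambda\mapsto\varkappa=(\lambda-1)/(\mi(\lambda+1))$ is $\lambda=-1$, and since the reflection $\alpha\mapsto\bar\alpha^{-1}$ fixes $-1$, we may arrange that one of the distinguished roots $\alpha_{1,n},\alpha_{2,n}\in B(0,1)$ of $a_n$ satisfies $\alpha_{j,n}\to-1$ along a subsequence.

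Next I would use Theorem~\ref{Th:main} to write $a_n=a_{t_n}$ for a unique $t_n\in\Omega$. If $(t_n)$ had an accumulation point inside $\Omega$, then continuity of the Whitham flow together with Lemma~\ref{cayley transform} would produce a convergent subsequence of $(\hat a_n)$, contradicting the divergence of its coefficients. Hence all accumulation points of $(t_n)$ lie in $\partial\Omega$; this is legitimate because $\Omega$ is bounded, as shown in the proof of Theorem~\ref{Th:main}. Every point of $\partial\Omega$ is, by the definition of $\Omega$ in Theorem~\ref{Th:global basis}~(iii), the endpoint of the maximal interval of the radial Whitham flow in its own direction, so the hypothesis of Theorem~\ref{Th:global basis}~(v) is satisfied. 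Applying that theorem together with Lemmata~\ref{Le:case a}--\ref{Le:case e} to a suitable subsequence, the accumulation point $(\alpha_1,\alpha_2)$ of $(\alpha_{1,n},\alpha_{2,n})$ has components in $\{0,1\}\cup I_B$, where $I_B$ is the one-dimensional set from Lemma~\ref{Le:case b}.

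The main obstacle is the verification that $-1\notin\{0,1\}\cup I_B$. All cases except (B) are immediate, since the possible components are $0$ or $1$. In case (B), I would appeal to the explicit parameterisation from the proof of Lemma~\ref{Le:case b}, namely $\alpha=k\bigl(\tfrac{2r\pm\mi(r^2-1)}{r^2+1}\bigr)^{-1}$ with $k\in(0,1)$ and $r\in(k,1)$. A short calculation shows $|2r\pm\mi(r^2-1)|^2=(r^2+1)^2$, so $\alpha=k\cdot\overline{2r\pm\mi(r^2-1)}/(r^2+1)$, which has real part $2kr/(r^2+1)>0$; in particular $\alpha\neq-1$. This rules out $-1$ as a component of any accumulation point, contradicting $\alpha_{j,n}\to-1$ and completing the argument.
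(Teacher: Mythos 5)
Your proof is correct and takes essentially the same route as the paper: both reduce boundedness to excluding $\lambda=-1$ (the preimage of $\varkappa=\infty$ under the Cayley transform) as an accumulation point of the roots $(\alpha_{1,n},\alpha_{2,n})$, and both settle this via the case classification of Theorem~\ref{Th:global basis}~(v) together with Lemma~\ref{Le:case c and d}. The paper is a little more economical — since $I_B\subset B(0,1)$ and the case~(D) components lie in $\{0\}\cup(0,1)$, only case~(C), the one case admitting a component on $\partial B(0,1)$, needs attention — so your detour through Theorem~\ref{Th:main} and the explicit computation $\RE\alpha=2kr/(r^2+1)>0$ for $I_B$ are sound but not needed.
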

\begin{proof}
It suffices to show that the roots of any sequence in $\Hat{\calS}^2_0$ are bounded. Consider the sequence $(\alpha_{1,n},\alpha_{2,n})_{n\in\mathbb{N}}$ of roots in \,$B(0,1)$\, of the corresponding sequence in $\calS^2_1$. Our task is to prove that neither component of an accumulation point of \,$(\alpha_{1,n},\alpha_{2,n})$\, is equal to the value \,$-1$\, of $\lambda$ at $\varkappa=\infty$~\eqref{eq:moebius-kappa}. Since $B(0,1)$ is bounded, we may assume that $(\alpha_{1,n},\alpha_{2,n})_{n\in\mathbb{N}}$ converges. If one component of \,$\lim \, (\alpha_{1,n},\alpha_{2,n})$\, does not belong to \,$B(0,1)$\, then by Theorem~\ref{Th:global basis} this limit belongs to one of the sets in the cases~(A)--(E). The case~(C) is the only case where one component might be equal to $-1$. However, this possibility is excluded by Lemma~\ref{Le:case c and d}. This completes the proof.
\end{proof}
In Theorem~\ref{Th:global basis}~(v), $\partial\Hat{\calS}^2_0$ is decomposed into five subsets which correspond to the cases~(A)--(E). The results of Section~\ref
{Se:limits a to e} yield that the composition $\Hat{\phi}$ of the inverse diffeomorphism of Lemma~\ref{cayley transform} with $\phi$~\eqref{def:phi} maps the cases~(A) and (D) to the edges of $\triangle$, whereas it maps the cases~(B), (C) and (E) to the vertices. 

\bibliographystyle{amsplain}
\bibliography{ref}

\def\cydot{\leavevmode\raise.4ex\hbox{.}} \def\cprime{$'$}
\providecommand{\bysame}{\leavevmode\hbox to3em{\hrulefill}\thinspace}
\providecommand{\MR}{\relax\ifhmode\unskip\space\fi MR }
\providecommand{\MRhref}[2]{%
  \href{http://www.ams.org/mathscinet-getitem?mr=#1}{#2}
}
\providecommand{\href}[2]{#2}
\begin{thebibliography}{10}

\bibitem{Ab}
U.~Abresch, \emph{Constant mean curvature tori in terms of elliptic functions},
  J. Reine U. Angew Math. \textbf{374} (1987), 169--192.

\bibitem{ABR}
S.~Axler, P.~Bourdon, and W.~Ramey, \emph{Harmonic function theory.}, 2nd ed.,
  Grad. Texts Math., vol. 137, New York, NY: Springer, 2001.

\bibitem{Bob:cmc}
A.~I. Bobenko, \emph{Constant mean curvature surfaces and integrable
  equations}, Russian Math. Surveys \textbf{46} (1991), 1--45.

\bibitem{CKS}
E.~Carberry, S.~Klein, and M.~U. Schmidt, \emph{Blowing up sequences of
  constant mean curvature tori with fixed spectral genus in the euclidean
  3-space to minimal surfaces}, arXiv:2110.01574.

\bibitem{CO:19}
E.~Carberry and R.~Ogilvie, \emph{Whitham deformations and the space of
  harmonic tori in {$S^3$}}, J. Lond. Math. Soc. (2) \textbf{99} (2019), no.~3,
  945--964.

\bibitem{CO:20}
\bysame, \emph{The space of equivariant harmonic tori in the 3-sphere}, J.
  Geom. Phys. \textbf{157} (2020), 22.

\bibitem{CS1}
E.~Carberry and M.~U. Schmidt, \emph{The closure of spectral data for constant
  mean curvature tori in {$\mathbb{S}^3$}}, J. Reine Angew. Math. \textbf{721}
  (2016), 149--166.

\bibitem{CS}
\bysame, \emph{The prevalence of tori amongst constant mean curvature planes in
  {$\mathbb{R}^3$}}, J. Geom. Phys. \textbf{106} (2016), 352--366.

\bibitem{FaKr}
H.~M. Farkas and I.~Kra, \emph{Riemann surfaces}, second ed., Graduate Texts in
  Mathematics, vol.~71, Springer-Verlag, New York, 1992.

\bibitem{GerPS}
A.~Gerding, F.~Pedit, and N.~Schmitt, \emph{Constant mean curvature surfaces:
  an integrable systems perspective}, Harmonic maps and differential geometry.
  A harmonic map fest in honour of John C. Wood's 60th birthday, Cagliari,
  Italy, September 7--10, 2009, Providence, RI: American Mathematical Society
  (AMS), 2011, pp.~7--39.

\bibitem{GriS1}
P.~G. Grinevich and M.~U. Schmidt, \emph{Period preserving nonisospectral flows
  and the moduli space of periodic solutions of soliton equations}, Phys. D
  \textbf{87} (1995), no.~1-4, 73--98.

\bibitem{HKS3}
L.~Hauswirth, M.~Kilian, and M.~U. Schmidt, \emph{Mean-convex {A}lexandrov
  embedded constant mean curvature tori in the 3-sphere}, Proc. Lond. Math.
  Soc. (3) \textbf{112} (2016), no.~3, 588--622.

\bibitem{HKS2}
\bysame, \emph{Properly embedded minimal annuli in {$\mathbb{S}^2 \times
  \mathbb{R}$}}, J. Integrable Syst. \textbf{5} (2020), no.~1, xyaa005, 37.

\bibitem{H2021}
L.~Heller, \emph{Generalized {Whitham} flow and its applications}, Minimal
  surfaces: integrable systems and visualisation. m:iv workshops, 2016--19,
  Cham: Springer, 2021, pp.~131--146.

\bibitem{hit:tor}
N.~Hitchin, \emph{Harmonic maps from a 2-torus to the 3-sphere}, J.
  Differential Geom. \textbf{31} (1990), no.~3, 627--710.

\bibitem{KSS}
M.~Kilian, M.~U. Schmidt, and N.~Schmitt, \emph{Flows of constant mean
  curvature tori in the 3-sphere: the equivariant case}, J. Reine Angew. Math.
  \textbf{707} (2015), 45--86.

\bibitem{K-L-S-S}
S~Klein, E.~L\"{u}bcke, M.U. Schmidt, and T.~Simon, \emph{Singular curves and
  {B}aker-{A}khiezer functions}, arXiv:1609.07011.

\bibitem{KPS}
M.~Knopf, R.~Pe\~na Hoepner, and M.~U. Schmidt, \emph{Solutions of the
  {S}inh--{G}ordon equation of spectral genus two and constrained {W}illmore
  tori {I}}, arXiv:1708.00887.

\bibitem{MaOs}
V.~A. Mar{\v{c}}enko and I.~V. Ostrovs{\cprime}ki{\u\i}, \emph{A
  characterization of the spectrum of the {H}ill operator}, Mat. Sb. (N.S.)
  \textbf{97(139)} (1975), no.~4(8), 540--606, 633--634.

\bibitem{PinS}
U.~Pinkall and I.~Sterling, \emph{On the classification of constant mean
  curvature tori}, Ann. Math. \textbf{130} (1989), 407--451.

\bibitem{Wen}
H.~C. Wente, \emph{Counterexample to a conjecture of {H}. {H}opf}, Pac. J.
  Math. \textbf{121} (1986), 193--243.

\end{thebibliography}
\end{document}